\documentclass[14pt,a4paper,titlepage,draft]{extarticle}

\usepackage[utf8]{inputenc}
\usepackage[english,russian]{babel}
\usepackage{amsfonts,amsthm,amsmath,amssymb,indentfirst,enumerate, amsopn,mmap,afterpage}

\theoremstyle{plain}
\newtheorem{lemma}{Лемма}
\newtheorem{proposition}{Предложение}
\newtheorem*{theorem}{Теорема}

\newtheorem{corollary}{Следствие}
\newtheorem*{problem}{Проблема}

\theoremstyle{definition}
\newtheorem{agreement}{Соглашение}
\newtheorem{definition}{Oпределение}
\newtheorem{definitions}[definition]{Определения}
\newtheorem{notation}{Обозначение}
\newtheorem{notations}[notation]{Обозначения}

\theoremstyle{remark}
\newtheorem{remark}{Замечание}

\newtheorem{example}{Пример}
\newtheorem{examples}[example]{Примеры}

\newcommand{\empha}[1]{\emph{\textbf{#1}}}

\newcommand{\Z}{\mathbf{Z}}
\newcommand{\Q}{\mathbf{Q}}
\newcommand{\Co}{\mathbf{C}}
\newcommand{\R}{\mathbf{R}}

\DeclareMathOperator{\Un}{Un}
\DeclareMathOperator{\I}{Int}
\DeclareMathOperator{\V}{V}
\DeclareMathOperator{\tr}{tr}
\DeclareMathOperator{\N}{Norm}
\DeclareMathOperator{\Gal}{Gal}
\DeclareMathOperator{\ldb}{\text{\upshape\textlbrackdbl}}
\DeclareMathOperator{\rdb}{\text{\upshape\textrbrackdbl}}

\newcommand{\zdb}[1]{\Z{\ldb}{#1}{\rdb}}
\newcommand{\qdb}[1]{\Q{\ldb}{#1}{\rdb}}

\floatsep=-11pt
\setcounter{totalnumber}{4}

\setcounter{tocdepth}{3}

\title{Units of integral group rings of cyclic $2$-groups}

\author{Rifkhat~Zh.~Aleeev, Olga~V.~Mitina, Aleksandra~D.~Godova}

\begin{document}

\selectlanguage{english}

\maketitle

\begin{selectlanguage}{english}

\begin{abstract}
This paper is devoted to the units of integral group rings of cyclic $2$-groups of small orders, namely, the orders of $2^n$ for $n\leqslant7$.
Immediately we should note the issues our consideration describe in the introduction in more detail.

Here we will indicate the main directions of our research.
Previously, we proved that the normalized group of units of an integral group ring of a cyclic $2$-group of order $2^n$ contains a subgroup of finite index, which is
the direct product of the subgroup of units defined by the character with the largest character field
and the subgroup of units that is isomorphic to the subgroup of units of the integer group ring of the cyclic $2$-group of order $2^{n-1}$.
Because of this, it is very important to study the structure of the subgroup of units defined by the character with the largest field of characters, which is the cyclotomic field $\Q_{2^n}$
obtained by adjoining a primitive $2^n$th root of unity to $\Q$, the field of rational number.
That subgroup of units of an integral group ring of a cyclic $2$-group is isomorphic to the subgroup of the group of units of the integer ring of the specified cyclotomic field.
Therefore, the research of units of an integer group ring of a cyclic $2$-group is reduced to study the properties of the group of units of the integer ring of the cyclotomic field $\Q_{2^n}$.
In general, the group of units of the ring of integers of the circular field $\Q_{2^n}$ is not known completely.
However, the fundamental paper of Sinnott allows us to always find a subgroup of circular units of a finite index in this group of units,
and this index is equal to the class number of the maximum real subfield of the field $\Q_{2^n}$.
The classical Weber class number problem assumes the class number is equal to $1$, and this would give a coincidence of the subgroup of circular units and the group of all units of the integer ring of the cyclotomic field $\Q_{2^n}$.
As far as we know, the Weber problem is solved positively for all $2^n\leqslant256$, and under the generalized Riemann hypothesis for $2^n\leqslant512$.

Thus, we will study of groups of circular units of integer rings of cyclotomic fields $\Q_{2^n}$  in large part.
First of all, we note one significant point.
In previous works of the authors (Aleev and Mitina) \&\ co-authors, it was used $5$ generates a unit subgroup of the index $2$ modulo $2^n$.
As it turned out, Gauss noted $5$ can be replaced by $3$, that is $3$ generates a unut subgroup of the index $2$ modulo $2^n$.
This made it possible to replace the generators of groups of circular units by simpler ones.
However, this also led to the need to revise a rather extensive list of previously obtained results.
Since some of the results obtained earlier were proved by complicated calculations, it was impossible to simply say  they are obtained similarly.
This transfer of the results took up quite a lot of space in this work.
More precisely, the cases of $n\in\{0,1,2\}$ are trivial, the case of $n=3$ is well known, besides, in this case there is no difference what to consider $3$ or $5$.
For $5$ the cases of $n\in\{4,5,6\}$ were fully studied, so they were necessary to review these cases.
Also, many results were obtained for an arbitrary $n$, they needed to be revised  also.

Let's summarize the results.
We can divide our work into the following parts.
\begin{description}
\item[Preliminary information.]
This is described in section 2.
There we introduce three number sequences of cyclotomic fields  play an essential role in further considerations.
In particular, one of these sequences allows us to describe circular units.
\item[Circular units.]
This is the subject of section 3.
First, we consider the reduction of unit group of circular unit group.
Then we obtain a description of circular unit group by one of the sequences introduced earlier.
\item[Properties of units of group rings.]
More precisely, in section 4 we produce certain units of an integral group ring of a cyclic group of order $2^n$.
These units are determined by a character whose character field is a cyclotomic field $\Q_{2^n}$.
Due to the standard isomorphism, we reduce the study of such units to the study of the units of the integer ring of the cyclotomic field $\Q_{2^n}$.
Namely, we prove necessary to study the units of the integer ring of the cyclotomic field $\Q_{2^n}$ congruent to $1$ modulo $2$.
\item[Congruence of circular units.]
In sections 5--7 we study the congruence of circular units modulo $2$, which plays an important role in finding the units of integral group rings of cyclic $2$-groups.
More specifically, in section 5 a subgroup of circular units is constructed, which claims to describe all circular units that congruent to $1$ modulo $2$.
In section 6 we introduce a subgroup of circular units whose quotient group is an elementary Abelian $2$-group by the subgroup from section 5.
Section 7 is devoted to proving the coincidence of subgroups from sections 5 and 6, which can be done for $2^n\leqslant128$.
As noted earlier, Weber's problem about the number of classes for such cases has a positive solution.
Therefore, for $2^n\leqslant128$ we can state that obtain the description of all units of integer group rings of cyclic $2$-groups determined by the character with the character field $\Q_{2^n}$.
\end{description}
\end{abstract}

\end{selectlanguage}

\begin{selectlanguage}{russian}
\begin{abstract}

\setcounter{page}{4}

Эта работа посвящена единицам целочисленных групповых колец циклических $2$-групп небольших порядков, а именно, порядков $2^n$ для $n\leqslant7$.
Сразу же отметим, что более подробно рассматриваемые вопросы описаны во введении.

Здесь мы укажем основные направления наших исследований в этой работе.
Ранее было доказано, что нормализованная группа единиц целочисленного группового кольца циклической $2$-группы порядка $2^n$ содержит 
в качестве подгруппы конечного индекса прямое произведение подгруппы единиц, определяемых характером с наибольшим полем характеров,
и подгруппы единиц, которая изоморфна подгруппе единиц целочисленного группового кольца циклической $2$-группы порядка $2^{n-1}$.
В силу этого очень важно изучить строение подгруппы единиц, определяемых характером с наибольшим полем характеров, которым является круговое поле $\Q_{2^n}$,
полученное присоединением к полю рациональных чисел примитивного корня из $1$ степени $2^n$.
Упомянутая подгруппы единиц целочисленного группового кольца циклической $2$-группы стандартно изоморфна подгруппе группе единиц кольца целых указанного кругового поля.
Поэтому задача о подгруппе единиц целочисленного группового кольца циклической $2$-группы сводится к изучению свойств группе единиц  кольца целых кругового поля $\Q_{2^n}$.
В общем случае группа единиц кольца целых кругового поля $\Q_{2^n}$ не известна.
Однако, фундаментальная работа Синнотта позволяет находить всегда подгруппу круговых единиц конечного индекса в этой группе единиц, 
причём этот индекс равен числу классов максимального действительного подполя поля $\Q_{2^n}$.
Классическая проблема Вебера о числе классов предполагает, что это число классов равно $1$, а это давало бы совпадение упомянутой подгруппы круговых единиц со всей группой единиц кольца целых кругового поля $\Q_{2^n}$.
Насколько нам известно, проблема Вебера решена положительно для всех $2^n\leqslant256$, а при допущении обобщённой гипотезы Римана также для $2^n\leqslant512$.

Таким образом, мы сосредоточимся, в основном, на исследовании групп круговых единиц колец целых круговых полей $\Q_{2^n}$.
Прежде всего отметим один существенный момент.
В предыдущих работах авторов (Алеев и Митина) с соавторами существенно использовалось, что $5$ порождает по модулю $2^n$ подгруппу единиц индекса $2$.
Как оказалось, ещё Гаусс отметил, что $5$ можно заменить на $3$, то есть $3$ порождает по модулю $2^n$ подгруппу единиц индекса $2$.
Это позволило заменить ранее рассматриваемые порождающие группы круговых единиц на более простые.
Однако, это также повлекло за собой, что необходимо пересмотреть довольно обширный список полученных  ранее результатов.
Поскольку некоторые полученные ранее результаты были доказаны путём довольно сложных вычислений, то нельзя было просто сказать, что  они получаются аналогично.
Этот перенос результатов занял весьма много места в этой работе.
Более точно, случаи $n\in\{0,1,2\}$ тривиальны, случай $n=3$ хорошо известен, кроме того в этом случае нет разницы, что рассматривать $3$ или $5$.
Для $5$ были полностью изучены случаи $n\in\{4,5,6\}$, поэтому нужно было пересмотреть эти случаи.
Также было получено много результатов для произвольного $n$, они тоже нуждались в пересмотре.

Подведём итоги.
Можно разделить всю работу на следующие части.
\begin{description}
\item[Предварительные сведения.] 
Это изложено в разделе 2.
Там вводятся три последовательности чисел из круговых полей, которые играют существенную роль в дальнейших рассмотрениях.
В частности, одна из этих последовательностей позволяет описывать круговые единицы.
\item[Круговые единицы.]
Этому посвящён раздел 3.
Сначала рассматривается редукция группы единиц к круговым единицам.
Потом описание группы круговых единиц в терминах одной из последовательностей, введённых ранее.
\item[Свойства единиц групповых колец.]
Более точно, в разделе 4 производится определённых единиц целочисленного группового кольца циклической группы порядка $2^n$.
Эти единицы определяются характером, у которого полем характеров  является круговое поле $\Q_{2^n}$.
В силу стандартного изоморфизма изучение таких единиц сводится к изучению единиц кольца целых кругового поля $\Q_{2^n}$.
А именно, доказывается, что нужно изучать единицы кольца целых кругового поля $\Q_{2^n}$, сравнимые с $1$ по модулю $2$.
\item[Сравнимость круговых единиц.]
В разделах 5--7  изучается сравнимость круговых единиц  по модулю $2$, которая играет важную роль в нахождении единиц целочисленных групповых колец циклических $2$-групп.
Более определённо, в разделе 5 строится подгруппа круговых единиц, которая претендует на описание всех круговых единиц, сравнимых с $1$  по модулю $2$.
В разделе 6 вводится подгруппа круговых единиц, фактор-группа которой по подгруппе из раздела 5 является элементарной абелевой $2$-группой.
Раздел 7 посвящён доказательству совпадения подгрупп из разделов 5 и 6, что удаётся сделать для $2^n\leqslant128$.
Как отмечалось ранее, проблема Вебера о числе классов для таких случаев имеет положительное решение.
Поэтому можем утверждать, что для $2^n\leqslant128$ описаны все единицы целочисленных групповых колец циклических $2$-групп, которые определяются характером с полем характера $\Q_{2^n}$.
\end{description}
\end{abstract}

\setcounter{page}{7}

\tableofcontents

\allowdisplaybreaks

\section{Введение}

Разобьём цель  работы на 3 части.
\begin{description}
\item[Общая.]
Изучение единиц целочисленных групповых колец циклических $2$-групп.

Это непосредственно связано с изучением единиц колец целых $2$-круговых полей $\Q_{2^n}$
(круговых полей, полученных присоединением к полю рациональных чисел примитивного корня из $1$ степени $2^n$, где $n$ --- неотрицательное целое число).
Классическая проблема Вебера в качестве следствия даёт, что все единицы колец целых $2$-круговых полей являются  круговыми.
Однако проблема Вебера решена $2^n\leqslant512$, причём для $512$ при дополнительном условии.
Тем не менее круговые единицы образуют подгруппу конечного (для  $2^n>512$ неизвестного) индекса в группе всех единиц колец целых $2$-круговых полей.
Поэтому круговые единицы крайне важны.

Используя круговые единицы можно построить подгруппы конечного (возможно даже тривиального) индекса в группе единиц целочисленных групповых колец циклических $2$-групп,
а при допущении справедливости проблемы Вебера можно построить всю группу единиц целочисленного группового кольца любой циклической $2$-группы.
\item[Конкретная.]
Для каждого натурального числа $n$ круговые единицы кольца целых $2$-кругового поля $\Q_{2^n}$ позволяют строить единицы целочисленного группового кольца  циклической группы порядка $2^n$.

Постепенно увеличивая $n$, можно таким способом легко построить подгруппу конечного индекса в группе единиц единиц целочисленного группового кольца любой циклической $2$-группы.
Более точно, опишем, что имеется ввиду.
Пусть $\zeta_{2^n}$ --- примитивный примитивный корень из $1$ степени $2^n$.
\begin{description}
\item[Значения $n\in\{0,1,2\}$.]
Этот случай самый начальный и самый неинтересный.
Так как $-1\in\Q$ --- примитивный корень степени $2$ из $1$, то
\[
\Q_2=\Q_1=\Q.
\]
Поэтому $\Q_2$ \empha{никогда} особо не рассматривают.
Как круговые единицы кольца целых $\Q$ (а это будет кольцо целых чисел $\Z$) возникают только
\[
1\text{ и }-1.
\]
В случае $n=2$ имеем
\[
\Q_4=\Q(i)=\left\{a+bi\mid a,b\in\Q\right\}
\]
--- \emph{поле гауссовых чисел}, его кольцо целых ---
\[
\Z[i]=\left\{a+bi\mid a,b\in\Z\right\}.
\]
Как круговые единицы кольца $\Z[i]$ возникают только
\[
1,-1,i\text{ и }-i.
\]
Во всех этих случаях из круговые единиц в целочисленных групповых кольцах циклических $2$-групп будут возникать только элементы групп и к ним противоположные.
А самое главное состоит в том, что \emph{других единиц нет} в целочисленных групповых кольцах циклических $2$-групп порядков $2^n$ для $n\in\{0,1,2\}$.
\item[Значения $n\in\{3,4\}$.]
Случай $2^3=8$ --- первый нетривиальный.
Он давно и хорошо изучен.
Информацию о нём можно найти либо в диссертации \cite{aleev3+}, либо  в \cite{aleev3}.

Случай $2^4=16$ полностью изучен в работах \cite{amkhan} и \cite{amkhan1}.
\item[Значения $n\in\{5,6,7\}$.]
Много информации в  случае $2^5=32$  найти в \cite{amkhan} и в  случае $2^6=64$ найти в \cite{amkhan2}.

Случай $2^7=128$ не опубликован, только анонсирован в \cite{amgod}.
\item[Значения $n>7$.]
До конца не изучено!
\item[Построение подгруппы конечного индекса.]
Ясно, что
\[
\Q_{2^3}\subset\Q_{2^4}\subset\dots\subset\Q_{2^n}.
\]
Для каждого $k\in\{3,\dots,n\}$ можно построить подгруппу круговых единиц, которая является прямым произведением бесконечных циклических групп,
а потом по этой подгруппе строится подгруппа единиц $U_k$ в целочисленном групповом кольце циклической $2$-группы $G$ порядка $2^n$,
которая тоже является прямым произведением бесконечных циклических групп.
Наконец, получим
\[
\langle-1\rangle\times G\times U_3\times\dots\times U_n.
\]
Это будет подгруппа конечного индекса в группе единиц целочисленного группового кольца данной циклической $2$-групп.

На деле этот метод состоит в решении нескольких задач.
\begin{description}
\item[Первая задача] состоит в построении подходящей подгруппы круговых единиц максимального ранга, которая является прямым произведением бесконечных циклических групп.

Эта задача решена даже двумя способами!
\item[Вторая задача] состоит в построении особой подгруппы единиц в целочисленном групповом кольце циклической $2$-группы, которая также является прямым произведением бесконечных циклических групп.
Эта подгруппа сразу получается из подгруппы, построенной во второй задаче.

Поэтому эта задача решена тоже двумя способами!
\item[Третья задача.] Однако нас будет интересовать вопрос:
\begin{quote}
\emph{как построить максимальную возможную\\ подгруппу во второй задаче?}
\end{quote}
Эта задача решена одним способом для $n\in\{3,4,5,6\}$ в \cite{aleev3}, \cite{amkhan1} и \cite{amkhan2}.
Другим способом данная задача решена для $n=7$.
\end{description}
\end{description}
\item[Частная.]
Наконец подобрались к тому, что нужно сделать!
\begin{quote}
Возникла лакуна, которую \empha{надо} закрыть!
\end{quote}
Более определённо.
Как уже отмечали третья задача решена одним способом для $n\in\{3,4,5,6\}$ и другим для  $n=7$.
Естественно возникает вопрос:
\begin{quote}
Можно ли решить упомянутую задачу новым способом, что применялся для  $n=7$?
\end{quote}
На первый взгляд ответ очевиден --- ДА, КОНЕЧНО!
Однако, так как вычисления для $n\in\{3,4,5,6\}$ были весьма трудные, то <<верхоглядное>> уверение не может удовлетворить!

Теперь перейдём к более точным постановкам.
Будем использовать обозначения, которые будут объяснены в дальнейшем.
\begin{enumerate}
\item
Для $n\in\{3,4,5,6\}$ использовались следующие порождающие группы круговых единиц
\[
t_j=1+s_j+s_{2j},\ j\in\{1,3,5,\dots\}
\]
Для $n=7$ использовались другие порождающие группы круговых единиц
\[
d_j=1+s_j,\ j\in\{1,3,5,\dots\}
\]
\item
Для $n=3$ нужен только $t_1$, но в этом случае
\[
s_2=0\longrightarrow t_1=d_1.
\]
\item
Поэтому надо рассматривать только
\[
n\in\{4,5,6\}
\]
и по возможности значения $n>6$.
\end{enumerate}
\end{description}

\section{Круговое поле $\Q_{2^n}$}

Круговое поле, полученное присоединением первообразного (примитивного) корня из $1$ степени $2^n$, будем  обозначать как $\Q_{2^n}$  или $\Q(\zeta_{2^n})$, где $\zeta_{2^n}$ ---  примитивный корень из $1$ степени $2^n$.

Пусть $K$ --- подполе поля комплексных чисел $\Co$ и $\overline{\Z}$ --- кольцо всех целых алгебраических чисел.
Тогда обозначим через $\I(K)=K\cap\overline{\Z}$ --- кольцо целых поля $K$ и также через $\Un(\I(K))$ ---  группу единиц кольца $\I(K)$.

\subsection{Общие сведения}

\begin{notations}\label{n:m&a}
Удобно ввести следующие обозначения.
\begin{enumerate}
\item
Положим $\zeta_{2^n}=\alpha$  и не ограничивая общности можем считать, что
\[
\alpha=e^{i\frac{2\pi}{2^n}}=\cos\frac{2\pi}{2^n}+i\sin\frac{2\pi}{2^n}.
\]
\item
Тогда, в частности,
\[
\alpha^{2^{n-1}}=-1,\qquad\alpha^{2^{n-2}}=i\qquad\text{и}\text\qquad\alpha^{2^{n-3}}=\frac{\sqrt{2}}{2}(1+i).
\]
\end{enumerate}
\end{notations}

Хорошо известен следующий результат.
\begin{lemma}\label{l:cyc}{\ }
\begin{enumerate}[{\rm1.}]
\item
Круговое поле $\Q(\alpha)$ равно
\[
\Q(\alpha)=\left\{f(\alpha)\mid f\in\Q[t]\right\}=\left\{f(\alpha)\mid f\in\qdb{2^{n-1}-1}[t]\right\},
\]
где $\qdb{2^{n-1}-1}[t]$ --- множество (точнее подпространство) всех многочленов с рациональными коэффициентами степени не выше $2^{n-1}-1$.
Иными словами,
\[
1,\alpha,\alpha^2, \dots,\alpha^{2^{n-1}-1}
\]
--- базис $\Q(\alpha)$ как векторного пространства над полем рациональных чисел $\Q$.
\item
Группа Галуа $\Gal(\Q_{2^n})$ кругового поля $\Q_{2^n}$, или равносильно группа автоморфизмов  поля $\Q_{2^n}$, имеет порядок, равный степени расширения
\[
|\Gal(\Q_{2^n})|=[\Q(\alpha):\Q]=2^{n-1}.
\]
\item
Более того, всякий автоморфизм $\sigma$ поля $\Q_{2^n}$ является расширением по линейности отображения
\[
\sigma:\alpha\mapsto\alpha^k\text{ для подходящего нечётного }k\in\{1,3,\dots,2^n-1\},
\]
то есть, для любого
\begin{multline*}
\beta=b_{2^{n-1}-1}\alpha^{2^{n-1}-1}+\dots+b_1\alpha+b_0\in\Q_{2^n},\\ 
\text{ где }\{b_{2^{n-1}-1},\dots,b_1,b_0\}\subset\Q.
\end{multline*}
имеем
\begin{align*}
\sigma(\beta)&=b_{2^{n-1}-1}\sigma(\alpha^{2^{n-1}-1})+\dots+b_1\sigma(\alpha)+b_0=\\
&=b_{2^{n-1}-1}\alpha^{k(2^{n-1}-1)}+\dots+b_1\alpha^k+b_0.
\end{align*}
Таким образом
\[
\Gal(\Q_{2^n})=\left\{\sigma_k\mid\sigma_k(\alpha)=\alpha^k,\ k\in\{1,3,\dots,2^n-1\}\right\}.
\]
\item
Кольцом целых кругового поля $\Q(\alpha)=\Q_{2^n}$ является
\[
\Z[\alpha]=\left\{f(\alpha)\mid f\in\Z[t]\right\}=\left\{f(\alpha)\mid f\in\zdb{2^{n-1}-1}[t]\right\},
\]
где $\zdb{2^{n-1}-1}[t]$ --- множество (точнее, конечно-порождённая подгруппа) всех многочленов с целыми коэффициентами степени не выше $2^{n-1}-1$.
\item\label{cib}
Из {\rm 1} и {\rm 4} получаем, что
\[
1,\alpha,\alpha^2, \dots,\alpha^{2^{n-1}-1}
\]
--- целый базис расширения $\Q(\alpha)/\Q$.
\end{enumerate}
\end{lemma}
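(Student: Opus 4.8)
План состоит в том, чтобы всё свести к одному ключевому факту: минимальным многочленом элемента $\alpha=\zeta_{2^n}$ над $\Q$ служит $2^n$-й круговой многочлен
\[
\Phi_{2^n}(t)=t^{2^{n-1}}+1.
\]
Сначала я бы проверил, что $\alpha$ — его корень (это немедленно следует из принятых обозначений, ведь $\alpha^{2^n}=1$, но $\alpha^{2^{n-1}}=-1$), а затем доказал бы неприводимость $\Phi_{2^n}$ над $\Q$ по признаку Эйзенштейна при $p=2$, применённому к сдвигу $\Phi_{2^n}(t+1)=(t+1)^{2^{n-1}}+1$: старший коэффициент равен $1$, свободный член равен $2$ и не делится на $4$, а промежуточные биномиальные коэффициенты $\binom{2^{n-1}}{j}$ при $0<j<2^{n-1}$ чётны, так как $2^{n-1}$ — степень двойки. Отсюда $[\Q(\alpha):\Q]=\deg\Phi_{2^n}=2^{n-1}$, и стандартный аргумент линейной алгебры (деление с остатком на минимальный многочлен) даёт пункт~1: $\Q(\alpha)=\Q[\alpha]$, а $1,\alpha,\dots,\alpha^{2^{n-1}-1}$ — базис над~$\Q$.

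Пункт~2 я бы получил, заметив, что $\Q(\alpha)$ — поле разложения сепарабельного многочлена $t^{2^n}-1$ над $\Q$, так что расширение $\Q(\alpha)/\Q$ нормально и сепарабельно, а значит, $|\Gal(\Q_{2^n})|=[\Q(\alpha):\Q]=2^{n-1}$. Для пункта~3 достаточно заметить, что всякий автоморфизм $\sigma$ однозначно определяется образом $\sigma(\alpha)$ (ведь $\alpha$ порождает поле), который обязан быть корнем $\Phi_{2^n}$, то есть первообразным корнем степени $2^n$ из~$1$; таковы в точности элементы $\alpha^k$ с нечётными $k\in\{1,3,\dots,2^n-1\}$. Обратно, ввиду неприводимости $\Phi_{2^n}$ каждому такому $k$ отвечает ровно один автоморфизм $\sigma_k\colon\alpha\mapsto\alpha^k$, и их число $\varphi(2^n)=2^{n-1}$ согласуется с пунктом~2; формула для $\sigma_k(\beta)$ получается продолжением по линейности, как в формулировке.

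Главным препятствием будет пункт~4 — равенство $\I(\Q(\alpha))=\Z[\alpha]$, точнее то, что кольцо целых \emph{не больше} $\Z[\alpha]$ (включение $\Z[\alpha]\subseteq\I(\Q(\alpha))$ и эквивалентность двух записей через многочлены степени $\leqslant2^{n-1}-1$ тривиальны, поскольку $\Phi_{2^n}$ унитарен). Здесь я рассуждал бы так: положив $\lambda=1-\alpha$, из $\Phi_{2^n}(1)=2$ получаю, что норма $\lambda$ равна $2$, так что $(\lambda)$ — простой идеал над $2$; далее, $\Phi_{2^n}(t)=\prod(t-\alpha^k)$ по нечётным $k$ даёт $2=\prod(1-\alpha^k)$, причём каждый множитель равен $\lambda$ с точностью до единицы кольца $\Z[\alpha]$, то есть $2$ вполне разветвлено и $(2)=(\lambda)^{2^{n-1}}$ с точностью до единицы. Дискриминант степенного базиса равен $\pm\N(\Phi_{2^n}'(\alpha))$, а $\Phi_{2^n}'(\alpha)=2^{n-1}\alpha^{2^{n-1}-1}$ с точностью до единицы равно $2^{n-1}$, поэтому этот дискриминант — степень двойки со знаком, и индекс $[\I(\Q(\alpha)):\Z[\alpha]]$ тоже степень двойки. Остаётся исключить делимость индекса на $2$, что следует из того, что $2$ уже вполне разветвлено в $\Z[\alpha]$ через $\lambda$ (локализация $\Z[\alpha]$ по идеалу $(\lambda)$ — кольцо дискретного нормирования, так что $\Z[\alpha]$ целозамкнуто в точке $2$). Это даёт $\I(\Q(\alpha))=\Z[\alpha]$, а пункт~5 немедленно вытекает из пунктов~1 и~4. Все эти факты классические (см., например, монографию Вашингтона о круговых полях), и пункт~4 при желании можно просто процитировать.
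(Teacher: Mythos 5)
Your proposal is correct, but note that the paper itself gives no proof of this lemma at all: it is introduced with the words <<Хорошо известен следующий результат>> and treated as classical background, so there is nothing to compare step by step. What you supply is the standard textbook argument, and every link in it holds up: $\Phi_{2^n}(t)=t^{2^{n-1}}+1$ is indeed the minimal polynomial of $\alpha$, the Eisenstein check at $p=2$ after the shift $t\mapsto t+1$ works because $(1+t)^{2^{n-1}}\equiv 1+t^{2^{n-1}}\pmod{2}$ forces all intermediate binomial coefficients to be even while the constant term $2$ is not divisible by $4$; points 2 and 3 then follow from normality of the splitting field of $t^{2^n}-1$ and the count $\varphi(2^n)=2^{n-1}$. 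You also correctly single out point 4 as the only genuinely nontrivial claim, and your treatment of it is sound: $1-\alpha^k$ is an associate of $\lambda=1-\alpha$ for every odd $k$ (since $(1-\alpha^k)/(1-\alpha)$ and its inverse both lie in $\Z[\alpha]$), so $2$ is totally ramified, the discriminant of the power basis is $\pm\N(\Phi_{2^n}'(\alpha))$ and hence a power of $2$, and the maximality of $\Z[\alpha]$ at the unique prime $(\lambda)$ above $2$ rules out an index divisible by $2$. The one place where a referee might ask for an extra line is the assertion that $\Z[\alpha]_{(\lambda)}$ is a discrete valuation ring; this follows from $\Z[\alpha]/(\lambda)\cong\Z/2$ together with $2=\lambda^{2^{n-1}}\cdot(\text{unit})$, or can be replaced by the usual iteration $\I(\Q(\alpha))=\Z[\alpha]+\lambda^m\I(\Q(\alpha))$ for all $m$. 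Since the authors simply cite the result, your option of referring to Washington for point 4 is entirely in the spirit of the paper; writing the argument out, as you do, is a legitimate and more self-contained alternative.
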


\begin{definition}
Для любого $2^n=2^n$ множество
\[
2\Z[\alpha]=\left\{2\rho\mid\rho\in\Z[\alpha]\right\}
\]
является идеалом в $\Z[\alpha]$.
Поэтому возникает \empha{сравнимость элементов из $\Z[\alpha]$ по модулю этого идеала}.
Для краткости будем писать для $\rho,\sigma\in\Z[\alpha]$
\[
\rho\equiv\sigma\pmod{2},
\]
если  $\rho\equiv\sigma\pmod{2\Z[\alpha]}$, то есть $\rho\in\sigma+2\Z[\zeta_{2^n}]$.
\end{definition}

\begin{remark}\label{r:int}
В силу утверждения \ref{cib} леммы \ref{l:cyc} имеем, что
\[
2\Z[\alpha]=\left\{2\sum_{j=0}^{2^{n-1}-1}b_j\alpha^j\mid \{b_0,b_1,\dots,b_{2^{n-1}-1}\}\subset\Z\right\}.
\]
В частности, получим, что для $b=\sum_{j=0}^{2^{n-1}-1}b_j\alpha^j\in\Z[\alpha]$ сравнение
\[
b\equiv1\pmod{2}
\]
выполняется тогда и только тогда, когда
\[
b_0\text{ --- нечётное число, }b_j\text{ --- чётное число для }j\in\{1,2,\dots,2^{n-1}-1\}.
\]
\end{remark}

\subsection{Три полезных последовательности}

\subsubsection{Последовательность $\left\{s_j\right\}_{j\in\Z}$}

\begin{notation}
Для любого  целого $j$ положим
\[
s_j=\alpha^j+\alpha^{-j}=2\cos\frac{2\pi}{2^n}j=2\cos\frac{\pi}{2^{n-1}}j.
\]
\end{notation}

\begin{remark}
Для любого  целого $j$ число $s_j$ инвариантно относительно действия автоморфизма кругового поля, индуцированного отображением
\[
\alpha\longmapsto\alpha^{-1}=\alpha^{2^n-1}=-\alpha^{2^{n-1}-1}
\]
и, очевидно, состоит из действительных чисел, то есть
\[
\left\{s_j\right\}_{j\in\Z}\subset\R.
\]
\end{remark}

Свойства последовательности $\left\{s_j\right\}_{j\in\Z}$ изучены в лемме 2 работы \cite{amkhr}, поскольку там использовались другие обозначения.
\begin{lemma}\label{l:ms}
Последовательность чисел $\left\{s_j\right\}_{j\in\Z}$  обладает следующими свойствами.
\begin{enumerate}[{\rm1.}]
\item
Последовательность $\left\{s_j\right\}_{j\in\Z}$ периодична с периодом $2^n$ и
\begin{gather*}
s_0=2,\  s_{2^{n-1}}=-2,\  s_{2^{n-2}}=s_{3\cdot2^{n-2}}=0,\\
 s_{2^{n-3}}=s_{7\cdot2^{n-3}}=\sqrt{2},\ s_{3\cdot2^{n-3}}=s_{5\cdot2^{n-3}}=-\sqrt{2}.
\end{gather*}
\item
Набор $(s_0,s_1,\dots,s_{2^n})$ симметричен относительно центра, то есть для $j\in\{0,1,2,\dots,2^{n-1}\}$
\[
s_{2^n-j}=s_j\longleftrightarrow s_{2^{n-1}+j}=s_{2^{n-1}-j}.
\]
Набор $(s_0,s_1,\dots,s_{2^{n-1}})$ антисимметричен относительно цент\-ра, то
есть для $j\in\{0,1,2,\dots,2^{n-2}\}$
\[
s_{2^{n-1}-j}=-s_j\longleftrightarrow s_{2^{n-2}+j}=-s_{2^{n-2}-j}.
\]
Таким образом, для любого $j\in\{1,2,\dots,2^{n-2}-1\}$
\[
s_j=-s_{2^{n-1}-j}=-s_{2^{n-1}+j}=s_{2^n-j}.
\]
Также
\[
\begin{array}{|c||c|c|c|c|}\hline
j& 0 & 1  &  \dots  & 2^{n-3}-1 \\ \hline
s_j&2   & s_1   & \dots & s_{2^{n-3}-1}    \\ \hline
j& 2^{n-3} & 2^{n-3}+1 & \dots &2^{n-2}-1 \\ \hline
s_j& \sqrt{2}  & s_{2^{n-3}+1} & \dots & s_{2^{n-2}-1}   \\ \hline\hline
j& 2^{n-2}& 2^{n-2}+1 & \dots & 3\cdot2^{n-3}-1   \\ \hline
s_j & 0& -s_{2^{n-2}-1} & \dots &-s_{2^{n-3}+1}  \\ \hline
j&  3\cdot2^{n-3} & 3\cdot2^{n-3}+1 & \dots & 2^{n-1}-1  \\ \hline
s_j &- \sqrt{2} &-s_{2^{n-3}-1} & \dots & -s_1 \\ \hline\hline
j&2^{n-1}& 2^{n-1}+1 & \dots & 5\cdot2^{n-3}-1 \\ \hline
s_j&-2&-s_1            & \dots &-s_{2^{n-3}-1} \\ \hline
j& 5\cdot2^{n-3} & 5\cdot2^{n-3}+1 & \dots & 4\cdot2^{n-2}-1 \\ \hline
s_j&-\sqrt{2} & -s_{2^{n-3}+1}& \dots &-s_{2^{n-2}-1}  \\ \hline\hline
j&4\cdot2^{n-2}&4\cdot2^{n-2}+1& \dots &7\cdot2^{n-3}-1 \\ \hline
s_j&0&s_{2^{n-2}-1}  & \dots & s_{2^{n-3}+1} \\ \hline
j&7\cdot2^{n-3}&7\cdot2^{n-3}+1& \dots &s_{2^n-1}   \\ \hline
s_j&  \sqrt{2} &s_{2^{n-3}-1}&\dots & s_1              \\ \hline
\end{array}
\]
\item
Для любых целых $j$ и $l$
\[
s_js_l=s_{j+l}+s_{l-j},
\]
в частности, $s_j^2=s_{2j}+2$ и
\begin{gather*}
s_0^2=s_{2^{n-1}}^2=4,\ s_{2^{n-2}}^2=s_{3\cdot2^{n-2}}^2=0\text{ и }\\
s_{2^{n-3}}^2=s_{3\cdot2^{n-3}}^2=s_{5\cdot2^{n-3}}^2=s_{7\cdot2^{n-3}}^2=2.
\end{gather*}
\end{enumerate}
\end{lemma}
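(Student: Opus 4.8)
Все три утверждения я предполагаю вывести прямо из определения $s_j=\alpha^j+\alpha^{-j}$ и из соотношений обозначений~\ref{n:m&a}: $\alpha^{2^n}=1$, $\alpha^{2^{n-1}}=-1$, $\alpha^{2^{n-2}}=i$ и $\alpha^{2^{n-3}}=\tfrac{\sqrt2}{2}(1+i)$; по существу это переформулировка леммы~2 из~\cite{amkhr}. Начну с пункта~1. Периодичность с периодом $2^n$ получается из $\alpha^{j+2^n}=\alpha^j$, а равенство $s_{-j}=s_j$ очевидно. Перечисленные значения я вычислю подстановкой: $s_0=2$; $s_{2^{n-1}}=-1+(-1)^{-1}=-2$; $s_{2^{n-2}}=i+i^{-1}=0$ и $s_{3\cdot2^{n-2}}=i^3+i^{-3}=0$; для значений $\pm\sqrt2$ замечу, что $\alpha^{-2^{n-3}}=\overline{\alpha^{2^{n-3}}}=\tfrac{\sqrt2}{2}(1-i)$, откуда $s_{2^{n-3}}=\sqrt2$, а $s_{3\cdot2^{n-3}}$, $s_{5\cdot2^{n-3}}$, $s_{7\cdot2^{n-3}}$ получаются возведением $\alpha^{2^{n-3}}$ в степени $3,5,7$ (или, эквивалентно, из соотношений симметрии пункта~2, применённых к $s_{2^{n-3}}$).

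Далее — пункт~2. Равенство $s_{2^n-j}=s_j$ — это $\alpha^{2^n-j}+\alpha^{-(2^n-j)}=\alpha^{-j}+\alpha^j$ с учётом периодичности; равенство $s_{2^{n-1}-j}=-s_j$ получается умножением $\alpha^{\pm j}$ на $\alpha^{\pm2^{n-1}}=-1$. Из этих двух соотношений (и из $s_{-j}=s_j$) немедленно выводится вся цепочка $s_j=-s_{2^{n-1}-j}=-s_{2^{n-1}+j}=s_{2^n-j}$, а также вспомогательное соотношение $s_{2^{n-2}+j}=i(\alpha^j-\alpha^{-j})=-s_{2^{n-2}-j}$ (именно оно отвечает за «антисимметрию относительно центра» для набора $(s_0,\dots,s_{2^{n-1}})$). После этого таблицу я предполагаю заполнить механически: отрезок индексов $0,1,\dots,2^n-1$ разбивается на восемь блоков длины $2^{n-3}$, и значение $s_j$ внутри каждого блока приводится к $\pm s_k$ с $k\in\{0,1,\dots,2^{n-3}\}$ одним-двумя применениями перечисленных соотношений и периодичности.

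Наконец — пункт~3. Тождество $s_js_l=s_{j+l}+s_{l-j}$ получается раскрытием скобок:
\[
(\alpha^j+\alpha^{-j})(\alpha^l+\alpha^{-l})=\bigl(\alpha^{j+l}+\alpha^{-(j+l)}\bigr)+\bigl(\alpha^{l-j}+\alpha^{-(l-j)}\bigr)=s_{j+l}+s_{l-j}.
\]
При $l=j$ отсюда $s_j^2=s_{2j}+s_0=s_{2j}+2$, и перечисленные частные значения квадратов я получу подстановкой соответствующих $j$ с приведением индекса $2j$ по модулю $2^n$ и использованием пункта~1 (например, $s_{2^{n-3}}^2=s_{2^{n-2}}+2=2$, а $s_{7\cdot2^{n-3}}^2=s_{7\cdot2^{n-2}}+2=s_{3\cdot2^{n-2}}+2=2$).

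Существенных препятствий здесь нет: вся лемма — по сути упражнение на определение $s_j$. Наиболее трудоёмкая часть — аккуратное заполнение таблицы в пункте~2, но и оно сводится к рутинному перебору восьми блоков и отслеживанию знаков, так что «главным препятствием» его считать не приходится.
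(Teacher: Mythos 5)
Ваше доказательство корректно: периодичность и значения $s_{k\cdot2^{n-3}}$ действительно получаются подстановкой из обозначений~\ref{n:m&a}, соотношения симметрии --- умножением на $\alpha^{\pm2^{n-1}}=-1$ и $\alpha^{\pm2^{n-2}}=\pm i$, а тождество $s_js_l=s_{j+l}+s_{l-j}$ --- раскрытием скобок. Статья собственного доказательства не приводит, а лишь ссылается на лемму 2 работы \cite{amkhr}; ваша прямая проверка по определению --- это именно тот рутинный аргумент, который там подразумевается, так что расхождений с подходом статьи нет.
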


Отсюда для последующих применений извлечём очевидное, но весьма полезное следствие.
\begin{lemma}\label{l:ms2}
Последовательность $\left\{s_j\right\}_{j\in\Z}$ по модулю $2$ периодична с периодом $2^{n-1}$ и имеет следующие свойства.
\begin{enumerate}[{\rm 1.}]
\item
$s_0\equiv s_{2^{n-2}}\equiv0\pmod{2},\  s_{2^{n-3}}\equiv s_{3\cdot2^{n-3}}\equiv\sqrt{2}\pmod{2}$.
\item
Набор $\left(s_0,s_1,\dots,s_{2^{n-1}}\right)$ симметричен относительно центра, то есть для $j\in\{0,1,2,\dots,2^{n-2}\}$
\[
s_{2^{n-1}-j}\equiv s_j\pmod{2}\longleftrightarrow s_{2^{n-2}+j}=s_{2^{n-2}-j}\pmod{2}.
\]
Таким образом
\begin{align*}
\left(s_0,s_1,\dots,s_{2^{n-1}}\right)\equiv(&s_0\equiv0,s_1,\dots,s_{2^{n-3}}\equiv\sqrt{2},\dots,s_{2^{n-2}-1},\\
&s_{2^{n-2}}\equiv0,s_{2^{n-2}+1}\equiv s_{2^{n-2}-1},\dots,\\
&s_{3\cdot2^{n-3}}\equiv s_{2^{n-3}}\equiv\sqrt{2},\dots,\\
&s_{2^{n-1}-1}\equiv s_1,s_{2^{n-1}}\equiv0)\pmod{2},
\end{align*}
или таблица по модулю $2$
\[
\begin{array}{|c||c|c|c|c|}\hline
j& 0 & 1  &  \dots  & 2^{n-3}-1\\ \hline
s_j&0   & s_1   & \dots & s_{2^{n-3}-1}  \\ \hline
j& 2^{n-3} & 2^{n-3}+1 & \dots &2^{n-2}-1 \\ \hline
s_j& \sqrt{2}  & s_{2^{n-3}+1} & \dots & s_{2^{n-2}-1}   \\ \hline\hline
j& 2^{n-2}& 2^{n-2}+1 & \dots & 3\cdot2^{n-3}-1   \\ \hline
s_j & 0& s_{2^{n-2}-1} & \dots &s_{2^{n-3}+1} \\ \hline
j&3\cdot2^{n-3} & 3\cdot2^{n-3}+1 & \dots & 2^{n-1}-1  \\ \hline
s_j &\sqrt{2} &s_{2^{n-3}-1} & \dots &s_1\\ \hline
\end{array}
\]
\item
Для любых целых $j$ и $k$
\[
s_js_k=s_{j+k}+s_{k-j},
\]
в частности, $s_j^2\equiv s_{2j}\pmod{2}$ и
\[
s_0^2\equiv s_{2^{n-2}}^2\equiv s_{2^{n-3}}^2\equiv s_{3\cdot2^{n-3}}^2\equiv0\pmod{2}.
\]
\end{enumerate}
\end{lemma}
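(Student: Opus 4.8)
Все три пункта получаются редукцией соответствующих пунктов леммы~\ref{l:ms} по модулю идеала $2\Z[\alpha]$. Опорное (совершенно элементарное) наблюдение, которым я буду пользоваться постоянно: для любого $\rho\in\Z[\alpha]$ выполнено $-\rho\equiv\rho\pmod{2}$, поскольку $2\rho\in2\Z[\alpha]$; в частности, $2\equiv-2\equiv0$ и $4\equiv0\pmod{2}$.

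Сначала я установлю периодичность по модулю $2$ с периодом $2^{n-1}$. Для любого целого $j$, используя $\alpha^{2^{n-1}}=-1$ (обозначения~\ref{n:m&a}), имеем
\[
s_{2^{n-1}+j}=\alpha^{2^{n-1}}\alpha^j+\alpha^{-2^{n-1}}\alpha^{-j}=-\alpha^j-\alpha^{-j}=-s_j,
\]
а так как $s_j\in\Z[\alpha]$, отсюда $s_{2^{n-1}+j}\equiv s_j\pmod{2}$ --- это и есть требуемая периодичность (то же равенство $s_{2^{n-1}+j}=-s_j$ фактически содержится в утверждении~2 леммы~\ref{l:ms}). Из него же при $j\in\{0,1,\dots,2^{n-2}\}$ получается $s_{2^{n-1}-j}=-s_j\equiv s_j\pmod{2}$, то есть симметрия набора $(s_0,\dots,s_{2^{n-1}})$ относительно центра, равносильная $s_{2^{n-2}+j}\equiv s_{2^{n-2}-j}\pmod{2}$; соответствующая таблица по модулю $2$ получается из таблицы утверждения~2 леммы~\ref{l:ms} стиранием всех знаков минус и заменой граничных величин $s_0=2$, $s_{2^{n-1}}=-2$, $s_{2^{n-2}}=s_{3\cdot2^{n-2}}=0$ на $0$.

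Значения в пункте~1 --- это прямая подстановка величин из утверждения~1 леммы~\ref{l:ms}: $s_0=2\equiv0$, $s_{2^{n-2}}=0$, $s_{2^{n-3}}=\sqrt{2}$ и $s_{3\cdot2^{n-3}}=-\sqrt{2}\equiv\sqrt{2}\pmod{2}$. Формула $s_js_k=s_{j+k}+s_{k-j}$ --- это дословно утверждение~3 леммы~\ref{l:ms} (переменная $l$ переименована в $k$); полагая в ней $k=j$, получаю $s_j^2=s_{2j}+2\equiv s_{2j}\pmod{2}$, а частные сравнения $s_0^2\equiv s_{2^{n-2}}^2\equiv s_{2^{n-3}}^2\equiv s_{3\cdot2^{n-3}}^2\equiv0\pmod{2}$ вытекают из $s_0^2=4$, $s_{2^{n-2}}^2=0$ и $s_{2^{n-3}}^2=s_{3\cdot2^{n-3}}^2=2$, каждое из которых сравнимо с $0$ по модулю $2$.

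Существенных препятствий здесь нет: утверждение --- формальное следствие леммы~\ref{l:ms}, как и заявлено перед его формулировкой. Единственное место, требующее внимания, --- это проследить, что \emph{анти}симметрия $s_j=-s_{2^{n-1}-j}$ из леммы~\ref{l:ms} превращается по модулю $2$ именно в симметрию, и аккуратно разобрать граничные случаи $j\in\{0,2^{n-2}\}$, где $s_j\in\{0,\pm2\}$ и обе части сравнения и без того $\equiv0\pmod{2}$.
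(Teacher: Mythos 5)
Ваше доказательство верно и идёт ровно тем путём, который подразумевает статья: лемма~\ref{l:ms2} приведена там без выписанного доказательства, как «очевидное следствие» леммы~\ref{l:ms}, и ваша редукция по модулю $2$ (с ключевым наблюдением $-\rho\equiv\rho\pmod{2}$, превращающим антисимметрию в симметрию, и проверкой $\sqrt{2}=s_{2^{n-3}}\in\Z[\alpha]$ для сравнения $-\sqrt{2}\equiv\sqrt{2}$) как раз и есть опущенные автором детали.
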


\subsubsection{Таблицы $\left\{s_j\right\}_{j\in\{0,\dots,2^{n-1}-1\}}$ по модулю $2$\\ для $2^n\in\{16,32,64,128\}$ $\longleftrightarrow$ $m\in\{1,2,4,8\}$}\label{sss:ts}

Основные вычисления будут проводиться по модулю $2$, поэтому удобно иметь соответствующие таблицы.

\begin{table}[h]
\caption{Таблица для $16$}
\[
\begin{array}{|c||c|c|c|c|c|c|c|c|}\hline
j& 0 & 1  & 2 &3 \\ \hline
s_j&0  & s_1& \sqrt{2}& s_3   \\ \hline\hline
j& 4& 5 & 6 & 7  \\ \hline
s_j & 0&s_3&\sqrt{2}&s_1\\ \hline
\end{array}
\]
\end{table}

\begin{table}[!h]
\caption{Таблица для $32$}
\[
\begin{array}{|c||c|c|c|c|c|c|c|c|}\hline
j& 0 & 1  &  2  & 3 & 4 & 5 & 6 &7 \\ \hline
s_j&0&s_1&s_2&s_3&\sqrt{2}&s_5&s_6& s_7 \\ \hline\hline
j&8&9&10&11&12&13&14&15 \\ \hline
s_j& 0& s_7&s_6 &s_5&\sqrt{2} &s_3&s_2&s_1\\ \hline
\end{array}
\]
\end{table}

\begin{table}[!h]
\caption{Таблица для $64$}
\[
\hspace*{-21pt}
\begin{array}{|c||c|c|c|c|c|c|c|c|c|c|c|c|c|c|c|c|}\hline
j& 0 & 1  &  2  & 3 & 4 & 5 & 6 &7 &8 &9 &10 &11 &12 &13 &14 &15\\ \hline
s_j&0&s_1&s_2&s_3&s_4 &s_5&s_6& s_7 &\sqrt{2} &s_9 &s_{10} &s_{11} &s_{12} &s_{13} &s_{14} &s_{15} \\ \hline\hline
j&16 &17 &18 &19 &20 &21 &22 &23 &24 &25 &26 &27 &28 &29 &30 &31 \\ \hline
s_j& 0& s_{15}&s_{14} &s_{13} &s_{12} &s_{11}&s_{10}&s_9&\sqrt{2} &s_7 &s_6 &s_5 &s_4 &s_3 &s_2 &s_1 \\ \hline
\end{array}
\]
\end{table}

\begin{table}[!h]
\caption{Таблица для $128$}
\[
\hspace*{-32pt}
\begin{array}{|c||c|c|c|c|c|c|c|c|c|c|c|c|c|c|c|c|}\hline
j& 0 & 1  &  2  & 3 & 4 & 5 & 6 &7 &8 &9 &10 &11 &12 &13 &14 &15\\ \hline
s_j&0&s_1&s_2&s_3&s_4 &s_5&s_6& s_7 &s_8 &s_9 &s_{10} &s_{11} &s_{12} &s_{13} &s_{14} &s_{15} \\ \hline\hline
j&16 &17 &18 &19 &20 &21 &22 &23 &24 &25 &26 &27 &28 &29 &30 &31 \\ \hline
s_j&\sqrt{2}& s_{17}&s_{18} &s_{19} &s_{20} &s_{21}&s_{22}&s_{23}&s_{24}&s_{25} &s_{26} &s_{27} &s_{28} &s_{29} &s_{30} &s_{31} \\ \hline\hline
j&32 &33 &34 &35 &36 &37 &38 &39 &40 &41 &42 &43 &44 &45 &46 &47 \\ \hline
s_j& 0& s_{31}&s_{30} &s_{29} &s_{28} &s_{27}&s_{26}&s_{25}&s_{24} &s_{23} &s_{22} &s_{21} &s_{20} &s_{19} &s_{18} &s_{17} \\ \hline\hline
j&48 &49 &50 &51 &52 &53 &54 &55 &56 &57 &58 &59 &60 &61 &62 &63 \\ \hline
s_j&\sqrt{2}& s_{15}&s_{14} &s_{13} &s_{12} &s_{11}&s_{10}&s_9&\sqrt{2} &s_7 &s_6 &s_5 &s_4 &s_3 &s_2 &s_1 \\ \hline
\end{array}
\]
\end{table}

\subsubsection{Последовательность $\left\{d_j\right\}_{j\in\Z}$}\label{sss:u}

\begin{remark}
Для любого целого $j$ положим
\[
t_j=1+s_j+s_{2j}=1+\alpha^j+\alpha^{-j}+\alpha^{2j}+\alpha^{-2j}=1+2\cos\frac{\pi}{2^{n-2}}j+2\cos\frac{\pi}{2^{n-1}}j.
\]
Свойства последовательности $\left\{t_j\right\}_{j\in\Z}$ изучены в лемме 3 работы \cite{amkhr}.
В прежних работах и вариантах рассматривались такие числа.

Сделаем попытку отказаться от них, заменив их на числа более простого вида.
\end{remark}

\begin{notation}
Для любого целого $j$ положим
\[
d_j=1+s_j=1+\alpha^j+\alpha^{-j}=1+2\cos\frac{\pi}{2^{n-1}}j.
\]
Обозначение $d$ связано с немецким словом
\begin{quote}
\empha{drei} --- три.
\end{quote}
\end{notation}

\begin{lemma}\label{l:md}
Последовательность чисел $\left\{d_j\right\}_{j\in\Z}$  обладает следующими свойствами.
\begin{enumerate}[{\rm 1.}]
\item
Последовательность $\left\{d_j\right\}_{j\in\Z}$ периодична с периодом $2^n$ и
\begin{gather*}
d_0=3,\ d_{2^{n-1}}=-1,\ d_{2^{n-2}}=d_{3\cdot2^{n-2}}=1,\\
d_{2^{n-3}}=d_{7\cdot2^{n-3}}=1+\sqrt{2}\text{ и } d_{3\cdot2^{n-3}}=d_{5\cdot2^{n-3}}=1-\sqrt{2}.
\end{gather*}
\item
Набор $(d_0,d_1,\dots,d_{2^n})$ симметричен относительно центра, то есть для $j\in\{0,1,2,\dots,2^{n-1}\}$
\[
d_{2^n-j}=d_j\longleftrightarrow d_{2^{n-1}+j}=d_{2^{n-1}-j}.
\]
Также $d_{2^{n-1}\pm j}=d_j-2s_j$ для любого $j\in\{0,1,2,\dots,2^{n-1}-1\}$, то есть $d_{2^{n-1}\pm j}\equiv d_j\pmod{2}$.
\item
Для любого целого $j$
\[
d_j^2=d_{2j}+2d_j,
\]
в частности, $d_j^2\equiv d_{2j} \pmod{2}$ и
\begin{gather*}
d_0^2=9,\ d_{2^{n-1}}^2=1,\ d_{2^{n-2}}^2=d_{3\cdot2^{n-2}}^2=1,\\
d_{2^{n-3}}^2=d_{7\cdot2^{n-3}}^2=3+2\sqrt{2},\ d_{3\cdot2^{n-3}}^2=d_{5\cdot2^{n-3}}^2=3-2\sqrt{2}.
\end{gather*}
\item
Для любых целых $j$ и $k$
\[
d_jd_k=1+s_j+s_k+s_{k+j}+s_{k-j}=-3+d_j+d_k+d_{k+j}+d_{k-j}.
\]
\end{enumerate}
\end{lemma}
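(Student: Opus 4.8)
План доказательства состоит в том, чтобы все четыре пункта получить прямой подстановкой из леммы \ref{l:ms}, пользуясь определяющим равенством $d_j=1+s_j$; фактически лемма \ref{l:md} --- это перевод утверждений леммы \ref{l:ms} на язык чисел $d_j$. Для пункта 1: периодичность последовательности $\left\{d_j\right\}$ с периодом $2^n$ немедленно вытекает из периодичности $\left\{s_j\right\}$ (пункт 1 леммы \ref{l:ms}), а значения $d_0=3$, $d_{2^{n-1}}=-1$, $d_{2^{n-2}}=d_{3\cdot2^{n-2}}=1$, $d_{2^{n-3}}=d_{7\cdot2^{n-3}}=1+\sqrt{2}$ и $d_{3\cdot2^{n-3}}=d_{5\cdot2^{n-3}}=1-\sqrt{2}$ получаются подстановкой известных значений $s_0=2$, $s_{2^{n-1}}=-2$, $s_{2^{n-2}}=s_{3\cdot2^{n-2}}=0$, $s_{2^{n-3}}=s_{7\cdot2^{n-3}}=\sqrt{2}$, $s_{3\cdot2^{n-3}}=s_{5\cdot2^{n-3}}=-\sqrt{2}$ в равенство $d_j=1+s_j$.

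Для пункта 2 симметрия набора $(d_0,\dots,d_{2^n})$ относительно центра равносильна цепочке $d_{2^n-j}=1+s_{2^n-j}=1+s_j=d_j$ и потому следует из симметрии набора $(s_0,\dots,s_{2^n})$ (пункт 2 леммы \ref{l:ms}). Формулу $d_{2^{n-1}\pm j}=d_j-2s_j$ я бы вывел из равенства $s_{2^{n-1}\pm j}=-s_j$ (его можно извлечь из пункта 2 леммы \ref{l:ms}, а проще проверить напрямую, так как $\alpha^{2^{n-1}}=-1$): тогда $d_{2^{n-1}\pm j}=1+s_{2^{n-1}\pm j}=1-s_j=(1+s_j)-2s_j=d_j-2s_j$; а так как $2s_j\in2\Z[\alpha]$, отсюда $d_{2^{n-1}\pm j}\equiv d_j\pmod{2}$.

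Пункты 3 и 4 --- чисто алгебраические тождества. Для пункта 3: $d_j^2=(1+s_j)^2=1+2s_j+s_j^2$, а по пункту 3 леммы \ref{l:ms} $s_j^2=s_{2j}+2$, так что $d_j^2=3+2s_j+s_{2j}$; с другой стороны $d_{2j}+2d_j=(1+s_{2j})+2(1+s_j)=3+2s_j+s_{2j}$, что и даёт $d_j^2=d_{2j}+2d_j$, откуда $d_j^2\equiv d_{2j}\pmod{2}$; числовые значения квадратов получаются возведением в квадрат значений из пункта 1. Для пункта 4: $d_jd_k=(1+s_j)(1+s_k)=1+s_j+s_k+s_js_k$, и по пункту 3 леммы \ref{l:ms} $s_js_k=s_{j+k}+s_{k-j}$, что даёт первое равенство $d_jd_k=1+s_j+s_k+s_{k+j}+s_{k-j}$; подставляя в эту сумму $s_i=d_i-1$, получаем $d_jd_k=-3+d_j+d_k+d_{k+j}+d_{k-j}$.

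Существенного препятствия здесь нет: это ровно <<очевидное, но весьма полезное следствие>> леммы \ref{l:ms}, и единственное, за чем имеет смысл следить, --- это знаки (прежде всего равенство $s_{2^{n-1}\pm j}=-s_j$) и корректность границ индексов в пункте 2.
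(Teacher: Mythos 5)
Ваше доказательство верно и по существу совпадает с доказательством в статье: все четыре пункта выводятся из леммы \ref{l:ms} прямой подстановкой $d_j=1+s_j$, включая ключевые выкладки $d_{2^{n-1}\pm j}=1-s_j=d_j-2s_j$, $d_j^2=1+2s_j+(s_{2j}+2)=d_{2j}+2d_j$ и $d_jd_k=1+s_j+s_k+s_{k+j}+s_{k-j}$. Существенных расхождений или пробелов нет.
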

\begin{proof}
Будем использовать лемму \ref{l:ms}.
\begin{enumerate}
\item
Из пункта 1 леммы \ref{l:ms}
\begin{gather*}
d_0=1+2=3,\ d_{2^{n-1}}=1-2=-1,\ d_{2^{n-2}}=d_{3\cdot2^{n-2}}=1+0=1,\\
 d_{2^{n-3}}=d_{7\cdot2^{n-3}}=1+\sqrt{2},\ d_{3\cdot2^{n-3}}=d_{5\cdot2^{n-3}}=1-\sqrt{2}.
\end{gather*}
\item
По пункту 2 из леммы \ref{l:ms} для любого $j\in\{1,2,\dots,2^{n-1}-1\}$
\[
d_{2^{n-1}-j}=1-s_j=1+s_j-2s_j=d_j-2s_j.
\]
\item
В самом деле, из пункта 3 леммы \ref{l:ms} для любого $j$
\[
d_j^2=(1+s_j)^2=1+(s_{2j}+2)+2s_j=d_{2j}+2d_j.
\]
Из пункта 1 следует остальное.
\item
Имеем
\begin{align*}
d_jd_k&=(1+s_j)(1+s_k)=1+s_j+s_k+s_{k+j}+s_{k-j}=\\
&=(1+s_j)+(1+s_k)+(1+s_{k+j})+(1+s_{k-j})-3=\\
&=d_j+d_k+d_{k+j}+d_{k-j}-3.
\end{align*}
\end{enumerate}
\end{proof}

Отсюда для последующих применений извлечём очевидное, но весьма полезное следствие.
\begin{lemma}\label{l:md2}
Последовательность $\left\{d_j\right\}_{j\in\Z}$ по модулю $2$ периодична с периодом $2^{n-1}$ и имеет следующие свойства.
\begin{enumerate}[{\rm1.}]
\item
$d_0\equiv d_{2^{n-2}}\equiv1\pmod{2},\ d_{2^{n-3}}\equiv d_{3\cdot2^{n-3}}\equiv1+\sqrt{2}\pmod{2}$.

В частности, для любого целого $k$
\[
d_{2^{n-2}k}\equiv1\pmod{2}.
\]
\item
Для любого $j\in\{0,1,2,\dots,2^{n-2}\}$
\[
d_{2^{n-1}-j}\equiv d_j\pmod{2}\longleftrightarrow d_{2^{n-2}+j}\equiv d_{2^{n-2}-j}\pmod{2}.
\]
Таким образом
\begin{align*}
\left(d_0,d_1,\dots,d_{2^{n-1}}\right)\equiv(&d_0\equiv1,d_1,\dots,d_{2^{n-3}}\equiv1+\sqrt{2},\dots,d_{2^{n-2}-1},\\
&d_{2^{n-2}}\equiv1,d_{2^{n-2}+1}\equiv d_{2^{n-2}-1},\dots,\\
&d_{3\cdot2^{n-3}}\equiv d_{2^{n-3}}\equiv1+\sqrt{2},\dots,\\
&d_{2^{n-1}-1}\equiv d_1,d_{2^{n-1}}\equiv1)\pmod{2}.
\end{align*}
\item
Для любого целого $j$
\[
d_j^2\equiv d_{2j}\pmod{2}
\]
и
\[
d_0^2\equiv d_{2^{n-2}}^2\equiv d_{2^{n-3}}^2\equiv d_{3\cdot2^{n-3}}^2\equiv1\pmod{2}.
\]
\item
Для любых целых $j$ и $k$
\[
d_jd_k=1+s_j+s_k+s_{k+j}+s_{k-j}\equiv1+d_j+d_k+d_{k+j}+d_{k-j}\pmod{2}.
\]
\end{enumerate}
\end{lemma}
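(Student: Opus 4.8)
The plan is to obtain each assertion by reducing modulo $2$ the corresponding exact identity of Lemma \ref{l:md} (ultimately of Lemma \ref{l:ms}), so that the argument is pure bookkeeping. The single point that deserves a word of care is that the congruences involving $\sqrt2$ are to be read in $\Z[\alpha]$: by Notations \ref{n:m&a} one has $\sqrt2 = \alpha^{2^{n-3}}+\alpha^{-2^{n-3}} = s_{2^{n-3}} \in \Z[\alpha]$ for $n \geqslant 3$, hence $2\sqrt2 \in 2\Z[\alpha]$ and every congruence below is meaningful.

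First I would establish the mod-$2$ periodicity and item 1. From Lemma \ref{l:md}.2, $d_{2^{n-1}\pm j} = d_j - 2s_j$, so $d_{2^{n-1}+j} \equiv d_j \pmod 2$; combined with the full period $2^n$ from Lemma \ref{l:md}.1 this shows $\{d_j\}$ has period $2^{n-1}$ modulo $2$. For item 1 I simply reduce the values listed in Lemma \ref{l:md}.1: $d_0 = 3 \equiv 1$, $d_{2^{n-2}} = 1$, $d_{2^{n-3}} = 1+\sqrt2$, and $d_{3\cdot 2^{n-3}} = 1-\sqrt2 = (1+\sqrt2)-2\sqrt2 \equiv 1+\sqrt2 \pmod 2$. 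Since the mod-$2$ period is $2^{n-1}$, for any integer $k$ the term $d_{2^{n-2}k}$ equals $d_0$ or $d_{2^{n-2}}$ according to the parity of $k$, and both are $\equiv 1 \pmod 2$.

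Item 2 follows from the identity $d_{2^{n-1}-j} = d_{2^n-(2^{n-1}+j)} = d_{2^{n-1}+j}$ (using the exact symmetry $d_{2^n-l}=d_l$ of Lemma \ref{l:md}.2) together with $d_{2^{n-1}+j}\equiv d_j \pmod 2$ just proved: this gives $d_{2^{n-1}-j}\equiv d_j\pmod 2$, and replacing $j$ by $2^{n-2}-j$ turns it into $d_{2^{n-2}+j}\equiv d_{2^{n-2}-j}\pmod 2$; writing out the entries for $j=0,1,\dots,2^{n-1}$ and inserting the values of item 1 at the positions $0,\ 2^{n-3},\ 2^{n-2},\ 3\cdot2^{n-3}$ produces the displayed expansion and the table. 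Item 3 is immediate from $d_j^2 = d_{2j}+2d_j$ (Lemma \ref{l:md}.3), whence $d_j^2\equiv d_{2j}\pmod 2$, the special values then coming either from item 1 or by direct reduction ($9\equiv1$, $3\pm2\sqrt2\equiv1$). Item 4 is immediate from $d_jd_k = -3+d_j+d_k+d_{k+j}+d_{k-j}$ (Lemma \ref{l:md}.4) and $-3\equiv1\pmod 2$. I do not expect any genuine obstacle: the substance of the lemma is organizing these reductions into a form convenient for later citation, and the only thing to keep checking is that $\sqrt2$ really lies in $\Z[\alpha]$ so that the $\sqrt2$-congruences are legitimate.
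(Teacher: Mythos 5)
Your proposal is correct and coincides with the paper's intent: the paper states Lemma~\ref{l:md2} without a written proof, presenting it as an obvious corollary obtained by reducing the exact identities of Lemma~\ref{l:md} modulo~$2$, which is precisely the reduction you carry out. Your explicit remark that $\sqrt{2}=s_{2^{n-3}}\in\Z[\alpha]$ (so that $2\sqrt{2}\in2\Z[\alpha]$ and the $\sqrt{2}$-congruences make sense) is a worthwhile clarification of the one non-trivial point.
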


\subsubsection{Последовательность $\left\{r_j\right\}_{j\in\Z}$}

\begin{notation}
Для любого целого $j$ положим
\begin{align*}
r_j&=s_j+s_{2^{n-2}-j}=(1+s_j)+(1+s_{2^{n-2}-j})-2=d_j+d_{2^{n-2}-j}-2\equiv\\
&\equiv d_j+d_{2^{n-2}-j}\pmod{2}.
\end{align*}
\end{notation}
\begin{remark}
Ясно, что
\begin{align*}
r_j&=s_j+s_{2^{n-2}-j}=2\cos\frac{\pi}{2^{n-1}}j+2\cos\frac{\pi}{2^{n-1}}(2^{n-2}-j)=\\
&=2\cos\frac{\pi}{2^{n-1}}j+2\cos\left(\frac{\pi}{2}-\frac{\pi}{2^{n-1}}j\right)=2\cos\frac{\pi}{2^{n-1}}j+2\sin\frac{\pi}{2^{n-1}}j=\\
&=4\cos\frac{\pi}{4}\cos\left(\frac{\pi}{4}-\frac{\pi}{2^{n-1}}j\right)=2\sqrt{2}\cos\left(\frac{\pi}{4}-\frac{\pi}{2^{n-1}}j\right)=\\
&=2\sqrt{2}\cos\left(\frac{\pi}{2^{n-1}}(2^{n-3}-j)\right)=\sqrt{2}s_{2^{n-3}-j}=s_{2^{n-3}}s_{2^{n-3}-j}.
\end{align*}
\end{remark}

\begin{lemma}\label{l:r1}
Последовательность чисел $\left\{r_j\right\}_{j\in\Z}$ периодична  с периодом $2^n$.
Кроме того, отрезок $\left(r_0,\dots,r_{2^n-1}\right)$  последовательности $\left\{r_j\right\}_{j\in\Z}$ разбивается на части:
\begin{align*}
\left\{r_0=2\right\}&\cup R_0=\left(r_1,\dots,r_{2^{n-3}}=2\sqrt{2},\dots,r_{2^{n-2}-1}\right),\\
 \left\{r_{2^{n-2}}=2\right\}&\cup R_1=\left(r_{2^{n-2}+1},\dots,r_{3\cdot2^{n-3}}=0,\dots,r_{2^{n-1}-1}\right), \\
\left\{r_{2^{n-1}}=-2\right\}&\cup R_2=\left(r_{2^{n-1}+1},\dots,r_{5\cdot2^{n-3}}=-2\sqrt{2},\dots,r_{12^{n-3}-1}\right),\\
\left\{r_{12^{n-3}}=-2\right\}&\cup R_3=\left(r_{12^{n-3}+1},\dots,r_{7\cdot2^{n-3}}=0,\dots,r_{2^n-1}\right).
\end{align*}
Упорядоченные наборы $R_0$, $R_1$, $R_2$ и $R_3$ имеют следующие свойства.
\begin{enumerate}[{\rm 1.}]
\item
$R_2=-R_0$ и $R_3=- R_1$, то есть состоят из противоположных чисел.
\item
Для любого целого числа $j$
\[
r_{2^{n-2}+j}=r_j-2s_{2^{n-2}-j}=r_{-j}.
\]
\item
Каждый из наборов $R_0$ и $R_2$ центрально симметричен, то есть для любого $k\in\{0,2\}$
\begin{gather*}
\hspace*{-7pt}R_k{=}\left(r_{2^{n-2}k+1},\dots,r_{2^{n-3}(2k+1)-1},r_{2^{n-3}(2k+1)},r_{2^{n-3}(2k+1)-1},\dots,r_{2^{n-2}k+1}\right),
\intertext{то есть для любого $j\in\{0,\dots,2^{n-3}-1\}$ имеем}
r_{2^{n-3}(2k+1)+j}=r_{2^{n-3}(2k+1)-j}.
\end{gather*}
Каждый из наборов $R_1$ и $R_3$ центрально антисимметричен, то есть для любого $k\in\{1,3\}$
\begin{gather*}
R_k=\left(r_{2^{n-2}k+1},\dots,r_{2^{n-3}(2k+1)-1},0,-r_{2^{n-3}(2k+1)-1},\dots,-r_{2^{n-2}k+1}\right),
\intertext{то есть для любого $j\in\{0,\dots,2^n-1\}$ имеем}
r_{2^{n-3}(2k+1)+j}=-r_{2^{n-3}(2k+1)-j}.
\end{gather*}
\end{enumerate}
\end{lemma}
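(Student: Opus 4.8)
The plan is to base the whole argument on the closed form $r_j=\sqrt{2}\,s_{2^{n-3}-j}=s_{2^{n-3}}s_{2^{n-3}-j}$ obtained in the Remark immediately preceding the statement, so that every claim about $\{r_j\}$ reduces to Lemma~\ref{l:ms} together with the two elementary facts $s_{-m}=s_m$ and $s_{2^{n-1}+m}=-s_m$ (both visible from $s_m=2\cos\frac{\pi}{2^{n-1}}m$, the second also being contained in part~2 of Lemma~\ref{l:ms}), all understood to hold for every integer $m$.

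First I would dispose of periodicity: $r_{j+2^n}=\sqrt{2}\,s_{2^{n-3}-j-2^n}=\sqrt{2}\,s_{2^{n-3}-j}=r_j$. Next, the displayed boundary and midpoint entries of the four blocks are read off by substituting $j\in\{0,2^{n-3},2^{n-2},3\cdot2^{n-3},2^{n-1},5\cdot2^{n-3},3\cdot2^{n-2},7\cdot2^{n-3}\}$ into $r_j=s_j+s_{2^{n-2}-j}$ and applying part~1 of Lemma~\ref{l:ms}; for instance $r_0=s_0+s_{2^{n-2}}=2$, $r_{2^{n-3}}=2s_{2^{n-3}}=2\sqrt{2}$, $r_{3\cdot2^{n-3}}=s_{3\cdot2^{n-3}}+s_{-2^{n-3}}=-\sqrt{2}+\sqrt{2}=0$, $r_{2^{n-1}}=s_{2^{n-1}}+s_{-2^{n-2}}=-2$, and so on. Counting entries, $4+4(2^{n-2}-1)=2^n$, which also confirms that $(r_0,\dots,r_{2^n-1})$ splits exactly into the four singletons and the four ordered tuples $R_0,R_1,R_2,R_3$ as stated.

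The content of the lemma is in the three numbered items. For item~1 I would first establish the single identity $r_{2^{n-1}+j}=-r_j$, valid for all $j$, immediate from the closed form and $s_{2^{n-1}+m}=-s_m$; taking $j\in\{1,\dots,2^{n-2}-1\}$ gives $R_2=-R_0$, and replacing $j$ there by $2^{n-2}+j$ gives $R_3=-R_1$. For item~2, the antisymmetry $s_{2^{n-2}+m}=-s_{2^{n-2}-m}$ of part~2 of Lemma~\ref{l:ms} gives
\[
r_{2^{n-2}+j}=s_{2^{n-2}+j}+s_{-j}=-s_{2^{n-2}-j}+s_j=r_j-2s_{2^{n-2}-j},
\]
and $r_{-j}=s_{-j}+s_{2^{n-2}+j}$ is visibly the same number. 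For item~3, the central symmetry of $R_0$ follows from $r_{2^{n-3}+j}=\sqrt{2}\,s_{-j}=\sqrt{2}\,s_j=r_{2^{n-3}-j}$, and that of $R_2$ then from $R_2=-R_0$; the central antisymmetry of $R_1$ follows from $r_{3\cdot2^{n-3}+j}=\sqrt{2}\,s_{2^{n-2}+j}=-\sqrt{2}\,s_{2^{n-2}-j}$ together with $r_{3\cdot2^{n-3}-j}=\sqrt{2}\,s_{2^{n-2}-j}$, and that of $R_3$ from $R_3=-R_1$.

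I do not expect a genuine obstacle; the proof is entirely bookkeeping. The only places that call for a little care are: applying the relations of Lemma~\ref{l:ms}, which are stated there only for restricted index ranges, after extending them to all integers via the $2^n$-periodicity and the evenness $s_{-m}=s_m$; and keeping straight that, as ordered tuples, $R_1,R_2,R_3$ occupy the index positions obtained from those of $R_0$ by the shifts $2^{n-2},2^{n-1},3\cdot2^{n-2}$ respectively, so that the correct substitution is made in each block when transcribing the identities above.
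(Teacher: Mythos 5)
Your proposal is correct and follows essentially the same route as the paper: both reduce every claim to the periodicity and the symmetry/antisymmetry properties of $\left\{s_j\right\}_{j\in\Z}$ from Lemma~\ref{l:ms}, establishing the same key identities $r_{2^{n-1}+j}=-r_j$, $r_{2^{n-2}+j}=r_j-2s_{2^{n-2}-j}=r_{-j}$ and the central (anti)symmetries, with the same reduction of $R_2,R_3$ to $R_0,R_1$ via item~1. The only difference is cosmetic: you route several steps through the closed form $r_j=\sqrt{2}\,s_{2^{n-3}-j}$ from the preceding remark, whereas the paper computes directly from $r_j=s_j+s_{2^{n-2}-j}$ (in particular deriving the antisymmetry of $R_1$ by adding the two expressions from item~2); both are sound.
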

\begin{proof}
По лемме \ref{l:ms} получаем, что последовательность чисел $\left\{r_j\right\}_{j\in\Z}$ периодична  с периодом $2^n$.

Сначала вычислим $r_{2^{n-3}k}$ для $k\in\{0,1,\dots,7\}$.
В самом деле, из леммы \ref{l:ms} получаем
\begin{align*}
r_0&=s_0+s_{2^{n-2}}=2+0=2,\\
r_{2^{n-1}}&=s_{2^{n-1}}+s_{2^{n-2}}=-2+0=-2,\\
r_{2^{n-2}}&=s_{2^{n-2}}+s_0=0+2=2,\\
r_{3\cdot2^{n-2}}&=s_{12^{n-3}}+s_{2^{n-1}}=0-2=-2,\\
r_{2^{n-3}}&=s_{2^{n-3}}+s_{2^{n-3}}=2s_{2^{n-3}}=2\sqrt{2},\\
r_{3\cdot2^{n-3}}&=s_{3\cdot2^{n-3}}+s_{2^{n-3}}=-\sqrt{2}+\sqrt{2}=0,\\
r_{5\cdot2^{n-3}}&=s_{5\cdot2^{n-3}}+s_{3\cdot2^{n-3}}=s_{3\cdot2^{n-3}}+s_{3\cdot2^{n-3}}=-\sqrt{2}-\sqrt{2}=-2\sqrt{2},\\
r_{7\cdot2^{n-3}}&=s_{7\cdot2^{n-3}}+s_{5\cdot2^{n-3}}=s_{2^{n-3}}+s_{3\cdot2^{n-3}}=\sqrt{2}-\sqrt{2}=0.
\end{align*}

Теперь исследуем свойства наборов $R_0$, $R_1$, $R_2$ и $R_3$.
\begin{enumerate}
\item
Рассмотрим произвольный элемент $r$ из $R_2\cup R_3$.
Тогда найдётся такой номер 
\begin{gather*}
j\in\{1,2,\dots,2^{n-2}-1\}\cup\{2^{n-2}+1,2^{n-2}+2,\dots,2^{n-1}-1\},\text{ что}\\
r=r_{j+2^{n-1}}.
\end{gather*}
Поэтому из леммы \ref{l:ms} следует
\[
r=s_{j+2^{n-1}}+s_{2^{n-2}-j-2^{n-1}}=-s_j-s_{2^{n-2}-j}=-r_j,\text{ где }r_j\in R_0\cup R_1.
\]
\item
Имеем
\begin{align*}
r_{2^{n-2}+j}&=s_{2^{n-2}+j}+s_{2^{n-1}-(2^{n-2}+j)}=-s_{-2^{n-1}+(2^{n-2}+j)}+s_{-j}=\\
&=s_j-s_{-2^{n-2}+j}=s_j-s_{2^{n-2}-j}=r_j-2s_{2^{n-2}-j}.
\end{align*}
Также
\begin{align*}
r_{-j}&=s_{-j}+s_{2^{n-2}+j}=s_j-s_{-2^{n-1}+(2^{n-2}+j)}+s_{-j}=\\
&=s_j-s_{-2^{n-2}+j}=s_j-s_{2^{n-2}-j}=r_j-2s_{2^{n-2}-j}.
\end{align*}
\item
Теперь в силу утверждения 1 достаточно рассмотреть $R_0$ и $R_1$.
Пусть $j\in\{1,2,\dots,2^{n-3}-1\}$.
Элемент $r_j\in R_0$ в качестве центрально симметричного имеет элемент
\[
r_{2^{n-2}-j}=s_{2^{n-2}-j}+s_{2^{n-2}-2^{n-2}+j}=s_{2^{n-2}-j}+s_j=r_j.
\]
Элемент $r_{2^{n-2}+j}\in R_1$ в качестве центрально симметричного имеет элемент
\[
r_{2^{n-1}-j}.
\]
Применим утверждение 2 и получим
\begin{align*}
r_{2^{n-2}+j}&=r_j-2s_{2^{n-2}-j};\\
r_{2^{n-1}-j}&=r_{2^{n-2}+(2^{n-2}-j)}=r_{2^{n-2}-j}-2s_{2^{n-2}-(2^{n-2}-j)}=r_j-2s_j,\\
r_{2^{n-2}+j}&+r_{2^{n-1}-j}=r_j-2s_{2^{n-2}-j}+r_j-2s_j=2r_j-2r_j=0.
\end{align*}
\end{enumerate}
Лемма доказана.
\end{proof}

Рассмотрим последовательность $\left\{r_j\right\}_{j\in\Z}$, приведённую по модулю $2$.
\begin{lemma}\label{l:r2}
Последовательность $\left\{r_j\right\}_{j\in\Z}$ по модулю $2$ периодична с периодом $2^{n-2}$.
Более точно, в обозначениях леммы {\rm\ref{l:r1}} последовательность $\left\{r_j\right\}_{j\in\Z}$ по модулю $2$ имеет следующие свойства.
\begin{enumerate}[{\rm1.}]
\item
$r_0\equiv r_{2^{n-3}}\equiv r_{2^{n-2}}\equiv0\pmod{2}$.
\item
Для любого целого числа $j$
\[
r_j\equiv r_{-j}\pmod{2}.
\]
\item
$R_0\equiv R_1\equiv R_2\equiv R_3\pmod{2}$ --- здесь имеется в виду поэлементная сравнимость по модулю $2$ упорядоченных наборов $R_0$, $R_1$, $R_2$ и $R_3$.
\item
Набор $R_0$ центрально симметричен по модулю $2$, то есть для любого $j\in\{0,\dots,2^{n-3}-1\}$ имеем
\[
r_{2^{n-3}+j}=r_{2^{n-3}-j}.
\]
Поэтому
\begin{multline*}
R_0\equiv\left(r_1,\dots,r_{2^{n-3}-1},r_{2^{n-3}}\equiv0,\right.\\
\left.r_{2^{n-3}+1}\equiv r_{2^{n-3}-1},\dots,r_{2^{n-2}-1}\equiv r_1\right)\pmod{2},
\end{multline*}
\end{enumerate}
\end{lemma}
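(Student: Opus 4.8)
The plan is to obtain Lemma~\ref{l:r2} as a direct corollary of Lemma~\ref{l:r1}, using repeatedly the trivial fact that $2\rho\equiv0\pmod2$ for every $\rho\in\Z[\alpha]$; in particular $2s_k\equiv0$ for all integers $k$, and $2\sqrt2=2s_{2^{n-3}}\equiv0$ since $\sqrt2=s_{2^{n-3}}\in\Z[\alpha]$ by Notation~\ref{n:m&a} and Lemma~\ref{l:cyc}.

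First I would pin down the period. By Lemma~\ref{l:r1} the sequence $\left\{r_j\right\}_{j\in\Z}$ has period $2^n$; its item~1 yields $r_{2^{n-1}+j}=-r_j$ and its item~2 yields $r_{2^{n-2}+j}=r_j-2s_{2^{n-2}-j}$ for every integer $j$. Reducing the latter modulo $2$ annihilates the term $2s_{2^{n-2}-j}$, so $r_{2^{n-2}+j}\equiv r_j\pmod2$; thus $\left\{r_j\right\}_{j\in\Z}$ has period $2^{n-2}$ modulo $2$. The three congruences in item~1 of Lemma~\ref{l:r2} then follow at once from the explicit values $r_0=r_{2^{n-2}}=2$ and $r_{2^{n-3}}=2\sqrt2$ computed inside the proof of Lemma~\ref{l:r1}, since all three lie in $2\Z[\alpha]$.

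Item~2 is immediate from item~2 of Lemma~\ref{l:r1}, which gives $r_{-j}=r_j-2s_{2^{n-2}-j}$: reduce modulo $2$. For item~3, item~1 of Lemma~\ref{l:r1} gives $R_2=-R_0$ and $R_3=-R_1$, and negation is invisible modulo $2$ (because $\rho-(-\rho)=2\rho$), so $R_2\equiv R_0$ and $R_3\equiv R_1\pmod2$ entrywise; and $R_1\equiv R_0\pmod2$ because the $k$-th entry of $R_1$ is $r_{2^{n-2}+k}$, which is congruent to the $k$-th entry $r_k$ of $R_0$ by the period-$2^{n-2}$ property just proved (here $k$ ranges over $\{1,\dots,2^{n-2}-1\}$). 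Finally, for item~4, the central symmetry $r_{2^{n-3}+j}=r_{2^{n-3}-j}$ for $j\in\{0,\dots,2^{n-3}-1\}$ is already an exact identity, being item~3 of Lemma~\ref{l:r1} specialised to $k=0$, so a fortiori it holds modulo $2$; combined with $r_{2^{n-3}}\equiv0$ from item~1, this produces the displayed table, whose right half is the reversal of the left half.

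Nothing here is genuinely hard; the one point demanding attention is the bookkeeping of index ranges, i.e.\ checking that the periodicities and symmetries borrowed from Lemma~\ref{l:r1} are applied on exactly the intervals of indices that occur in $R_0$, $R_1$, $R_2$, $R_3$, so that the entrywise congruences asserted in items~3 and~4 are correctly aligned.
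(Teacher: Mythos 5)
Ваше рассуждение верно и идёт тем же путём, что и статья: в работе доказательство леммы~\ref{l:r2} сводится к одной фразе <<всё очевидно следует из леммы~\ref{l:r1}>>, а Вы лишь аккуратно расписали эти очевидные шаги (периодичность по модулю $2$ из пункта~2 леммы~\ref{l:r1}, явные значения $r_0=r_{2^{n-2}}=2$, $r_{2^{n-3}}=2\sqrt{2}$, а также $R_2=-R_0$, $R_3=-R_1$ и центральную симметрию $R_0$). Замечаний нет.
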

\begin{proof}
Всё очевидно следует из леммы \ref{l:r1}.

Лемма доказана.
\end{proof}

\subsubsection{Таблицы $\left\{r_j\right\}_{j\in\{0,\dots,2^{n-2}-1\}}$ по модулю $2$\\ для $2^n\in\{16,32,64,128\}$ $\longleftrightarrow$ $m\in\{1,2,4,8\}$}\label{sss:tr}

Основные вычисления будут проводиться по модулю $2$, поэтому удобно иметь соответствующие таблицы.
\begin{table}[h]
\caption{Таблица для $16$}
\[
\begin{array}{|c||c|c|c|c|c|c|c|c|}\hline
j& 0 & 1  & 2 &3 \\ \hline
r_j&0  & r_1&0& r_1   \\ \hline
\end{array}
\]
\end{table}

\begin{table}[!h]
\caption{Таблица для $32$}
\[
\begin{array}{|c||c|c|c|c|c|c|c|c|}\hline
j& 0 & 1  &  2  & 3 & 4 & 5 & 6 &7 \\ \hline
r_j&0&r_1&r_2&r_3&0&r_3&r_2& r_1 \\ \hline
\end{array}
\]
\end{table}

\begin{table}[!h]
\caption{Таблица для $64$}
\[
\begin{array}{|c||c|c|c|c|c|c|c|c|}\hline
j& 0 & 1  &  2  & 3 & 4 & 5 & 6 &7 \\ \hline
r_j&0&r_1&r_2&r_3&r_4 &r_5&r_6&r_7 \\ \hline\hline
j&8 &9 &10 &11 &12 &13 &14 &15 \\ \hline
r_j&0 &r_7 &r_6 &r_5 &r_4 &r_3 &r_2 &r_1 \\ \hline
\end{array}
\]
\end{table}

\begin{table}[!h]
\caption{Таблица для $128$}
\[
\hspace*{-13pt}
\begin{array}{|c||c|c|c|c|c|c|c|c|c|c|c|c|c|c|c|c|}\hline
j& 0 & 1  &  2  & 3 & 4 & 5 & 6 &7 &8 &9 &10 &11 &12 &13 &14 &15\\ \hline
r_j&0&r_1&r_2&r_3&r_4 &r_5&r_6&r_7 &r_8 &r_9 &r_{10} &r_{11} &r_{12} &r_{13} &r_{14} &r_{15} \\ \hline\hline
j&16 &17 &18 &19 &20 &21 &22 &23 &24 &25 &26 &27 &28 &29 &30 &31 \\ \hline
r_j& 0&r_{15}&r_{14} &r_{13} &r_{12} &r_{11}&r_{10}&r_9&r_8 &r_7 &r_6 &r_5 &r_4 &r_3 &r_2 &r_1 \\ \hline
\end{array}
\]
\end{table}

\begin{table}[p]
\caption{Таблица для $256$}
\[
\hspace*{-24pt}
\begin{array}{|c||c|c|c|c|c|c|c|c|c|c|c|c|c|c|c|c|}\hline
j& 0 & 1  &  2  & 3 & 4 & 5 & 6 &7 &8 &9 &10 &11 &12 &13 &14 &15\\ \hline
r_j&0&r_1&r_2&r_3&r_4 &r_5&r_6&r_7 &r_8 &r_9 &r_{10} &r_{11} &r_{12} &r_{13} &r_{14} &r_{15} \\ \hline\hline
j&16 &17 &18 &19 &20 &21 &22 &23 &24 &25 &26 &27 &28 &29 &30 &31 \\ \hline
r_j&r_{16}&r_{17}&r_{18} &r_{19} &r_{20} &r_{21}&r_{22}&r_{23}&r_{24}&r_{25} &r_{26} &r_{27} &r_{28} &r_{29} &r_{30} &r_{31} \\ \hline\hline
j&32 &33 &34 &35 &36 &37 &38 &39 &40 &41 &42 &43 &44 &45 &46 &47 \\ \hline
r_j& 0&r_{31}&r_{30} &r_{29} &r_{28} &r_{27}&r_{26}&r_{25}&r_{24} &r_{23} &r_{22} &r_{21} &r_{20} &r_{19} &r_{18} &r_{17} \\ \hline\hline
j&48 &49 &50 &51 &52 &53 &54 &55 &56 &57 &58 &59 &60 &61 &62 &63 \\ \hline
r_j&r_{16}&r_{15}&r_{14} &r_{13} &r_{12} &r_{11}&r_{10}&r_9&r_8 &r_7 &r_6 &r_5 &r_4 &r_3 &r_2 &r_1 \\ \hline
\end{array}
\]
\end{table}

\newpage

\subsubsection[Перемножение элементов последовательностей по модулю $2$]{Перемножение элементов последовательностей\\ по модулю $2$}

\begin{lemma}\label{l:sdr}
Элементы последовательностей $\left\{s_j\right\}_{j\in\Z}$, $\left\{d_j\right\}_{j\in\Z}$ и\\ 
$\left\{r_j\right\}_{j\in\Z}$ перемножаются по модулю $2$ следующим образом.
\begin{enumerate}[{\rm1.}]
\item
Для любых целых $j$ и $k$
\begin{align*}
(SS)\quad&
\left\{\begin{aligned}
s_js_k&=s_{k+j}+s_{k-j},\text{ в частности, }\\
s_k^2&\equiv s_{2k}\pmod{2}\text{ и }s_0^2\equiv s_{2^n}^2\equiv0\pmod{2};
\end{aligned}\right.\\
(SD)\quad&
\left\{\begin{aligned}
s_jd_k&=s_j+s_{k-j}+s_{k+j},\text{ в частности,}\\
s_kd_k&=d_k+d_{2k}\equiv s_{2k}+s_{3k}\pmod{2};
\end{aligned}\right.\\
(SR)\quad&
\left\{\begin{aligned}
s_jr_k&=r_{k-j}+r_{k+j},\text{ в частности,}\\
s_kr_k&\equiv r_{2k}\pmod{2}.
\end{aligned}\right.
\end{align*}
\item
Для любых целых $j$ и $k$
\begin{align*}
(DD)\quad&
\left\{\begin{aligned}
d_jd_k&\equiv1+d_j+d_k+d_{k+j}+d_{k-j}\pmod{2},\\
&\text{в частности,}\\
d_k^2&\equiv d_{2k}\pmod{2}\text{ и для любого натурального }l\\
d_k^{2^l}&\equiv d_{2^l k}\pmod{2},\\
d_k^{2^{n-2}}&\equiv d_{2^{n-2}k}\equiv1\pmod{2};
\end{aligned}\right.\\
(DR)\quad&
\left\{\begin{aligned}
d_jr_k&=r_k+r_{k+j}+r_{k-j},\text{ в частности,}\\
d_kr_k&=2+r_k+r_{2k}\equiv r_k+r_{2k}\pmod{2},\\
d_{2k}r_k&=r_k+r_{3k}+r_{-k}\equiv r_{3k}\pmod{2},\\
d_kr_{2k}&=r_{2k}+r_{3k}+r_{-k}\equiv r_k+r_{2k}+r_{3k}\pmod{2}.
\end{aligned}\right.\\
\end{align*}
\item
Для любых целых $j$ и $k$
\[
(RR)\quad r_jr_k\equiv0\pmod{2}.
\]
\end{enumerate}
\end{lemma}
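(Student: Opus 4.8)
The plan is to deduce every identity directly from the defining formulas $s_j=\alpha^j+\alpha^{-j}$, $d_j=1+s_j$ and $r_j=s_j+s_{2^{n-2}-j}=s_{2^{n-3}}s_{2^{n-3}-j}$, using Lemmas~\ref{l:ms}, \ref{l:md}, \ref{l:r1} and their mod-$2$ corollaries~\ref{l:ms2}, \ref{l:md2}, \ref{l:r2}. No new idea is involved; the work is entirely the bookkeeping of indices modulo the relevant period ($2^n$, and $2^{n-1}$ or $2^{n-2}$ after reduction mod~$2$).

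For part~1: $(SS)$ is just part~3 of Lemma~\ref{l:ms} --- multiply out $(\alpha^j+\alpha^{-j})(\alpha^k+\alpha^{-k})=\alpha^{j+k}+\alpha^{k-j}+\alpha^{j-k}+\alpha^{-j-k}=s_{j+k}+s_{k-j}$ --- and the special cases come from $s_0=2$ and $s_{2^n}=s_0$. For $(SD)$ I would write $s_jd_k=s_j(1+s_k)=s_j+s_js_k=s_j+s_{k-j}+s_{k+j}$; putting $j=k$, using $s_0=2$ and $d_\ell=1+s_\ell$, gives the ``in particular'' lines, the reductions mod~$2$ being supplied by Lemma~\ref{l:ms2} (resp.~\ref{l:md2}). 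For $(SR)$ the cleanest route uses $r_k=s_{2^{n-3}}s_{2^{n-3}-k}$: by $(SS)$, $s_jr_k=s_{2^{n-3}}\bigl(s_{2^{n-3}-k+j}+s_{2^{n-3}-k-j}\bigr)=r_{k-j}+r_{k+j}$; alternatively one expands $r_k=s_k+s_{2^{n-2}-k}$ and regroups the four $s$-products into $r_{k+j}$ and $r_{k-j}$. Setting $j=k$ and using $r_0=2$ (Lemma~\ref{l:r1}) gives $s_kr_k\equiv r_{2k}\pmod 2$.

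For part~2: expanding $d_jd_k=(1+s_j)(1+s_k)=1+s_j+s_k+s_{j+k}+s_{k-j}$ and replacing each $s_\ell$ by $d_\ell-1$ yields $d_j+d_k+d_{j+k}+d_{k-j}-3$, i.e.\ part~4 of Lemma~\ref{l:md}, which mod~$2$ is the asserted congruence since $-3\equiv1$. Taking $j=k$ gives $d_k^2\equiv d_{2k}\pmod 2$; then $d_k^{2^l}\equiv d_{2^lk}\pmod 2$ follows by induction on $l$, using that $a\equiv b\pmod 2$ implies $a^2\equiv b^2\pmod 2$ because $a^2-b^2=(a-b)(a+b)$; and $d_k^{2^{n-2}}\equiv d_{2^{n-2}k}\equiv1\pmod 2$ by part~1 of Lemma~\ref{l:md2}. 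For $(DR)$ write $d_jr_k=(1+s_j)r_k=r_k+s_jr_k=r_k+r_{k+j}+r_{k-j}$ by $(SR)$; the three specialisations follow by substituting $j=k$ or $j=2k$ and using $r_0=2$ together with $r_{-k}\equiv r_k\pmod 2$ from part~2 of Lemma~\ref{l:r2} (which kills the pair $r_k+r_{-k}\equiv0$).

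Part~3 is immediate, and is the only place a small observation is used: since $r_j=s_{2^{n-3}}s_{2^{n-3}-j}$ and $s_{2^{n-3}}^2=2$ by parts~1 and~3 of Lemma~\ref{l:ms}, we get $r_jr_k=s_{2^{n-3}}^2\,s_{2^{n-3}-j}\,s_{2^{n-3}-k}=2\,s_{2^{n-3}-j}\,s_{2^{n-3}-k}\in2\Z[\alpha]$, hence $r_jr_k\equiv0\pmod 2$. I expect no genuine obstacle: the only thing to watch is being consistent about reducing indices to the correct range and about citing the right mod-$2$ corollary at each step.
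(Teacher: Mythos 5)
Your proof is correct, and for parts 1 and 2 it follows essentially the same computations as the paper: $(SS)$ and $(DD)$ are quoted from Lemmas~\ref{l:ms2} and~\ref{l:md2}, while $(SD)$, $(SR)$ and $(DR)$ are obtained by expanding $d_j=1+s_j$ and $r_k=s_k+s_{2^{n-2}-k}$ and regrouping, with $s_0=r_0=2$ killing the extra term in the specialisations. The one place you genuinely diverge is part 3: the paper expands $r_jr_k=(s_j+s_{2^{n-2}-j})r_k$ into the four terms $r_{k-j}+r_{2^{n-2}+(k-j)}+r_{k+j}+r_{-2^{n-2}+(k+j)}$ and then invokes the period-$2^{n-2}$ periodicity of $\left\{r_j\right\}$ modulo $2$ (Lemma~\ref{l:r2}) to pair them off, whereas you use the factorisation $r_j=s_{2^{n-3}}s_{2^{n-3}-j}$ together with $s_{2^{n-3}}^2=2$ to exhibit $r_jr_k$ as an explicit element of $2\Z[\alpha]$. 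Your route is shorter and does not depend on Lemma~\ref{l:r2}; the paper's argument stays inside the $r$-calculus it has set up (and your factorisation also yields the alternative derivation of $(SR)$ you mention, where the paper again uses the four-term expansion). One small point worth recording: your computation of $s_kd_k$ gives $d_k+d_{2k}\equiv s_k+s_{2k}\pmod{2}$, which agrees with the paper's own proof but not with the line $s_{2k}+s_{3k}$ printed in the statement --- that is evidently a typo in the statement, not a gap in your argument.
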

\begin{proof}{\ }
\begin{enumerate}
\item
Произведение $s_js_k$ и его свойства описаны в лемме \ref{l:ms2}.

Отсюда
\[
s_jd_k=s_j(1+s_k)=s_j+s_{k-j}+s_{k+j}.
\]
Если $j=k$, то
\begin{align*}
s_kd_k&=s_k+s_0+s_{2k}=2+s_k+s_{2k}=d_k+d_{2k}\equiv\\
&\equiv s_k+s_{2k}\pmod{2}.
\end{align*}

Далее
\begin{align*}
s_jr_k&=s_j(s_k+s_{2^{n-2}-k})=s_{k-j}+s_{k+j}+s_{2^{n-2}-k+j}+s_{2^{n-2}-k-j}=\\
&=r_{k-j}+r_{k+j}.
\end{align*}
Если $j=k$, то
\[
s_kr_k=r_0+r_{2k}=2+r_{2k}\equiv r_{2k}\pmod{2}.
\]
\item
Произведение $d_jd_k$ и его свойства описаны в лемме \ref{l:md2}.

Применим утверждение 1 и получим
\[
d_jr_k=(1+s_j)r_k=r_k+r_{k+j}+r_{k-j}.
\]
Поэтому при $j=k$ получим
\begin{align*}
d_kr_k&=r_k+r_{k+k}+r_{k-k}=r_k+r_{2k}+r_0=2+r_k+r_{2k}\equiv\\
&\equiv r_k+r_{2k}\pmod{2}.
\end{align*}
\item
По утверждению 1 имеем
\begin{align*}
r_jr_k&=(s_j+s_{2^{n-2}-j})r_k=r_{k-j}+r_{k+j}+r_{k-2^{n-2}+j}+r_{k+2^{n-2}-j}=\\
&=r_{k-j}+r_{2^{n-2}+(k-j)}+r_{k+j}+r_{-2^{n-2}+(k+j)}.
\end{align*}
Так как по лемме \ref{l:r2} последовательность $\left\{r_j\right\}_{j\in\Z}$ по модулю $2$ периодична с периодом $2^{n-2}$, то
\begin{align*}
r_{k-j}\equiv r_{2^{n-2}+(k-j)}\pmod{2},
r_{k+j}\equiv r_{-2^{n-2}+(k+j)}\pmod{2}.
\end{align*}
Следовательно
\[
r_jr_k\equiv2r_{k-j}+2r_{k+j}\equiv0\pmod{2}.
\]
\end{enumerate}
Лемма доказана.
\end{proof}

\subsection[Максимальное действительное подполе $\Q_{2^n}\cap\R$ кругового поля $\Q_{2^n}$]{Максимальное действительное подполе\\ $\Q_{2^n}\cap\R$ кругового поля $\Q_{2^n}$}

\subsubsection{Известные и стандартные результаты}

\begin{lemma}\label{l:su}
Для любого целого $j$
\begin{align*}
s_{2j}&=s_1^{2j}+\sum_{k=0}^{j-1}(-1)^{j-k}\left(C_{j+k}^{j-k}+C_{j+k-1}^{j-k-1}\right)s_1^{2k},\\
s_{2j+1}&=s_1^{2j+1}+\sum_{k=0}^{j-1}(-1)^{j-k}\left(C_{j+1+k}^{j-k}+C_{j+k}^{j-n-1}\right)s_1^{2k+1},\\
s_1^{2j}&=C_{2j}^j +s_{2j}+\sum_{k=1}^{j-1}C_{2j}^k s_{2(j-k)},\\
s_1^{2j+1}&=s_{2j+1}+\sum_{k=1}^{j-1}C_{2j+1}^ns_{2(j-k)+1}.
\end{align*}
\end{lemma}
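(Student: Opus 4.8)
The whole lemma rests on two elementary facts about the sequence $\{s_j\}$: the three–term recurrence $s_{m+1}=s_1s_m-s_{m-1}$, which is the special case $j=1$, $l=m$ of part~3 of Lemma~\ref{l:ms} (rewritten from $s_1s_m=s_{m+1}+s_{m-1}$), and the binomial expansion of $s_1^m=(\alpha+\alpha^{-1})^m$. Equivalently, $s_j=2T_j(s_1/2)$ for the Chebyshev polynomial $T_j$, so the four identities are the explicit expansion of $T_j$ and of its inverse, but it is cleaner to argue directly. Throughout it suffices to treat $j\geqslant0$.

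First I would establish the two formulas for $s_1^{2j}$ and $s_1^{2j+1}$. Expanding by the binomial theorem,
\[
s_1^m=(\alpha+\alpha^{-1})^m=\sum_{k=0}^{m}C_m^k\,\alpha^{m-2k},
\]
and pairing the summand for $k$ with the summand for $m-k$ — legitimate since $C_m^k=C_m^{m-k}$ and $\alpha^{m-2k}+\alpha^{-(m-2k)}=s_{m-2k}$ — collapses the sum to $\sum_{2k<m}C_m^k\,s_{m-2k}$, together with the single unpaired middle term $C_{2j}^j$ in the case $m=2j$. Setting $m=2j$ and then $m=2j+1$, pulling out the $k=0$ term and re-indexing, yields exactly the two stated identities for $s_1^{2j}$ and $s_1^{2j+1}$.

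Next I would prove the two formulas for $s_{2j}$ and $s_{2j+1}$ by simultaneous induction on $j$, using the recurrence in the forms $s_{2j+2}=s_1s_{2j+1}-s_{2j}$ and $s_{2j+1}=s_1s_{2j}-s_{2j-1}$. The base cases are immediate from $s_0=2$, $s_1=s_1$, $s_2=s_1^2-2$. In the inductive step one substitutes the expressions already proved for smaller index into the recurrence; multiplication by $s_1$ turns each $s_1^{2k}$ into $s_1^{2k+1}$, so one must shift the summation variable before combining the two sums. Reading off the coefficient of a given interior power of $s_1$ and matching it with the coefficient $(-1)^{j+1-k}\bigl(C_{j+1+k}^{j+1-k}+C_{j+k}^{j-k}\bigr)$ claimed in the statement then reduces to a single application of Pascal's identity $C_a^b=C_{a-1}^b+C_{a-1}^{b-1}$; the leading power ($s_1^{2j+2}$, resp.\ $s_1^{2j+1}$) carries coefficient $1$ automatically, while the lowest term needs the middle–term bookkeeping of the first part.

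The only place that requires genuine care is this last induction: keeping the ranges of the two binomial sums — the one coming from $s_1\cdot s_{2j+1}$ and the one coming from $-s_{2j}$ — correctly aligned after the index shift, treating the boundary terms $k=0$ and $k=j$ so that they output the advertised binomial coefficients and sign, and invoking the even-versus-odd dichotomy of the first part to pin down the summand with no partner. None of this is deep, but it is where all the combinatorial bookkeeping lives; everything else is formal manipulation with the recurrence and the binomial theorem. As an alternative to the induction, one may note that the first part exhibits $(s_1^{2k})_{k\leqslant j}$ as a unitriangular combination of $(s_{2k})_{k\leqslant j}$, so the formulas for $s_{2j}$ and $s_{2j+1}$ are simply the inverse unitriangular system, and verify the closed form of the inverse coefficients by a generating–function computation; but the direct induction is the shortest self-contained route.
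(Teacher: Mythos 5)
Your treatment of the last two identities coincides with the paper's: both expand $s_1^m=(\alpha+\alpha^{-1})^m$ by the binomial theorem and pair the $k$-th and $(m-k)$-th summands via $C_m^k=C_m^{m-k}$ and $\alpha^{m-2k}+\alpha^{-(m-2k)}=s_{m-2k}$, with the unpaired middle term $C_{2j}^j$ occurring only for even exponent. For the first two identities the routes genuinely diverge: the paper does not prove them at all but simply cites Proposition~1 of \cite{amukh}, whereas you derive them by induction on $j$ from the recurrence $s_{m+1}=s_1s_m-s_{m-1}$ (the case $j=1$ of part~3 of Lemma~\ref{l:ms}) together with Pascal's identity. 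Your argument is therefore self-contained where the paper's is not, at the price of the index bookkeeping you flag; it also makes explicit that the two halves of the lemma are mutually inverse unitriangular systems, which the paper never remarks on. If you do write the induction out in full, note that the printed coefficients contain misprints which your computation will silently correct: $C_{j+k}^{j-n-1}$ should read $C_{j+k}^{j-k-1}$, the exponent $n$ in $C_{2j+1}^n$ should be $k$, and the upper limit of the last sum should be $j$ rather than $j-1$, since the term $C_{2j+1}^{j}s_1$ does not vanish.
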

\begin{proof}
Первые 2 формулы непосредственно следуют из предложения 1 в \cite{amukh}.

По формуле бинома Ньютона
\begin{align*}
(\alpha+\alpha^{-1})^{2j}&=\alpha^{2j}+\sum_{k=1}^{2j-1}C_{2j}^k\alpha^{2j-k}\alpha^{-k}+\alpha^{-2j}=\\
&=\alpha^{2j}+\alpha^{-2j}+\sum_{k=1}^{2j-1}C_{2j}^k\alpha^{2(j-k)}=\\
&=(\alpha^{2j}+\alpha^{-2j})+\sum_{n=1}^{j-1}C_{2j}^n\alpha^{2(j-k)}+C_{2j}^j+\sum_{k=j+1}^{2j-1}C_{2j}^k\alpha^{2(j-k)}=
\intertext{во второй сумме положим $j-k=l-j\longleftrightarrow k=2j-l$}
&=C_{2j}^j+(\alpha^{2j}+\alpha^{-2j})+\sum_{k=1}^{j-1}C_{2j}^k\alpha^{2(j-k)}+\\
&+\sum_{l=1}^{j-1}C_{2j}^{2j-l}\alpha^{-2(j-l)}=
\intertext{так как $C_{2j}^k=C_{2j}^{2j-k}$, то}
&=C_{2j}^j+(\alpha^{2j}+\alpha^{-2j})+\sum_{k=1}^{j-1}C_{2j}^k(\alpha^{2(j-k)}+\alpha^{-2(j-k)}),
\end{align*}
что и надо.

Для нечётной степени аналогично.

Лемма доказана.
\end{proof}

\begin{lemma}\label{l:ib} 
Для максимального действительное подполя $\Q(\alpha)\cap\R$ кругового поля $\Q(\alpha)$ выполняются следующие утверждения.
\begin{enumerate}[{\rm1.}]
\item
Максимальное действительное подполе $\Q(\alpha)\cap\R$ кругового поля $\Q(\alpha)$ равно
\[
\Q(\alpha+\alpha^{-1})=\left\{f(\alpha+\alpha^{-1})\mid f\in\Q[t]\right\}.
\]
\item
Поле $\Q(\alpha)\cap\R$ абелево, $[\Q(\alpha)\cap\R:\Q]=2^{n-2}$ и любой автоморфизм из группы $\Gal(\Q(\alpha)\cap\R)$ индуцируется отображением $\alpha\mapsto\alpha^k$ для  нечётного $k$.
В частности,
\begin{align*}
\Q(\alpha+\alpha^{-1})&=\left\{f(\alpha+\alpha^{-1})\mid f\in\Q[t]\right\}=\\
&=\left\{f(s_1)\mid f\in\qdb{2^{n-2}-1}[t]\right\},
\end{align*}
где $\qdb{2^{n-2}-1}[t]$ --- множество (точнее, подпространство) всех многочленов с рациональными коэффициентами степени не выше $2^{n-2}-1$.
\item\label{intbr}
Максимальное действительное подполе $\Q(\alpha)\cap\R$ кругового поля $\Q(\alpha)$ имеет два следующих целых базиса.
\begin{enumerate}
\item[{\rm а.}]
$1, s_1=\alpha+\alpha^{-1}, s_1^2=(\alpha+\alpha^{-1})^2,\dots, s_1^{2^{n-2}-1}=(\alpha+\alpha^{-1})^{2^{n-2}-1}.$
\item[{\rm б.}]
$1, s_1=\alpha+\alpha^{-1}, s_2=\alpha^2+\alpha^{-2},\dots, s_{2^{n-2}-1}=\alpha^{2^{n-2}-1}+\alpha^{-2^{n-2}+1}.$
\end{enumerate}
В частности, получаем, что кольцо целых $\I\left(\Q(\alpha)\cap\R\right)$ максимального действительного подполя $\Q(\alpha)\cap\R$ кругового поля $\Q(\alpha)$ равно
\begin{align*}
\Z[\alpha+\alpha^{-1}]&=\left\{f(\alpha+\alpha^{-1})\mid f\in\Z[t]\right\}=\\
&=\left\{f(s_1)\mid f\in\zdb{2^{n-2}-1}[t]\right\},
\end{align*}
где $\zdb{2^{n-2}-1}[t]$ --- множество (точнее, конечно-порождённая подгруппа) всех многочленов с целыми коэффициентами степени не выше $2^{n-2}-1$.
\end{enumerate}
\end{lemma}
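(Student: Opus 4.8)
The plan is to dispose of the three assertions in turn, each time reducing to the structure of the cyclotomic extension $\Q(\alpha)/\Q$ already recorded in Lemma~\ref{l:cyc}. \textbf{Assertion 1 and the degree.} Since $\alpha+\alpha^{-1}=s_1=2\cos(2\pi/2^n)$ is real, $\Q(\alpha+\alpha^{-1})\subseteq\Q(\alpha)\cap\R$. Conversely $\alpha$ is a root of $t^2-s_1t+1$ over $\Q(\alpha+\alpha^{-1})$, so $[\Q(\alpha):\Q(\alpha+\alpha^{-1})]\leqslant2$; for $n\geqslant2$ one has $\alpha\notin\R$, hence this degree equals $2$ and also $[\Q(\alpha):\Q(\alpha)\cap\R]\geqslant2$. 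The chain $\Q(\alpha+\alpha^{-1})\subseteq\Q(\alpha)\cap\R\subseteq\Q(\alpha)$ together with $[\Q(\alpha):\Q]=2^{n-1}$ then forces $\Q(\alpha+\alpha^{-1})=\Q(\alpha)\cap\R$ and $[\Q(\alpha)\cap\R:\Q]=2^{n-2}$ (the cases $n\leqslant2$ being trivial).

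\textbf{Assertion 2.} By Lemma~\ref{l:cyc} the group $\Gal(\Q_{2^n})=\{\sigma_k\mid k\text{ odd}\}$ is abelian and complex conjugation is $\sigma_{-1}\colon\alpha\mapsto\alpha^{-1}=\alpha^{2^n-1}$; hence $\Q(\alpha)\cap\R$ is the fixed field of $\langle\sigma_{-1}\rangle$, and by the Galois correspondence $\Gal((\Q(\alpha)\cap\R)/\Q)\cong\Gal(\Q_{2^n})/\langle\sigma_{-1}\rangle$ is abelian of order $2^{n-2}$, with every automorphism the restriction of some $\sigma_k$, i.e.\ induced by $\alpha\mapsto\alpha^k$ with $k$ odd. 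Since $[\Q(s_1):\Q]=2^{n-2}$, the minimal polynomial of $s_1$ over $\Q$ has degree $2^{n-2}$, so $1,s_1,\dots,s_1^{2^{n-2}-1}$ is a $\Q$-basis, which is exactly the asserted description of $\Q(\alpha+\alpha^{-1})$.

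\textbf{Assertion 3 (the core).} The crucial step is to establish $\Z[\alpha]=\Z[s_1]\oplus\Z[s_1]\alpha$ as $\Z[s_1]$-modules. Using $\alpha^2=s_1\alpha-1$ one proves by induction that $\alpha^j=p_j(s_1)\alpha+q_j(s_1)$ with $p_j,q_j\in\Z[t]$ (from $p_{j+1}=s_1p_j+q_j$, $q_{j+1}=-p_j$), so by assertion~\ref{cib} of Lemma~\ref{l:cyc} every element of $\Z[\alpha]$ lies in $\Z[s_1]\cdot1+\Z[s_1]\cdot\alpha$; the sum is direct because $1,\alpha$ are linearly independent over $\Q(s_1)=\Q(\alpha)\cap\R$. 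Now let $\gamma\in\I(\Q(\alpha)\cap\R)$; since $\I(\Q(\alpha))=\Z[\alpha]$, write $\gamma=a+b\alpha$ with $a,b\in\Z[s_1]$. Applying complex conjugation and using $\overline{\gamma}=\gamma$ yields $b(\alpha-\alpha^{-1})=0$, hence $b=0$ and $\gamma\in\Z[s_1]$. Thus $\I(\Q(\alpha)\cap\R)=\Z[s_1]=\Z[\alpha+\alpha^{-1}]$, and since the minimal polynomial of $s_1$ is monic of degree $2^{n-2}$ the powers $1,s_1,\dots,s_1^{2^{n-2}-1}$ form a $\Z$-basis --- this is the first integral basis. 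For the second, Lemma~\ref{l:su} expresses each $s_k$ as $s_1^k$ plus an integral combination of lower powers of $s_1$ (and conversely each $s_1^k$ as $s_k$ plus an integral combination of the $s_j$ with $j<k$), so the transition matrix between the two tuples is unipotent triangular over $\Z$; therefore $1,s_1,s_2,\dots,s_{2^{n-2}-1}$ is also a $\Z$-basis of $\Z[s_1]=\I(\Q(\alpha)\cap\R)$, which gives the stated form of the ring of integers.

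\textbf{Expected obstacle.} Assertions 1 and 2 are routine Galois theory; the real content is the integral-basis claim, i.e.\ the module decomposition $\Z[\alpha]=\Z[s_1]\oplus\Z[s_1]\alpha$ and the conjugation argument pinning $\I(\Q(\alpha)\cap\R)$ down to $\Z[s_1]$. A discriminant comparison (matching the discriminant of the power basis against the field discriminant) is an alternative route, but it is computationally heavier, so the conjugation argument is the one I would use.
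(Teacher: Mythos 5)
Your proof is correct, but on the central point it takes a genuinely different route from the paper. For assertion 1 the paper argues by conjugation-averaging: a real $\beta=\sum_i a_i\alpha^i$ equals $\sum_i\frac{a_i}{2}(\alpha^i+\alpha^{-i})$, and each $\alpha^i+\alpha^{-i}$ is an integer polynomial in $s_1$ by Lemma~\ref{l:su}; your degree count via the quadratic $t^2-s_1t+1$ reaches the same conclusion and is equally standard. The real divergence is assertion 3а: the paper does not prove the integral-basis claim at all, it simply cites the result of Liang \cite{li}, whereas you give a self-contained argument --- the $\Z[s_1]$-module decomposition $\Z[\alpha]=\Z[s_1]\oplus\Z[s_1]\alpha$ (valid because $\alpha^2=s_1\alpha-1$ and $1,\alpha,\dots,\alpha^{2^{n-1}-1}$ is an integral basis of $\Q(\alpha)$ by Lemma~\ref{l:cyc}), followed by the conjugation argument $\gamma=a+b\alpha=\overline{\gamma}=a+b\alpha^{-1}\Rightarrow b=0$, which pins $\I(\Q(\alpha)\cap\R)$ down to $\Z[s_1]$. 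This is exactly the classical proof behind Liang's statement for prime-power conductors, and it makes the lemma independent of the external reference at the cost of half a page; the paper's citation is shorter but leaves the key fact unproved in situ. Your derivation of basis б from basis а via the unipotent triangular transition matrix of Lemma~\ref{l:su} coincides with the paper's argument for 3б.
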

\begin{proof}{\ }
\begin{enumerate}
\item
 Ясно, что $\Q(\alpha^{-1}+\alpha)\subseteq\Q(\alpha)\cap\R$.
 Пусть $\beta=\sum_ia_i\alpha^i\in\Q(\zeta)\cap\R$, где для любого $i$ число $a_i\in\Q$.
 Тогда $\overline{\beta}=\beta$, но $\overline{\beta}=\sum_ia_i\overline{\alpha}^i=\sum_ia_i\alpha^{-i}$.
 Поэтому $\beta=\sum_i\frac{a_i}{2}(\alpha^i+\alpha^{-i})$.
 Поскольку $\alpha^i+\alpha^{-i}$ выражается с целыми коэффициентами через $\alpha+\alpha^{-1}$ по лемме \ref{l:su}, то $\beta\in\Q(\alpha^{-1}+\alpha)$, то есть, $\Q(\alpha^{-1}+\alpha)\supseteq\Q(\alpha)\cap\R$.
 Таким образом, $\Q(\alpha^{-1}+\alpha)=\Q(\alpha)\cap\R$.
\item
Ясно, что $\Q(\alpha^{-1}+\alpha)=\Q(\alpha)\cap\R$ --- неподвижное относительно комплексного сопряжения подполе поля $\Q(\alpha)$.
Всё следует из леммы~\ref{l:cyc} по основной теореме теории Галуа \cite[\S~58]{vdv} и теореме из \cite[\S~59, с.~202]{vdv}.
\item
\begin{enumerate}
\item[{\rm а.}]
Это результат из \cite{li}.
\item[{\rm б.}]
Данное утверждение сразу следует из утверждения 3.а и леммы \ref{l:su}.
\end{enumerate}
\end{enumerate}
\end{proof}
\begin{remark}\label{r:intr}
Как в замечании \ref{r:int} на с.~\pageref{r:int} в силу утверждения \ref{intbr} леммы \ref{l:ib} имеем, что
\[
2\Z[\alpha+\alpha^{-1}]=\left\{2\sum_{j=0}^{2^{n-2}-1}b_js_j\mid \{b_0,b_1,\dots,b_{2^{n-2}-1}\}\subset\Z\right\}.
\]
В частности, получим, что для $b=\sum_{j=0}^{2^{n-2}-1}b_js_j\in\Z[\alpha+\alpha^{-1}]$ сравнение
\[
b\equiv1\pmod{2}
\]
выполняется тогда и только тогда, когда
\[
b_0\text{ --- нечётное число, }b_j\text{ --- чётное число для }j\in\{1,2,\dots,2^{n-2}-1\}.
\]
\end{remark}

\subsubsection{Особый базис кольца $\Z[\alpha+\alpha^{-1}]$ и связанный с ним идеал}

\begin{definitions}
Обозначим для удобства
\[
\vec{R}=(r_1,\dots,r_{2^{n-3}-1}).
\]
Определим в поле $\Q(\alpha)\cap\R$ новый особый (упорядоченный) базис
\begin{align*}
\vec{B}&=(1,s_1,\dots,s_{2^{n-3}-1},s_{2^{n-3}},\vec{R})=\\
&=(1,s_1,\dots,s_{2^{n-3}-1},s_{2^{n-3}},r_1,\dots,r_{2^{n-3}-1}).
\end{align*}
Также пусть $R_{\Z}$ --- подгруппа (по сложению), порождённая $\vec{R}$, то есть
\[
R_{\Z}=\left\{a_1r_1+\dots+a_{2^{n-3}-1}r_{2^{n-3}-1}\mid\{a_1,\dots,a_{2^{n-3}-1}\}\subset\Z\right\}.
\]
Наконец, определим подгруппу (по сложению)
\[
\widetilde{R}=R_{\Z}+2\Z[s_1]=R_{\Z}+2\Z[\alpha+\alpha^{-1}].
\]
\end{definitions}

\begin{lemma}\label{l:bsp} 
Определённый выше базис $\vec{B}$ поля $\Q(\alpha)\cap\R$ имеет следующие свойства.
\begin{enumerate}[{\rm1.}]
\item
Для любого $j\in\{1,2,\dots,2^{n-3}-1\}$
\begin{align*}
r_j&=s_j+s_{2^{n-2}-j},\\
s_{2^{n-3}+j}&=r_{2^{n-3}-j}-s_{2^{n-3}-j}.
\end{align*}
\item
$\vec{B}$ --- целый базис поля $\Q(\alpha)\cap\R$.
\item
Элементы базиса $\vec{B}$ перемножаются по модулю $2$ так:
\begin{description}
\item[$2(SS\leqslant2^{n-3})$]
если $j\leqslant k<2^{n-3}$ и $k+j\leqslant2^{n-3}$, то
\[
\left\{\begin{aligned}
s_js_k&=s_{k-j}+s_{k+j},\text{ в частности, для $j\leqslant m$}\\
&s_j^2=s_{2j};
\end{aligned}\right.
\]
\item[$2(SS>2^{n-3})$]
если $j\leqslant k<2^{n-3}$ и $k+j>2^{n-3}$, то
\begin{gather*}
\left\{\begin{aligned}
s_js_k&=s_{k-j}+r_{2^{n-3}-(k+j)}-s_{2^{n-3}-(k+j)},\\
&\text{ в частности, для $j>m$}\\
s_j^2&=r_{2^{n-2}-2j}-s_{2^{n-2}-2j};
\end{aligned}\right.
\end{gather*}
\item[$2(SS2^{n-3})$]
если  $j<2^{n-3}$,то
\[
\left\{\begin{aligned}
s_js_{2^{n-3}}&=r_{2^{n-3}-j},\\
s_{2^n}^2&=0;
\end{aligned}\right.
\]
\item[$2(SR<2^{n-3})$]
если $j\leqslant k$ и $k+j<2^{n-3}$, то
\[
\left\{\begin{aligned}
s_jr_k&=r_{k-j}+r_{k+j},\text{ в частности, для $j<m$}\\
s_jr_j&=r_{2j};
\end{aligned}\right.
\]
\item[$2(SR=2^{n-3})$]
если $j\leqslant k$ и $k+j=2^{n-3}$, то
\[
\left\{\begin{aligned}
s_jr_k&=r_{k-j},\text{ в частности,}\\
s_{2^{n-4}} r_{2^{n-4}}&=0;
\end{aligned}\right.
\]
\item[$2(SR>2^{n-3})$]
если $j\leqslant k$ и $k+j>2^{n-3}$, то
\[
\left\{\begin{aligned}
s_jr_k&=r_{k-j}+r_{2^{n-2}-(k+j)},\text{ в частности, для $j>m$}\\
&s_jr_j=r_{2^{n-2}-2j};
\end{aligned}\right.
\]
\item[$2(S2^{n-3}R)$]
\[
s_{2^{n-3}}r_k=0;
\]
\item[$(RS<2^{n-3})$]
если $k<j<2^{n-3}$ и $k+j<2^{n-3}$, то
\[
s_jr_k=r_{j-k}+r_{k+j};
\]
\item[$2(RS=2^{n-3})$]
если $k<j<2^{n-3}$ и $k+j=2^{n-3}$, то
\[
s_jr_k=r_{j-k};
\]
\item[$2(RS>2^{n-3})$]
если $k<j<2^{n-3}$ и $k+j>2^{n-3}$, то
\[
s_jr_k=r_{j-k}+r_{2^{n-2}-(k+j)};
\]
\item[$2(RR)$]
\[
r_jr_k=0.
\]
\end{description}
\end{enumerate}
\end{lemma}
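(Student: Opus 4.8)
Замысел в том, чтобы вывести все три пункта из уже доказанных свойств последовательностей $\left\{s_j\right\}$ и $\left\{r_j\right\}$, то есть из лемм \ref{l:ms}, \ref{l:ms2}, \ref{l:r1}, \ref{l:r2} и \ref{l:sdr}.

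\emph{Пункт 1.} Первое равенство --- это определение $r_j$. Второе вытекает из него: $r_{2^{n-3}-j}=s_{2^{n-3}-j}+s_{2^{n-2}-(2^{n-3}-j)}=s_{2^{n-3}-j}+s_{2^{n-3}+j}$, откуда $s_{2^{n-3}+j}=r_{2^{n-3}-j}-s_{2^{n-3}-j}$.

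\emph{Пункт 2.} Предлагается сравнить $\vec{B}$ с целым базисом $1,s_1,\dots,s_{2^{n-2}-1}$ из утверждения \ref{intbr} леммы \ref{l:ib}. Первые $2^{n-3}+1$ членов набора $\vec{B}$ с ним совпадают, а для $j\in\{1,\dots,2^{n-3}-1\}$ по пункту 1 имеем $r_j=s_j+s_{2^{n-2}-j}$, причём индексы $2^{n-2}-j$ взаимно однозначно пробегают множество $\{2^{n-3}+1,\dots,2^{n-2}-1\}$. Значит, матрица перехода от старого базиса к $\vec{B}$ блочно-треугольна: верхний левый блок (размера $2^{n-3}+1$) единичный, правый нижний --- матрица перестановки, так что её определитель равен $\pm1$ и $\vec{B}$ --- тоже целый базис.

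\emph{Пункт 3.} Это перебор случаев по единой схеме: перемножаем два члена базиса по формуле $s_js_k=s_{k+j}+s_{k-j}$ (соответственно по формулам $(SR)$ и $(RR)$ из леммы \ref{l:sdr}), а затем каждый возникший $s_l$ или $r_l$ приводим к члену базиса $\vec{B}$. Если индекс попадает в интервал $2^{n-3}<l<2^{n-2}$, применяем к $s_l$ тождество из пункта 1, а к $r_l$ --- центральную симметрию по модулю $2$ из леммы \ref{l:r2}; если же индекс выходит за $2^{n-2}$, пользуемся периодичностью по модулю $2$ с периодом $2^{n-2}$ и чётностью $r_{-j}\equiv r_j\pmod2$ (та же лемма \ref{l:r2}). Отдельные равенства вроде $s_{2^{n-3}}^2=s_{2^{n-2}}+2\equiv0\pmod2$, $r_0=2$, $r_{2^{n-3}}=2\sqrt2\equiv0\pmod2$, $s_{2^{n-4}}r_{2^{n-4}}=r_0+r_{2^{n-3}}\equiv0\pmod2$ проверяются напрямую по пункту 1 лемм \ref{l:ms} и \ref{l:r1}, а случай $(RR)$ --- это в точности утверждение 3 леммы \ref{l:sdr}.

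Главная трудность --- именно аккуратное ведение пункта 3: для каждого произведения $s_js_k$ или $s_jr_k$ нужно определить, в какую из зон $[0,2^{n-3})$, $\{2^{n-3}\}$, $(2^{n-3},2^{n-2})$ или $[2^{n-2},+\infty)$ попадают индексы $k-j$ и $k+j$, и выбрать соответствующее приведение; сами выкладки элементарны и почти дословно повторяют рассуждения из лемм \ref{l:ms2} и \ref{l:r2}.
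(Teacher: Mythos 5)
Ваш набросок верен и по существу совпадает с доказательством в статье, которое ещё лаконичнее: пункт 1 там выводится тем же вычислением $r_{2^{n-3}-j}=s_{2^{n-3}-j}+s_{2^{n-3}+j}$, пункт 2 объявлен прямым следствием леммы \ref{l:ib} и пункта 1 (вы лишь явно выписали блочно-треугольную матрицу перехода с определителем $\pm1$), а пункт 3 --- непосредственным следствием леммы \ref{l:sdr}; ваша схема приведения индексов к базису $\vec{B}$ как раз и раскрывает это «непосредственно». Попутно ваша выкладка корректно даёт $r_{2^{n-2}-(k+j)}-s_{2^{n-2}-(k+j)}$ в случае $2(SS>2^{n-3})$, что согласуется с частным случаем $s_j^2$ в формулировке.
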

\begin{proof}{\ }
\begin{enumerate}
\item
Для любого $j\in\{1,2,\dots,2^{n-3}-1\}$ по определению $r_j=s_j+s_{2^{n-2}-j}$.
Так как
\[
r_{2^{n-3}-j}=s_{2^{n-3}-j}+s_{2^{n-2}-(2^{n-3}-j)}=s_{2^{n-3}-j}+s_{2^{n-3}+j},
\]
то
\[
s_{2^{n-3}+j}=r_{2^{n-3}-j}-s_{2^{n-3}-j}.
\]
\item
Это утверждение сразу следует из леммы \ref{l:ib} и утверждения 1 данной леммы.
\item
Это утверждение --- непосредственное следствие леммы \ref{l:sdr}.
\end{enumerate}
Лемма доказана.
\end{proof}

\begin{lemma}\label{l:idR}
Подгруппа $\widetilde{R}$ является идеалом кольца $\Z[\alpha+\alpha^{-1}]$, причём
\[
R_\Z^2\subseteq2\Z[\alpha+\alpha^{-1}].
\]
Иными словами $\widetilde{R}/2\Z[\alpha+\alpha^{-1}]$ является идеалом с нулевым умножением фактор-кольца $\Z[\alpha+\alpha^{-1}]/2\Z[\alpha+\alpha^{-1}]$.
\end{lemma}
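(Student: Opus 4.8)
The plan is to reduce the ideal property to a finite check on the generators $r_1,\dots,r_{2^{n-3}-1}$ of $R_{\Z}$, and then to read off the required containments from the multiplication table modulo $2$ in part~3 of Lemma~\ref{l:bsp} (equivalently from Lemma~\ref{l:sdr}).

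First I would record the two preliminary facts that do the structural work: by Lemma~\ref{l:ib} (equivalently part~2 of Lemma~\ref{l:bsp}) the basis $\vec{B}=(1,s_1,\dots,s_{2^{n-3}-1},s_{2^{n-3}},r_1,\dots,r_{2^{n-3}-1})$ is a $\Z$-basis of $\Z[\alpha+\alpha^{-1}]$, and $2\Z[\alpha+\alpha^{-1}]$ is obviously an ideal of $\Z[\alpha+\alpha^{-1}]$. Since $\widetilde{R}=R_{\Z}+2\Z[\alpha+\alpha^{-1}]$ is manifestly an additive subgroup containing $2\Z[\alpha+\alpha^{-1}]$, and since $2\Z[\alpha+\alpha^{-1}]\cdot\Z[\alpha+\alpha^{-1}]=2\Z[\alpha+\alpha^{-1}]\subseteq\widetilde{R}$, proving that $\widetilde{R}$ is an ideal amounts to proving $R_{\Z}\cdot\Z[\alpha+\alpha^{-1}]\subseteq\widetilde{R}$; by $\Z$-bilinearity this is the finitely many containments $r_q\cdot b\in\widetilde{R}$ for $q\in\{1,\dots,2^{n-3}-1\}$ and $b$ ranging over the elements of $\vec{B}$.

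Now I would run through the cases for $b$. For $b=1$ one has $r_q\cdot1=r_q\in R_{\Z}$. For $b=s_p$ with $1\le p\le 2^{n-3}-1$, the relevant rows of the table in part~3 of Lemma~\ref{l:bsp} — namely $2(SR<2^{n-3})$, $2(SR=2^{n-3})$, $2(SR>2^{n-3})$ when $p\le q$, and $(RS<2^{n-3})$, $2(RS=2^{n-3})$, $2(RS>2^{n-3})$ when $q<p$ — express $s_p r_q$, modulo $2\Z[\alpha+\alpha^{-1}]$, as an integer combination of terms $r_\ell$ with $\ell\in\{0,1,\dots,2^{n-3}\}$, where one checks on the spot that every subscript produced (such as $q-p$ or $2^{n-2}-(p+q)$) falls in that range. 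By part~1 of Lemma~\ref{l:r2} one has $r_0\equiv r_{2^{n-3}}\equiv0\pmod2$, so the possible boundary terms are absorbed into $2\Z[\alpha+\alpha^{-1}]$ and the whole combination lies in $R_{\Z}+2\Z[\alpha+\alpha^{-1}]=\widetilde{R}$. For $b=s_{2^{n-3}}$, the row $2(S2^{n-3}R)$ gives $s_{2^{n-3}}r_q\equiv0\pmod2$ (indeed $s_{2^{n-3}}r_q=s_{2^{n-3}}^2 s_{2^{n-3}-q}=2s_{2^{n-3}-q}$), so the product lies in $2\Z[\alpha+\alpha^{-1}]\subseteq\widetilde{R}$. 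Finally, for $b=r_p$ with $1\le p\le 2^{n-3}-1$, the row $2(RR)$ — equivalently identity $(RR)$ of Lemma~\ref{l:sdr} — gives $r_p r_q\equiv0\pmod2$, i.e. $r_p r_q\in 2\Z[\alpha+\alpha^{-1}]$; ranging over all $p,q$ this is precisely the assertion $R_{\Z}^2\subseteq2\Z[\alpha+\alpha^{-1}]$.

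Collecting the cases gives $\widetilde{R}\cdot\Z[\alpha+\alpha^{-1}]\subseteq\widetilde{R}$, so $\widetilde{R}$ is an ideal of $\Z[\alpha+\alpha^{-1}]$ containing $2\Z[\alpha+\alpha^{-1}]$; hence $\widetilde{R}/2\Z[\alpha+\alpha^{-1}]$ is an ideal of the quotient ring $\Z[\alpha+\alpha^{-1}]/2\Z[\alpha+\alpha^{-1}]$. Its image coincides with the image of $R_{\Z}$, and $\widetilde{R}^2\subseteq R_{\Z}^2+2\Z[\alpha+\alpha^{-1}]\subseteq 2\Z[\alpha+\alpha^{-1}]$ by the previous paragraph, so the product of any two elements of this ideal vanishes in the quotient: it is an ideal with zero multiplication. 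There is no genuine obstacle here — the content is a bounded case check, all of whose hard computations are already in Lemmas~\ref{l:sdr} and~\ref{l:bsp} — and the only point requiring care is the index bookkeeping for $b=s_p$: one must confirm that all $r$-subscripts produced by the table lie in $\{0,1,\dots,2^{n-3}\}$ and that the boundary subscripts $0$ and $2^{n-3}$ contribute only even multiples, which is exactly what part~1 of Lemma~\ref{l:r1} and part~1 of Lemma~\ref{l:r2} guarantee.
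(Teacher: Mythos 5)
Your proof is correct and follows exactly the route the paper intends: the paper's own proof consists of the single sentence that the lemma is an immediate consequence of Lemma~\ref{l:bsp}, and your case-by-case check of the generators $r_q$ against the basis $\vec{B}$ using the multiplication table (with the boundary terms $r_0$, $r_{2^{n-3}}$ absorbed into $2\Z[\alpha+\alpha^{-1}]$ and $(RR)$ giving $R_{\Z}^2\subseteq2\Z[\alpha+\alpha^{-1}]$) is precisely the verification that sentence leaves implicit.
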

\begin{proof}
Это непосредственное следствие леммы \ref{l:bsp}.
\end{proof}

\section{Группа единиц кольца $\Z[\alpha]$}

\subsection{Редукция к круговым единицам}

\begin{lemma}\label{l:u}
\[
\Un(\Z[\alpha])=\langle\alpha\rangle\times K,
\]
где $K\subset\Un(\Z[\alpha+\alpha^{-1}])\subset\R$.
\end{lemma}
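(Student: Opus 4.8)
План таков. Для единицы $u\in\Un(\Z[\alpha])$ нужно подобрать степень $\alpha^j$ так, чтобы $\alpha^{-j}u$ было вещественным; ключевой объект здесь --- отношение $u/\overline{u}$ единицы к комплексно сопряжённой. Комплексное сопряжение --- это автоморфизм поля $\Q_{2^n}$, индуцированный отображением $\alpha\mapsto\alpha^{-1}$ (в частности, он сохраняет кольцо целых $\Z[\alpha]$), поэтому $\overline{u}\in\Un(\Z[\alpha])$ и $v=u\overline{u}^{-1}\in\Un(\Z[\alpha])$. Так как расширение $\Q_{2^n}/\Q$ абелево, сопряжение перестановочно с любым $\sigma\in\Gal(\Q_{2^n})$, откуда $\sigma(\overline{u})=\overline{\sigma(u)}$ и $|\sigma(v)|=|\sigma(u)|\,/\,|\overline{\sigma(u)}|=1$ при всех $\sigma$. По классической теореме Кронекера алгебраическое целое, у которого все сопряжённые (корни минимального многочлена над $\Q$) по модулю равны $1$, является корнем из единицы; но все корни из единицы поля $\Q_{2^n}$ --- это степени $\alpha$, так что $v=\alpha^k$ при некотором целом $k$.

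Центральное место --- доказать, что $k$ чётно. Обычной редукции <<по модулю $\mathfrak p$>> по единственному простому идеалу $\mathfrak p=(1-\alpha)$, лежащему над $2$, здесь недостаточно: придётся работать по модулю $\mathfrak p^2$. Так как $\alpha\equiv1\pmod{\mathfrak p}$, при любом целом $i$ оба числа $1-\alpha^i$ и $1+\alpha^i$ (последнее $\equiv2\equiv0$) лежат в $\mathfrak p$, а значит, $\overline{\alpha^i}-\alpha^i=\alpha^{-i}(1-\alpha^i)(1+\alpha^i)\in\mathfrak p^2$; по линейности (достаточно проверить на целом базисе $1,\alpha,\dots,\alpha^{2^{n-1}-1}$ из пункта \ref{cib} леммы \ref{l:cyc}) получаем $\overline{\beta}\equiv\beta\pmod{\mathfrak p^2}$ для всех $\beta\in\Z[\alpha]$. В частности, $\overline{u}\equiv u\pmod{\mathfrak p^2}$, так что $(\alpha^k-1)\overline{u}=u-\overline{u}\in\mathfrak p^2$, и поскольку $\overline{u}$ --- единица, $\alpha^k-1\in\mathfrak p^2$. Но при нечётном $k$ число $\alpha^k$ --- первообразный корень степени $2^n$, частное $(1-\alpha^k)/(1-\alpha)$ является единицей кольца $\Z[\alpha]$, поэтому $\alpha^k-1$ ассоциировано с $1-\alpha$ и $\alpha^k-1\notin\mathfrak p^2$ --- противоречие. Значит, $k=2j$.

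Остаётся завершение. Положим $w=\alpha^{-j}u$; тогда $\overline{w}=\alpha^{j}\overline{u}=\alpha^{j}\alpha^{-2j}u=\alpha^{-j}u=w$, то есть $w$ вещественно и $w\in\Z[\alpha]\cap\R$. По лемме \ref{l:ib} (пункт \ref{intbr}) $\Z[\alpha]\cap\R=\I(\Q(\alpha)\cap\R)=\Z[\alpha+\alpha^{-1}]$, а так как $w^{-1}$ тоже вещественно и целое, то $w\in\Un(\Z[\alpha+\alpha^{-1}])$. Тем самым $u=\alpha^j w$ и $\Un(\Z[\alpha])=\langle\alpha\rangle\cdot\Un(\Z[\alpha+\alpha^{-1}])$. Чтобы получить именно \emph{прямое} произведение, применим теорему Дирихле о единицах: $\Un(\Z[\alpha+\alpha^{-1}])=\langle-1\rangle\times K$, где $K$ --- свободная абелева группа конечного ранга. Так как $-1=\alpha^{2^{n-1}}\in\langle\alpha\rangle$, получаем $\Un(\Z[\alpha])=\langle\alpha\rangle\cdot K$, а пересечение $\langle\alpha\rangle\cap K$ тривиально (элементы $\langle\alpha\rangle$ --- корни из единицы, группа $K$ без кручения). Следовательно, $\Un(\Z[\alpha])=\langle\alpha\rangle\times K$, где $K\subset\Un(\Z[\alpha+\alpha^{-1}])\subset\R$.

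Основным препятствием я считаю именно доказательство чётности $k$: ключевое наблюдение --- что редукция по модулю $\mathfrak p$ тут бесполезна, а по модулю $\mathfrak p^2$ всё работает благодаря тому, что $1+\alpha$ (как и $1-\alpha^k$ при нечётном $k$) ассоциировано с $1-\alpha$. Остальные шаги --- стандартный спуск к вещественному подполю и разложение Дирихле --- рутинны.
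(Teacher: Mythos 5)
Ваше доказательство корректно, но идёт совсем не тем путём, что статья: в статье лемма~\ref{l:u} вообще не доказывается, а объявляется фольклорным обобщением леммы Куммера со ссылками на Синнотта (с.~119) и Хассе (с.~561), где лишь отмечено, что можно взять $K\subset\R$. Вы же даёте полное самостоятельное доказательство, и оно по существу совпадает со стандартным рассуждением для круговых полей примарного кондуктора: $v=u/\overline{u}$ имеет все сопряжённые по модулю $1$, по Кронекеру $v$ --- корень из единицы, то есть степень $\alpha$; затем чётность показателя, спуск $u=\alpha^j w$ в вещественное подполе и разложение Дирихле. Ключевое место вы определили верно: при $p=2$ обычная редукция по модулю $\mathfrak p=(1-\alpha)$ не даёт противоречия (в отличие от нечётного $p$, где из $2u\in\mathfrak p$ всё следует сразу), и нужен переход к $\mathfrak p^2$; ваша выкладка $\overline{\alpha^i}-\alpha^i=\alpha^{-i}(1-\alpha^i)(1+\alpha^i)\in\mathfrak p^2$ вместе с тем, что $v_{\mathfrak p}(1-\alpha^k)=1$ при нечётном $k$ (это, кстати, содержится и в лемме~\ref{l:norm} статьи: $\N(1-\alpha^j)=2$), закрывает вопрос. Что даёт каждый подход: ссылка в статье короче и опирается на готовый результат, ваш вариант делает текст замкнутым и явно показывает, почему для $2$-круговых полей индекс $Q=[\Un:\langle\alpha\rangle\Un^{+}]$ равен $1$. Единственная оговорка --- рассуждение неявно предполагает $n\geqslant1$ (чтобы $-1\in\langle\alpha\rangle$), что согласуется с контекстом статьи, где фактически $n\geqslant3$.
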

\begin{proof}
По-видимому, этот результат хорошо известен, и носит фо\-льклорный характер.
Он является обобщением леммы Куммера \cite[Лемма 4, с. 178]{bor}.

В явном виде на с. 119 статьи \cite{sin} и на с. 561 книги \cite{has} указано, что можно считать, что $K\subset\R$, но тогда $K\subset\Z[\alpha+\alpha^{-1}]$, и потому
\[
K\subset\Un(\Z[\alpha+\alpha^{-1}]).
\]
\end{proof}

\begin{definition}
Пусть
\[
P=\left\langle1-\alpha^k\mid k\in\{1,2\dots,2^n-1\}\right\rangle\leqslant\Q_{2^n}^*
\]
--- подгруппа по умножению мультипликативной группы $\Q_{2^n}^*$ кругового поля $\Q_{2^n}$.
Тогда назовём \emph{группой круговых единиц} поля $\Q_{2^n}$
\[
K(\alpha)=P\cap\Un\left(\Z[\alpha]\right).
\]
\end{definition}

\begin{notation}
Обозначим через $h(n)$ --- число классов поля $\Q_{2^n}\cap\R$.
\end{notation}

\begin{lemma}\label{l:sin}
Индекс
\[
\left|\Un\left(\Z[\alpha]\right):K(\alpha)\right|=h(n).
\]
\end{lemma}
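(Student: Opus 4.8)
The plan is to reduce the statement to the classical index formula for the group of cyclotomic units of the maximal real subfield $L=\Q_{2^n}\cap\R$, which is the analytic class number formula for $L$ (equivalently, Sinnott's index computation specialized to prime-power conductor).

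First I would pass from $\Un(\Z[\alpha])$ to the real field. By Lemma~\ref{l:u} we have $\Un(\Z[\alpha])=\langle\alpha\rangle\times K$ with $K\subseteq\Un(\Z[\alpha+\alpha^{-1}])$. Observe that $\langle\alpha\rangle\subseteq K(\alpha)$: for every $j\not\equiv 0$ one has $(1-\alpha^j)/(1-\alpha^{-j})=-\alpha^j\in P$, and this is a unit, so $-\alpha^j\in K(\alpha)$; a product of three such factors whose exponents sum to $0$ (for instance $j_1=j_2=1$, $j_3=2^n-2$) equals $-1$, whence $\alpha^j\in K(\alpha)$ for all $j$. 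Hence $K(\alpha)=\langle\alpha\rangle\times C$, where $C=K(\alpha)\cap\R$ is a group of real circular units contained in $K$, and
\[
\left|\Un(\Z[\alpha]):K(\alpha)\right|=[K:C].
\]
Modulo the torsion $\{\pm1\}$ (absorbed into $\langle\alpha\rangle$), both $K$ and $C$ are free abelian of rank $2^{n-2}-1$, the common Dirichlet rank of $\Q_{2^n}$ and of $L$ (using Lemmas~\ref{l:cyc} and~\ref{l:ib}), so $[K:C]$ is a finite lattice index inside $\Un(\I(L))$.

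Next I would identify this with $[\Un(\I(L)):C^+]$, where $C^+$ is the classical group of cyclotomic units of $L$, generated by $-1$ and the real units $\eta_a=\alpha^{(1-a)/2}(1-\alpha^a)/(1-\alpha)$ for odd $a$, $1<a<2^{n-1}$. Each $\eta_a$ is real, lies in $P$, and is a unit of $\Z[\alpha+\alpha^{-1}]$ (since $1-\alpha$ generates the unique prime of $\Z[\alpha]$ over $2$, the quotient $(1-\alpha^a)/(1-\alpha)$ has norm $\pm1$ over $\Q$), hence $\eta_a\in C$; conversely every element of $C$ is, up to a power of $\alpha$, a quotient of products of Galois conjugates of $1-\alpha$, hence lies in $\langle\alpha\rangle\cdot C^+$. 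Combined with Kummer's lemma (\cite[Лемма~4]{bor}, see also \cite{has} and \cite{sin}), which for prime-power conductor gives $\Un(\Z[\alpha])=\langle\alpha\rangle\cdot\Un(\Z[\alpha+\alpha^{-1}])$, this yields $[K:C]=[\Un(\I(L)):C^+]$.

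Finally I would invoke the analytic class number formula for the real abelian field $L$: Kummer's evaluation of $\prod_\chi L(1,\chi)$ over the nontrivial even characters of conductor dividing $2^n$ shows that the regulator of $C^+$ equals $h(n)$ times the regulator of $\Un(\I(L))$, so $[\Un(\I(L)):C^+]=h(n)$. This is exactly Sinnott's index formula \cite{sin} for the field $\Q_{2^n}$, in which the group-theoretic correction factor of the general statement is trivial because the conductor is a prime power. The step that genuinely needs care, rather than a citation, is the middle one: matching the ad hoc group $K(\alpha)=P\cap\Un(\Z[\alpha])$ with the normalization of cyclotomic units for which the index formula is proved, together with the sign and root-of-unity bookkeeping and the features specific to $p=2$ (notably that $1-\alpha$ is not itself a unit).
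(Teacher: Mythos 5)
Your proof is correct and ultimately rests on the same foundation as the paper's: the paper's entire proof of Lemma~\ref{l:sin} is a direct citation of Sinnott's index theorem \cite{sin} (the theorem on p.~107 together with the results on p.~119, where the correction factor is trivial for prime-power conductor). The extra bookkeeping you supply --- checking $\langle\alpha\rangle\subseteq K(\alpha)$, reducing to the real subfield, and matching $P\cap\Un(\Z[\alpha])$ with the normalized cyclotomic units --- is sound and simply unpacks why that citation applies to the group $K(\alpha)$ as defined here.
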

\begin{proof}
Всё следует из \cite[Теорема на с. 107 и результаты на с.~119]{sin}.

Лемма доказана.
\end{proof}

Таким образом, группа круговых единиц $K$ является подгруппой индекса $h(n)$ в $\Un\left(\Z[\alpha]\right)$,  и известна следующая проблема.
\begin{problem}[Вебера о числе классов (Weber class number problem)]{\ }\\
Для любого натурального $n$ число классов $h(n)=1$,
\end{problem}

О (почти) современном состоянии этой проблемы можно понять из следующего результата.
\begin{lemma}\label{l:m&fk}{\ }
\begin{enumerate}[{\rm1.}]
\item
$h(n)=1$ для $n\leqslant8$ и при допущении обобщённой гипотезы Римана $h(9)=1$.
\item
Пусть $\ell$ --- простое число.
\begin{enumerate}
\item[{\rm a.}]
Если $\ell$ меньше чем $10^9$, то $\ell$ не делит $h(n)$ для всех $n\geqslant1$.
\item[{\rm б.}]
Если $\ell\not\equiv\pm1\pmod{32}$, то $\ell$ не делит $h(n)$ для всех $n\geqslant1$.
\end{enumerate}
\end{enumerate}
\end{lemma}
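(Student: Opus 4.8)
The statement is a digest of what is presently known about the Weber class number problem, so my plan is not to compute anything but to match results already in the literature to the notation $h(n)$ used here. The single piece of bookkeeping needed throughout is the identification of the $k$-th layer $\mathbf B_k$ of the cyclotomic $\Z_2$-extension of $\Q$ with the maximal real subfield $\Q_{2^{k+2}}\cap\R$; since $\Q_{2^n}\cap\R=\mathbf B_{n-2}$ for $n\geqslant2$ (and equals $\Q$, whence $h(n)=1$, for $n\leqslant2$), a theorem valid for $h(\mathbf B_k)$ for all layers $k$ is exactly a theorem valid for $h(n)$ for all $n$.

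For assertion 1, I would split the range $n\leqslant9$ into the part handled by classical class-number computations and the part handled recently. The cases $n\leqslant7$, i.e. $2^n\leqslant128$, are covered unconditionally by van der Linden's computations of class numbers of real abelian number fields, which give $h(n)=1$; his argument combines the analytic class number formula with explicit control of the analytic factor and of the unit lattice to force the class group to be trivial. The case $n=8$ ($2^8=256$) unconditionally, and the case $n=9$ ($2^9=512$) under GRH, come from J.~C.~Miller's later work on class numbers of totally real fields, which removes GRH from the conductor-$256$ case and settles conductor $512$ under GRH. Citing these two sources in this order gives assertion 1 verbatim; the only verification needed is that Miller's conductor-$256$ (resp. conductor-$512$) statement corresponds, under the dictionary above, to our $n=8$ (resp. $n=9$).

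For assertion 2, I would invoke the series of papers of Fukuda and Komatsu on Weber's problem in the cyclotomic $\Z_2$-extension of $\Q$, bearing in mind that the case $\ell=2$ --- that every $h(n)$ is odd --- is the classically accessible part. For odd $\ell$ their method exploits the structure of the Iwasawa module of $2$-class groups along the tower together with congruences among cyclotomic units and finite computations in the first few layers: part (a) is their statement that no prime $\ell<10^9$ divides any $h(\mathbf B_k)$, and part (b) is their congruence obstruction, namely that a prime divisor of some $h(\mathbf B_k)$ must satisfy $\ell\equiv\pm1\pmod{32}$. One must cite the version of their work carrying the constant $10^9$ and the modulus $32$, rather than the weaker bound and the modulus $16$ appearing in an earlier paper of theirs.

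The main obstacle is therefore bibliographic rather than mathematical: one has to be sure the theorems cited are stated for precisely the fields $\Q_{2^n}\cap\R$ --- via $\mathbf B_k=\Q_{2^{k+2}}\cap\R$ --- and with precisely the constants $10^9$ and $32$, since close variants (other conductor bounds, the modulus $16$, earlier prime bounds) circulate in the literature. Beyond matching the formulations and keeping the layer indexing straight, nothing further is required.
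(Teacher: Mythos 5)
Your proposal is correct and follows the same route as the paper: both treat the lemma as a compilation of literature results, attributing part 1 to Miller's computations (\cite{mil1}, \cite{mil2}) with the cases $n\leqslant7$ going back to Masley and van der Linden (\cite{mas}, \cite{lin}), and part 2 to Fukuda--Komatsu (\cite{fk}). The only difference is that you make explicit the bookkeeping identification $\Q_{2^n}\cap\R=\mathbf{B}_{n-2}$ with the layers of the cyclotomic $\Z_2$-extension, which the paper leaves implicit.
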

\begin{proof}
Это результаты Миллера и Фукуды с Комацу.
Более точно.
\begin{enumerate}
\item
Первый результат получен Миллером в \cite{mil1} и \cite{mil2}.
Следует отметить, что  случаи для $n\leqslant7$ изучались в \cite{mas} и \cite{lin}.
\item
Это доказано в \cite{fk}.
\end{enumerate}
Лемма доказана.
\end{proof}

\subsection{Описание группы круговых единиц}

В пунктах 1.3.1 и 1.3.2 работы \cite{amkhr} была изучена группа $K(\alpha)$.
Однако, в заключении этих исследований использовались числа $t_j$, упоминавшиеся в начале пункта\ref{sss:u}.
Поэтому для полноты изложения приведём результаты из пунктов 1.3.1 и 1.3.2 работы \cite{amkhr}, которые не меняются,
а которые нужно изменить переформулируем и докажем.

Сначала изучим упоминавшуюся ранее подгруппу
\[
P = \left\langle1-\alpha^k\mid k\in\{1,2,\dots,2^n-1\}\right\rangle\leqslant\Q_{2^n}^*.
\]

\begin{lemma}[Лемма 4 в \cite{amkhr}]\label{l:P}
При введённых выше обозначениях
\begin{gather*}
P =\left\langle1-\alpha^j\mid j\in\{1,3,\dots,2^n-1\}\right\rangle\text{ и}\\
\langle\alpha \rangle\leqslant P.
\end{gather*}
\end{lemma}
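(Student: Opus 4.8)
The plan is to prove the two assertions of the lemma separately: first that the odd exponents $j\in\{1,3,\dots,2^n-1\}$ already generate $P$, and then that $\alpha\in P$. Throughout I would assume $n\geqslant2$; for $n\leqslant1$ the field is $\Q$ and the statement is vacuous or trivial, and in fact the inclusion $\langle\alpha\rangle\leqslant P$ genuinely fails for $n=1$.

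For the first assertion I would induct on the $2$-adic valuation of the exponent. For $m\geqslant0$ set $A_m=\langle\,1-\alpha^j\mid 1\leqslant j\leqslant2^n-1,\ v_2(j)\leqslant m\,\rangle$, where $v_2$ denotes the $2$-adic valuation; then $A_0\subseteq A_1\subseteq\dots\subseteq A_{n-1}=P$, and the goal is $A_0=A_{n-1}$. It is enough to check $A_m=A_{m-1}$ for $1\leqslant m\leqslant n-1$, i.e. that $1-\alpha^k\in A_{m-1}$ whenever $v_2(k)=m$. For such a $k$ I would use the factorisation $1-\alpha^k=(1-\alpha^{k/2})(1+\alpha^{k/2})$ and rewrite the second factor as $1+\alpha^{k/2}=1-\alpha^{\,l}$ with $l=k/2+2^{n-1}$, using $-1=\alpha^{2^{n-1}}$. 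Writing $k=2^mb$ with $b$ odd, the identity $k/2+2^{n-1}=2^{m-1}(b+2^{n-m})$ shows $v_2(l)=m-1$, and since $k\leqslant2^n-2$ one also has $1\leqslant l\leqslant2^n-1$ (here $m\leqslant n-1$ is exactly what makes $2^{n-m}$ even, hence $b+2^{n-m}$ odd). Thus both $1-\alpha^{k/2}$ and $1-\alpha^{l}$ are generators of $A_{m-1}$, so $1-\alpha^k\in A_{m-1}$; descending the chain gives $P=A_0=\langle\,1-\alpha^j\mid j\in\{1,3,\dots,2^n-1\}\,\rangle$.

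For the second assertion I would exhibit $\alpha$, up to sign, as a ratio of two of the surviving (odd-exponent) generators. From $1-\alpha^{-1}=-\alpha^{-1}(1-\alpha)$ and $\alpha^{-1}=\alpha^{2^n-1}$ one obtains
\[
-\alpha^{-1}=\frac{1-\alpha^{2^n-1}}{1-\alpha}\in P,
\]
both exponents being odd, and hence $-\alpha=(-\alpha^{-1})^{-1}\in P$. Since $n\geqslant2$, the exponent $1+2^{n-1}$ is odd and coprime to $2^n$, so $-\alpha=\alpha^{1+2^{n-1}}$ is again a primitive $2^n$-th root of unity; therefore $\langle\alpha\rangle=\langle-\alpha\rangle\leqslant P$.

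There is no serious obstacle: the work is bookkeeping at the level of the multiplicative group of the field, with no appeal to unit-theoretic facts. The one point requiring care is controlling the $2$-adic valuation of the auxiliary exponent $k/2+2^{n-1}$ and confirming that it lands in $\{1,\dots,2^n-1\}$ with valuation exactly $m-1$, which is what makes the induction close; and one must keep in mind that $-1$ is a power of $\alpha$, so that shifting an exponent by $2^{n-1}$ keeps us inside the prescribed list of generators.
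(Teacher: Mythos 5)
The paper itself gives no proof of this lemma: it is imported verbatim as Lemma~4 of \cite{amkhr}, so there is nothing in the present text to compare against line by line. Your argument is correct and is the standard one for this reduction. The induction on the $2$-adic valuation, resting on the factorisation $1-\alpha^{k}=(1-\alpha^{k/2})(1+\alpha^{k/2})$ together with $1+\alpha^{k/2}=1-\alpha^{k/2+2^{n-1}}$, is exactly the right mechanism, and your bookkeeping is sound: for $k=2^{m}b$ with $b$ odd and $1\leqslant m\leqslant n-1$ the auxiliary exponent $k/2+2^{n-1}=2^{m-1}(b+2^{n-m})$ indeed has valuation exactly $m-1$ and stays in $\{1,\dots,2^{n}-1\}$, so the chain $A_{n-1}=\dots=A_{0}$ closes. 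The second half, extracting $-\alpha^{-1}=(1-\alpha^{2^{n}-1})/(1-\alpha)\in P$ and then observing that $-\alpha=\alpha^{2^{n-1}+1}$ is again a primitive $2^{n}$-th root of unity for $n\geqslant2$, is also correct, and your caveat that the inclusion $\langle\alpha\rangle\leqslant P$ fails for $n=1$ is accurate but harmless here, since the paper only ever uses this machinery for $n\geqslant3$.
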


\begin{lemma}[Лемма 5 в \cite{amkhr}]\label{l:P+}
Для любых $\varepsilon_0\in\{-1,1\}$,\dots, $\varepsilon_{2^{n-2}-1}\in \{-1,1\}$
\[
P=\langle\alpha\rangle\left\langle1-\alpha^{\varepsilon_l(2l+1)}\mid\in\{0,1,\dots,2^{n-2}-1\}\right\rangle.
\]
\end{lemma}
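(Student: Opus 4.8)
The plan is to derive this directly from the preceding lemma. By Lemma~\ref{l:P} we already know $P=\langle 1-\alpha^{j}\mid j\in\{1,3,\dots,2^{n}-1\}\rangle$ and $\langle\alpha\rangle\leqslant P$, so the inclusion $\supseteq$ in the asserted equality is immediate: every $1-\alpha^{\varepsilon_l(2l+1)}$ has odd exponent, hence lies in $P$, and $\alpha\in P$. The content is the inclusion $\subseteq$, for which it suffices to show that each generator $1-\alpha^{j}$ of $P$, with $j$ odd and $1\leqslant j\leqslant 2^{n}-1$, belongs to the subgroup on the right.

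The single identity that does all the work is
\[
1-\alpha^{-k}=-\alpha^{-k}\bigl(1-\alpha^{k}\bigr),\qquad k\in\Z.
\]
Since $2^{n}$ is even, $-1=\alpha^{2^{n-1}}\in\langle\alpha\rangle$, so the factor $-\alpha^{-k}$ lies in $\langle\alpha\rangle$; consequently $1-\alpha^{k}$ and $1-\alpha^{-k}$ generate the same subgroup once $\langle\alpha\rangle$ is adjoined. First I would use this to dispose of the signs: for each $l$ and each choice $\varepsilon_l\in\{-1,1\}$ the element $1-\alpha^{\varepsilon_l(2l+1)}$ is either $1-\alpha^{2l+1}$ or $-\alpha^{-(2l+1)}(1-\alpha^{2l+1})$, whence
\begin{multline*}
\langle\alpha\rangle\bigl\langle 1-\alpha^{\varepsilon_l(2l+1)}\mid l\in\{0,\dots,2^{n-2}-1\}\bigr\rangle=\\
\langle\alpha\rangle\bigl\langle 1-\alpha^{2l+1}\mid l\in\{0,\dots,2^{n-2}-1\}\bigr\rangle,
\end{multline*}
independently of the $\varepsilon_l$. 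So it remains to identify this last group with $P$.

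Then I would check that $\langle\alpha\rangle\langle 1-\alpha^{2l+1}\mid l\rangle$ contains every generator $1-\alpha^{j}$ from Lemma~\ref{l:P}. The odd integers $j$ with $1\leqslant j<2^{n-1}$ are precisely the numbers $2l+1$ for $l\in\{0,\dots,2^{n-2}-1\}$, so those generators appear directly. If instead $2^{n-1}<j<2^{n}$ with $j$ odd, then $2^{n}-j$ is odd and $0<2^{n}-j<2^{n-1}$, hence $2^{n}-j=2l+1$ for some such $l$ and $\alpha^{j}=\alpha^{-(2l+1)}$; the displayed identity then gives $1-\alpha^{j}=-\alpha^{-(2l+1)}(1-\alpha^{2l+1})\in\langle\alpha\rangle\langle 1-\alpha^{2l+1}\rangle$. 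Since for $n\geqslant 2$ no odd $j$ equals $2^{n-1}$, these two cases exhaust all generators of $P$, which yields $P\subseteq\langle\alpha\rangle\langle 1-\alpha^{2l+1}\mid l\rangle$. Combined with the sign-invariance above and the inclusion $\supseteq$, this gives the claim.

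I do not expect a genuine obstacle here: the proof is a one-line identity plus bookkeeping. The only points requiring care are the enumeration of the odd residues modulo $2^{n}$ and the observation that $-1\in\langle\alpha\rangle$, which is exactly what lets the factors $-\alpha^{-k}$ be absorbed into $\langle\alpha\rangle$ and, in the same stroke, both removes the sign ambiguity and halves the number of generators from $2^{n-1}$ to $2^{n-2}$.
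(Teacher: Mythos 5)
Your proof is correct. Note that the paper itself gives no proof of this statement: it is imported verbatim as Лемма 5 from \cite{amkhr}, so there is nothing to compare against line by line. Your derivation from Lemma~\ref{l:P} — the identity $1-\alpha^{-k}=-\alpha^{-k}\left(1-\alpha^{k}\right)$ together with $-1=\alpha^{2^{n-1}}\in\langle\alpha\rangle$, which simultaneously kills the sign ambiguity in $\varepsilon_l$ and pairs each odd residue $j$ with $2^{n}-j$ to cut the generating set from $2^{n-1}$ to $2^{n-2}$ elements — is the standard argument and is complete; the only hypotheses you use (oddness of the exponents, $2^{n-1}$ not odd) hold in the paper's range $n\geqslant3$.
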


В следующей лемме $3$ заменяет $5$  в лемме 6 в \cite{amkhr}.

\begin{lemma}\label{l:3}
\[
P=\langle\alpha\rangle\left\langle1-\alpha^{3^k}\mid\in\{0,1,\dots,2^{n-2}-1\}\right\rangle.
\]
Таким образом, если $\mu\in P$, то
\[
\mu = \alpha^k \prod_{l=0}^{2^{n-2}-1} (1-\alpha^{3^l})^{k_l}
\]
для подходящих $k\in\{0,1,\dots,2^{n-3}-1\}$ и $k_l, l\in\{0,1,\dots,2^{n-2}-1\}$.
\end{lemma}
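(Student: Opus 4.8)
План --- вывести утверждение из леммы~\ref{l:P+}, выбрав в ней знаки $\varepsilon_l$ так, чтобы показатели $\varepsilon_l(2l+1)$ пробегали по модулю~$2^n$ в точности множество степеней тройки. Единственный содержательный ингредиент --- отмеченное ещё Гауссом (см.\ введение) свойство: при $n\geqslant3$ число~$3$ порождает по модулю~$2^n$ подгруппу индекса~$2$, то есть
\[
(\Z/2^n\Z)^{*}=\langle-1\rangle\times\langle3\rangle,\qquad\bigl|\langle3\rangle\bigr|=2^{n-2}.
\]
Его я бы доказал первым делом. Для $n\geqslant4$ достаточно проверить, что $v_2\bigl(3^{2^k}-1\bigr)=k+2$ при $k\geqslant1$, где $v_2$ --- $2$-адическое нормирование (индукция по~$k$: $3^{2^{k+1}}-1=\bigl(3^{2^k}-1\bigr)\bigl(3^{2^k}+1\bigr)$, а $3^{2^k}+1\equiv2\pmod4$). Тогда порядок~$3$ по модулю~$2^n$ равен $2^{n-2}$, а единственный элемент порядка~$2$ в циклической группе $\langle3\rangle$ есть $3^{2^{n-3}}\equiv1+2^{n-1}\not\equiv-1\pmod{2^n}$; значит $-1\notin\langle3\rangle$, и подсчёт порядков даёт указанное разложение. Малые значения~$n$ (в частности, $n=3$) разбираются прямым вычислением.

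Далее я бы воспользовался тем, что каждый из наборов $\{\,2l+1\mid l=0,\dots,2^{n-2}-1\,\}=\{1,3,\dots,2^{n-1}-1\}$ и $\{\,3^{k}\bmod2^n\mid k=0,\dots,2^{n-2}-1\,\}$ является полной системой представителей смежных классов по подгруппе $\langle-1\rangle$ в $(\Z/2^n\Z)^{*}$: для первого это очевидно (из пары $\{j,\,2^n-j\}$ нечётных вычетов ровно один меньше~$2^{n-1}$), для второго --- прямое следствие доказанного разложения. Поэтому найдутся перестановка~$\pi$ множества $\{0,\dots,2^{n-2}-1\}$ и знаки $\varepsilon_l\in\{-1,1\}$, для которых $\varepsilon_l(2l+1)\equiv3^{\pi(l)}\pmod{2^n}$ при всех~$l$. Применяя лемму~\ref{l:P+} именно с этим набором знаков и учитывая, что $\alpha$ --- первообразный корень из~$1$ степени~$2^n$, а потому $\alpha^{\varepsilon_l(2l+1)}=\alpha^{3^{\pi(l)}}$, получаем, что семейство порождающих $1-\alpha^{\varepsilon_l(2l+1)}$ после переиндексации по биекции~$\pi$ превращается в семейство $1-\alpha^{3^{k}}$, $k=0,\dots,2^{n-2}-1$, а значит
\[
P=\langle\alpha\rangle\bigl\langle\,1-\alpha^{3^{k}}\mid k=0,1,\dots,2^{n-2}-1\,\bigr\rangle.
\]
Это первое утверждение леммы; второе (запись произвольного $\mu\in P$ в виде $\mu=\alpha^{k}\prod_{l}(1-\alpha^{3^l})^{k_l}$) --- его непосредственная переформулировка, так как любой элемент абелевой группы $P$ есть произведение степени~$\alpha$ на произведение степеней порождающих $1-\alpha^{3^l}$.

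Главным (хотя и совсем небольшим) препятствием я считаю аккуратное $2$-адическое вычисление порядка~$3$ по модулю~$2^n$ и проверку того, что $-1\notin\langle3\rangle$; остальная часть доказательства --- чисто комбинаторная работа с представителями смежных классов, опирающаяся на уже установленную лемму~\ref{l:P+}.
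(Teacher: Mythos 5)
Your proposal is correct and follows essentially the same route as the paper: both reduce the claim to Lemma~\ref{l:P+} via Gauss's observation that $\left\{\pm3^k\mid k\in\{0,\dots,2^{n-2}-1\}\right\}$ is a reduced residue system modulo $2^n$, matching each $2l+1$ with some $\varepsilon_l3^{k_l}$. The only differences are cosmetic: you prove the Gauss fact by a $2$-adic valuation computation where the paper simply cites Gauss, and you obtain the bijectivity of $l\mapsto k_l$ by noting that both families are transversals of $\langle-1\rangle$ in $(\Z/2^n\Z)^{*}$, whereas the paper verifies injectivity by an explicit case analysis of $2l+1\equiv\pm(2p+1)\pmod{2^n}$ — your version is slightly cleaner but proves the same thing.
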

\begin{proof}
Согласно \cite[\textbf{Модули, являющиеся степенями числа $2$ (90, 91)}, с.~90--92]{gauss} множество
\[
\left\{\pm 3^k\mid k\in\{0,\dots,2^{n-2}-1=2^{n-2}-1\}\right\}
\]
образует приведённую систему вычетов по модулю $2^n$.
Это означает, что для любого нечётного $j\in\{1,3,\dots,2^n-1\}$ существуют такие $\varepsilon\in\{1,-1\}$ и $k\in\{0,\dots,2^{n-2}-1=2^{n-2}-1\}$, что
\[
j\equiv\varepsilon3^k\pmod{2^n}.
\]
Для каждого $l\in\{0,\dots,2^{n-2}-1=2^{n-2}-1\}$ найдём такие числа $\varepsilon_l\in\{1,-1\}$ и $k_l\in\{0,\dots,2^{n-2}-1=2^{n-2}-1\}$, что
\[
2l+1\equiv\varepsilon_l3^{k_l}\pmod{2^n}.
\]
Покажем, что
\[
\left\{k_l\mid l\in\{0,\dots,2^{n-2}-1=2^{n-2}-1\}\right\}=\left\{0,1,\dots,2^{n-2}-1\right\}.
\]
Допустим, что $k_l=k_p$ для 
\[
\{k_l,k_p\}\subseteq\left\{k_l\mid l\in\{0,\dots,32^{n-3}-1=2^{n-2}-1\}\right\}.
\]
Тогда
\[
\begin{aligned}
2l+1&\equiv\varepsilon_l3^{k_l}\pmod{2^n},\\
2p+1&\equiv\varepsilon_p3^{k_l}\pmod{2^n},
\end{aligned}
\longleftrightarrow
\begin{aligned}
\varepsilon_l(2l+1)&\equiv3^{k_l}\pmod{2^n},\\
\varepsilon_p(2p+1)&\equiv3^{k_l}\pmod{2^n}.
\end{aligned}
\]
Следовательно,
\begin{multline*}
\varepsilon_l(2l+1)\equiv\varepsilon_p(2p+1)\pmod{2^n} \longleftrightarrow\\
\longleftrightarrow 2l+1\equiv\pm(2p+1)\pmod{2^n}.
\end{multline*}
Если $2l+1\equiv 2p+1\pmod{2^n}$, то по выбору $l$ и $p$ имеем $l=p$.

Пусть $2l+1\equiv-(2p+1)\pmod{2^n}$.
Тогда  имеем
\begin{align*}
2(l+p)+2\equiv0\pmod{2^n}&\longleftrightarrow l+p+1\pmod{2^{n-1}}\longleftrightarrow\\
&\longleftrightarrow l\equiv -p-1\pmod{2^{n-1}}\longleftrightarrow\\
&\longleftrightarrow l=2^{n-1}-p-1.
\end{align*}
Получим систему
\[
\left\{
\begin{aligned}
0&\leqslant p\leqslant2^{n-2}-1\\
0&\leqslant2^{n-1}-p-1\leqslant2^{n-2}-1
\end{aligned}
\right.
\longrightarrow
0\leqslant2^{n-1}-1\leqslant2^{n-1}-2,
\]
что невозможно.

Таким образом,
\[
k_l=k_p\longleftrightarrow l=p.
\]
Теперь для любого значения $k\in\{0,\dots,2^{n-2}-1=2^{n-2}-1\}$ найдётся  такое число $l_k\in\{0,\dots,2^{n-2}-1=2^{n-2}-1\}$, что
\[
2l_k+1\equiv\varepsilon_{l_k}3^k\pmod{2^n}\longleftrightarrow\varepsilon_{l_k}(2l_k+1)\equiv3^k\pmod{2^n}.
\]
Откуда по лемме \ref{l:P+}
\[
P=\langle\alpha\rangle\left\langle 1-\alpha^{3^k}\mid k\in\{0,\dots,2^{n-2}-1=2^{n-2}-1\}\right\rangle.
\]
\end{proof}

\begin{lemma}\label{l:norm}
Для любого нечётного $j\in\{1, 3, \dots,2^n - 1\}$ норма в $\Q_{2^n}$
\[
\N(1 - \alpha^j) = 2.
\]
Если $\mu\in P$ и
\[
\mu = \alpha^k \prod_{l=0}^{2^{n-2}-1} (1-\alpha^{3^l})^{k_l}
\]
для подходящих $k\in\{0,1,\dots,2^n-1\}$ и $k_l, l\in\{0,1,\dots,2^{n-2}-1\}$, то
\[
\mu\in K(\alpha)\longleftrightarrow\sum_{l=0}^{2^{n-2}-1} k_l=0.
\]
\end{lemma}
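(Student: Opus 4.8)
Замысел таков: сначала напрямую вычислить норму $\N(1-\alpha^j)$, а затем за счёт мультипликативности нормы свести условие $\mu\in K(\alpha)$ к ограничению на $\sum_l k_l$.

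Сначала вычислим $\N(1-\alpha^j)$. Так как $j$ нечётно, оно обратимо по модулю $2^n$, и умножение на $j$ переставляет нечётные вычеты по модулю $2^n$; поэтому по описанию группы $\Gal(\Q_{2^n})$ из леммы~\ref{l:cyc} сопряжёнными к $\alpha^j$ являются в точности все первообразные корни степени $2^n$ из $1$, то есть набор $\{\alpha^m\mid m\text{ нечётно},\ 1\leqslant m\leqslant2^n-1\}$ --- полный набор сопряжённых к $\alpha$. Поскольку $1,\alpha,\dots,\alpha^{2^{n-1}-1}$ --- базис над $\Q$ и $\alpha^{2^{n-1}}=-1$ (лемма~\ref{l:cyc}), минимальный многочлен элемента $\alpha$ над $\Q$ равен $\Phi(x)=x^{2^{n-1}}+1$, и потому
\[
\N(1-\alpha^j)=\prod_{\substack{m\text{ нечётно}\\1\leqslant m\leqslant2^n-1}}(1-\alpha^m)=\Phi(1)=1^{2^{n-1}}+1=2.
\]

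Далее, норма мультипликативна, а $\alpha$ --- корень из $1$, так что $\N(\alpha)=\pm1$ (при $n\geqslant2$ даже $\N(\alpha)=(-1)^{2^{n-1}}\Phi(0)=1$). Поэтому для $\mu=\alpha^k\prod_{l=0}^{2^{n-2}-1}(1-\alpha^{3^l})^{k_l}$ получаем $\N(\mu)=\N(\alpha)^k\prod_l\N(1-\alpha^{3^l})^{k_l}=\pm\,2^{\sum_l k_l}$. Для импликации $\Rightarrow$: если $\mu\in K(\alpha)\subseteq\Un(\Z[\alpha])$, то и $\mu^{-1}\in\Z[\alpha]$, значит $\N(\mu)$ и $\N(\mu^{-1})$ --- целые числа с произведением $\N(1)=1$, откуда $\N(\mu)=\pm1$ и (по предыдущему равенству) $\sum_l k_l=0$. Для обратной импликации воспользуемся известным фактом о круговых единицах: для нечётного $j$ элемент $(1-\alpha^j)/(1-\alpha)=1+\alpha+\dots+\alpha^{j-1}$ есть единица кольца $\Z[\alpha]$, причём обратный к нему равен $(1-\alpha)/(1-\alpha^j)=\sum_{i=0}^{j^{*}-1}\alpha^{ij}\in\Z[\alpha]$, где $j^{*}$ --- нечётное число с $jj^{*}\equiv1\pmod{2^n}$ (тогда $\alpha^{jj^{*}}=\alpha$). Записав $1-\alpha^{3^l}=u_l(1-\alpha)$ с $u_l\in\Un(\Z[\alpha])$ и предполагая $\sum_l k_l=0$, получим
\[
\mu=\alpha^k\Big(\prod_l u_l^{k_l}\Big)(1-\alpha)^{\sum_l k_l}=\alpha^k\prod_l u_l^{k_l}\in\Un(\Z[\alpha]);
\]
а так как $\mu\in P$ по условию и $K(\alpha)=P\cap\Un(\Z[\alpha])$, отсюда $\mu\in K(\alpha)$.

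Единственное содержательное место --- факт о том, что $(1-\alpha^j)/(1-\alpha)$ есть единица $\Z[\alpha]$, использованный в обратной импликации; всё остальное --- подсчёт норм. Аккуратности требует именно проведение обратной импликации через явное разложение $\mu$ в произведение единиц (что заодно даёт $\mu^{-1}\in\Z[\alpha]$), а не ссылка на одно лишь равенство $\N(\mu)=\pm1$, которое само по себе не гарантирует, что $\mu$ --- целое алгебраическое число.
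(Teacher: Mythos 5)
Your proposal is correct and follows the same main line as the paper: compute $\N(1-\alpha^j)$ as $\Phi_{2^n}(1)=1^{2^{n-1}}+1=2$ using the fact that the conjugates of $\alpha^j$ are exactly all primitive $2^n$-th roots of unity, then use multiplicativity of the norm to get $\N\bigl(\prod_l(1-\alpha^{3^l})^{k_l}\bigr)=2^{\sum_lk_l}$, which forces $\sum_lk_l=0$ for $\mu\in K(\alpha)$. Where you genuinely add something is the converse direction: the paper closes it with the single remark that units of $\Z[\alpha]$ must have norm $\pm1$, which literally only yields necessity, since $\mu$ with negative exponents $k_l$ is a priori just an element of $\Q_{2^n}^*$ and the condition $\N(\mu)=\pm1$ alone does not show that $\mu$ and $\mu^{-1}$ lie in $\Z[\alpha]$. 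Your explicit factorization $1-\alpha^{3^l}=u_l(1-\alpha)$ with $u_l=(1-\alpha^{3^l})/(1-\alpha)\in\Un(\Z[\alpha])$ (invertible because $3^l$ is odd, with inverse $\sum_{i=0}^{j^*-1}\alpha^{i3^l}$), giving $\mu=\alpha^k\prod_lu_l^{k_l}$ when $\sum_lk_l=0$, supplies exactly the missing step and is the standard cyclotomic-units argument; it is consistent with how the paper itself later treats the elements $\beta_l$ in Lemma~\ref{l:beta}. So: same strategy, with a more carefully justified sufficiency part.
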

\begin{proof}
Алгебраически сопряжёнными с элементом $\alpha^j$ являются все первообразные корни из $1$ степени $2^n=2^n$
\[
\alpha,\alpha^3,\dots,\alpha^{2^n-1}.
\]
Они являются корнями кругового многочлена $\Phi_{2^n}(y)=y^{2^{n-1}}+1$, поэтому
\[
\Phi_{2^n}(y)=y^{2^{n-1}}+1=\prod_{l=0}^{2^{n-1}-1}(y-\alpha^{2l+1}).
\]
Отсюда
\[
\N(1-\alpha^j)=\prod_{l=0}^{62^{n-2}-1}(1-\alpha^{2l+1})=\Phi_{122^{n-1}}(1)=1+1=2.
\]
Так как $\alpha^k\in K(\alpha)=P\cap\Un(\Z[\alpha])$, то достаточно найти условие обратимости элемента $\prod_{l=0}^{32^{n-3}-1}(1-\alpha^{3^l})^{k_l}$.
Из мультипликативности нормы следует, что
\[
\N\left(\prod_{l=0}^{2^{n-2}-1}(1-\alpha^{3^l})^{k_l}\right)=\prod_{l=0}^{32^{n-3}-1}\left(\N(1-\alpha^{3^l})\right)^{k_l}=2^{\sum_{l=0}^{2^{n-2}-1}k_l}.
\]
Элементы группы $K(\alpha)$ являются единицами в кольце $\Z[\alpha]$, поэтому должны иметь норму $\pm1$.
Это даёт нужное.
\end{proof}

\begin{notations}
Для любого $l\in\{0, 1, \dots,2^{n-2}-1\}$ положим
\[
\beta_l=1+\alpha^{3^l}+\alpha^{2\cdot 3^l}=\alpha^{3^l}\left(\alpha^{- 3^l}+1+\alpha^{3^l}\right)=\alpha^{3^l}d_{3^l}.
\]
\end{notations}

\begin{lemma}\label{l:beta}
Для любого $l\in \{0,1,2,\dots,2^{n-2}-1\}$ элементы  $\beta_l$ лежат в $K(\alpha)$.
\end{lemma}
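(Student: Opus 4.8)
The plan is to exhibit $\beta_l$ explicitly as a quotient of two standard generators of $P$ and then read off membership in $K(\alpha)$ from Lemma~\ref{l:norm}. The starting point is the elementary identity $1+x+x^2=\dfrac{1-x^3}{1-x}$, applied with $x=\alpha^{3^l}$: since $3\cdot 3^l=3^{l+1}$ this gives
\[
\beta_l=1+\alpha^{3^l}+\alpha^{2\cdot 3^l}=\frac{1-\alpha^{3^{l+1}}}{1-\alpha^{3^l}}.
\]
Both $3^l$ and $3^{l+1}$ are odd, so $1-\alpha^{3^l}$ and $1-\alpha^{3^{l+1}}$ lie in $P$ (by Lemma~\ref{l:P}), and hence $\beta_l\in P$; moreover $\beta_l$ is manifestly an algebraic integer of $\Z[\alpha]$.

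Next I would put $\beta_l$ into the normal form required by Lemma~\ref{l:norm}. For $l\leqslant 2^{n-2}-2$ this is immediate: $\beta_l=\alpha^{0}\,(1-\alpha^{3^{l+1}})^{1}(1-\alpha^{3^{l}})^{-1}$. For the boundary value $l=2^{n-2}-1$ one first uses that $3$ has multiplicative order $2^{n-2}$ modulo $2^n$ (the Gauss result quoted in Lemma~\ref{l:3}), so $3^{l+1}=3^{2^{n-2}}\equiv 1\pmod{2^n}$ and therefore $\beta_l=\alpha^{0}\,(1-\alpha^{3^{0}})^{1}(1-\alpha^{3^{2^{n-2}-1}})^{-1}$. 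In either case $\beta_l$ is written as $\alpha^{k}\prod_{j=0}^{2^{n-2}-1}(1-\alpha^{3^{j}})^{k_j}$ with $k=0$ and exponent sum $\sum_j k_j=1+(-1)=0$, so Lemma~\ref{l:norm} yields $\beta_l\in K(\alpha)$. (Equivalently, by multiplicativity of the norm and the first assertion of Lemma~\ref{l:norm}, $\N(\beta_l)=\N(1-\alpha^{3^{l+1}})/\N(1-\alpha^{3^{l}})=2/2=1$, so $\beta_l$ is a unit of $\Z[\alpha]$ lying in $P$, i.e.\ in $P\cap\Un(\Z[\alpha])=K(\alpha)$.)

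There is essentially no obstacle in this argument; the only point needing a word of care is the wrap‑around case $l=2^{n-2}-1$, where the exponent $3^{l+1}$ has to be reduced modulo $2^n$ before it is recognized as one of the standard exponents $3^{j}$, and this is exactly what the order‑of‑$3$ statement guarantees.
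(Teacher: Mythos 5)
Your argument is correct and follows essentially the same route as the paper: both identify $\beta_l$ with the quotient $\dfrac{1-\alpha^{3^{l+1}}}{1-\alpha^{3^l}}$ and conclude from $\N(1-\alpha^{3^{j}})=2$ that $\N(\beta_l)=1$, hence $\beta_l\in P\cap\Un(\Z[\alpha])=K(\alpha)$. Your additional remarks (explicit membership in $P$, the wrap-around case $l=2^{n-2}-1$ via the order of $3$ modulo $2^n$) only make explicit what the paper leaves implicit.
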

\begin{proof}
Так как $\N\left(1-\alpha^{3^{l+1}}\right)=2$ и $\N(1-\alpha^{3^l})=2$, то
\[
\N(\beta_l)=\N\left(\frac{1-\alpha^{3^{l+1}}}{1-\alpha^{3^l}}\right)=1.
\]
Поэтому $\beta_l$ будет единицей кольца $\Z[\alpha]$.
\end{proof}

\begin{lemma}\label{l:Kb}
При введённых обозначениях
\[
K=\langle\alpha\rangle\left\langle\beta_l\mid l\in \{0,1,2,\dots,2^{n-2}-1\}\right\rangle.
\]
\end{lemma}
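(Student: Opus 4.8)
The plan is to show both inclusions. Since each $\beta_l \in K(\alpha)$ by Lemma \ref{l:beta} and $\langle\alpha\rangle \leqslant K(\alpha)$ (as $\alpha$ is a unit lying in $P$), the subgroup $\langle\alpha\rangle\langle\beta_l \mid l\in\{0,\dots,2^{n-2}-1\}\rangle$ is automatically contained in $K(\alpha)$. So the content is the reverse inclusion: every circular unit lies in this subgroup. First I would take an arbitrary $\mu \in K(\alpha) = P\cap\Un(\Z[\alpha])$ and apply Lemma \ref{l:3} to write
\[
\mu = \alpha^k \prod_{l=0}^{2^{n-2}-1}(1-\alpha^{3^l})^{k_l}
\]
for suitable integers $k$ and $k_0,\dots,k_{2^{n-2}-1}$. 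By Lemma \ref{l:norm}, the condition $\mu \in K(\alpha)$ forces $\sum_{l=0}^{2^{n-2}-1} k_l = 0$.

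The key algebraic identity is that $\beta_l = \dfrac{1-\alpha^{3^{l+1}}}{1-\alpha^{3^l}}$, which was already used in the proof of Lemma \ref{l:beta}; equivalently, $1+\alpha^{3^l}+\alpha^{2\cdot3^l}$ is the cyclotomic-type quotient telescoping consecutive factors $1-\alpha^{3^l}$. The strategy is then an Abel summation / telescoping argument on the exponent vector $(k_0,\dots,k_{2^{n-2}-1})$. Writing $p_l = k_0 + k_1 + \cdots + k_l$ for the partial sums (with $p_{-1}=0$ and, crucially, $p_{2^{n-2}-1}=0$ because the total is zero), one rearranges
\[
\prod_{l=0}^{2^{n-2}-1}(1-\alpha^{3^l})^{k_l} = \prod_{l=0}^{2^{n-2}-2}\left(\frac{1-\alpha^{3^l}}{1-\alpha^{3^{l+1}}}\right)^{p_l} = \prod_{l=0}^{2^{n-2}-2}\beta_l^{-p_l},
\]
using that the boundary term at $l=2^{n-2}-1$ drops out precisely because $p_{2^{n-2}-1}=0$ (and recalling $3^{2^{n-2}}\equiv 3^0 = 1 \pmod{2^n}$, so the index wraps consistently). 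This exhibits $\mu$ as $\alpha^k$ times a product of integer powers of the $\beta_l$, which is exactly what is required.

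The main obstacle I anticipate is handling the index arithmetic at the endpoint carefully: one must check that $3^{2^{n-2}} \equiv 1 \pmod{2^n}$ (so that $1-\alpha^{3^{2^{n-2}}} = 1-\alpha = 1-\alpha^{3^0}$ and the telescoping closes up into a genuine cycle rather than leaving a stray factor), and that the vanishing of $\sum k_l$ is used in exactly the right place to kill the boundary contribution. A secondary, more routine point is to confirm that every partial product appearing is a genuine unit of $\Z[\alpha]$ — but this is immediate from Lemma \ref{l:beta} once the rearrangement is valid, since each $\beta_l \in \Un(\Z[\alpha])$. Modulo this bookkeeping the argument is a standard telescoping, and no serious computation is needed.
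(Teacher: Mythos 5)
Your proposal is correct and follows essentially the same route as the paper: decompose $\mu\in K(\alpha)$ via Lemma \ref{l:norm} with $\sum_l k_l=0$, then express the exponents of the $\beta_l$ as partial sums of the $k_l$, using $3^{2^{n-2}}\equiv1\pmod{2^n}$ to close the telescoping cycle. The paper phrases this as solving the linear system $k_0=-f_0+f_{2^{n-2}-1}$, $k_j=f_{j-1}-f_j$, whose solution $f_j=f_0-(k_1+\dots+k_j)$ is exactly your $-p_j$ up to the free choice of $f_0$.
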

\begin{proof}
Пусть $\mu\in K(\alpha)$.
Тогда по лемме \ref{l:norm}
\[
\mu=\alpha^k\prod_{l=0}^{2^{n-2}-1}(1-\alpha^{3^l})^{k_l}
\]
для подходящих $k\in\{0,1,\dots,2^n-1\}$  и целых $k_l$ ($l\in\{0,\dots,2^{n-2}-1\}$), причём $\sum_{l=0}^{2^{n-2}-1}k_l=0$.

Достаточно найти такие целые $f_0$,\dots, $f_{2^{n-2}-1}$, что
\[
\prod_{l=0}^{2^{n-2}-1}(1-\alpha^{3^l})^{k_l}=\prod_{l=0}^{2^{n-2}-1}\beta_l^{f_l}=\prod_{l=0}^{2^{n-2}-1}\left((1-\alpha^{3^l})^{-f_l}(1-\alpha^{3^{l+1}})^{f_l}\right).
\]
Так как $3^{2^{n-2}}\equiv 1\pmod{2^n}$, то $\alpha^{3^{2^{n-2}}}=\alpha$ и можно рассмотреть систему
\begin{multline*}
\hspace{-3pt}\left\{
\begin{aligned}
k_0&=-f_0+f_{2^{n-2}-1},\\
k_1&=f_0-f_1,\\
k_2&=f_1-f_2,\\
&\vdots\\
k_{2^{n-2}-2}&=f_{2^{n-2}-3}-f_{2^{n-2}-2},\\
k_{2^{n-2}-1}&=f_{2^{n-2}-2}-f_{2^{n-2}-1},
\end{aligned}
\right.
\longleftrightarrow
\left\{
\begin{aligned}
f_{2^{n-2}-1}&=k_0+0,\\
f_1&=f_0-k_1,\\
f_2&=f_1-k_2,\\
&\vdots\\
f_{2^{n-2}-2}&=f_{2^{n-2}-3}-k_{2^{n-2}-2},\\
f_{2^{n-2}-1}&=f_{2^{n-2}-2}-k_{2^{n-2}-1},
\end{aligned}
\right.\longleftrightarrow\\
\longleftrightarrow
\left\{
\begin{aligned}
f_{32^{n-3}-1}&=k_0+f_0,\\
f_1&=f_0-k_1,\\
f_2&=f_0-k_1-k_2,\\
&\vdots\\
f_{2^{n-2}-2}&=f_0-k_1-\dots-k_{2^{n-2}-2},\\
f_{2^{n-2}-1}&=f_0-k_1-\dots-k_{2^{n-2}-1}.
\end{aligned}
\right.
\end{multline*}
Так как
\[
\sum_{l=0}^{2^{n-2}-1}k_l=0\longleftrightarrow k_0=-\sum_{l=1}^{2^{n-2}-1}k_l,
\]
то получим систему
\[
\left\{
\begin{aligned}
f_1&=f_0-k_1,\\
f_2&=f_0-k_1-k_2,\\
&\vdots\\
f_{2^{n-2}-2}&=f_0-k_1-\dots-k_{2^{n-2}-2},\\
f_{2^{n-2}-1}&=f_0-k_1-\dots-k_{2^{n-2}-1},
\end{aligned}
\right.,
\]
позволяющую находить решения  $f_1$,\dots, $f_{2^{n-2}-1}$ при произвольном $f_0$.
\end{proof}

\begin{notations}
Для любого $j\in \{0, 1, \dots, 2^{n-2}-1\}$ положим
\[
L_j=\{0,1,\dots, 2^{n-2}-1\}\setminus\{j\}=\{0,1,\dots,j-1,j+1,\dots,2^{n-2}-1\}.
\]
\end{notations}

\begin{lemma}\label{l:bLj}
Для любого $j\in\{0,1,\dots, 2^{n-2}-1\}$
\[
K(\alpha)=\langle\alpha\rangle\times\prod_{l\in L_j}\langle\beta_l\rangle.
\]
\end{lemma}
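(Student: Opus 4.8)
The plan is to deduce the statement from Lemma~\ref{l:Kb} by eliminating one generator via a telescoping relation and then invoking a rank count. By Lemma~\ref{l:Kb} we already know $K(\alpha)=\langle\alpha\rangle\langle\beta_l\mid l\in\{0,1,\dots,2^{n-2}-1\}\rangle$, so it suffices to see that (i) one of the $\beta_j$ may be dropped, and (ii) the remaining data assemble into the claimed direct product.

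For (i) I would first rewrite $\beta_l$ in multiplicative form: from $1+x+x^2=(1-x^3)/(1-x)$ with $x=\alpha^{3^l}$ one gets $\beta_l=(1-\alpha^{3^{l+1}})/(1-\alpha^{3^l})$ for every integer $l$, which is exactly the shape already used in the proof of Lemma~\ref{l:beta}. Since $3^{2^{n-2}}\equiv1\pmod{2^n}$ (the fact of Gauss invoked in Lemma~\ref{l:3}), we have $\alpha^{3^{2^{n-2}}}=\alpha$, and the product telescopes:
\[
\prod_{l=0}^{2^{n-2}-1}\beta_l=\frac{1-\alpha^{3^{2^{n-2}}}}{1-\alpha^{3^0}}=\frac{1-\alpha}{1-\alpha}=1 .
\]
Hence for any fixed $j\in\{0,1,\dots,2^{n-2}-1\}$ we have $\beta_j=\prod_{l\in L_j}\beta_l^{-1}$, so $\langle\beta_l\mid l\in\{0,\dots,2^{n-2}-1\}\rangle=\langle\beta_l\mid l\in L_j\rangle$, and Lemma~\ref{l:Kb} yields $K(\alpha)=\langle\alpha\rangle\,\langle\beta_l\mid l\in L_j\rangle$.

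For (ii) I would pin down the structure of $K(\alpha)$ as an abstract group. Its torsion subgroup is contained in the group of roots of unity of $\Q_{2^n}$, which is $\langle\alpha\rangle$; on the other hand $\langle\alpha\rangle\leqslant P$ by Lemma~\ref{l:P} and $\langle\alpha\rangle\leqslant\Un(\Z[\alpha])$, so $\langle\alpha\rangle\leqslant K(\alpha)$ and the torsion subgroup of $K(\alpha)$ is \emph{exactly} $\langle\alpha\rangle$. Thus $\overline K:=K(\alpha)/\langle\alpha\rangle$ is finitely generated and torsion-free, i.e.\ free abelian; and by Lemma~\ref{l:sin} the index $|\Un(\Z[\alpha]):K(\alpha)|=h(n)$ is finite, so the rank of $\overline K$ equals that of $\Un(\Z[\alpha])$, which is $2^{n-2}-1$ by Dirichlet's unit theorem for the totally imaginary field $\Q_{2^n}$ of degree $2^{n-1}$. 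Note $2^{n-2}-1=|L_j|$. Now the images $\overline\beta_l$ $(l\in L_j)$ generate the free abelian group $\overline K$, and their number equals the rank of $\overline K$; a generating set of a free abelian group of cardinality equal to its rank is a basis (equivalently, a surjection $\Z^{|L_j|}\to\overline K\cong\Z^{|L_j|}$ is an isomorphism). In particular each $\beta_l$, $l\in L_j$, has infinite order, and from $\alpha^a\prod_{l\in L_j}\beta_l^{e_l}=1$ one gets, after reduction modulo $\langle\alpha\rangle$, that $\sum_{l\in L_j}e_l\overline\beta_l=0$ in $\overline K$, whence all $e_l=0$ and then $\alpha^a=1$. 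Combined with the surjectivity from (i), this is precisely the assertion $K(\alpha)=\langle\alpha\rangle\times\prod_{l\in L_j}\langle\beta_l\rangle$.

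The only delicate point is the middle step: one must verify that the torsion of $K(\alpha)$ is exactly $\langle\alpha\rangle$ (so that $\overline K$ is honestly free of rank $2^{n-2}-1$) and must transport the Dirichlet rank from $\Un(\Z[\alpha])$ to its finite-index subgroup $K(\alpha)$ via Lemma~\ref{l:sin}. Once the number of generators $\overline\beta_l$ matches the rank, the direct-product decomposition is automatic.
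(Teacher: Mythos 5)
Your proposal is correct and follows essentially the same route as the paper: the telescoping identity $\prod_{l=0}^{2^{n-2}-1}\beta_l=1$ to drop one generator, followed by the observation that the torsion part of $K(\alpha)$ is $\langle\alpha\rangle$ and that the rank equals $2^{n-2}-1$, so the remaining generators must be independent. The paper states these last two facts without elaboration, whereas you supply the (correct) justification via Dirichlet's unit theorem and Lemma~\ref{l:sin}.
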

\begin{proof}
Поскольку $3^{2^{n-2}}\equiv 1\pmod{2^n}$, то $\alpha^{3^{2^{n-2}}}=\alpha$ и поэтому
\begin{align*}
\prod_{l=0}^{2^{n-2}-1}\beta_l&=\prod_{l=0}^{2^{n-2}-1}\frac{1-\alpha^{3^{l+1}}}{1-\alpha^{3^l}}=\\
&=\frac{1-\alpha^3}{1-\alpha}\cdot\frac{1-\alpha^{3^2}}{1-\alpha^3}\cdot\frac{1-\alpha^{3^3}}{1-\alpha^{3^2}}\cdot\dots\cdot\frac{1-\alpha^{3^{2^{n-2}-1}}}{1-\alpha^{3^{2^{n-2}-2}}}\cdot\frac{1-\alpha^{3^{2^{n-2}}}}{1-\alpha^{3^{2^{n-2}-1}}}=\\
&=\frac{1-\alpha^{3^{2^{n-2}}}}{1-\alpha}=1.
\end{align*}
Следовательно, из порождающих $\left\{\beta_l\mid l\in\{0,\dots,2^{n-2}-1\}\right\}$ можно удалить любой $\beta_l$.
Разложение группы $K$ в прямое произведение следует из того, что во-первых периодическая часть равна $\langle\alpha\rangle$, во-вторых, ранг группы единиц равен $2^{n-2}-1$.
\end{proof}

\begin{lemma}\label{l:tLj}
При введённых обозначениях группа круговых единиц
\[
K(\alpha)=\langle\alpha\rangle\times\prod_{l=0}^{2^{n-2}-2} \langle d_{2l+1} \rangle.
\]
\end{lemma}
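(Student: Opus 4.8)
План состоит в том, чтобы перенести разложение из леммы~\ref{l:bLj} с порождающих $\beta_l$ на порождающие $d_{2l+1}$. Опираться буду на два наблюдения: во-первых, $\beta_l=\alpha^{3^l}d_{3^l}$, так что $d_{3^l}=\alpha^{-3^l}\beta_l\in K(\alpha)$ (поскольку $\beta_l\in K(\alpha)$ по лемме~\ref{l:beta}, а $\alpha\in K(\alpha)$); во-вторых, показатели $3^l$ пробегают, с точностью до знака, полную систему представителей нечётных вычетов по модулю $2^n$ --- ровно так, как это использовалось в доказательстве леммы~\ref{l:3}. Сначала я бы зафиксировал такой индекс $j_0\in\{0,1,\dots,2^{n-2}-1\}$, что $3^{j_0}\equiv\pm(2^{n-1}-1)\pmod{2^n}$; такой $j_0$ существует и единствен, так как $\{\pm3^k\mid k\in\{0,\dots,2^{n-2}-1\}\}$ --- приведённая система вычетов по модулю $2^n$. Применив лемму~\ref{l:bLj} с этим $j_0$, получу $K(\alpha)=\langle\alpha\rangle\times\prod_{l\in L_{j_0}}\langle\beta_l\rangle$, где второй множитель --- свободная абелева группа ранга $2^{n-2}-1$ с базисом $\{\beta_l\}_{l\in L_{j_0}}$.

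Затем я бы применил элементарный факт: если $A\times F$ --- прямое произведение, причём $F$ свободна абелева с базисом $\{b_i\}$, то замена каждого $b_i$ на $b_ia_i$ с $a_i\in A$ снова даёт прямое разложение $A\times F'$, где $F'$ свободна абелева с базисом $\{b_ia_i\}$; проверка в одну строку --- гомоморфизм $F\to F'$, $b_i\mapsto b_ia_i$, инъективен в силу свободы $F$, при этом $A\cap F'=1$ и $AF'=AF$. Применяя это к $A=\langle\alpha\rangle$, $b_l=\beta_l$ и $a_l=\alpha^{-3^l}$, получу $K(\alpha)=\langle\alpha\rangle\times\prod_{l\in L_{j_0}}\langle d_{3^l}\rangle$.

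Наконец, остаётся отождествить набор $\{d_{3^l}\}_{l\in L_{j_0}}$ с $\{d_{2l+1}\}_{l=0}^{2^{n-2}-2}$. Так как $s_j=\alpha^j+\alpha^{-j}$ зависит лишь от $j$ по модулю $2^n$ и $s_{-j}=s_j$, имеем $d_{3^l}=d_{c_l}$, где $c_l$ --- единственный элемент множества $\{1,3,\dots,2^{n-1}-1\}$ с $3^l\equiv\pm c_l\pmod{2^n}$. Тот же довод об инъективности, что и в доказательстве леммы~\ref{l:3} (из $c_l=c_{l'}$ следует $3^{l}\equiv\pm3^{l'}\pmod{2^n}$; случай $3^{l}\equiv-3^{l'}$ невозможен, ибо $-1\notin\langle3\rangle$, а из $3^{l}\equiv3^{l'}$ следует $l=l'$, так как $3$ имеет порядок $2^{n-2}$ по модулю $2^n$ --- всё это видно из того, что $\{\pm3^k\}_{k=0}^{2^{n-2}-1}$ --- приведённая система вычетов), показывает, что $l\mapsto c_l$ --- биекция $\{0,\dots,2^{n-2}-1\}$ на $\{1,3,\dots,2^{n-1}-1\}$, переводящая $j_0$ в $2^{n-1}-1$. Поэтому $L_{j_0}$ отображается биективно на $\{1,3,\dots,2^{n-1}-3\}=\{2l+1\mid0\leqslant l\leqslant2^{n-2}-2\}$, откуда $\prod_{l\in L_{j_0}}\langle d_{3^l}\rangle=\prod_{l=0}^{2^{n-2}-2}\langle d_{2l+1}\rangle$, что и даёт утверждение леммы.

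Основной (хотя и небольшой) трудностью будет именно шаг замены базиса: нужно аккуратно убедиться, что переход от $\{\beta_l\}$ к $\{d_{3^l}\}$ сохраняет структуру прямого произведения --- не порождает кручения и не уменьшает ранг свободного множителя. Это целиком обеспечивается тем, что неторсионный множитель из леммы~\ref{l:bLj} свободен; всё остальное --- рутинный учёт гауссовой системы вычетов, уже построенной в доказательстве леммы~\ref{l:3}.
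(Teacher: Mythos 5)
Ваше рассуждение верно и по существу совпадает с доказательством в статье: оба опираются на лемму~\ref{l:bLj}, на тождество $\beta_l=\alpha^{3^l}d_{3^l}$ и на гауссов факт о том, что $\{\pm3^k\}$ --- приведённая система вычетов по модулю $2^n$, задающая биекцию между индексами $\beta$ и нечётными числами $2l+1$. Разница лишь в направлении этой биекции (в статье каждое $d_{2l+1}$ выражается через подходящее $\beta_{k_l}$, у вас каждое $\beta_l$ переписывается как $\alpha^{3^l}d_{c_l}$) и в том, что вы явно выписываете шаг о сохранении прямого разложения при замене свободных порождающих на их произведения с элементами множителя $\langle\alpha\rangle$, который в статье оставлен неявным.
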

\begin{proof}
Для каждого $l\in\{0,\dots,2^{n-2}-2=2^{n-2}-2\}$ найдём такие $\varepsilon_l\in\{1,-1\}$ и $k_l\in\{0,\dots,2^{n-2}-1=2^{n-2}-1\}$, что
\[
2l+1\equiv\varepsilon_l3^{k_l}\pmod{2^n}
\]
и поэтому
\begin{align*}
d_{2l+1}&=1+\alpha^{2l+1}+\alpha^{-(2l+1)}=\alpha^{-(2l+1)}\left(1+\alpha^{2l+1}+\alpha^{2(2l+1)}\right)=\\
&=\alpha^{-(2l+1)}\cdot\frac{1-\alpha^{3(2l+1)}}{1-\alpha^{2l+1}}=\alpha^{-2(2l+1)}\cdot\frac{1-\alpha^{3\varepsilon_l3^{k_l}}}{1-\alpha^{\varepsilon_l3^{k_l}}}=\\
&=\alpha^{-(2l+1)}\cdot\frac{1-\alpha^{\varepsilon_l3^{k_l+1}}}{1-\alpha^{\varepsilon_l3^{k_l}}}.
\end{align*}
Если $\varepsilon_l=1$, то
\[
d_{2l+1}=\alpha^{-(2l+1)}\cdot\frac{1-\alpha^{3^{k_l+1}}}{1-\alpha^{3^{k_l}}}=\alpha^{-(2l+1)}\cdot\beta_{k_l},
\]
и по предыдущей лемме среди $k_l$ не будет точно одного элемента  из
\[
\{0,1,\dots,2^{n-2}-1\}.
\]

Пусть $\varepsilon_l=-1$. Тогда имеем
\[
\frac{1-\alpha^{-3^{k_l+1}}}{1-\alpha^{-3^{k_l}}}=\frac{-\alpha^{-3^{k_l+1}}}{-\alpha^{-3^{k_l}}}\cdot\frac{1-\alpha^{3^{k_l+1}}}{1-\alpha^{3^{k_l}}}=
\alpha^{-3^{k_l+1}+3^{k_l}}\cdot\beta_{k_l}=\alpha^{-2\cdot 3^{k_l}}\cdot\beta_{k_l},
\]
и снова всё доказано по предыдущей лемме.
\end{proof}

\begin{notation}
Положим
\[
D=\prod_{l=0}^{2^{n-2}-2}\left\langle d_{2l+1}\right\rangle=\left\langle d_1\right\rangle\times\left\langle d_3\right\rangle\times\dots\times\left\langle d_{2^{n-1}-3}\right\rangle.
\]
\end{notation}

Как следствие леммы \ref{l:tLj} получим.
\begin{lemma}\label{l:ur}{\ }
\begin{enumerate}[{\rm1.}]
\item
$K(\alpha)=\langle\alpha\rangle\times D$.
\item
$K(\alpha)\cap\R=\langle-1\rangle\times D$.
\end{enumerate}
\end{lemma}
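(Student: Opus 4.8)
The plan is to obtain the first assertion immediately from Lemma~\ref{l:tLj} together with the definition of $D$. Indeed, Lemma~\ref{l:tLj} states $K(\alpha)=\langle\alpha\rangle\times\prod_{l=0}^{2^{n-2}-2}\langle d_{2l+1}\rangle$, and $D$ was defined to be precisely the product $\prod_{l=0}^{2^{n-2}-2}\langle d_{2l+1}\rangle=\langle d_1\rangle\times\dots\times\langle d_{2^{n-1}-3}\rangle$. So $K(\alpha)=\langle\alpha\rangle\times D$ with nothing further to prove.

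For the second assertion I would first record two elementary facts. First, $D\subset\R$: every generator $d_{2l+1}=1+s_{2l+1}$ is real because $\{s_j\}_{j\in\Z}\subset\R$, hence every element of $D$ is real. Second, the real elements of $\langle\alpha\rangle$ are exactly $\{1,-1\}$: since $\alpha=e^{i\frac{2\pi}{2^n}}$, a power $\alpha^k$ is real precisely when $2^{n-1}\mid k$, and then $\alpha^k=(\alpha^{2^{n-1}})^{k/2^{n-1}}=(-1)^{k/2^{n-1}}\in\langle-1\rangle$, so $\langle\alpha\rangle\cap\R=\langle-1\rangle$. In particular $-1=\alpha^{2^{n-1}}\in K(\alpha)$, which together with $D\subset\R$ gives the easy inclusion $\langle-1\rangle\cdot D\subseteq K(\alpha)\cap\R$.

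For the reverse inclusion, take $\mu\in K(\alpha)\cap\R$ and use the first assertion to write $\mu=\alpha^k\delta$ with $\delta\in D$ and $\alpha^k\in\langle\alpha\rangle$, the factorization being unique since the product $\langle\alpha\rangle\times D$ is direct. As $\delta$ is a nonzero real number and $\mu$ is real, $\alpha^k=\mu\delta^{-1}\in\R$, so $\alpha^k\in\langle\alpha\rangle\cap\R=\langle-1\rangle$, whence $\mu=\alpha^k\delta\in\langle-1\rangle\cdot D$. Finally, the product $\langle-1\rangle\times D$ is genuinely direct: $D$ is torsion-free, being a direct product of infinite cyclic groups, while $-1$ has order $2$, so $\langle-1\rangle\cap D=\{1\}$. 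Hence $K(\alpha)\cap\R=\langle-1\rangle\times D$.

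I do not expect a real obstacle here: the lemma is a formal consequence of Lemma~\ref{l:tLj}, and the only points needing attention are the two elementary observations above — that $D\subset\R$ and that $\langle\alpha\rangle\cap\R=\langle-1\rangle$ — and the directness of $\langle-1\rangle\times D$, which is automatic from torsion-freeness of $D$.
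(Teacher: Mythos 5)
Your proof is correct and follows essentially the same route as the paper: part 1 is read off from Lemma~\ref{l:tLj} and the definition of $D$, and part 2 comes from intersecting the decomposition $\langle\alpha\rangle\times D$ with $\R$, using that $D$ is real and $\langle\alpha\rangle\cap\R=\langle-1\rangle$. The only cosmetic difference is that the paper gets the realness of the complement by citing Lemma~\ref{l:u}, while you verify $D\subset\R$ and the directness of $\langle-1\rangle\times D$ directly, which is a harmless (and arguably more self-contained) way to fill in the same one-line argument.
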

\begin{proof}{\ }
\begin{enumerate}
\item
Это непосредственное следствие леммы \ref{l:tLj}.
\item
Всё следует из утверждения 1 и леммы \ref{l:u}.
\end{enumerate}

Лемма доказана.
\end{proof}

\section{Подгруппа $W_1$}

\subsection{Свойства $W_1$}

\begin{definition}
Пусть $\chi_1$ --- характер группы $G$ с
\[
\chi_1(x)=\alpha.
\]
Определим подгруппу $W_1$ нормализованной группы единиц $\V(\Z G)$ целочисленного группового кольца $\Z G$ циклической группы $G$ следующим образом:
\[
W_1=\left\langle u_{\chi_1}(\beta_1)\in\V(\Z G)\mid \beta_1\in\Un(\I(\Q(\chi_1)))\right\rangle.
\]
\end{definition}

\begin{remark}
Из определения единиц $u_{\chi_1}(\beta_1)$  в работе \cite[Определение 1]{aleev1} следует мультипликативность таких единиц, что даёт
\[
W_1=\left\{ u_{\chi_1}(\beta_1)\in\V(\Z G)\mid \beta_1\in\Un(\I(\Q(\chi_1)))\right\}.
\]
Как уже отмечалось ранее
\[
\Un(\I(\Q(\chi_1)))=\Un(\Z[\alpha]).
\]
\end{remark}

\begin{notation}
Обозначим след элемента $c\in\Q_{2^n}$ как:
\[
\tr_{\Q_{2^n}}(c),
\]
опуская иногда индекс $\Q_{2^n}$, то есть будем писать просто
\[
\tr(c).
\]
\end{notation}

Как непосредственное следствие леммы 1 из \cite{aleev1}, получаем следующий результат.
\begin{lemma}\label{l:locun}
Пусть $\beta_1\in\Un(\Z[\alpha])$ и
\[
u_{\chi_1}(\beta_1)=\sum_{j=0}^{2^n-1}\gamma_jx^j.
\]
Тогда для любого $j\in\{0,1,\dots,2^n-1\}$:
\[
\gamma_j=\begin{cases}
 1+\dfrac{\tr_{\Q_{2^n}}(\beta_1-1)}{2^n},&\text{если }j=0,\\
 \dfrac{1}{2^n}\tr_{\Q_{2^n}}\left((\beta_1-1)\alpha^{-j}\right),&\text{если }j\in\{1,\dots,2^n-1\}.
\end{cases}
\]
\end{lemma}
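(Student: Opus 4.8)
Доказательство строится как прямая специализация леммы~1 из \cite{aleev1} к случаю циклической группы $G$ порядка $2^n$ и точного характера $\chi_1$ со значением $\chi_1(x)=\alpha$. В этом случае поле характеров есть $\Q(\chi_1)=\Q_{2^n}=\Q(\alpha)$, откуда $\Un(\I(\Q(\chi_1)))=\Un(\Z[\alpha])$, а единица $u_{\chi_1}(\beta_1)$ из \cite[Определение~1]{aleev1} представима в виде $u_{\chi_1}(\beta_1)=1-e_{\chi_1}+e_{\chi_1}w$, где $e_{\chi_1}\in\Q G$ --- примитивная центральная идемпотента, отвечающая $\chi_1$, а $w=\sum_{k=0}^{2^{n-1}-1}b_kx^k\in\Z G$ --- поднятие разложения $\beta_1=\sum_{k=0}^{2^{n-1}-1}b_k\alpha^k$ (так что $\chi_1(w)=\beta_1$).

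Первый шаг --- выписать $e_{\chi_1}$. Так как примитивными корнями степени $2^n$ из $1$ являются в точности $\alpha^j$ при нечётных $j$, а группа Галуа $\Gal(\Q_{2^n})$ действует по правилу $\alpha\mapsto\alpha^j$ (лемма~\ref{l:cyc}), получаем
\[
e_{\chi_1}=\frac{1}{2^n}\sum_{k=0}^{2^n-1}\Bigl(\sum_{j\in\{1,3,\dots,2^n-1\}}\alpha^{-jk}\Bigr)x^k=\frac{1}{2^n}\sum_{k=0}^{2^n-1}\tr_{\Q_{2^n}}(\alpha^{-k})\,x^k.
\]
Второй шаг --- чисто вычислительный. Записав $u_{\chi_1}(\beta_1)=1+e_{\chi_1}(w-1)$, для коэффициента $\gamma_j$ при $x^j$ имеем $\gamma_j=\delta_{j0}+\bigl[e_{\chi_1}(w-1)\bigr]_j$. Сворачивая произведение в $\Z G$ по модулю $2^n$ и заменяя индекс суммирования $k\mapsto m=j-k$, получаем
\[
\bigl[e_{\chi_1}(w-1)\bigr]_j=\frac{1}{2^n}\sum_{m}\tr_{\Q_{2^n}}\!\bigl(\alpha^{m-j}\bigr)(w-1)_m=\frac{1}{2^n}\tr_{\Q_{2^n}}\!\Bigl(\alpha^{-j}\sum_m(w-1)_m\alpha^m\Bigr)=\frac{1}{2^n}\tr_{\Q_{2^n}}\!\bigl((\beta_1-1)\alpha^{-j}\bigr),
\]
где использованы $\Q$-линейность следа и равенство $\sum_m(w-1)_m\alpha^m=\chi_1(w)-1=\beta_1-1$. Отсюда при $j=0$ выходит $\gamma_0=1+\tr_{\Q_{2^n}}(\beta_1-1)/2^n$, а при $j\in\{1,\dots,2^n-1\}$ --- $\gamma_j=\tr_{\Q_{2^n}}((\beta_1-1)\alpha^{-j})/2^n$, что и требуется.

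Главным (хотя и чисто техническим) препятствием я считаю аккуратное согласование общей формулы из \cite{aleev1} с данной конкретной ситуацией: во-первых, нужно корректно отследить слагаемое $1$ в $\gamma_0$, порождаемое частью $1-e_{\chi_1}$; во-вторых, убедиться, что в случае точного характера $\chi_1$ след в формуле берётся именно над полем $\Q_{2^n}$, а не над каким-либо его подполем, --- это обеспечено точностью $\chi_1$, ввиду которой $\chi_1$-компонента кольца $\Q G$ изоморфна всему полю $\Q_{2^n}$. Целочисленность коэффициентов $\gamma_j$ отдельно проверять не потребуется: включение $u_{\chi_1}(\beta_1)\in\V(\Z G)$ гарантируется самой конструкцией единиц $u_{\chi_1}(\beta_1)$ из \cite{aleev1}.
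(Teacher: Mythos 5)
Ваше доказательство верно и по существу совпадает с подходом статьи: в тексте лемма~\ref{l:locun} вообще не доказывается, а лишь объявляется непосредственным следствием леммы~1 из \cite{aleev1}, тогда как вы честно выписываете идемпотент $e_{\chi_1}$ и проводите свёртку, получая нужную формулу для $\gamma_j$. Единственная неточность --- заключительная фраза о том, что целочисленность $\gamma_j$ и включение $u_{\chi_1}(\beta_1)\in\V(\Z G)$ «гарантируются конструкцией»: это неверно в контексте статьи (конструкция даёт лишь элемент $\Q G$, а условия целочисленности --- предмет предложения~\ref{pr:ub1z}), но на доказательство самой леммы, где $\gamma_j$ вычисляются как рациональные числа, это не влияет.
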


\begin{remark}
\empha{Поскольку нужно получить единицу
\[
u_{\chi_1}(\beta_1)=\sum_{j=0}^{2^n-1}\gamma_jx^j\in\V(\Z G),
\]
то необходимо найти такие условия на $\beta_1\in\Un(\Z[\alpha])$, чтобы обеспечивалась целочисленность $\gamma_j$ для всех} 
\[
j\in\{0,1,\dots,2^n-1\}.
\]
\end{remark}

Следующая лемма очевидна, потому что $y^{2^{n-1}}+1$ --- минимальный многочлен числа $\alpha$.
\begin{lemma}\label{l:tra}{\ }
\begin{enumerate}[{\rm1.}]
\item
Для любого $j\in\{0,1,\dots,2^n-1\}$
\[
\tr_{\Q_{2^n}}(\alpha^{-j})=\begin{cases}
 2^{n-1},&\text{если }j=0,\\
-2^{n-1},&\text{если }j=2^{n-1},\\
0&\text{для всех остальных }j.
\end{cases}
\]
\item
Пусть $\beta_1=\sum_{k=0}^{2^{n-1}-1}b_k\alpha^k\in\Un(\Z[\alpha])$.
Тогда для любого 
\[
j\in\{0,1,\dots,2^n-1\}
\]
имеем
\[
\tr_{\Q_{2^n}}(\beta_1\alpha^{-j})=
\begin{cases}
 2^{n-1}b_j&\text{при }j\in\{0,1,\dots,2^{n-1}-1\},\\
-2^{n-1}b_{j-2^{n-1}}&\text{при }j\in\{2^{n-1},\dots,2^n-1\}.
\end{cases}
\]
\end{enumerate}
\end{lemma}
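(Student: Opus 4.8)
The statement is essentially a direct computation, and the plan is to carry it out using only the explicit description of $\Gal(\Q_{2^n})$ from Lemma~\ref{l:cyc}, according to which $\tr_{\Q_{2^n}}(\beta)=\sum_k\sigma_k(\beta)$, the sum running over all odd $k\in\{1,3,\dots,2^n-1\}$ with $\sigma_k(\alpha)=\alpha^k$.

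For part~1 I would apply this with $\beta=\alpha^{-j}$ to get $\tr_{\Q_{2^n}}(\alpha^{-j})=\sum_{k\text{ odd}}\alpha^{-jk}$. The cases $j=0$ (all $2^{n-1}$ summands equal $1$) and $j=2^{n-1}$ (each summand is $(-1)^k=-1$) are immediate. For the remaining $j$ I would write $j=2^aj'$ with $j'$ odd, noting $0\le a\le n-2$ since $a=n-1$ would force $j=2^{n-1}$; then $\zeta:=\alpha^{2^a}$ is a primitive $2^{n-a}$-th root of unity with $n-a\ge2$, and $\alpha^{-jk}=\zeta^{-j'k}$ depends only on $j'k$ modulo $2^{n-a}$. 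As $k$ runs over the odd residues modulo $2^n$ so does $j'k$, and each odd residue modulo $2^{n-a}$ is attained exactly $2^a$ times; hence $\tr_{\Q_{2^n}}(\alpha^{-j})$ equals $2^a$ times the sum of all primitive $2^{n-a}$-th roots of unity, which vanishes because $\Phi_{2^{n-a}}(y)=y^{2^{n-a-1}}+1$ has zero coefficient in degree $2^{n-a-1}-1$ (equivalently, those roots cancel in pairs $\zeta,-\zeta$).

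For part~2, write $\beta_1=\sum_{k=0}^{2^{n-1}-1}b_k\alpha^k$ and expand by linearity of the trace:
\[
\tr_{\Q_{2^n}}(\beta_1\alpha^{-j})=\sum_{k=0}^{2^{n-1}-1}b_k\,\tr_{\Q_{2^n}}(\alpha^{k-j}).
\]
Now apply part~1 term by term. The $k$-th term is nonzero only if $k-j\equiv0$ or $2^{n-1}\pmod{2^n}$; but $k$ and the residue of $j$ modulo $2^{n-1}$ both lie in $\{0,\dots,2^{n-1}-1\}$, so their difference has absolute value strictly less than $2^{n-1}$, which rules out the class $2^{n-1}$ and leaves exactly one surviving index. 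If $j\in\{0,\dots,2^{n-1}-1\}$ that index is $k=j$, contributing $2^{n-1}b_j$; if $j\in\{2^{n-1},\dots,2^n-1\}$, set $j'=j-2^{n-1}$, so the surviving index is $k=j'$ and $\tr_{\Q_{2^n}}(\alpha^{j'-j})=\tr_{\Q_{2^n}}(\alpha^{-2^{n-1}})=\tr_{\Q_{2^n}}(-1)=-2^{n-1}$, contributing $-2^{n-1}b_{j-2^{n-1}}$.

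I do not anticipate any genuine obstacle; the only places needing a little care are the range estimate that singles out exactly one nonvanishing summand in part~2 and the remark that the sum of the primitive $2^m$-th roots of unity is $0$ for $m\ge2$. A fully equivalent alternative would be to read $\tr_{\Q_{2^n}}(\beta_1\alpha^{-j})$ off the matrix of multiplication by $\alpha^{-j}$ in the integral basis $1,\alpha,\dots,\alpha^{2^{n-1}-1}$ of Lemma~\ref{l:cyc} --- i.e.\ via the companion matrix of $y^{2^{n-1}}+1$ --- which amounts to the same bookkeeping.
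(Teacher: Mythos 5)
Your proof is correct, and all the delicate points (the vanishing of the sum of primitive $2^m$-th roots of unity for $m\geqslant2$, and the range estimate isolating the single surviving index in part~2) are handled properly. The paper itself offers no proof — it simply declares the lemma obvious because $y^{2^{n-1}}+1$ is the minimal polynomial of $\alpha$ — and your Galois-sum computation (equivalently, the companion-matrix bookkeeping you mention at the end) is precisely the standard filling-in of that one-line remark.
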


\begin{proposition}\label{pr:ub1z}
Пусть $\beta_1\in\Un(\Z[\alpha])$.
Локальная единица 
\[
u_{\chi_1}(\beta_1)\in\V(\Z G) 
\]
тогда и только тогда, когда
\begin{enumerate}[{\rm1)}]
\item
$\beta_1\in\Un(\Z[\alpha+\alpha^{-1}])=\langle-1\rangle\times K$, где $K$ как в лемме {\rm\ref{l:u}},
\item
причём $\beta_1\equiv1\pmod{2}$.
\end{enumerate}
\end{proposition}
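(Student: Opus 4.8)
The plan is to reduce everything to the coefficient formulas of Lemma~\ref{l:locun} plus a short Galois-theoretic argument. Write $\beta_1=\sum_{k=0}^{2^{n-1}-1}b_k\alpha^k$. First I would compute each $\gamma_j$ in $u_{\chi_1}(\beta_1)=\sum_{j=0}^{2^n-1}\gamma_jx^j$ by splitting $\tr_{\Q_{2^n}}\bigl((\beta_1-1)\alpha^{-j}\bigr)=\tr_{\Q_{2^n}}(\beta_1\alpha^{-j})-\tr_{\Q_{2^n}}(\alpha^{-j})$ and applying Lemma~\ref{l:tra} on the four ranges $j=0$, $1\le j\le 2^{n-1}-1$, $j=2^{n-1}$, and $2^{n-1}+1\le j\le 2^n-1$. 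This gives $\gamma_0=1+\frac{b_0-1}{2}$, $\gamma_j=\frac{b_j}{2}$, $\gamma_{2^{n-1}}=\frac{1-b_0}{2}$, and $\gamma_{2^{n-1}+l}=-\frac{b_l}{2}$, so all $\gamma_j\in\Z$ exactly when $b_0$ is odd and $b_1,\dots,b_{2^{n-1}-1}$ are even, i.e.\ (by Remark~\ref{r:int}) exactly when $\beta_1\equiv1\pmod2$. Since, by \cite{aleev1}, $u_{\chi_1}(\beta_1)$ is always a unit of $\Q G$ of augmentation $1$ whose inverse is $u_{\chi_1}(\beta_1^{-1})$, and $\beta_1^{-1}\equiv1\pmod2$ whenever $\beta_1\equiv1\pmod2$, this already yields $u_{\chi_1}(\beta_1)\in\V(\Z G)\iff\beta_1\equiv1\pmod2$. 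In particular condition~2) is necessary, and conditions~1)--2) together are sufficient (only~2) is used for sufficiency).

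It remains to show that for a \emph{unit} $\beta_1$ the congruence $\beta_1\equiv1\pmod2$ forces $\beta_1\in\Un(\Z[\alpha+\alpha^{-1}])$, i.e.\ $\beta_1\in\R$; this is the only non-routine part. By Lemma~\ref{l:u} write $\beta_1=\alpha^k\kappa$ with $\kappa\in K\subset\R$. Complex conjugation $\alpha\mapsto\alpha^{-1}$ is a ring automorphism of $\Z[\alpha]$ fixing the ideal $2\Z[\alpha]$, so $\overline{\beta_1}=\alpha^{-k}\kappa\equiv1\pmod2$ as well, whence $\alpha^{2k}=\beta_1\overline{\beta_1}^{-1}\equiv1\pmod2$. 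I would then rule out every value of $\alpha^{2k}$ except $1$: if $\alpha^m\ne\pm1$ then $\alpha^m$ is a primitive $2^t$-th root of unity with $t\ge2$, and $\N_{\Q_{2^n}}(1-\alpha^m)=2^{2^{n-t}}$ with exponent $2^{n-t}\le2^{n-2}<2^{n-1}$, while $1-\alpha^m\in2\Z[\alpha]$ would force $2^{2^{n-1}}\mid\N_{\Q_{2^n}}(1-\alpha^m)$ --- a contradiction; and the remaining possibility $\alpha^{2k}=-1$ gives $\beta_1=-\overline{\beta_1}$, hence $\tr_{\Q_{2^n}}(\beta_1)=-\tr_{\Q_{2^n}}(\beta_1)=0$, hence $b_0=0$ by Lemma~\ref{l:tra}, contradicting that $b_0$ is odd. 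Therefore $\alpha^{2k}=1$, so $\alpha^k=\pm1$ and $\beta_1=\pm\kappa\in\langle-1\rangle\times K=\Un(\Z[\alpha+\alpha^{-1}])$, which establishes condition~1) and finishes the $\Rightarrow$ direction.

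For the $\Leftarrow$ direction I would argue directly: condition~2) says $\beta_1\equiv1\pmod2$, so by the computation of the first step all $\gamma_j$ are integers and $u_{\chi_1}(\beta_1)\in\V(\Z G)$ (condition~1) is automatically present by the second step but is not needed here). Alternatively one may first use condition~1) together with Remark~\ref{r:intr} to write $\beta_1=b_0+\sum_{j=1}^{2^{n-2}-1}b_js_j$ with $b_0$ odd and $b_1,\dots,b_{2^{n-2}-1}$ even, and then re-expand in the basis $1,\alpha,\dots,\alpha^{2^{n-1}-1}$ via $\alpha^{-j}=-\alpha^{2^{n-1}-j}$ to see that the $\alpha$-coordinates again have the required parity.

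The main obstacle is the realness step: the congruence $\beta_1\equiv1\pmod2$ does not by itself imply $\beta_1\in\R$ for a general element of $\Z[\alpha]$, so one genuinely needs the structural description $\Un(\Z[\alpha])=\langle\alpha\rangle\times K$ from Lemma~\ref{l:u} together with the norm estimate, plus the small separate argument excluding $\alpha^{2k}=-1$ via the trace. The coefficient computation is a routine, if slightly tedious, trace calculation, and the $\Leftarrow$ direction is then immediate.
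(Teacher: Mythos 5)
Your proof is correct, and its first half coincides with the paper's: you compute the $\gamma_j$ from Lemma~\ref{l:locun} and Lemma~\ref{l:tra}, read off via Remark~\ref{r:int} that integrality of all coefficients is exactly the congruence $\beta_1\equiv1\pmod2$, and settle sufficiency by the multiplicativity of $u_{\chi_1}$ from \cite{aleev1} (the paper is equally terse at that point). The genuine divergence is in the realness step, i.e. showing that a unit $\beta_1=\alpha^l\kappa$ with $\kappa\in K$ and $\beta_1\equiv1\pmod2$ must have $\alpha^l=\pm1$. The paper assumes $l\notin\{0,2^{n-1}\}$, chooses an exponent $l_12^{n-2-s}$ with $2^s=\gcd(l,2^n)$ so that $\beta_1^{l_12^{n-2-s}}=i\cdot(\text{real unit})$, and expands that element in the integral basis to see that its constant coefficient is $0$, contradicting the congruence; everything stays inside coefficient manipulations. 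You instead form $\beta_1\overline{\beta_1}^{-1}=\alpha^{2l}\equiv1\pmod2$, eliminate every root of unity other than $\pm1$ by comparing $\N(1-\zeta_{2^t})=2^{2^{n-t}}$ (with $t\geqslant2$) against the divisibility $2^{2^{n-1}}\mid\N(2\rho)$, and dispose of $\alpha^{2l}=-1$ by the trace, which again reduces to the oddness of $b_0$. Both arguments rest on the same two ingredients --- the decomposition $\Un(\Z[\alpha])=\langle\alpha\rangle\times K$ of Lemma~\ref{l:u} and the parity of $b_0$ --- but your version isolates the root-of-unity obstruction directly and avoids the auxiliary exponent and basis expansion, at the price of importing the standard norm computation for $1-\zeta_{2^t}$; the paper's version is more elementary in its toolkit but requires the somewhat fiddly choice of power and the verification that the relevant exponents avoid $\{0,2^{n-1}\}$. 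Your observation that condition~1) is in fact a consequence of condition~2) for units is also implicit in the paper's own proof, so there is no logical discrepancy.
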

\begin{proof}{\ }
\begin{description}
\item[Необходимость.]
Пусть $\beta_1=\sum_{k=0}^{2^{n-1}-1}b_k\alpha^k$.
Тогда  по лемме \ref{l:tra}
\begin{align*}
\gamma_0\in\Z&\longleftrightarrow\tr_{\Q_{2^n}}(\beta_1-1)\equiv0\pmod{2^n}\longleftrightarrow\\
&\longleftrightarrow2^{n-1}b_0\equiv2^{n-1}\pmod{2^n}\longleftrightarrow b_0\equiv1\pmod{2},\\
\gamma_{2^{n-1}}\in\Z&\longleftrightarrow\tr_{\Q_{2^n}}((\beta_1-1)(-1))\equiv0\pmod{2^n}\longleftrightarrow\\
&\longleftrightarrow2^{n-1}b_0\equiv2^{n-1}\pmod{2^n}\longleftrightarrow b_0\equiv1\pmod{2},
\intertext{для любого $j\in\{1,2,\dots,2^{n-1}-1\}$}
\gamma_j\in\Z&\longleftrightarrow\tr_{\Q_{2^n}}\left((\beta_1-1)\alpha^{-j}\right)\equiv0\pmod{2^n}\longleftrightarrow\\
&\longleftrightarrow2^{n-1}b_j\equiv0\pmod{2^n}\longleftrightarrow b_j\equiv0\pmod{2},
\intertext{для любого $j\in\{2^{n-1},2^{n-1}+1,\dots,2^n-1\}$}
\gamma_j\in\Z&\longleftrightarrow\tr_{\Q_{2^n}}\left((\beta_1-1)\alpha^{-j}\right)\equiv0\pmod{2^n}\longleftrightarrow\\
&\longleftrightarrow-2^{n-1}b_{j-2^{n-1}}\equiv0\pmod{2^n}\\
&\longleftrightarrow b_{j-2^{n-1}}\equiv0\pmod{2}.
\end{align*}
Это даёт нам, что $\beta_1\equiv1\pmod{2}$.

С другой стороны
\[
\beta_1\in\Un(\Z[\alpha])=\langle\alpha\rangle\times K.
\]
Поэтому
\[
\beta_1=\alpha^l\cdot k,
\]
где $l\in\{0,1,\dots,2^n-1\}$ и $k\in K$.

Предположим, что $l\notin\{0,62^{n-2}\}$.
В этом случае
\[
\mbox{\text{НОД}}(l,2^n)=2^s\in\{1,2,4,\dots,2^{n-2}\}\longleftrightarrow ll_1+2^n n_1=2^s
\]
для подходящих целых $l_1$ и $n_1$.
Отсюда
\[
2^{n-2}=l(l_1\cdot2^{n-2-s})+2^n(n_1\cdot2^{n-2-s}),
\]
что влечёт, что
\begin{align*}
i&=\alpha^{2^{n-2}}=\alpha^{l(l_1\cdot2^{n-2-s})+2^n(n_1\cdot2^{n-2-s})}=(\alpha^l)^{l_1\cdot2^{n-2-s}}(\alpha^{2^n})^{n_1\cdot2^{n-2-s}}=\\
&=(\alpha^l)^{l_1\cdot2^{n-2-s}}
\end{align*}
и также
\[
\beta=\beta_1^{l_1\cdot2^{n-2-s}}=(\alpha^k)^{l_1\cdot2^{n-2-s}}\cdot k^{l_1\cdot2^{n-2-s}}=i\cdot k^{l_1\cdot2^{n-2-s}}
\]
Для  подходящих $a_j\in\Z$ для всех $j\in\{0,1,\dots,2^{n-2}-1\}$ по лемме \ref{l:ib} имеем
\begin{align*}
\beta&=i\left(a_0+\sum_{j=1}^{2^{n-2}-1}a_j(\alpha^j+\alpha^{-j})\right)=\\
&=a_0\cdot i+\sum_{j=1}^{2^{n-2}-1}a_j(\alpha^{j+2^{n-2}}+\alpha^{-j+2^{n-2}}).
\end{align*}
Очевидно
\[
\left\{j+2^{n-2},-j+2^{n-2}\mid j\in\{1,2,\dots,2^{n-2}\}\right\}\cap\{0,2^{n-1}\}=\varnothing,
\]
что даёт противоречие с утверждением 1.
Таким образом
\[
\beta_1=\pm k.
\]
Поэтому по лемме \ref{l:ur} получим
\[
\beta_1\in\Un(\Z[\alpha+\alpha^{-1}])=\langle-1\rangle\times K.
\]
\item[Достаточность.]
Из леммы \ref{l:tra} нетрудно получается доказательство достаточности.
\end{description}

Предложение доказано.
\end{proof}

\begin{lemma}\label{l:-1}
$\langle u_{\chi_1}(-1)\rangle=\langle x^{2^{n-1}}\rangle$.
\end{lemma}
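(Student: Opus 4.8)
The plan is to compute $u_{\chi_1}(-1)$ explicitly from the formula in Lemma~\ref{l:locun} with $\beta_1=-1$ and observe that it equals $x^{2^{n-1}}$; the equality of cyclic subgroups is then immediate. First I would note that $-1\in\Un(\Z[\alpha+\alpha^{-1}])$ and $-1\equiv1\pmod2$, so by Proposition~\ref{pr:ub1z} the element $u_{\chi_1}(-1)$ genuinely lies in $\V(\Z G)$, which makes the statement meaningful.

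Next, write $u_{\chi_1}(-1)=\sum_{j=0}^{2^n-1}\gamma_jx^j$. Since $\beta_1-1=-2$, Lemma~\ref{l:locun} gives
\[
\gamma_0=1+\frac{\tr_{\Q_{2^n}}(-2)}{2^n},\qquad
\gamma_j=\frac{1}{2^n}\tr_{\Q_{2^n}}(-2\alpha^{-j})=\frac{-2}{2^n}\tr_{\Q_{2^n}}(\alpha^{-j})\ \text{ for }j\in\{1,\dots,2^n-1\}.
\]
By part~1 of Lemma~\ref{l:tra} (the case $\beta_1=1$, i.e. $b_0=1$ and $b_k=0$ otherwise), $\tr_{\Q_{2^n}}(1)=2^{n-1}$, $\tr_{\Q_{2^n}}(\alpha^{-2^{n-1}})=-2^{n-1}$, and $\tr_{\Q_{2^n}}(\alpha^{-j})=0$ for all remaining $j$. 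Hence $\gamma_0=1+\dfrac{-2\cdot2^{n-1}}{2^n}=1-1=0$, and $\gamma_j=0$ for every $j\in\{1,\dots,2^n-1\}$ with $j\neq2^{n-1}$, while
\[
\gamma_{2^{n-1}}=\frac{-2}{2^n}\,\tr_{\Q_{2^n}}(\alpha^{-2^{n-1}})=\frac{-2}{2^n}\cdot(-2^{n-1})=1.
\]

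Therefore $u_{\chi_1}(-1)=x^{2^{n-1}}$, and consequently $\langle u_{\chi_1}(-1)\rangle=\langle x^{2^{n-1}}\rangle$. I do not expect any real obstacle here: the only points requiring care are the sign in the $\gamma_0$ term and the separate treatment of the index $j=2^{n-1}$, both of which are handled directly by Lemma~\ref{l:tra}.
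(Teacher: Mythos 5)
Your proposal is correct and follows essentially the same route as the paper: compute the coefficients $\gamma_j$ of $u_{\chi_1}(-1)$ via Lemma~\ref{l:locun} and the trace values from Lemma~\ref{l:tra}, obtaining $\gamma_{2^{n-1}}=1$ and all other $\gamma_j=0$, hence $u_{\chi_1}(-1)=x^{2^{n-1}}$. The preliminary appeal to Proposition~\ref{pr:ub1z} to confirm $u_{\chi_1}(-1)\in\V(\Z G)$ is a harmless extra that the paper omits.
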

\begin{proof}
Пусть
\[
u_{\chi_1}(-1)=\sum_{j=0}^{2^n-1}\gamma_jx^j.
\]
Тогда для любого $j\in\{0,1,\dots,2^n-1\}$ по леммам \ref{l:locun} и \ref{l:tra}:
\[
\gamma_j=\begin{cases}
 1+\dfrac{\tr_{\Q_{2^n}}(-1-1)}{2^n}=1-\dfrac{2^n}{2^n}=0,&\text{если }j=0,\\
 \dfrac{1}{2^n}\tr_{\Q_{2^n}}\left((-1-1)(-1)\right)=\dfrac{2^n}{2^n}=1,&\text{если }j=2^{n-1},\\
 \dfrac{1}{2^n}\tr_{\Q_{2^n}}\left((-1-1)\alpha^{-j}\right)=-2\cdot0=0&\text{для всех остальных }j.
\end{cases}
\]
Лемма доказана.
\end{proof}

\subsection{Подгруппа $V_1$}

\begin{agreement}
Далее ограничимся рассмотрением \empha{только круговых единиц}.
Более точно, будут рассматриваться только элементы группы $D$.
\end{agreement}

\begin{notations}
Введём следующие обозначения.
\begin{enumerate}
\item
Положим
\[
E=\left\{\lambda\in D\mid\lambda\equiv1\pmod{2}\right\}.
\]
Иными словами,
\[
E=(1+2\Z[\alpha])\cap D.
\]
\item
$V_1=\left\{u_{\chi_1}(\lambda)\in W_1\mid \lambda\in E\right\}$.
\end{enumerate}
\end{notations}

\begin{lemma}\label{l:std}
Для любого $l\in\{0,\dots,2^{n-2}-1\}$
\[
d_{2l+1}^{2^{n-2}}\equiv1\pmod{2},\text{ то есть }d_{2l+1}^{2^{n-2}}\in E,
\]
и для любого $k\in\{0,1,\dots,n-3\}$
\[
d_{2l+1}^{2^k}\not\equiv1\pmod{2}, \text{ то есть }d_{2l+1}^{2^k}\notin E.
\]
\end{lemma}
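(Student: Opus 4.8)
The plan is to reduce everything to the behaviour of $\{s_j\}$ modulo $2$ via the squaring identities already in hand, so that the only genuinely new ingredient is a characterization of when $s_m\equiv0\pmod2$.

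For the first assertion $d_{2l+1}^{2^{n-2}}\in E$ I would first note that $d_{2l+1}$ is a unit of $\Z[\alpha]$ for every odd $2l+1$: exactly as in Lemma~\ref{l:norm} and the proof of Lemma~\ref{l:tLj} one has $d_{2l+1}=\alpha^{-(2l+1)}\dfrac{1-\alpha^{3(2l+1)}}{1-\alpha^{2l+1}}$, and since $\N(1-\alpha^{\text{odd}})=2$ this has norm $1$. Being real and of circular type, $d_{2l+1}\in K(\alpha)\cap\R=\langle-1\rangle\times D$ by Lemma~\ref{l:ur}, so raising it to the even exponent $2^{n-2}$ lands in $D$. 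The congruence $d_{2l+1}^{2^{n-2}}\equiv1\pmod2$ is then immediate from Lemma~\ref{l:sdr}, case $(DD)$ (equivalently: iterate $d_j^2\equiv d_{2j}\pmod2$ of Lemma~\ref{l:md2}.3 to get $d_{2l+1}^{2^{n-2}}\equiv d_{2^{n-2}(2l+1)}$, then apply $d_{2^{n-2}k}\equiv1\pmod2$ of Lemma~\ref{l:md2}.1). Hence $d_{2l+1}^{2^{n-2}}\in E$.

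For the second assertion, since $E\subseteq D$ it suffices to prove $d_{2l+1}^{2^k}\not\equiv1\pmod2$ for each $k\in\{0,1,\dots,n-3\}$. By the same iteration of $d_j^2\equiv d_{2j}\pmod2$ (or $(DD)$ in Lemma~\ref{l:sdr}),
\[
d_{2l+1}^{2^k}\equiv d_{2^k(2l+1)}=1+s_{2^k(2l+1)}\pmod2,
\]
so the claim becomes $s_m\not\equiv0\pmod2$ for $m=2^k(2l+1)$, an integer of $2$-adic valuation exactly $k\le n-3$. I would isolate the sub-step: $s_m\equiv0\pmod2$ holds if and only if $2^{n-2}\mid m$. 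Indeed, by Lemma~\ref{l:ms2} the sequence $\{s_j\}$ is periodic modulo $2$ with period $2^{n-1}$, and the explicit table there shows that among $j\in\{0,1,\dots,2^{n-1}-1\}$ the only indices with $s_j\equiv0$ are $j=0$ and $j=2^{n-2}$; every other entry is one of the elements $s_1,\dots,s_{2^{n-2}-1}$ (including $s_{2^{n-3}}=\sqrt2$), and none of these lies in $2\Z[\alpha+\alpha^{-1}]$ because, by Lemma~\ref{l:ib}.\ref{intbr} together with Remark~\ref{r:intr}, $1,s_1,\dots,s_{2^{n-2}-1}$ is a $\Z$-basis of $\Z[\alpha+\alpha^{-1}]$, so each such $s_r$ has an odd coefficient (namely $1$) in its own expansion. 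Since $v_2(m)=k\le n-3<n-2$, we get $2^{n-2}\nmid m$, hence $s_m\not\equiv0\pmod2$, hence $d_{2l+1}^{2^k}\not\equiv1\pmod2$ and $d_{2l+1}^{2^k}\notin E$.

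Almost everything here is bookkeeping with results already proved (Lemmas~\ref{l:norm}, \ref{l:md2}, \ref{l:sdr} and \ref{l:ur}). The one point requiring a little care — and the place I expect the only (minor) friction — is the sub-step $s_m\equiv0\pmod2\iff2^{n-2}\mid m$: when reducing the index $m$ into the range $\{0,\dots,2^{n-1}-1\}$ one uses the symmetry/antisymmetry of Lemma~\ref{l:ms2}.2, and one must check that the reflections $j\mapsto 2^{n-1}-j$ and $j\mapsto 2^{n}-j$ involved do not alter the $2$-adic valuation, which is true precisely because $v_2(m)=k<n-1$. Once that is observed, the lemma follows.
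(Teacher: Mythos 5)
Your argument is correct, but the second (negative) half takes a genuinely different route from the paper's. For $d_{2l+1}^{2^{n-2}}\equiv1\pmod2$ both you and the paper iterate $d_j^2\equiv d_{2j}\pmod2$ and invoke $d_{2^{n-2}k}\equiv1\pmod2$ from Lemma~\ref{l:md2}; the paper additionally first reduces to $d_1$ by Galois conjugacy, which is cosmetic. For $d_{2l+1}^{2^k}\not\equiv1\pmod2$, the paper argues by contradiction: if $d_1^{2^k}\equiv1$ for some $k\leqslant n-3$, then squaring up gives $d_1^{2^{n-3}}\equiv d_{2^{n-3}}=1+\sqrt2\equiv1\pmod2$, whence $\sqrt2/2\in\Z[\alpha]$, impossible since $\sqrt2/2$ is not an algebraic integer. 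You instead prove the sharper statement $s_m\equiv0\pmod2\iff2^{n-2}\mid m$, using the periodicity of $\{s_j\}$ modulo $2$ with period $2^{n-1}$, the table of Lemma~\ref{l:ms2}, and the fact that $1,s_1,\dots,s_{2^{n-2}-1}$ is an integral basis of $\Z[\alpha+\alpha^{-1}]$ (Lemma~\ref{l:ib}.\ref{intbr}, Remark~\ref{r:intr}), so that no basis element $s_r$ with $1\leqslant r\leqslant2^{n-2}-1$ can lie in $2\Z[\alpha+\alpha^{-1}]$ (and hence not in $2\Z[\alpha]$ either, since an element of $\Q(\alpha+\alpha^{-1})$ lying in $\frac12\cdot2\Z[\alpha]$ is an algebraic integer of the real subfield). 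The paper's version is more economical — it needs only the single value $d_{2^{n-3}}=1+\sqrt2$ and one non-integrality fact — while yours yields a reusable characterization of the vanishing of $s_m$ modulo $2$ and avoids the contradiction scaffolding; both are complete. Your precaution about reflections preserving the $2$-adic valuation is not really needed once the claim is phrased as ``zeros in one period occur only at $j\equiv0\pmod{2^{n-2}}$,'' but it does no harm.
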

\begin{proof}
Поскольку для любого $l\in\{0,\dots,2^{n-2}-1\}$ число  $u=d_{2l+1}$  алгебраически сопряжёно с числом
\[
d_1=1+s_1=1+\alpha+\alpha^{-1},
\]
то достаточно рассмотреть $d_1$.
По лемме \ref{l:md2}
\[
d_1^{2^{n-2}}\equiv d_{2^{n-2}}\equiv1\pmod{2}.
\]

Допустим, что для некоторого $k\in\{0,1,\dots,n-3\}$
\[
d_1^{2^k}\equiv d_{2^k}\equiv1\pmod{2}.
\]
Тогда по лемме  \ref{l:md2}
\[
d_1^{2^{n-3}}\equiv d_{2^{n-2}}=1+\sqrt{2}\equiv1\pmod{2}.
\]
Откуда
\[
\frac{\sqrt{2}}{2}\in\Z[\alpha],
\]
что невозможно, ибо $\frac{\sqrt{2}}{2}$ не является целым алгебраическим числом.

Лемма доказана.
\end{proof}

\begin{proposition}\label{p:D2}{\ }
\begin{enumerate}[{\rm 1.}]
\item
Индекс
\[
\left|D:D^{2^{n-2}}\right|=(2^{n-2})^{2^{n-2}-1}=2^{(n-2)(2^{n-2}-1)},
\]
равносильно порядок фактор-группы
\[
\left|D/D^{2^{n-2}}\right|=(2^{n-2})^{2^{n-2}-1}=2^{(n-2)(2^{n-2}-1)}.
\]
\item
$E$ является подгруппой $D$, содержащей $D^{2^{n-2}}$, то есть
\[
D^{2^{n-2}}\leqslant E<D.
\]
\end{enumerate}
\end{proposition}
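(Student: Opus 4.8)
Замысел состоит в том, чтобы утверждение~1 вывести прямо из строения $D$ как свободной абелевой группы, установленного в лемме~\ref{l:tLj}. По этой лемме $D=\prod_{l=0}^{2^{n-2}-2}\langle d_{2l+1}\rangle$ --- прямое произведение $2^{n-2}-1$ бесконечных циклических групп, то есть $D$ свободна абелева ранга $2^{n-2}-1$ с базисом $\{d_{2l+1}\mid l\in\{0,\dots,2^{n-2}-2\}\}$. Поэтому
\[
D^{2^{n-2}}=\prod_{l=0}^{2^{n-2}-2}\bigl\langle d_{2l+1}^{2^{n-2}}\bigr\rangle,
\]
а в свободной абелевой группе ранга $r$ подгруппа, состоящая из $m$-х степеней элементов, имеет индекс $m^{r}$; при $m=2^{n-2}$, $r=2^{n-2}-1$ получаем
\[
\bigl|D:D^{2^{n-2}}\bigr|=(2^{n-2})^{2^{n-2}-1}=2^{(n-2)(2^{n-2}-1)},
\]
и это же число равно порядку фактор-группы $D/D^{2^{n-2}}$.

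Для утверждения~2 сначала проверим, что $E$ --- подгруппа в $D$, пользуясь лишь тем, что сравнимость по модулю идеала $2\Z[\alpha]$ согласована с умножением. Очевидно, $1\in E$. Если $\lambda,\mu\in E$, то $\lambda\mu-1=\lambda(\mu-1)+(\lambda-1)\in2\Z[\alpha]$, откуда $\lambda\mu\in E$; а так как $\lambda\in\Un(\Z[\alpha])$, то, умножая соотношение $\lambda-1\in2\Z[\alpha]$ на $\lambda^{-1}\in\Z[\alpha]$, получаем $1-\lambda^{-1}\in2\Z[\alpha]$, то есть $\lambda^{-1}\in E$. Значит, $E\leqslant D$.

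Остаётся установить, что $D^{2^{n-2}}\leqslant E<D$. По лемме~\ref{l:std} каждый образующий удовлетворяет сравнению $d_{2l+1}^{2^{n-2}}\equiv1\pmod{2}$, то есть $d_{2l+1}^{2^{n-2}}\in E$; так как $E$ --- подгруппа, а группа $D^{2^{n-2}}$ порождается этими элементами, то $D^{2^{n-2}}\leqslant E$. Включение строгое: снова по лемме~\ref{l:std} (случай $k=0$) имеем $d_1\notin E$, поэтому $E\neq D$. Серьёзных препятствий здесь нет --- единственное, за чем стоит проследить, это корректность утверждения~1, а именно то, что $D$ действительно свободна абелева ранга $2^{n-2}-1$ с базисом из $d_{2l+1}$ (что как раз дают леммы~\ref{l:bLj} и~\ref{l:tLj}), так что переход к $2^{n-2}$-м степеням умножает индекс ровно на $(2^{n-2})^{2^{n-2}-1}$.
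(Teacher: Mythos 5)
Your proof is correct and follows essentially the same route as the paper: part 1 from the free abelian structure of $D$ of rank $2^{n-2}-1$ given by Lemma~\ref{l:tLj}, and part 2 from Lemma~\ref{l:std} together with closure of the set $1+2\Z[\alpha]$ under multiplication. The only (harmless) difference is that you verify closure of $E$ under inverses directly, using $\lambda^{-1}\in\Z[\alpha]$, whereas the paper instead invokes finiteness of $D/D^{2^{n-2}}$ to conclude that a multiplicatively closed union of cosets is a subgroup; both arguments are valid.
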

\begin{proof}{\ }
\begin{enumerate}
\item
Сразу следует из определения группы $D$ и леммы \ref{l:std}.
\item
Из леммы \ref{l:std} получаем, что подгруппа $D^{2^{n-2}}$ содержится в множестве $E$.
Пусть $1+2\lambda$ и $1+2\mu$ --- произвольные элементы $E$, где $\lambda,\mu\in\Z[\alpha]$.
Так как
\[
(1+2\lambda)(1+2\mu)=1+2(\lambda+\mu)+4\lambda\mu\in1+2\Z[\alpha]\cap D=E,
\]
то есть множество $E$ замкнуто относительно умножения.
Так как $E$ состоит из смежных классов, которые являются элементами конечной фактор-группы $D/D^{2^{n-2}}$, то получается конечная подгруппа $E/D^{2^{n-2}}$.
Откуда следует, что $E$ является подгруппой $D$, содержащей $D^{2^{n-2}}$.
\end{enumerate}
Предложение доказано.
\end{proof}

Как непосредственное следствие  предложений \ref{pr:ub1z} и \ref{p:D2} и леммы \ref{l:-1} получим описание строения группы $W_1$.
\begin{corollary}\label{c:W1}{\ }
\begin{enumerate}[{\rm 1.}]
\item
$V_1$ --- подгруппа группы $W_1$.
\item
$W_1=\langle x^{2^{n-1}}\rangle\times V_1$.
\end{enumerate}
\end{corollary}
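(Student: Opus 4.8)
The plan is to assemble Propositions~\ref{pr:ub1z} and \ref{p:D2} and Lemma~\ref{l:-1}, once we pin down how the Agreement (restriction to circular units) enters. First I would record, using the Remark after the definition of $W_1$ (multiplicativity of $\beta_1\mapsto u_{\chi_1}(\beta_1)$) together with Proposition~\ref{pr:ub1z} and the Agreement, that $u_{\chi_1}(\beta_1)\in W_1$ holds precisely when $\beta_1$ is a real circular unit congruent to $1$ modulo $2$, i.e.\ when $\beta_1\in\langle-1\rangle\times D$ (Lemma~\ref{l:ur}) with $\beta_1\equiv1\pmod2$. Writing $\beta_1=\varepsilon\lambda$ with $\varepsilon\in\{1,-1\}$ and $\lambda\in D$, and using $-1\equiv1\pmod2$, the condition $\beta_1\equiv1\pmod2$ is equivalent to $\lambda\equiv1\pmod2$, i.e.\ to $\lambda\in E$. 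Hence $W_1=\{\,u_{\chi_1}(\varepsilon)\,u_{\chi_1}(\lambda)\mid\varepsilon\in\langle-1\rangle,\ \lambda\in E\,\}$, and in particular $u_{\chi_1}(\lambda)\in W_1$ for every $\lambda\in E$.

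For part~1 I would then note that $E$ is a subgroup of $D$ by Proposition~\ref{p:D2}(2), so by multiplicativity the restriction of $u_{\chi_1}$ to $E$ is a homomorphism with image $V_1$; thus $V_1$ is a subgroup of $W_1$ (containing $u_{\chi_1}(1)=1$). For part~2, Lemma~\ref{l:-1} gives $\langle u_{\chi_1}(-1)\rangle=\langle x^{2^{n-1}}\rangle$, a subgroup of $W_1$ of order $2$, and the description of $W_1$ just obtained shows $W_1=\langle x^{2^{n-1}}\rangle\cdot V_1$. To upgrade this to an internal direct product it remains to check $\langle x^{2^{n-1}}\rangle\cap V_1=\{1\}$. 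For this I would use that $\beta_1\mapsto u_{\chi_1}(\beta_1)$ is injective on $\Z[\alpha]$: by Lemmas~\ref{l:locun} and \ref{l:tra}, writing $\beta_1=\sum_{k=0}^{2^{n-1}-1}b_k\alpha^k$, the coefficients of $u_{\chi_1}(\beta_1)$ at $x^0,\dots,x^{2^{n-1}-1}$ are $(b_0+1)/2,\ b_1/2,\ \dots,\ b_{2^{n-1}-1}/2$, so they recover $\beta_1$. Consequently $u_{\chi_1}(-1)\in V_1$ would force $-1\in E\subseteq D$, which is impossible since the direct product $D$ of infinite cyclic groups is torsion-free (Lemma~\ref{l:bLj}). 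Since $\V(\Z G)$ is abelian, we conclude $W_1=\langle x^{2^{n-1}}\rangle\times V_1$.

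The argument is essentially bookkeeping, and I do not anticipate a real obstacle. The two points I would take care to spell out are: (a) the reduction from $\langle-1\rangle\times D$ to $\langle-1\rangle\times E$, the sign $\varepsilon$ being invisible modulo $2$ --- this is exactly what lets $\langle x^{2^{n-1}}\rangle$ be split off cleanly; and (b) the injectivity of $u_{\chi_1}$, which is what turns the set-theoretic factorization $W_1=\langle x^{2^{n-1}}\rangle\cdot V_1$ into a genuine direct product. Step~(b) is the one I would be most careful with.
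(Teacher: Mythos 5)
Your proposal is correct and follows exactly the route the paper intends: the paper gives no written proof, presenting the corollary as an immediate consequence of Proposition~\ref{pr:ub1z}, Proposition~\ref{p:D2} and Lemma~\ref{l:-1}, which are precisely the ingredients you assemble. Your extra care with the two points you flag — the sign $\varepsilon$ being invisible modulo $2$, and the injectivity of $\beta_1\mapsto u_{\chi_1}(\beta_1)$ needed to turn $W_1=\langle x^{2^{n-1}}\rangle\cdot V_1$ into a direct product — is sound and only makes explicit what the paper leaves implicit.
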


\section{Подгруппа $F$}

\subsection{Построение воронки}

Так как
\[
D=\prod_{l=0}^{2^{n-2}-1}\left\langle d_{2l+1}\right\rangle,
\]
множеством индексов в этом произведении является
\[
A=\{1,3,5,\dots,2^{n-1}-3\}.
\]
Всего в этом множестве $2^{n-2}-1$ элементов.

\begin{remark}
Построим разбиения (дизъюнктные объединения) множества $A$, которые будут определяться всё уменьшающимися частями множества $A$.
Поэтому этот процесс (для красного словца) назовём \empha{``воронкой''}.
\end{remark}

\begin{notations}\label{n:part}
Обозначим
\[
A_0=\{1,3,5,\dots,2^{n-2}-1\}=\left\{2l+1\mid l\in\{0,\dots,2^{n-2}-1\}\right\}{,}
\]
в этом множестве $2^{n-3}=2^{n-3}$ элементов.
Также обозначим
\[
B_0=A\setminus A_0=\left\{2^{n-1}-(2l+1)\mid 2l+1\in A_0\setminus\{1\}\right\}.
\]
Будем делить множество $A_0$ пополам.

А именно, для любого $k\in\{1,\dots,n-3\}$ положим
\[
A_k=\{1,3,5,\dots,2^{n-2-k}-1\}=\left\{2l+1\mid l\in\{0,\dots,2^{n-3-k}-1\}\right\},
\]
в этом множестве $2^{n-3-k}$ элементов.
Также будем иметь
\[
B_k=A_{k-1}\setminus A_k=\left\{2^{n-1-k}-(2l+1)\mid 2l+1\in A_k\right\}.
\]
\end{notations}

\begin{examples}
Рассмотрим последние члены семейства $\left\{A_k\right\}_{k=0}^{n-3}$.
\begin{description}
\item[$A_{n-3}.$]
Имеем
\begin{align*}
A_{n-3}&=\{1,\dots,2^{n-2-n-3}-1\}=\{1,2^1-1\}=\\
&=\left\{2l+1\mid l\in\{0,2^0-1\}\right\}=\{1\}.
\end{align*}
\item[$A_{n-4}.$]
Теперь
\begin{align*}
A_{n-4}&=\{1,\dots,2^{n-2-n-4}-1\}=\{1,\dots,2^2-1\}=\\
&=\left\{2l+1\mid l\in\{0,\dots,2^1-1\}\right\}=\{1,3\}.
\end{align*}
Отсюда
\[
B_{n-3}=A_{n-4}\setminus A_{n-3}=\{3\}.
\]
\item[$A_{n-5}.$]
Теперь
\begin{align*}
A_{n-5}&=\{1,\dots,2^{n-2-n-5}-1\}=\{1,\dots,2^3-1\}=\\
&=\left\{2l+1\mid l\in\{0,\dots,2^2-1\}\right\}=\{1,3,5,7\}.
\end{align*}
Отсюда
\[
B_{n-4}=A_{n-5}\setminus A_{n-4}=\{5,7\}.
\]
\end{description}
\end{examples}

\begin{lemma}\label{l:part}
При таких обозначениях  для любого $k\in\{0,1,\dots,n-4\}$ получается разбиение
\[
A=A_k\cup B_k\cup B_{k-1}\cup\dots\cup B_0,
\]
причём $A_{n-4}=\{1,3\}$ и при $n=4\longleftrightarrow2^n=16$ имеем $B_{n-4}=\{5\}$, а при $n\geqslant5\longleftrightarrow2^n\geqslant32$ имеем $B_{n-4}=\{5,7\}$.
\end{lemma}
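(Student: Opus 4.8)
The plan is to read everything off the notations introduced above, keeping track of which odd integers lie in each $A_k$ and $B_k$, and then to telescope; the whole argument is bookkeeping with powers of $2$.

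\emph{One-step splittings.} Since $A_0=\{1,3,\dots,2^{n-2}-1\}$ and $A=\{1,3,\dots,2^{n-1}-3\}$ are both sets of consecutive odd integers starting at $1$, and $2^{n-2}-1\le 2^{n-1}-3$ for $n\ge3$, we have $A_0\subseteq A$; hence $A=A_0\cup(A\setminus A_0)=A_0\cup B_0$ with $A_0\cap B_0=\varnothing$. The same remark, using $2^{n-1-k}=2\cdot2^{n-2-k}$, gives $A_k=\{1,\dots,2^{n-2-k}-1\}\subseteq A_{k-1}=\{1,\dots,2^{n-1-k}-1\}$ for each $k\in\{1,\dots,n-3\}$, so $A_{k-1}=A_k\cup B_k$ with $A_k\cap B_k=\varnothing$. (One also checks the alternative description from the notations: $x\mapsto 2^{n-1-k}-x$ maps $\{1,3,\dots,2^{n-2-k}-1\}$ bijectively onto $\{2^{n-2-k}+1,\dots,2^{n-1-k}-1\}=A_{k-1}\setminus A_k$, and likewise for $B_0$.)

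\emph{Telescoping.} I would then induct on $k$: the base case $A=A_0\cup B_0$ is the first splitting, and substituting $A_{k-1}=A_k\cup B_k$ into $A=A_{k-1}\cup B_{k-1}\cup\dots\cup B_0$ yields $A=A_k\cup B_k\cup B_{k-1}\cup\dots\cup B_0$. This is valid for all $k\le n-3$, in particular for $k\in\{0,1,\dots,n-4\}$. That the union is disjoint follows because the $A_j$ are nested, $A_0\supsetneq A_1\supsetneq\dots$: each $B_j=A_{j-1}\setminus A_j$ is disjoint from $A_j$, hence from $A_k$ whenever $j\le k$ (then $A_k\subseteq A_j$); and for $i<j$ one has $B_j\subseteq A_{j-1}\subseteq A_i$, so $B_i\cap B_j\subseteq B_i\cap A_i=\varnothing$.

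\emph{The explicit sets.} Setting $k=n-4$ gives $A_{n-4}=\{1,3,\dots,2^{2}-1\}=\{1,3\}$. For $B_{n-4}$ there are two cases. If $n\ge5$ then $n-5\ge0$, so $A_{n-5}=\{1,3,\dots,2^{3}-1\}=\{1,3,5,7\}$ and $B_{n-4}=A_{n-5}\setminus A_{n-4}=\{5,7\}$. If $n=4$ then $n-4=0$, the symbol $A_{n-5}$ is undefined, and $B_{n-4}$ must be read as $B_0=A\setminus A_0=\{1,3,5\}\setminus\{1,3\}=\{5\}$. This degenerate case $n=4$ is the only point that needs real care — it is the ``main obstacle'' in what is otherwise a routine index computation — since there $B_{n-4}$ cannot be obtained from the formula $A_{n-5}\setminus A_{n-4}$ and has to be handled directly as $B_0$.
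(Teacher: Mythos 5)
Your proposal is correct and follows essentially the same route as the paper: the same one-step splittings $A=A_0\cup B_0$ and $A_{k-1}=A_k\cup B_k$, telescoped by induction, followed by the direct computation of $A_{n-4}$ and of $B_{n-4}$ with the case distinction $n=4$ versus $n\geqslant5$. Your explicit verification of pairwise disjointness via the nesting $A_0\supsetneq A_1\supsetneq\dots$ is a small addition the paper leaves implicit, but it does not change the argument.
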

\begin{proof}
По определению подмножеств $A_0$ и $B_0$ получим
\[
A=A_0\cup B_0\text{ и }A_0\cap B_0=\varnothing,
\]
то есть получили разбиение множества $A$.
Это будет нулевой шаг.

Первые $2^{n-4}$ элементов множества $A_0$ образуют подмножество
\[
A_1=\{1,3,5,\dots,2^{n-3}-1\}.
\]
Так как
\[
B_1=A_0\setminus A_1=\left\{2^{n-2}-(2l+1)\mid 2l+1\in A_1\right\},
\]
то
\[
A_0=A_1\cup B_1\text{ и }A_1\cap B_0=\varnothing,
\]
то есть получили разбиение множества $A_0$, а это даёт разбиение множества $A$
\[
A=A_0\cup B_0=A_1\cup B_1\cup B_0.
\]
Это будет первый шаг индукции.

На шаге индукции с номером $k-1$ получили множество
\[
A_{k-1}=\{1,3,5,\dots,2^{n-2-(k-1)}-1\},
\]
в котором $2^{n-3-(k-1)}$ элементов, и разбиение множества $A$
\[
A=A_{k-1}\cup B_{k-1}\cup\dots\cup B_0.
\]
Теперь сделаем шаг индукции с номером $k$.
Первые $2^{n-3-k}$ элементов множества  $A_0$ образуют подмножество
\[
A_k=\{1,3,5,\dots,2^{n-2-k}-1\}\text{ и }B_k=A_{k-1}\setminus A_k.
\]
Так как получили разбиение
\[
A_{k-1}=A_k\cup B_k,
\]
что даёт разбиение множества $A$
\[
A=A_k\cup B_k\cup B_{k-1}\cup\dots\cup B_0.
\]
Это будет шаг с номером $k$.

При $n=4\longleftrightarrow2^n=16$ имеем
\[
A=\{1,3,5\},\ A_0=\{1,3\}\text{ и }B_0=\{5\}.
\]
При $n\geqslant5\longleftrightarrow2^n\geqslant32$ имеем
\[
A_{n-5}=\{1,3,\dots,2^{n-2-(n-5)}-1\}=\{1,3,\dots,2^3-1\}=\{1,3,5,7\}.
\]
Поэтому имеем
\[
A_{n-4}=\{1,3\}\text{ и }B_{n-4}=\{5,7\}.
\]
Это будет последний шаг.

Лемма доказана.
\end{proof}

\subsubsection{Примеры  воронок для $2^n\in\{16,32,64,128\}$ $\longleftrightarrow$ $m\in\{1,2,4,8\}$}\label{sss:funcas}

\begin{description}
\item[Воронка для $16$]
\end{description}

Изобразим этот процесс таблично
\[
A=\begin{array}{|c|c|c|c|}\hline
l&0&1&2\\\hline
2l+1&1&3&5\\\hline
\end{array}
\]
\begin{description}
\item[Шаг $0$.]
Имеем
\begin{align*}
A_0=\{1, 3\}, \\
B_0=A\setminus A_0=\{5\}.
\end{align*}
Получим разбиение
\[
A=\{1,3\}\cup\{5\}.
\]
\end{description}

\begin{description}
\item[Воронка для $32$]
\end{description}

Изобразим этот процесс таблично
\[
A=\begin{array}{|c|c|c|c|c|c|c|c|}\hline
l&0&1&2&3&4&5&6\\\hline
2l+1&1&3&5&7&9&11&13\\\hline
\end{array}
\]
\begin{description}
\item[Шаг $0$.]
Имеем
\begin{align*}
A_0=\{1, 3, 5, 7\}, \\
B_0=A\setminus A_0=\{9, 11, 13\}.
\end{align*}
Получим разбиение
\[
A=\{1,3,5,7\}\cup\{9,11,13\}.
\]
\item[Шаг $1$.]
Теперь
\begin{align*}
A_1=\{1,3\}, \;\; B_1=\{5,7\}.
\end{align*}
Получили итоговое разбиение
\[
A=\{1,3\}\cup\{5,7\}\cup\{9,11,13\}.
\]
\end{description}

\begin{description}
\item[Воронка для $64$]
\end{description}

Изобразим этот процесс таблично
\[
A=\begin{array}{|c|c|c|c|c|c|c|c|c|c|c|c|c|c|c|c|}\hline
l&0&1&2&3&4&5&6&7&8&9&10&11&12&13&14\\\hline
2l+1&1&3&5&7&9&11&13&15&17&19&21&23&25&27&29\\\hline
\end{array}
\]
\begin{description}
\item[Шаг $0$.]
Имеем
\begin{align*}
A_0&=\{1, 3, 5, 7, 9, 11, 13, 15\}, \\
B_0&=A\setminus A_0=\{17, 19, 21, 23, 25, 27, 29\}.
\end{align*}
Получим разбиение
\[
A=\{1,3,5,7,9,11,13,15\}\cup\{17,19,21,23,25,27,29\}.
\]
\item[Шаг $1$.]
Теперь
\begin{align*}
A_1=\{1,3,5,7\}, \;\; B_1=\{9,11,13,15\}.
\end{align*}
Получили разбиение
\[
A=\{1,3,5,7\}\cup\{9,11,13,15\}\cup\{17,19,21,23,25,27,29\}.
\]
\item[Шаг $2$.]
Наконец,
\[
A_2=\{1,3\}, B_2=\{5,7\}.
\]
Получили последнее разбиение
\[
A=\{1,3\}\cup\{5,7\}\cup\{9,11,13,15\}\cup\{17,19,21,23,25,27,29\}.
\]
\end{description}

\begin{description}\label{pr:128}
\item[Воронка для $128$]
\end{description}

Изобразим этот процесс таблично
\[
A=\begin{array}{|c|c|c|c|c|c|c|c|c|}\hline
l&0&1&2&3&\dots&28&29&30\\\hline
2l+1&1&3&5&7&\dots&57&59&61\\\hline
\end{array}
\]
\begin{description}
\item[Шаг $0$.]
\end{description}
Имеем
\[
\hspace*{-17pt}
\begin{aligned}
A_0&=\begin{array}{|c|c|c|c|c|c|c|c|c|c|c|c|c|c|c|c|c|}\hline
l&0&1&2&3&4&5&6&7&8&9&10&11&12&13&14&15\\\hline
2l+1&1&3&5&7&9&11&13&15&17&19&21&24&25&27&29&31\\\hline
\end{array},\\
B_0&=\begin{array}{|c|c|c|c|c|c|c|c|c|c|c|c|c|c|c|c|}\hline
l&16&17&18&19&20&21&22&23&24&25&26&27&28&29&30\\\hline
2l+1&33&35&37&39&41&43&45&47&49&51&53&55&57&59&61\\\hline
\end{array}.
\end{aligned}
\]
Получим разбиение
\[
A=\{1,3,\dots,29,31\}\cup\{31,33,\dots,59,61\}.
\]
\begin{description}
\item[Шаг $1$.]
Теперь
\begin{align*}
A_1&=\begin{array}{|c|c|c|c|c|c|c|c|c|}\hline
l&0&1&2&3&4&5&6&7\\\hline
2l+1&1&3&5&7&9&11&13&15\\\hline
\end{array},\\
B_1&=\begin{array}{|c|c|c|c|c|c|c|c|c|}\hline
l&8&9&10&11&12&13&14&15\\\hline
2l+1&17&19&21&23&25&27&29&31\\\hline
\end{array}.
\end{align*}
На этом шаге получим разбиение
\[
A=\{1,3,\dots,13,15\}\cup\{17,19,\dots,29,31\}\cup\{33,35,\dots,59,61\}.
\]
\item[Шаг $2$.]
Получим
\[
A_2=\begin{array}{|c|c|c|c|c|}\hline
l&0&1&2&3\\\hline
2l+1&1&3&5&7\\\hline
\end{array},
B_2=\begin{array}{|c|c|c|c|c|}\hline
l&4&5&6&7\\\hline
2l+1&9&11&13&15\\\hline
\end{array}.
\]
Отсюда возникает разбиение
\begin{align*}
A=\{1,3,5,7\}&\cup\{9,11,13,15\}\cup\{17,19,\dots,29,31\}\cup\\
&\cup\{33,35,\dots,59,61\}.
\end{align*}
\item[Шаг $3$.]
Наконец,
\[
A_3=\begin{array}{|c|c|c|}\hline
l&0&1\\\hline
2l+1&1&3\\\hline
\end{array},
B_3=\begin{array}{|c|c|c|}\hline
l&2&3\\\hline
2l+1&5&7\\\hline
\end{array},
\]
и всё закончено.
Получили последнее разбиение
\begin{align*}
A=\{1,3\}&\cup\{5,7\}\cup\{9,11,13,15\}\cup\{17,19,\dots,31\}\cup\\
&\cup\{33,35,\dots,59,61\}.
\end{align*}
\end{description}

\subsection{Сравнимость элементов воронки}

Используя построенные в лемме \ref{l:part} разбиения, изучим сравнимость по модулю $2$ степеней порождающих группы $D$.

\begin{lemma}\label{l:m&D}
Для любых $l,r\in\{0,\dots,2^{n-2}-1\}$ имеем
\[
d_{2l+1}^{2^{n-3}}d_{2r+1}^{2^{n-3}}\in E.
\]
В частности, $d_1^{-2^{n-3}}d_3^{2^{n-3}}\in E$.
\end{lemma}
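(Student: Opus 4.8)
The statement has two aspects: that the element belongs to $D$, and that it is $\equiv 1\pmod 2$ (together these say it lies in $E$). The first is routine. By Lemma~\ref{l:ur} we have $K(\alpha)\cap\R=\langle-1\rangle\times D$, so the quotient $(K(\alpha)\cap\R)/D$ has order $2$ and hence the square of every element of $K(\alpha)\cap\R$ lies in $D$. Each $d_{2l+1}$ with $l\in\{0,\dots,2^{n-2}-1\}$ lies in $K(\alpha)\cap\R$ (it is real, and a cyclotomic unit by the computation in the proof of Lemma~\ref{l:tLj}), so for $n\geqslant4$, where $2^{n-3}$ is even, the powers $d_{2l+1}^{2^{n-3}}$ and $d_{2r+1}^{2^{n-3}}$ lie in $D$, and so does their product. (For $n=3$ one has $D=\langle d_1\rangle$ with $d_3=d_1^{-1}$, and the claim is checked directly.) Thus the real task is the congruence.

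The heart of the argument is the computation
\[
d_{2l+1}^{2^{n-3}}\equiv 1+\sqrt2\pmod 2\qquad\text{for every }l.
\]
First I would apply the formula $d_k^{2^s}\equiv d_{2^s k}\pmod 2$ from $(DD)$ of Lemma~\ref{l:sdr} with $k=2l+1$ and $s=n-3$, giving $d_{2l+1}^{2^{n-3}}\equiv d_{2^{n-3}(2l+1)}\pmod 2$. Next, writing $2^{n-3}(2l+1)=2^{n-2}l+2^{n-3}$ and using $2^{n-1}=4\cdot 2^{n-3}$, one sees that this index is congruent modulo $2^{n-1}$ to $2^{n-3}$ when $l$ is even and to $3\cdot 2^{n-3}$ when $l$ is odd. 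Since $\{d_j\}_{j\in\Z}$ is periodic modulo $2$ with period $2^{n-1}$ and $d_{2^{n-3}}\equiv d_{3\cdot 2^{n-3}}\equiv 1+\sqrt2\pmod 2$ by Lemma~\ref{l:md2}, the displayed congruence follows in either case.

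Multiplying two such congruences gives
\[
d_{2l+1}^{2^{n-3}}d_{2r+1}^{2^{n-3}}\equiv(1+\sqrt2)^2=3+2\sqrt2\pmod 2 ,
\]
and since $\sqrt2=s_{2^{n-3}}=\alpha^{2^{n-3}}+\alpha^{-2^{n-3}}\in\Z[\alpha]$ we have $3+2\sqrt2=1+2(1+\sqrt2)\equiv 1\pmod 2$ (equivalently, $(1+\sqrt2)^2=d_{2^{n-3}}^2\equiv1\pmod 2$ by point~3 of Lemma~\ref{l:md2}). Combined with membership in $D$, this gives $d_{2l+1}^{2^{n-3}}d_{2r+1}^{2^{n-3}}\in E$. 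For the particular case I would write $d_1^{-2^{n-3}}d_3^{2^{n-3}}=d_1^{-2^{n-2}}\cdot\bigl(d_1^{2^{n-3}}d_3^{2^{n-3}}\bigr)$: the first factor lies in $D^{2^{n-2}}\leqslant E$ and the second lies in $E$ by the case $l=0$, $r=1$, so the product lies in $E$ because $E$ is a subgroup of $D$ by Proposition~\ref{p:D2}.

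I do not expect any genuine obstacle here; the only point requiring care is keeping the reduction $2^{n-3}(2l+1)\equiv 2^{n-3}$ or $3\cdot2^{n-3}\pmod{2^{n-1}}$ straight and invoking the correct parts of Lemmas~\ref{l:sdr} and~\ref{l:md2}.
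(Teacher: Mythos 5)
Your proof is correct and follows essentially the same route as the paper's: apply $(DD)$ of Lemma~\ref{l:sdr} to reduce $d_{2l+1}^{2^{n-3}}$ to $d_{2^{n-3}(2l+1)}\equiv d_{2^{n-3}}\equiv1+\sqrt{2}\pmod{2}$, square to get $1$, and settle the particular case by factoring out $d_1^{-2^{n-2}}\in D^{2^{n-2}}$. The only differences are cosmetic: you additionally verify membership in $D$ explicitly (which the paper leaves implicit), while your parenthetical claim that $d_3=d_1^{-1}$ for $n=3$ is off by a sign (in fact $d_1d_3=(1+\sqrt{2})(1-\sqrt{2})=-1$), though that degenerate case lies outside the range $n\geqslant4$ the paper actually uses.
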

\begin{proof}
По лемме  \ref{l:sdr} для любого $l\in\{0,1,\dots,2^{n-2}-1\}$
\[
d_{2l+1}^{2^{n-3}}\equiv d_{2^{n-3}(2l+1)}\pmod{2}.
\]
Из леммы \ref{l:md2} следует, что
\[
d_{2^{n-3}(2l+1)}\equiv d_{2^{n-3}}\equiv1+\sqrt{2}\pmod{2}.
\]
Поэтому лемма \ref{l:md2} даёт
\[
d_{2l+1}^{2^{n-3}}d_{2r+1}^{2^{n-3}}\equiv d_{2^{n-3}}^2\equiv d_{2^{n-2}}\equiv1\pmod{2}.
\]
В частности по лемме \ref{l:std} получим, что
\[
d_1^{-2^{n-3}}d_3^{2^{n-3}}=d_1^{-2^{n-2}}d_1^{2^{n-3}}d_3^{2^{n-3}}\in E.
\]
Лемма доказана.
\end{proof}

\begin{notations}
Введём обозначения.

Для любого $2l+1\in\left\{3,\dots,2^{n-2}-1=2^{n-2}-1\right\}=A_0\setminus\{1\}$ положим
\[
q(0,2l+1)=d_{2l+1}^{-1}d_{2^{n-1}-(2l+1)}.
\]
Пусть $k\in\{1,\dots,n-3\}$.
Для любого $2l+1\in\left\{1,\dots,2^{n-2-k}-1\right\}=A_k$ положим
\[
q(k,2l+1)=d_{2l+1}^{-1}d_{2^{n-1-k}-(2l+1)}.
\]
В частности,
для  $k=n-3$ имеем
\[
A_{n-3}=\{1\}
\]
и
\[
q(n-3,1)=d_1^{-1}d_3;
\]
и для  $k=n-4$ имеем
\[
A_{n-4}=\{1,3\}
\]
и
\[
q(n-4,1)=d_1^{-1}d_{8-1}=d_1^{-1}d_7\text{ и }q(n-4,3)=d_3^{-1}d_{8-3}=d_3^{-1}d_5.
\]
\end{notations}

\begin{lemma}\label{l:m(k)}
\begin{align*}
D&=\langle d_1\rangle\times\langle d_1^{-1}d_3\rangle\times\prod_{k=0}^{n-4}\prod_{2^{n-1-k}-(2l+1)\in B_k}\langle q(k,2l+1)\rangle=\\
&=\langle d_1\rangle\times\prod_{2l+1\in A_0\setminus\{1\}}\langle q(0,2l+1)\rangle\times\prod_{k=1}^{n-3}\prod_{2l+1\in A_k}\langle q(k,2l+1)\rangle.
\end{align*}
\end{lemma}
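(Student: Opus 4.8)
The plan is to exhibit the listed elements as a basis of the free abelian group $D$, by (i) counting them, (ii) proving they generate $D$, and (iii) observing that a generating set whose cardinality equals the rank of a free abelian group is automatically a basis, so that $D$ is the internal direct product of the cyclic subgroups they span. As a preliminary step I would check that the two displayed expressions for $D$ describe the same collection of factors: by the definition of $B_k$ in the notations \ref{n:part}, the condition $2^{n-1-k}-(2l+1)\in B_k$ is equivalent to $2l+1\in A_0\setminus\{1\}$ when $k=0$ and to $2l+1\in A_k$ when $1\le k\le n-4$, while $d_1^{-1}d_3=q(n-3,1)$ and $A_{n-3}=\{1\}$; thus the separate factor $\langle d_1^{-1}d_3\rangle$ together with the range $k=0,\dots,n-4$ of the first form amounts to the range $k=1,\dots,n-3$ of the second, and it suffices to establish one of the two equalities.

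For the count, besides $d_1$ the generators are the $q(0,2l+1)$ with $2l+1\in A_0\setminus\{1\}$, numbering $|A_0|-1=2^{n-3}-1$, and the $q(k,2l+1)$ with $1\le k\le n-3$ and $2l+1\in A_k$, numbering $\sum_{k=1}^{n-3}|A_k|=\sum_{k=1}^{n-3}2^{n-3-k}=2^{n-3}-1$; altogether $1+(2^{n-3}-1)+(2^{n-3}-1)=2^{n-2}-1$, which is exactly the rank of the free abelian group $D$ (Lemmas \ref{l:tLj} and \ref{l:bLj}).

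The core of the argument is generation, which I would run along the ``funnel'' of Lemma \ref{l:part} by downward induction on $k$. Write $H$ for the subgroup generated by the listed elements. For $k=n-3$ we have $A_{n-3}=\{1\}$ and $d_1\in H$. Assuming $d_{2l+1}\in H$ for every $2l+1\in A_k$, I use that $q(k,2l+1)=d_{2l+1}^{-1}d_{2^{n-1-k}-(2l+1)}\in H$ and that $2l+1\mapsto 2^{n-1-k}-(2l+1)$ is, by the very definition of $B_k$, a bijection $A_k\to B_k$; hence $d_{2^{n-1-k}-(2l+1)}=d_{2l+1}\,q(k,2l+1)\in H$ for every index in $B_k$, and since $A_{k-1}=A_k\cup B_k$ by Lemma \ref{l:part} we obtain $d_{2l+1}\in H$ for all $2l+1\in A_{k-1}$. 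Descending to $k=0$ puts every $d_{2l+1}$ with $2l+1\in A_0$ in $H$; the last level is identical, using that $q(0,2l+1)=d_{2l+1}^{-1}d_{2^{n-1}-(2l+1)}\in H$ and that $2l+1\mapsto 2^{n-1}-(2l+1)$ is a bijection $A_0\setminus\{1\}\to B_0$, so every $d_{2l+1}$ with $2l+1\in A=A_0\cup B_0$ lies in $H$, i.e. $H=D$.

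I do not foresee a real obstacle; the one thing needing care is the bookkeeping — confirming that the index ranges in the two product forms coincide, and that each $q(k,2l+1)$ really lies in $D$ (it does, since $B_k\subseteq A$, so both subscripts belong to the generating index set of $D$). Once generation is in hand, the direct-product claim follows formally: the homomorphism $\Z^{2^{n-2}-1}\to D$ sending the standard basis to the listed generators is a surjective endomorphism of a free abelian group of rank $2^{n-2}-1$, hence an isomorphism, so the generators form a basis and $D$ is the internal direct product of the cyclic subgroups they generate.
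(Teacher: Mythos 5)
Your proposal is correct and follows essentially the same route as the paper: both arguments run the "funnel" of Lemma \ref{l:part} and rest on the identity $d_{2^{n-1-k}-(2l+1)}=d_{2l+1}\,q(k,2l+1)$ together with the bijections $A_k\to B_k$ and the partition $A_{k-1}=A_k\cup B_k$. The only difference is in how directness is closed: the paper performs the explicit unimodular substitution $\langle a\rangle\times\langle b\rangle=\langle a\rangle\times\langle a^{-1}b\rangle$ level by level, while you show generation and then compare the number of generators, $2^{n-2}-1$, with the rank of the free abelian group $D$ — both are valid and essentially equivalent.
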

\begin{proof}
Будем рассматривать по шагам.

На шаге $0$ для любого $2l+1\in\left\{3,\dots,2^{n-2}-1\right\}=A_0\setminus\{1\}$ получим
\[
d_{2^{n-1}-(2l+1)}=d_{2l+1}q(0,2l+1).
\]
Поэтому
\begin{align*}
D&=\prod_{2l+1\in A_0}\langle d_{2l+1}\rangle\times\prod_{2^{n-1}-(2l+1)\in B_0}\langle d_{2^{n-1}-(2l+1)}\rangle=\\
&=\prod_{2l+1\in A_0}\langle d_{2l+1}\rangle\times\prod_{2^{n-1}-(2l+1)\in B_0}\langle d_{2l+1}q(0,2l+1)\rangle=\\
&=\prod_{2l+1\in A_0}\langle d_{2l+1}\rangle\times\prod_{2^{n-1}-(2l+1)\in B_0}\langle q(0,2l+1)\rangle.
\end{align*}
Положим
\[
D_0=\prod_{2l+1\in A_0}\langle d_{2l+1}\rangle.
\]
Далее надо рассматривать $D_0$.

Для любого $2l+1\in\left\{1,\dots,2^{n-2}-1\right\}=A_1$ получим
\[
d_{2^{n-2}-(2l+1)}=d_{2l+1}q(1,2l+1).
\]
Поэтому
\begin{align*}
D_0&=\prod_{2l+1\in A_1}\langle d_{2l+1}\rangle\times\prod_{2^{n-2}-(2l+1)\in B_1}\langle d_{2^{n-2}-(2l+1)}\rangle=\\
&=\prod_{2l+1\in A_1}\langle d_{2l+1}\rangle\times\prod_{2^{n-2}-(2l+1)\in B_1}\langle d_{2l+1}q(1,2l+1)\rangle=\\
&=\prod_{2l+1\in A_1}\langle d_{2l+1}\rangle\times\prod_{2^{n-2}-(2l+1)\in B_1}\langle q(1,2l+1)\rangle.
\end{align*}
Положим
\[
D_1=\prod_{2l+1\in A_1}\langle d_{2l+1}\rangle.
\]
Далее надо рассматривать $D_1$.

Предположим, что сделан шаг $k\in\{0,1,\dots,n-5\}$.
Тогда получим, что нужно рассматривать только
\[
D_k=\prod_{2l+1\in A_k}\langle d_{2l+1}\rangle.
\]
Поэтому
\begin{align*}
D_k&=\prod_{2l+1\in A_{k+1}}\langle d_{2l+1}\rangle\times\prod_{2^{n-2-k}-(2l+1)\in B_{k+1}}\langle d_{2^{n-2-k}-(2l+1)}\rangle=\\
&=\prod_{2l+1\in A_{k+1}}\langle d_{2l+1}\rangle\times\prod_{2^{n-2-k}-(2l+1)\in B_{k+1}}\langle d_{2l+1}q(k+1,2l+1)\rangle=\\
&=\prod_{2l+1\in A_{k+1}}\langle d_{2l+1}\rangle\times\prod_{2^{n-2-k}-(2l+1)\in B_{k+1}}\langle q(k+1,2l+1)\rangle.
\end{align*}
Положим
\[
D_{k+1}=\prod_{2l+1\in A_{k+1}}\langle d_{2l+1}\rangle.
\]
Далее надо рассматривать $D_{k+1}$.

Для $n\geqslant5$ рассмотрим, что получится после шага $n-4$.
В самом деле, имеем
\begin{align*}
D_{n-4}&=\langle d_1\rangle\times \langle d_3\rangle\times\langle q(n-4,5)\rangle\times\langle q(n-4,7)\rangle.
\end{align*}
Сделаем завершающий шаг
\[
\langle d_1\rangle\times\langle d_3\rangle=\langle d_1\rangle\times\langle d_1(d_1^{-1}d_3)\rangle=\langle d_1\rangle\times\langle d_1^{-1}d_3\rangle=\langle d_1\rangle\times\langle q(n-3,1)\rangle.
\]

Лемма доказана.
\end{proof}

\begin{lemma}\label{l:fun}
На шаге $0$ воронки имеем
\[
d_{2^{n-1}-(2l+1)}\equiv d_{2l+1}\pmod{2}\longleftrightarrow q(0,2l+1)=d_{2l+1}^{-1}d_{2^{n-1}-(2l+1)}\in E
\]
для любого $2l+1\in\{3,5,\dots,32^{n-3}-1\}=A_0\setminus\{1\}$.

На шаге $k\in\{1,2,\dots,n-4\}$
\begin{align*}
d_{2^{n-1-k}-(2l+1)}^{2^k}&\equiv d_{2l+1}^{2^k}\pmod{2}\longleftrightarrow\\
&\longleftrightarrow q(k,2l+1)^{2^k}=d_{2l+1}^{-2^k}d_{2^{n-1-k}-(2l+1)}^{2^k}\in E\text{ и}\\
d_{2^{n-1-k}-(2l+1)}^{2^{k-1}}&\not\equiv d_{2l+1}^{2^{k-1}}\pmod{2}
\end{align*}
для любого $2l+1\in\{1,3,\dots,2^{n-2-k}-1\}=A_k$.
\end{lemma}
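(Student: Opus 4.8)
The plan is to reduce every assertion, at each step of the funnel, to one elementary fact about congruences of units: for any $u,v\in\Un(\Z[\alpha])$ one has $u^{-1}v\equiv1\pmod{2}$ if and only if $v\equiv u\pmod{2}$, because $u^{-1}v-1=u^{-1}(v-u)$ and $2u\Z[\alpha]=2\Z[\alpha]$ since $u$ is invertible. This is combined with the observation that, by Lemma~\ref{l:m(k)}, each of $q(0,2l+1)$ and $q(k,2l+1)$ lies in $D$, so that any power of such an element again lies in $D$ and hence belongs to $E$ precisely when it is congruent to $1$ modulo~$2$.

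For step~$0$ the argument is then immediate: $q(0,2l+1)=d_{2l+1}^{-1}d_{2^{n-1}-(2l+1)}\in D$, so $q(0,2l+1)\in E$ iff $q(0,2l+1)\equiv1\pmod{2}$, and by the fact above with $u=d_{2l+1}$ and $v=d_{2^{n-1}-(2l+1)}$ this is equivalent to $d_{2^{n-1}-(2l+1)}\equiv d_{2l+1}\pmod{2}$. The equivalence at step~$k\in\{1,\dots,n-4\}$ is obtained in exactly the same way: $q(k,2l+1)^{2^k}=d_{2l+1}^{-2^k}d_{2^{n-1-k}-(2l+1)}^{2^k}$ lies in $D$, hence in $E$ iff it is $\equiv1\pmod{2}$, and applying the fact with $u=d_{2l+1}^{2^k}$, $v=d_{2^{n-1-k}-(2l+1)}^{2^k}$ rewrites this as $d_{2^{n-1-k}-(2l+1)}^{2^k}\equiv d_{2l+1}^{2^k}\pmod{2}$.

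The remaining assertion is the non-congruence $d_{2^{n-1-k}-(2l+1)}^{2^{k-1}}\not\equiv d_{2l+1}^{2^{k-1}}\pmod{2}$. Here I would use $(DD)$ of Lemma~\ref{l:sdr} to pass to the indices $2^{k-1}(2l+1)$: writing $m=2^{k-1}(2l+1)$, the left-hand side is $\equiv d_{2^{k-1}(2^{n-1-k}-(2l+1))}=d_{2^{n-2}-m}\pmod{2}$ and the right-hand side is $\equiv d_m\pmod{2}$. Since $1\leqslant2l+1\leqslant2^{n-2-k}-1$, one has $1\leqslant m\leqslant2^{n-3}-2^{k-1}<2^{n-3}$, so $m$ and $2^{n-2}-m$ are two distinct indices in $\{1,\dots,2^{n-2}-1\}$, whence $s_m$ and $s_{2^{n-2}-m}$ are distinct members of the integral basis $1,s_1,\dots,s_{2^{n-2}-1}$ of $\Z[\alpha+\alpha^{-1}]$ given in Lemma~\ref{l:ib}. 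Consequently $d_{2^{n-2}-m}-d_m=s_{2^{n-2}-m}-s_m$ has coefficient $-1$, an odd number, on the basis vector $s_m$, so by Remark~\ref{r:intr} it does not lie in $2\Z[\alpha+\alpha^{-1}]$; hence $d_{2^{n-2}-m}\not\equiv d_m\pmod{2}$, which yields the required non-congruence.

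The one place that needs care is the index estimate in the last step — one must verify $m<2^{n-3}$, so that $m\neq2^{n-2}-m$ and both $s_m$ and $s_{2^{n-2}-m}$ really are among the listed basis vectors — and this is exactly where the hypotheses $k\leqslant n-4$ and $2l+1\in A_k$ enter. Apart from that, the proof is a direct application of the congruence arithmetic already set up in Lemmas~\ref{l:sdr} and~\ref{l:md2}, of the decomposition of $D$ in Lemma~\ref{l:m(k)}, and of the mod-$2$ description of $\Z[\alpha+\alpha^{-1}]$ in Remark~\ref{r:intr}.
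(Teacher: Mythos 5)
Your reduction of membership in $E$ to a congruence between the $d$'s is correct (for $d\in D$ one has $d\in E$ iff $d\equiv1\pmod{2}$, and dividing by a unit preserves congruence classes modulo $2\Z[\alpha]$), and your proof of the non-congruence $d_{2^{n-1-k}-(2l+1)}^{2^{k-1}}\not\equiv d_{2l+1}^{2^{k-1}}\pmod{2}$ is sound: you compare $d_{2^{n-2}-m}$ with $d_m$ for $m=2^{k-1}(2l+1)<2^{n-3}$ and read off an odd coordinate in the integral basis $1,s_1,\dots,s_{2^{n-2}-1}$ of Lemma~\ref{l:ib}. The paper argues the same point slightly differently, reducing to the impossibility of $r_{2^{k-1}(2l+1)}\equiv0\pmod{2}$ because $r_{2^{k-1}(2l+1)}$ is a member of the special basis $\vec{B}$ of Lemma~\ref{l:bsp}; the two arguments are equivalent, and your index estimate is exactly where the hypothesis $2l+1\in A_k$ with $k\leqslant n-4$ must enter.

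The gap is that you never prove the positive congruences actually hold. The lemma is not merely a biconditional: it asserts (and the paper's proof spends its first two paragraphs establishing) that at step $0$ the congruence $d_{2^{n-1}-(2l+1)}\equiv d_{2l+1}\pmod{2}$ is \emph{true}, and that at step $k$ the congruence $d_{2^{n-1-k}-(2l+1)}^{2^k}\equiv d_{2l+1}^{2^k}\pmod{2}$ is \emph{true}. This is precisely the content used downstream: Lemma~\ref{l:F} needs $q(0,2l+1)\in E$ and $q(k,2l+1)^{2^k}\in E$ to hold in order to conclude $F\leqslant E$. Your proof only rewrites these memberships as congruences and stops, so the main assertion is left unproved. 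The missing step is short: by statement 2 of Lemma~\ref{l:md2} one has $d_{2^{n-1}-j}\equiv d_j\pmod{2}$, which settles step $0$; and by $(DD)$ of Lemma~\ref{l:sdr} together with that symmetry,
\[
d_{2l+1}^{2^k}\equiv d_{2^k(2l+1)}\pmod{2},\qquad
d_{2^{n-1-k}-(2l+1)}^{2^k}\equiv d_{2^{n-1}-2^k(2l+1)}\equiv d_{2^k(2l+1)}\pmod{2},
\]
which settles step $k$. You should add this before invoking your equivalence.
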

\begin{proof}
На нулевом шаге по лемме \ref{l:md2} (утверждение 2)  получаем
\[
d_{2^{n-1}-(2l+1)}\equiv d_{2l+1}\pmod{2}
\]
для любого $2l+1\in\{3,5,\dots,2^{n-2}-1\}$.

Посмотрим, что будет на шаге $k\in\{1,2,\dots,n-4\}$.
По леммам \ref{l:sdr} и \ref{l:md2} получим, что для любого $2l+1\in\{1,3,\dots,2^{n-2-k}-1\}=A_k$
\begin{align*}
d_{2l+1}^{2^k}&\equiv d_{2^k(2l+1)}\pmod{2}\text{ и}\\
d_{2^{n-1-k}-(2l+1)}^{2^k}&\equiv d_{2^{n-1}-2^k(2l+1)}\equiv d_{2^k(2l+1)}\pmod{2}.
\end{align*}

Предположим, что
\[
d_{2l+1}^{2^{k-1}}\equiv d_{2^{n-1-k}-(2l+1)}^{2^{k-1}}\pmod{2},
\]
то есть, снова по лемме \ref{l:sdr} (утверждение 2)
\[
1+s_{2^{k-1}(2l+1)}\equiv1+s_{2^{n-2}-2^{k-1}(2l+1)}\pmod{2}.
\]
Следовательно
\begin{align*}
s_{2^{k-1}(2l+1)}&\equiv s_{2^{n-2}-2^{k-1}(2l+1)}=r_{2^{k-1}(2l+1)}-s_{2^{k-1}(2l+1)}\equiv\\
&\equiv r_{2^{k-1}(2l+1)}+s_{2^{k-1}(2l+1)}\pmod{2}.
\end{align*}
Поэтому
\[
r_{2^{k-1}(2l+1)}\equiv0\pmod{2},
\]
что невозможно по лемме \ref{l:bsp}, ибо $r_{2^{k-1}(2l+1)}$ --- элемент целого базиса $\vec{B}$.

Лемма доказана.
\end{proof}

\subsection{Группа $F$}

\begin{notations}
Введём обозначения.

Для шага $0$ положим
\[
F_0=\prod_{2l+1\in A_0\setminus\{1\}}\langle d_{2l+1}^{-1}d_{2^{n-1}-(2l+1)}\rangle=\prod_{2l+1\in A_0\setminus\{1\}}\langle q(0,2l+1)\rangle.
\]

На шаге $k\in\{1,2,\dots,n-3\}$ положим
\[
F_k=\prod_{2l+1\in A_k}\langle d_{2l+1}^{-2^k}d_{2^{n-2}-(2l+1)}^{2^k}\rangle=\prod_{2l+1\in A_k}\langle q(k,2l+1)^{2^k}\rangle.
\]

Наконец,
\[
F=\langle d_1^{2^{n-2}}\rangle\times\prod_{k=0}^{n-3}F_k.
\]
\end{notations}

\begin{lemma}\label{l:F}
$F$ --- подгруппа в $E$.
Кроме того,
\[
D^{2^{n-2}}<F\leqslant E<D.
\]
\end{lemma}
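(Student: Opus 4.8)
The statement is essentially a bookkeeping consequence of the decompositions and congruences already obtained; throughout I argue for $n\geqslant4$, the range under consideration here. The plan is to start from the internal direct decomposition of $D$ given by Lemma~\ref{l:m(k)}, namely that $D$ is the direct product of the infinite cyclic groups $\langle d_1\rangle$, the groups $\langle q(0,2l+1)\rangle$ with $2l+1\in A_0\setminus\{1\}$, and the groups $\langle q(k,2l+1)\rangle$ with $k\in\{1,\dots,n-3\}$ and $2l+1\in A_k$. Comparing this with the definition of $F$, one sees at once that $F$ is the direct product of $\langle d_1^{2^{n-2}}\rangle$ with all the factors $\langle q(0,2l+1)\rangle$ and with the subgroups $\langle q(k,2l+1)^{2^k}\rangle$ of the factors $\langle q(k,2l+1)\rangle$. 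Since each of these is a subgroup of the corresponding direct factor of $D$, the product defining $F$ is indeed direct and $F$ is a subgroup of $D$.

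Next I would verify that $F\leqslant E$. As $E$ is a subgroup of $D$ by Proposition~\ref{p:D2}, it is enough to check that every generator listed for $F$ lies in $E$. The generator $d_1^{2^{n-2}}$ lies in $E$ by Lemma~\ref{l:std}. Each generator $q(0,2l+1)$ with $2l+1\in A_0\setminus\{1\}$ lies in $E$ by the step-$0$ statement of Lemma~\ref{l:fun}. For $k\in\{1,\dots,n-4\}$ each generator $q(k,2l+1)^{2^k}$ with $2l+1\in A_k$ lies in $E$ by the step-$k$ statement of the same lemma. The remaining factor, corresponding to $k=n-3$, falls outside the range of Lemma~\ref{l:fun}; but there $A_{n-3}=\{1\}$ and the single generator is $q(n-3,1)^{2^{n-3}}=d_1^{-2^{n-3}}d_3^{2^{n-3}}$, which lies in $E$ by Lemma~\ref{l:m&D}. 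Hence $F\leqslant E$, and combined with $E<D$ from Proposition~\ref{p:D2} this gives $F\leqslant E<D$.

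Finally I would show $D^{2^{n-2}}<F$. Since $D$ is free abelian on the basis above, $D^{2^{n-2}}$ is the direct product of the groups generated by the $2^{n-2}$-th powers of the basis elements. Each such power lies in $F$: $d_1^{2^{n-2}}$ is itself a generator of $F$; $q(0,2l+1)^{2^{n-2}}\in\langle q(0,2l+1)\rangle\leqslant F$; and $q(k,2l+1)^{2^{n-2}}\in\langle q(k,2l+1)^{2^k}\rangle\leqslant F$ because $2^k\mid2^{n-2}$ for $k\leqslant n-3$. Thus $D^{2^{n-2}}\leqslant F$, and the inclusion is proper: for $n\geqslant4$ the set $A_0\setminus\{1\}$ is nonempty, so $F$ contains a basis element $q(0,2l+1)$ of the free abelian group $D$, which is not a $2^{n-2}$-th power in $D$, whence $q(0,2l+1)\in F\setminus D^{2^{n-2}}$. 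Putting the three parts together yields $D^{2^{n-2}}<F\leqslant E<D$. The only genuinely delicate point is the one mentioned in the second paragraph: identifying which funnel step each generator of $F$ belongs to and noticing that the topmost step $k=n-3$ is not covered by Lemma~\ref{l:fun} and must instead be handled by the special case recorded in Lemma~\ref{l:m&D}; all the underlying congruence computations have already been carried out there.
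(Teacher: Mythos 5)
Your proposal is correct and follows essentially the same route as the paper: membership of each generator of $F$ in $E$ via Lemmas~\ref{l:std}, \ref{l:fun} and (for the top step $k=n-3$, which Lemma~\ref{l:fun} does not cover) Lemma~\ref{l:m&D}, together with the decomposition of $D^{2^{n-2}}$ coming from Lemma~\ref{l:m(k)}. You merely spell out in more detail the points the paper leaves implicit, such as the explicit witness for the properness of $D^{2^{n-2}}<F$.
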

\begin{proof}
Утверждение следует из лемм \ref{l:std}, \ref{l:m&D} и \ref{l:fun}.

По лемме \ref{l:m(k)} имеем
\[
D^{2^{n-2}}=\langle d_1^{2^{n-2}}\rangle\times\prod_{2l+1\in A_0\setminus\{1\}}\langle q(0,2l+1)^{2^{n-2}}\rangle\times\prod_{k=1}^{n-3}\prod_{2l+1\in A_k}\langle q(k,2l+1)^{2^{n-2}}\rangle.
\]
Поэтому $F>D^{2^{n-2}}$ и дополнительное утверждение следует из предложения \ref{p:D2}.

Лемма доказана.
\end{proof}

\section{Группа $\sqrt{F}$}

\subsection{Основная лемма о $\sqrt{F}$}

\begin{notation}
Обозначим
\[
\sqrt{F}=\left\{d\in D\mid d^2\in F\right\}.
\]
\end{notation}

\begin{lemma}\label{l:sqrt(F)}
$\sqrt{F}$ --- подгруппа группы $D$, имеющая следующие свойства.
\begin{enumerate}[{\rm1.}]
\item
Фактор-группа
\[
\sqrt{F}/F=\langle d_1^{2^{n-3}}F\rangle\times\prod_{k=1}^{n-3}\prod_{2l+1\in A_k}\langle q(k,2l+1)^{2^{k-1}}F\rangle
\]
и является элементарной абелевой $2$-группой, возможно, единичной.
\item
$D^{2^{n-2}}<F\leqslant\sqrt{F}\cap E\leqslant\sqrt{F}<D$.
\end{enumerate}
\end{lemma}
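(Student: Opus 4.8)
The plan is to reduce both statements to an elementary computation of $2$-torsion in a finite abelian $2$-group. I would start from the decomposition of $D$ supplied by Lemma~\ref{l:m(k)},
\[
D=\langle d_1\rangle\times\prod_{2l+1\in A_0\setminus\{1\}}\langle q(0,2l+1)\rangle\times\prod_{k=1}^{n-3}\prod_{2l+1\in A_k}\langle q(k,2l+1)\rangle,
\]
noting that, since $D$ is free abelian of rank $2^{n-2}-1$ and these are exactly $2^{n-2}-1$ generators forming a direct product, each displayed cyclic factor is infinite cyclic and the listed elements form a basis of $D$. Comparing with the definition of $F$ — which raises $d_1$ to the power $2^{n-2}$, keeps each $q(0,2l+1)$ to the power $1$, and raises each $q(k,2l+1)$ ($1\leqslant k\leqslant n-3$) to the power $2^k$ — one obtains the internal direct product decomposition
\[
D/F\;\cong\;\Z/2^{n-2}\Z\times\prod_{k=1}^{n-3}(\Z/2^k\Z)^{|A_k|},
\]
in which $d_1F$ generates the first summand, the classes $q(0,2l+1)F$ are trivial, and each $q(k,2l+1)F$ generates a summand $\Z/2^k\Z$.

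Next I would observe that $\sqrt F$ is precisely the full preimage in $D$ of the $2$-torsion subgroup $(D/F)[2]$ under the canonical map $D\to D/F$, since $d^2\in F$ is equivalent to $(dF)^2=F$. Hence $\sqrt F$ is a subgroup of $D$ containing $F$, and $\sqrt F/F=(D/F)[2]$. (The subgroup property is in any case immediate: $D$ is abelian, so $d^2,e^2\in F$ forces $(de)^2=d^2e^2\in F$ and $(d^{-1})^2\in F$.) Since the $2$-torsion of a direct product of cyclic $2$-groups is the product of their $2$-torsions, and $(\Z/2^j\Z)[2]$ is the group of order $2$ generated by the class of $2^{j-1}$ for every $j\geqslant1$, transporting this back through the isomorphism above yields exactly
\[
\sqrt F/F=\langle d_1^{2^{n-3}}F\rangle\times\prod_{k=1}^{n-3}\prod_{2l+1\in A_k}\langle q(k,2l+1)^{2^{k-1}}F\rangle,
\]
a direct product of groups of order $2$, i.e.\ an elementary abelian $2$-group; this is statement~1.

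For statement~2 I would assemble the chain from what is already available. The strict inclusions $D^{2^{n-2}}<F$ and $F<E<D$ are Lemma~\ref{l:F}; $F\leqslant\sqrt F$ holds because $F$ is a subgroup, so together with $F\leqslant E$ we get $F\leqslant\sqrt F\cap E$, while $\sqrt F\cap E\leqslant\sqrt F$ is trivial. The only remaining point is the strictness $\sqrt F<D$, i.e.\ that $D/F$ is not annihilated by $2$; this is clear from the decomposition above, since the cyclic direct factor $\langle d_1F\rangle$ has order $2^{n-2}\geqslant4$ (recall $n\geqslant4$ in this context), so $d_1\notin\sqrt F$. Altogether $D^{2^{n-2}}<F\leqslant\sqrt F\cap E\leqslant\sqrt F<D$. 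The only step that is not purely mechanical is the first one — confirming that the generators of Lemma~\ref{l:m(k)} form a basis, so that $D/F$ really splits as the stated product of cyclic $2$-groups; once that is granted, everything else is reading off $2$-torsion, and I anticipate no genuine obstacle.
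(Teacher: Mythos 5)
Your proposal is correct and follows essentially the same route as the paper: the paper's (very terse) proof likewise rests on the abelianness of $D$, the basis decomposition of Lemma~\ref{l:m(k)} underlying the definition of $F$, and Lemma~\ref{l:F}, with the quotient $\sqrt{F}/F$ read off as the $2$-torsion of $D/F$. You have merely made explicit the identification $D/F\cong\Z/2^{n-2}\Z\times\prod_{k=1}^{n-3}(\Z/2^k\Z)^{|A_k|}$ that the paper leaves implicit.
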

\begin{proof}
То, что $\sqrt{F}$ является подгруппой группы $D$, очевидно, так как $D$ абелева.

Так как $q(0,2l+1)\in F$ для любого числа $2l+1\in A_0\setminus\{1\}$, то надо рассматривать $k\geqslant1$.
Дополнительные утверждения непосредственно следуют из определения подгруппы $F$ и леммы \ref{l:F}.

Лемма доказана.
\end{proof}

\begin{corollary}\label{c:sqrF}
Пусть $2^n\in\{16,32,64,128\}\longleftrightarrow m\in\{1,2,4,8\}$. 
Тогда фактор-группа $\sqrt{F}/F$ имеет следующее строение.
\begin{description}
\item[Для $16$.]
\[
\sqrt{F}/F=\langle d_1^2\rangle F\times\langle q(1,1)\rangle F.
\] 
\item[Для $32$.]
\[
\sqrt{F}/F=\langle d_1^4\rangle F\times\langle q(2,1)^2\rangle F\times\langle q(1,1)\rangle F\times\langle q(1,3)\rangle F.
\]
\item[Для $64$.]
\begin{align*}
\sqrt{F}/F&=\langle d_1^8\rangle F\times\langle q(3,1)^4\rangle F\times\langle q(2,1)^2\rangle F\times\langle q(2,3)^2\rangle F\times\\
&\times\langle q(1,1)\rangle F\times\langle q(1,3)\rangle F\times\langle q(1,5)\rangle F\times\langle q(1,7)\rangle F.
\end{align*}
\item[Для $128$.]
\begin{align*}
\sqrt{F}/F&=\langle d_1^{16}\rangle F\times\langle q(4,1)^8\rangle F\times\langle q(3,1)^4\rangle F\times\langle q(3,3)^4\rangle F\times\\
&\times\langle q(2,1)^2\rangle F\times\langle q(2,3)^2\rangle F\times\langle q(2,5)^2\rangle F\times\langle q(2,7)^2\rangle F\times\\
&\times\langle q(1,1)\rangle F\times\langle q(1,3)\rangle F\times\langle q(1,5)\rangle F\times\langle q(1,7)\rangle F\times\\
&\times\langle q(1,9)\rangle F\times\langle q(1,11)\rangle F\times\langle q(1,13)\rangle F\times\langle q(1,15)\rangle F.
\end{align*}
\end{description}
\end{corollary}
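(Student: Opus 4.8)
The plan is to specialize the general description of $\sqrt{F}/F$ from item~1 of Lemma~\ref{l:sqrt(F)} to the four values $n\in\{4,5,6,7\}$ (that is, $2^n\in\{16,32,64,128\}$), reading off the sets $A_k$ either from Notations~\ref{n:part} or, more concretely, from the worked-out funnels in Subsection~\ref{sss:funcas}, where precisely these four cases are carried out. Lemma~\ref{l:sqrt(F)} gives
\[
\sqrt{F}/F=\langle d_1^{2^{n-3}}F\rangle\times\prod_{k=1}^{n-3}\prod_{2l+1\in A_k}\langle q(k,2l+1)^{2^{k-1}}F\rangle,
\]
so nothing remains but to substitute and to collect the factors in the order displayed in the statement (the block $k=n-3$ first, then $k=n-4$, down to $k=1$ last). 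First I would record the leading exponent: $2^{n-3}$ equals $2,4,8,16$ for $n=4,5,6,7$, producing the factors $\langle d_1^2F\rangle$, $\langle d_1^4F\rangle$, $\langle d_1^8F\rangle$, $\langle d_1^{16}F\rangle$ respectively.

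Then, for each $n$, I would expand the double product using $A_k=\{1,3,\dots,2^{n-2-k}-1\}$ (a set with $2^{n-3-k}$ elements) and the exponent $2^{k-1}$ attached to the block indexed by $k$. For $n=4$ the only block is $k=1$ with $A_1=\{1\}$, contributing the single factor $\langle q(1,1)F\rangle$. For $n=5$ the blocks are $k=1$ with $A_1=\{1,3\}$ (exponent $1$) and $k=2$ with $A_2=\{1\}$ (exponent $2$), contributing $\langle q(2,1)^2F\rangle$, $\langle q(1,1)F\rangle$, $\langle q(1,3)F\rangle$. For $n=6$ the blocks are $k=1$, $A_1=\{1,3,5,7\}$; $k=2$, $A_2=\{1,3\}$; $k=3$, $A_3=\{1\}$. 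For $n=7$ the blocks are $k=1$, $A_1=\{1,3,\dots,15\}$; $k=2$, $A_2=\{1,3,5,7\}$; $k=3$, $A_3=\{1,3\}$; $k=4$, $A_4=\{1\}$. Assembling these in the order just described reproduces the four claimed decompositions verbatim.

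There is no analytic content beyond Lemma~\ref{l:sqrt(F)}: the elementary abelian $2$-group property and the chain $D^{2^{n-2}}<F\leqslant\sqrt{F}<D$ are inherited from that lemma (together with Lemma~\ref{l:F} and Proposition~\ref{p:D2}), and each displayed factor has order dividing $2$ for the same reason. The only point requiring care is the bookkeeping — matching each power $q(k,2l+1)^{2^{k-1}}$ to its block and listing the blocks in decreasing order of $k$ — so the proof can be presented simply by citing Lemma~\ref{l:sqrt(F)} and exhibiting the four substitutions, cross-checking the sets $A_k$ against the funnels of Subsection~\ref{sss:funcas}.
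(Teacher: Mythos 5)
Your proposal is correct and follows essentially the same route as the paper: both obtain the four decompositions by specializing the general description of $\sqrt{F}/F$ to $n\in\{4,5,6,7\}$, reading off the sets $A_k$ and the exponents $2^{k-1}$. The only difference is one of presentation — you cite item~1 of Lemma~\ref{l:sqrt(F)} directly and substitute, whereas the paper re-derives the decomposition in each case by listing the generators $q(k,2l+1)$ explicitly and recomputing $D$ and $F$ via Lemma~\ref{l:m(k)} (which also serves to record the explicit forms of the $q$'s used later), but the underlying argument is identical.
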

\begin{proof}{\ }
\begin{description}
\item[Группа $\sqrt{F}/F$ для $16$.]
\end{description}
\begin{description}
\item[$(k=1).$]
\begin{align*}
A_1&=\{1\}.\\
q(1,1)^2&=d_1^{-2}d^2_3, \text{(по лемме 34)}\\
F_1&=\langle q(1,1)^{2}\rangle.
\end{align*}
\item[$(k=0).$]
\begin{align*}
A_0\setminus\{1\}&=\{3\},\\
q(0,3)&=d_3^{-1}d_{8-3}=d_3^{-1}d_5,\\
F_0&=\langle q(0,3)\rangle.
\end{align*}
\end{description}

По лемме \ref{l:m(k)}
\begin{align*}
D&=\langle d_1\rangle\times\prod_{2l+1\in A_0\setminus\{1\}}\left\langle q(0,2l+1)\right\rangle\times
\prod_{k=1}^{4-3}\prod_{2l+1\in A_k}\left\langle q(k,2l+1)\right\rangle=\\
&=\langle d_1\rangle\times\langle q(0,3)\rangle\times\langle q(1,1)\rangle;\\
F&=\langle d_1^{4}\rangle\times\prod_{k=0}^{4-3}F_k=\langle d_1^{4}\rangle\times F_0\times F_1=\\
&=\langle d_1^{4}\rangle\times\langle q(0,3)\rangle\times\langle q(1,1)^2\rangle;
\intertext{фактор-группа}
&\qquad\qquad\sqrt{F}/F=\langle d_1^{2}\rangle F\times\langle q(1,1)\rangle F.
\end{align*}

\begin{description}
\item[Группа $\sqrt{F}/F$ для $32$.]
\end{description}
\begin{description}
\item[$(k=2).$]
\begin{align*}
A_2&=\{1\}.\\
q(2,1)^4&=d_1^{-4}d^4_3,\\
F_2&=\langle q(2,1)^{4}\rangle.
\end{align*}
\item[$(k=1).$]
\begin{align*}
A_1&=\{1,3\}.\\
q(1,1)^2&=d_1^{-2}d^2_7 \;\; \text{ и } \;\; q(1,3)^2=d_3^{-2}d^2_5,\\
F_1&=\langle q(1,1)^{2}\rangle\times\langle q(1,3)^{2}\rangle.
\end{align*}
\item[$(k=0).$]
\begin{align*}
A_0\setminus\{1\}&=\{3, 5, 7\},\\
q(0,3)&=d_3^{-1}d_{16-3}=d_3^{-1}d_{13},\\
q(0,5)&=d_5^{-1}d_{11},\\
q(0,7)&=d_7^{-1}d_9,\\
F_0&=\langle q(0,3)\rangle\times\langle q(0,5)\rangle\times\langle q(0,7)\rangle.
\end{align*}
\end{description}

По лемме \ref{l:m(k)}
\begin{align*}
D&=\langle d_1\rangle\times\prod_{2l+1\in A_0\setminus\{1\}}\left\langle q(0,2l+1)\right\rangle\times
\prod_{k=1}^{5-3}\prod_{2l+1\in A_k}\left\langle q(k,2l+1)\right\rangle=\\
&=\langle d_1\rangle\times\langle q(0,3)\rangle\times\langle q(0,5)\rangle\times\langle q(0,7)\rangle\times\\
&\times\langle q(1,1)\rangle\times\langle q(1,3)\rangle\times\langle q(2,1)\rangle;\\
F&=\langle d_1^8\rangle\times\prod_{k=0}^{5-3}F_k=\langle d_1^8\rangle\times F_0\times F_1\times F_2=\\
&=\langle d_1^8\rangle\times\langle q(0,3)\rangle\times\langle q(0,5)\rangle\times\langle q(0,7)\rangle\times\\
&\times\langle q(1,1)^2\rangle\times\langle q(1,3)^2\rangle\times\langle q(2,1)^4\rangle;
\intertext{фактор-группа}
&\qquad\sqrt{F}/F=\langle d_1^4\rangle F\times\langle q(1,1)\rangle F\times\langle q(1,3)\rangle F\times\langle q(2,1)^2\rangle F.
\end{align*}

\begin{description}
\item[Группа $\sqrt{F}/F$ для $64$.]
\end{description}
\begin{description}
\item[$(k=3).$]
\begin{align*}
A_3&=\{1\}.\\
q(3,1)^8&=d_1^{-8}d_3^8,\\
F_3&=\langle q(3,1)^{8}\rangle.
\end{align*}
\item[$(k=2).$]
\begin{align*}
A_2&=\{1,3\}.\\
q(2,1)^4&=d_1^{-4}d^4_7 \; \text{ и }\; q(2,3)^4=d_3^{-4}d^4_5,\\
F_2&=\langle q(2,1)^{4}\rangle\times\langle q(2,3)^{4}\rangle.
\end{align*}
\item[$(k=1).$]
\begin{align*}
A_1&=\{1,3,5,7\}.\\
q(1,1)^2&=d_1^{-2}d^2_{15}, \\
q(1,3)^2&=d_3^{-2}d^2_{13},\\
q(1,5)^2&=d_5^{-2}d^2_{11},\\
q(1,7)^2&=d_7^{-2}d^2_9,\\
F_1=\langle q(1,1)^{2}\rangle&\times\langle q(1,3)^{2}\rangle\times\langle q(1,5)^{2}\rangle\times\langle q(1,7)^{2}\rangle.
\end{align*}
\item[$(k=0).$]
\[
A_0\setminus\{1\}=\{3, 5, 7, 9, 11, 13, 15\},
\]
\begin{align*}
q(0,3)&=d_3^{-1}d_{29},&q(0,5)&=d_5^{-1}d_{27},\\
q(0,7)&=d_7^{-1}d_{25},&q(0,9)&=d_9^{-1}d_{23},\\
q(0,11)&=d_{11}^{-1}d_{21},&q(0,13)&=d_{13}^{-1}d_{19},\\
q(0,15)&=d_{15}^{-1}d_{17},
\end{align*}
\begin{align*}
F_0=\langle q(0,3)\rangle&\times\langle q(0,5)\rangle\times\langle q(0,7)\rangle\times\langle q(0,9)\rangle\times\\
&\times\langle q(0,11)\rangle\times\langle q(0,13)\rangle\times\langle q(0,15)\rangle.
\end{align*}
\end{description}

По лемме \ref{l:m(k)}
\begin{align*}
D&=\langle d_1\rangle\times\prod_{2l+1\in A_0\setminus\{1\}}\left\langle q(0,2l+1)\right\rangle\times
\prod_{k=1}^{6-3}\prod_{2l+1\in A_k}\left\langle q(k,2l+1)\right\rangle=\\
&=\langle d_1\rangle\times\langle q(0,3)\rangle\times\langle q(0,5)\rangle\times\langle q(0,7)\rangle\times\langle q(0,9)\rangle\times\langle q(0,11)\rangle\times\\
&\times\langle q(0,13)\rangle\times\langle q(0,15)\rangle\times\langle q(1,1)\rangle\times\langle q(1,3)\rangle\times\langle q(1,5)\rangle\times\\
&\times\langle q(1,7)\rangle\times\langle q(2,1)\rangle\times\langle q(2,3)\rangle\times\langle q(3,1)\rangle;\\
F&=\langle d_1^{16}\rangle\times\prod_{k=0}^{6-3}F_k=\langle d_1^{16}\rangle\times F_0\times F_1\times F_2\times F_3=\\
&=\langle d_1^{16}\rangle\times\langle q(0,3)\rangle\times\langle q(0,5)\rangle\times\langle q(0,7)\rangle\times\langle q(0,9)\rangle\times\langle q(0,11)\rangle\times\\
&\times\langle q(0,13)\rangle\times\langle q(0,15)\rangle\times\langle q(1,1)^2\rangle\times\langle q(1,3)^2\rangle\times\langle q(1,5)^2\rangle\times\\
&\times\langle q(1,7)^2\rangle\times\langle q(2,1)^4\rangle\times\langle q(2,3)^4\rangle\times\langle q(3,1)^8\rangle;
\intertext{фактор-группа}
&\sqrt{F}/F=\langle d_1^8\rangle F\times\langle q(1,1)\rangle F\times\langle q(1,3)\rangle F\times\langle q(1,5)\rangle F\times\\
&\times\langle q(1,7)\rangle F\times\langle q(2,1)^2\rangle F\times\\
&\times\langle q(2,3)^2\rangle F\times\langle q(3,1)^4\rangle F.
\end{align*}

\begin{description}
\item[Группа $\sqrt{F}/F$ для $128$.]
\end{description}

Для любого $2l+1\in\left\{3,\dots,31\right\}=A_0\setminus\{1\}$ имеем
\[
q(0,2l+1)=d_{2l+1}^{-1}d_{64-(2l+1)}.
\]
Пусть $k\in\{1,\dots,4\}$.
Для любого $2l+1\in\left\{1,\dots,2^{5-k}-1\right\}=A_k$ имеем
\[
q(k,2l+1)=d_{2l+1}^{-1}d_{2^{6-k}-(2l+1)}.
\]
\begin{description}
\item[$(k=4).$]
\begin{align*}
A_4&=\{1\},\\
q(4,1)&=d_1^{-1}d_3,\\
F_4&=\langle q(4,1)^{16}\rangle.
\end{align*}
\item[$(k=3).$]
\begin{align*}
A_3&=\{1,3\},\\
q(3,1)&=d_1^{-1}d_{8-1}=d_1^{-1}d_7\text{ и }q(3,3)=d_3^{-1}d_{8-3}=d_3^{-1}d_5,\\
F_3&=\langle q(3,1)^8\rangle\times\langle q(3,3)^8\rangle.
\end{align*}
\item[$(k=2).$]
\begin{align*}
A_2&=\{1,3,5,7\},\\
q(2,1)&=d_1^{-1}d_{16-1}=d_1^{-1}d_{15},& q(2,3)&=d_3^{-1}d_{16-3}=d_3^{-1}d_{13},\\
q(2,5)&=d_5^{-1}d_{16-5}=d_5^{-1}d_{11},& q(2,7)&=d_7^{-1}d_{16-7}=d_7^{-1}d_9;\\
\end{align*}
\[
F_2=\langle q(2,1)^4\rangle\times\langle q(2,3)^4\rangle\times\langle q(2,5)^4\rangle\times\langle q(2,7)^4\rangle.
\]
\item[$(k=1).$]
\[
A_1=\{1,3,5,7,9,11,13,15\}
\]
и
\begin{align*}
q(1,1)&=d_1^{-1}d_{32-1}=d_1^{-1}d_{31},& q(1,3)&=d_3^{-1}d_{32-3}=d_3^{-1}d_{29},\\
q(1,5)&=d_5^{-1}d_{32-5}=d_5^{-1}d_{27},& q(1,7)&=d_7^{-1}d_{32-7}=d_7^{-1}d_{25},\\
q(1,9)&=d_9^{-1}d_{32-9}=d_9^{-1}d_{23},& q(1,11)&=d_{11}^{-1}d_{32-11}=d_{11}^{-1}d_{21},\\
q(1,13)&=d_{13}^{-1}d_{32-13}=d_{13}^{-1}d_{19},& q(1,15)&=d_{15}^{-1}d_{32-15}=d_{15}^{-1}d_{17};
\end{align*}
\[
F_1=\prod_{l=0}^7\langle q(1,2l+1)^2\rangle.
\]

\item[$(k=0).$]
Наконец,
\[
A_0\setminus\{1\}=\{3,5,7,9,11,13,15,17,19,21,23,25,27,29,31\}
\]
и
\begin{align*}
q(0,3)&=d_3^{-1}d_{64-3}=d_3^{-1}d_{61},& q(0,5)&=d_5^{-1}d_{64-5}=d_3^{-1}d_{59},\\
q(0,7)&=d_7^{-1}d_{64-7}=d_7^{-1}d_{57},& q(0,9)&=d_9^{-1}d_{64-9}=d_7^{-1}d_{55},\\
q(0,11)&=d_{11}^{-1}d_{64-11}=d_9^{-1}d_{53},& q(0,13)&=d_{13}^{-1}d_{64-13}=d_3^{-1}d_{51},\\
q(0,15)&=d_{15}^{-1}d_{64-15}=d_{13}^{-1}d_{49},& q(0,17)&=d_{17}^{-1}d_{64-17}=d_7^{-1}d_{47},\\
q(0,19)&=d_{19}^{-1}d_{64-19}=d_{19}^{-1}d_{45},& q(0,21)&=d_{21}^{-1}d_{64-21}=d_{21}^{-1}d_{43},\\
q(0,23)&=d_{23}^{-1}d_{64-23}=d_{23}^{-1}d_{41},& q(0,25)&=d_{25}^{-1}d_{64-25}=d_{25}^{-1}d_{39},\\
q(0,27)&=d_{27}^{-1}d_{64-27}=d_{27}^{-1}d_{37},& q(0,29)&=d_{29}^{-1}d_{64-29}=d_{29}^{-1}d_{35},\\
q(0,31)&=d_{31}^{-1}d_{64-31}=d_{31}^{-1}d_{33}.
\end{align*}
\end{description}
\[
F_0=\prod_{l=1}^{15}\langle q(0,2l+1)\rangle.
\]

По лемме \ref{l:m(k)}
\begin{align*}
D&=\prod_{l=0}^{30}\left\langle d_{2l+1}\right\rangle=\\
&=\langle d_1\rangle\times\langle q(4,1)\rangle\times\prod_{l=0}^1\langle q(3,2l+1)\rangle\times\prod_{l=0}^3\langle q(2,2l+1)\rangle\times\\
&\qquad\quad\times\prod_{l=0}^7\langle q(1,2l+1)\rangle\times\prod_{l=1}^{15}\langle q(0,2l+1)\rangle;\\
F&=\langle d_1^{32}\rangle\times\langle q(4,1)^{16}\rangle\times\prod_{l=0}^1\langle q(3,2l+1)^8\rangle\times\prod_{l=0}^3\langle q(2,2l+1)^4\rangle\times\\
&\qquad\qquad\times\prod_{l=0}^7\langle q(1,2l+1)^2\rangle\times\prod_{l=1}^{15}\langle q(0,2l+1)\rangle;
\intertext{фактор-группа}
\sqrt{F}/F&=\langle d_1^{16}\rangle F\times\langle q(4,1)^8\rangle F\times\prod_{l=0}^1\langle q(3,2l+1)^4\rangle F\times\\
&\qquad\qquad\times\prod_{l=0}^3\langle q(2,2l+1)^2\rangle F\times\prod_{l=0}^7\langle q(1,2l+1)\rangle F.
\end{align*}
\end{proof}

\subsection{Вычисления по модулю $2$ порождающих\\ фактор-группы $\sqrt{F}/F$}

\begin{notation}
Для любого $k\in\{1,\dots,n-3\}$ положим
\[
P_{2^n}(k)=\prod_{j=k-1}^{n-4}d_1^{2^j}\equiv\prod_{j=k-1}^{n-4}d_{2^j}\pmod{2}
\]
по лемме \ref{l:md2}.
\end{notation}

Таким образом, получим.
\begin{description}
\item[Для $16$.]
Для $k\in\{1\}$ имеем
\[
P_{16}(k)=\prod_{j=k-1}d_1^{2^j}\equiv\prod_{j=k-1}d_{2^j}\pmod{2}.
\]
Поэтому
\[
\begin{array}{c|c}
k&1\\ \hline
P(k)&d_1
\end{array}
\]
\item[Для $32$.]
Для любого $k\in\{1,2\}$ имеем
\[
P_{32}(k)=\prod_{j=k-1}^1d_1^{2^j}\equiv\prod_{j=k-1}^1d_{2^j}\pmod{2}.
\]
Поэтому
\[
\begin{array}{c|c|c}
k&1&2\\ \hline
P_{32}(k)&d_1d_2&d_2
\end{array}
\]
\item[Для $64$.]
Для любого $k\in\{1,2,3\}$ имеем
\[
P_{64}(k)=\prod_{j=k-1}^2d_1^{2^j}\equiv\prod_{j=k-1}^2d_{2^j}\pmod{2}.
\]
Поэтому
\[
\begin{array}{c|c|c|c}
k&1&2&3\\ \hline
P_{64}(k)&d_1d_2d_4&d_2d_4&d_4
\end{array}
\]
\item[Для $128$.]
Для любого $k\in\{1,2,3,4\}$ имеем
\[
P_{128}(k)=\prod_{j=k-1}^2d_1^{2^j}\equiv\prod_{j=k-1}^2d_{2^j}\pmod{2}.
\]
Поэтому
\[
\begin{array}{c|c|c|c|c}
k&1&2&3&4\\ \hline
P(k)&d_1d_2d_4d_8&d_2d_4d_8&d_4d_8&d_8
\end{array}
\]
\end{description}

\begin{lemma}\label{l:128dk}
Для любого $k\in\{1,\dots,n-3\}$ имеем
\[
d_1^{-2^{k-1}}\equiv d_{2^{n-3}}P_{2^n}(k)\pmod{2}.
\]
\end{lemma}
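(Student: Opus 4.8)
The plan is to express both sides of the asserted congruence as powers of the single circular unit $d_1$ modulo $2$ and then to reconcile them by means of the two basic modulo‑$2$ identities for $d_1$ established earlier: $d_1^{2^l}\equiv d_{2^l}\pmod{2}$ for every natural $l$ (Lemma~\ref{l:md2}, statement~3, iterated, or directly statement $(DD)$ of Lemma~\ref{l:sdr}), and $d_1^{2^{n-2}}\equiv1\pmod{2}$ (Lemma~\ref{l:std}). The only point requiring a moment's care is that congruences modulo $2$ in $\Z[\alpha]$ may be multiplied, because $2\Z[\alpha]$ is an ideal, and that the negative powers $d_1^{-2^{k-1}}$ are legitimate elements of $\Z[\alpha]$ since $d_1\in D\subseteq\Un(\Z[\alpha])$; once this is noted, the rest is exponent bookkeeping.

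Concretely, I would first use $d_{2^{n-3}}\equiv d_1^{2^{n-3}}\pmod2$ together with the definition $P_{2^n}(k)=\prod_{j=k-1}^{n-4}d_1^{2^j}$ to get $d_{2^{n-3}}P_{2^n}(k)\equiv d_1^{\,e}\pmod2$, where $e=2^{n-3}+\sum_{j=k-1}^{n-4}2^j$. The geometric sum evaluates to $\sum_{j=k-1}^{n-4}2^j=2^{n-3}-2^{k-1}$ for every admissible $k\in\{1,\dots,n-3\}$ — the extreme cases $k=1$ (sum $=2^{n-3}-1$) and $k=n-3$ (sum $=2^{n-4}$) are covered by the same formula and need no separate treatment — so that $e=2^{n-2}-2^{k-1}$ and hence $d_{2^{n-3}}P_{2^n}(k)\equiv d_1^{\,2^{n-2}-2^{k-1}}\pmod2$.

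It then remains to observe that $d_1^{\,2^{n-2}-2^{k-1}}=d_1^{-2^{k-1}}\cdot d_1^{\,2^{n-2}}$ and that $d_1^{\,2^{n-2}}\equiv1\pmod2$ by Lemma~\ref{l:std}; writing $d_1^{\,2^{n-2}}=1+2\rho$ with $\rho\in\Z[\alpha]$ and multiplying by $d_1^{-2^{k-1}}\in\Z[\alpha]$ gives $d_1^{\,2^{n-2}-2^{k-1}}\equiv d_1^{-2^{k-1}}\pmod2$, which together with the previous step yields the claim. I do not expect any real obstacle here; the computation is essentially a single geometric sum, and the only mildly delicate issues are to remember that the equality in the statement is a congruence in $\Z[\alpha]$ (so $d_1^{-2^{k-1}}$ makes sense) and to keep the index range of the "воронка"‑product $P_{2^n}(k)$ straight while substituting $d_1^{2^j}\equiv d_{2^j}$.
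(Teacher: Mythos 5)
Your proposal is correct and is essentially the paper's own argument run in the opposite direction: the paper starts from $d_1^{-2^{k-1}}\equiv d_1^{2^{n-2}-2^{k-1}}$ and expands the exponent as $2^{k-1}(2^{n-1-k}-1)=2^{n-3}+\dots+2^{k-1}$, while you collapse the right-hand side to $d_1^{2^{n-2}-2^{k-1}}$ via the same geometric sum and then cancel $d_1^{2^{n-2}}\equiv1\pmod2$. The exponent bookkeeping and the two cited congruences ($d_1^{2^l}\equiv d_{2^l}$ and $d_1^{2^{n-2}}\equiv1$) match the paper's, so no further comment is needed.
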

\begin{proof}
Так как $d_1^{2^{n-2}}\equiv1\pmod{2}$ по лемме \ref{l:md2}, то по лемме \ref{l:ms2}
\begin{align*}
d_1^{-2^{k-1}}&\equiv d_1^{2^{n-2}-2^{k-1}}=d_1^{2^{k-1}(2^{n-1-k}-1)}=d_1^{2^{k-1}(2^{n-2-k}+\dots+1)}=\\
&=d_1^{2^{n-3}+\dots+2^{k-1}}=d_1^{2^{n-3}}d_1^{2^{n-4}}\dots d_1^{2^{k-1}}\equiv d_1^{2^{n-3}}P(k)\pmod{2}.
\end{align*}
Лемма доказана.
\end{proof}

Таким образом, получим.
\begin{description}
\item[Для $16$.]
Для $k\in\{1\}$ имеем
\[
d_1^{-1}\equiv d_2d_1\pmod{2}.
\]
\item[Для $32$.]
Для любого $k\in\{1,2\}$ имеем
\[
d_1^{-2^{k-1}}\equiv d_{4}P_{32}(k)\pmod{2}.
\]
Подробно
\begin{align*}
k&=1& d_1^{-2^{1-1}}&=d_1^{-1}\equiv d_4P_{32}(1)=d_4d_2d_1=\\
&&&=d_4P_{32}(2)d_1\pmod{2},\\
k&=2& d_1^{-2^{2-1}}&=d_1^{-2}\equiv d_4P_{32}(2)=d_4d_2\pmod{2}.
\end{align*}
\item[Для $64$.]
Для любого $k\in\{1,2,3\}$ имеем
\[
d_1^{-2^{k-1}}\equiv d_8P_{64}(k)\pmod{2}.
\]
Подробно
\begin{align*}
k&=1& d_1^{-2^{1-1}}&=d_1^{-1}\equiv d_8P_{64}(1)=d_8d_4d_2d_1=\\
&&&=d_8P_{64}(2)d_1\pmod{2},\\
k&=2& d_1^{-2^{2-1}}&=d_1^{-2}\equiv d_8P_{64}(2)=d_8d_4d_2=\\
&&&=d_8P_{64}(3)d_2\pmod{2},\\
k&=3& d_1^{-2^{3-1}}&=d_1^{-4}\equiv d_8P_{64}(3)=d_8d_4\pmod{2}.
\end{align*}
\item[Для $128$.]
Для любого $k\in\{1,2,3,4\}$ имеем
\[
d_1^{-2^{k-1}}\equiv d_{16}P_{128}(k)\pmod{2}.
\]
Подробно
\begin{align*}
k&=1& d_1^{-2^{1-1}}&=d_1^{-1}\equiv d_{16}P_{128}(1)=d_{16}d_8d_4d_2d_1=\\
&&&=d_{16}P(2)_{128}d_1\pmod{2},\\
k&=2& d_1^{-2^{2-1}}&=d_1^{-2}\equiv d_{16}P_{128}(2)=d_{16}d_8d_4d_2=\\
&&&=d_{16}P_{128}(3)d_2\pmod{2},\\
k&=3& d_1^{-2^{3-1}}&=d_1^{-4}\equiv d_{16}P_{128}(3)=d_{16}d_8d_4=\\
&&&=d_{16}P_{128}(4)d_4\pmod{2},\\
k&=4& d_1^{-2^{4-1}}&=d_1^{-8}\equiv d_{16}P_{128}(4)=d_{16}d_8\pmod{2}.
\end{align*}
\end{description}

\begin{lemma}\label{l:12832k}
Для любого $k\in\{1,\dots,4\}$ имеем
\[
d_{2^{n-1-k}-1}^{2^{k-1}}\equiv d_{2^{n-2}-2^{k-1}}\equiv d_{2^{k-1}}+r_{2^{k-1}}\pmod{2}.
\]
\end{lemma}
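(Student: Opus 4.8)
Замысел состоит в том, чтобы получить оба сравнения напрямую из уже установленных свойств последовательностей $\left\{s_j\right\}_{j\in\Z}$, $\left\{d_j\right\}_{j\in\Z}$ и $\left\{r_j\right\}_{j\in\Z}$, не прибегая к вычислениям. Сначала я применю правило $(DD)$ из леммы \ref{l:sdr}, согласно которому $d_j^{2^{k-1}}\equiv d_{2^{k-1}j}\pmod{2}$ для любого натурального $k$. Подставляя $j=2^{n-1-k}-1$ и пользуясь тождеством
\[
2^{k-1}\bigl(2^{n-1-k}-1\bigr)=2^{n-2}-2^{k-1},
\]
сразу получаю первое сравнение $d_{2^{n-1-k}-1}^{2^{k-1}}\equiv d_{2^{n-2}-2^{k-1}}\pmod{2}$; здесь $0<2^{k-1}<2^{n-2}$, так что индекс лежит в естественном диапазоне.

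Для второго сравнения я воспользуюсь определениями $d_j=1+s_j$ и $r_j=s_j+s_{2^{n-2}-j}$. Взяв $j=2^{k-1}$, из определения числа $r_{2^{k-1}}$ получаю $s_{2^{n-2}-2^{k-1}}=r_{2^{k-1}}-s_{2^{k-1}}$, откуда
\[
d_{2^{n-2}-2^{k-1}}=1+s_{2^{n-2}-2^{k-1}}=\bigl(1+s_{2^{k-1}}\bigr)+r_{2^{k-1}}-2s_{2^{k-1}}=d_{2^{k-1}}+r_{2^{k-1}}-2s_{2^{k-1}}.
\]
Отбрасывая слагаемое $2s_{2^{k-1}}\equiv0\pmod{2}$, получаю $d_{2^{n-2}-2^{k-1}}\equiv d_{2^{k-1}}+r_{2^{k-1}}\pmod{2}$, что завершает план.

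Содержательных препятствий здесь нет: это техническое следствие лемм \ref{l:ms}--\ref{l:sdr}, и единственное, за чем нужно аккуратно проследить, --- это арифметика индексов (в первую очередь равенство $2^{k-1}(2^{n-1-k}-1)=2^{n-2}-2^{k-1}$) и то, что при рассматриваемых $k$ (то есть для $2^n\in\{16,32,64,128\}$) все задействованные индексы попадают в области применимости используемых лемм. Поэтому основная <<работа>> здесь --- лишь аккуратная запись выкладок, а не преодоление трудностей.
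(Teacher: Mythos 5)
Ваше доказательство верно и по существу совпадает с доказательством в статье: там тоже сначала применяется сравнение $d_j^{2^{l}}\equiv d_{2^{l}j}\pmod{2}$ (лемма \ref{l:md2}, эквивалентно правилу $(DD)$ леммы \ref{l:sdr}) вместе с тождеством $2^{k-1}(2^{n-1-k}-1)=2^{n-2}-2^{k-1}$, а затем из определений $d_j=1+s_j$ и $r_j=s_j+s_{2^{n-2}-j}$ выводится $d_{2^{n-2}-2^{k-1}}\equiv d_{2^{k-1}}+r_{2^{k-1}}\pmod{2}$. Различие лишь в ссылке на источник правила возведения в степень, содержательной разницы нет.
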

\begin{proof}
По лемме \ref{l:md2}
\[
d_{2^{n-1-k}-1}^{2^{k-1}}\equiv d_{(2^{n-1-k}-1)2^{k-1}}=d_{2^{n-2}-2^{k-1}}.
\]
Далее
\begin{align*}
d_{2^{n-2}-2^{k-1}}&=1+s_{2^{n-2}-2^{k-1}}=
\intertext{по определению $r_{2^{k-1}}$ получим}
&=1+r_{2^{k-1}}-s_{2^{k-1}}\equiv1+s_{2^{k-1}}+r_{2^{k-1}}=\\
&=d_{2^{k-1}}+r_{2^{k-1}}\pmod{2}.
\end{align*}

Лемма доказана.
\end{proof}

Таким образом, получим.
\begin{description}
\item[Для $16$.]
Для $k\in\{1\}$ имеем
\[
d_{2^2-1}^{2^{1-1}}\equiv d_{4-1}\equiv d_1+r_1\pmod{2}.
\]
\item[Для $32$.]
Для любого $k\in\{1,2\}$ имеем
\[
d_{2^{4-k}-1}^{2^{k-1}}\equiv d_{8-2^{k-1}}\equiv d_{2^{k-1}}+r_{2^{k-1}}\pmod{2}.
\]
\item[Для $64$.]
Для любого $k\in\{1,2,3\}$ имеем
\[
d_{2^{5-k}-1}^{2^{k-1}}\equiv d_{16-2^{k-1}}\equiv d_{2^{k-1}}+r_{2^{k-1}}\pmod{2}.
\]
\item[Для $128$.]
Для любого $k\in\{1,2,3,4\}$ имеем
\[
d_{2^{6-k}-1}^{2^{k-1}}\equiv d_{32-2^{k-1}}\equiv d_{2^{k-1}}+r_{2^{k-1}}\pmod{2}.
\]
\end{description}

\begin{lemma}\label{l:128qk}
Для любого $k\in\{1,\dots,n-3\}$ имеем
\[
q(k,1)^{2^{k-1}}\equiv1+d_{2^{k-1}}^{-1}r_{2^{k-1}}\pmod{2}.
\]
\end{lemma}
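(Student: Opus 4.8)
The plan is to expand $q(k,1)^{2^{k-1}}$ and simplify modulo $2\Z[\alpha]$ using the two preceding lemmas. Recall that $q(k,1)$ is the case $2l+1=1$ of the notation attached to $A_k$, namely $q(k,1)=d_1^{-1}d_{2^{n-1-k}-1}$, so that
\[
q(k,1)^{2^{k-1}}=d_1^{-2^{k-1}}\cdot d_{2^{n-1-k}-1}^{2^{k-1}}.
\]
I would then evaluate the two factors separately.

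For the first factor, Lemma \ref{l:md2} gives $d_1^{2^{k-1}}\equiv d_{2^{k-1}}\pmod 2$. Since $d_1$ and $d_{2^{k-1}}$ are units of $\Z[\alpha]$ (for $k\le n-3$ the index $2^{k-1}$ is not divisible by $2^n$, so $d_{2^{k-1}}$ is a unit just as $d_1$ is), and a congruence modulo $2\Z[\alpha]$ between units passes to their inverses — if $a\equiv b\pmod 2$ then $a^{-1}-b^{-1}=a^{-1}b^{-1}(b-a)\in2\Z[\alpha]$ — it follows that $d_1^{-2^{k-1}}\equiv d_{2^{k-1}}^{-1}\pmod 2$. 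For the second factor, Lemma \ref{l:12832k} gives $d_{2^{n-1-k}-1}^{2^{k-1}}\equiv d_{2^{k-1}}+r_{2^{k-1}}\pmod 2$; although that lemma is stated only for small $k$, its proof — via $d_{2^{n-1-k}-1}^{2^{k-1}}\equiv d_{2^{n-2}-2^{k-1}}$ (Lemma \ref{l:md2}) together with $d_{2^{n-2}-2^{k-1}}=1+s_{2^{n-2}-2^{k-1}}\equiv 1+s_{2^{k-1}}+r_{2^{k-1}}=d_{2^{k-1}}+r_{2^{k-1}}$ (the definition of $r_{2^{k-1}}$) — is valid for every $k\in\{1,\dots,n-3\}$.

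Multiplying the two congruences yields
\[
q(k,1)^{2^{k-1}}\equiv d_{2^{k-1}}^{-1}\bigl(d_{2^{k-1}}+r_{2^{k-1}}\bigr)=1+d_{2^{k-1}}^{-1}r_{2^{k-1}}\pmod 2,
\]
which is exactly the assertion. One could equally route the first factor through Lemma \ref{l:128dk}, writing $d_1^{-2^{k-1}}\equiv d_{2^{n-3}}P_{2^n}(k)$; then $d_{2^{n-3}}P_{2^n}(k)\,d_{2^{k-1}}\equiv d_{2^{n-3}}^2\equiv d_{2^{n-2}}\equiv 1$ once the exponents $2^{k-1}+2^{k-1}+2^k+\dots+2^{n-4}$ telescope up to $2^{n-3}$, while $d_{2^{n-3}}P_{2^n}(k)\,r_{2^{k-1}}\equiv d_{2^{k-1}}^{-1}r_{2^{k-1}}$, giving the same answer. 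There is no real obstacle: the only points that need care are that $d_{2^{k-1}}$ is genuinely invertible in $\Z[\alpha]$ (so that $d_{2^{k-1}}^{-1}$ makes sense and inverting a congruence is legitimate) and, in the alternative route, the bookkeeping of the cascading doubled exponent.
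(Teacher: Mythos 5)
Your proposal is correct and follows essentially the same route as the paper: the paper's proof is exactly the one-line computation $q(k,1)^{2^{k-1}}=d_1^{-2^{k-1}}d_{2^{n-1-k}-1}^{2^{k-1}}\equiv d_{2^{k-1}}^{-1}\left(d_{2^{k-1}}+r_{2^{k-1}}\right)=1+d_{2^{k-1}}^{-1}r_{2^{k-1}}\pmod{2}$, relying implicitly on the same two preceding lemmas. Your additional justification that a congruence modulo $2$ between units passes to their inverses, and your remark that Lemma~\ref{l:12832k} extends to all $k\in\{1,\dots,n-3\}$, merely make explicit what the paper leaves tacit.
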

\begin{proof}
Имеем
\begin{align*}
q(k,1)^{2^{k-1}}&=d_1^{-2^{k-1}}d_{2^{n-1-k}-1}^{2^{k-1}}\equiv d_{2^{k-1}}^{-1}\left(d_{2^{k-1}}+r_{2^{k-1}}\right)=\\
&=1+d_{2^{k-1}}^{-1}r_{2^{k-1}}\pmod{2}.
\end{align*}

Лемма доказана.
\end{proof}

Таким образом, получим.
\begin{description}
\item[Для $16$.]
Для $k\in\{1\}$ имеем
\[
q(1,1)^{2^{1-1}}\equiv1+d_1^{-1}r_1\pmod{2}.
\]
\item[Для $32$.]
Для любого $k\in\{1,2\}$ имеем
\[
q(k,1)^{2^{k-1}}\equiv1+d_{2^{k-1}}^{-1}r_{2^{k-1}}\pmod{2}.
\]
\item[Для $64$.]
Для любого $k\in\{1,2,3\}$ имеем
\[
q(k,1)^{2^{k-1}}\equiv1+d_{2^{k-1}}^{-1}r_{2^{k-1}}\pmod{2}.
\]
\item[Для $128$.]
Для любого $k\in\{1,2,3,4\}$ имеем
\[
q(k,1)^{2^{k-1}}\equiv1+d_{2^{k-1}}^{-1}r_{2^{k-1}}\pmod{2}.
\]
\end{description}

\begin{lemma}\label{l:128rd}
Пусть $\widetilde{R}$, как в лемме {\rm\ref{l:idR}}.
\begin{enumerate}[{\rm1.}]
\item
Для любых целых $j$ и $l$
\[
d_jr_l=r_l+r_{l+j}+r_{l-j}\in\widetilde{R}.
\]
\item
Для любого $k\in\{1,\dots,n-3\}$ имеем
\[
d_{2^{k-1}}^{-1}r_{2^{k-1}}\in\widetilde{R}\pmod{2}.
\]
\item
Для любого целого $l$
\[
d_{2^{n-3}}r_l\equiv r_l\pmod{2}.
\]
\item
Для любого $k\in\{1,\dots,n-3\}$ имеем
\[
q(k,1)^{2^{k-1}}\equiv1+P_{2^n}(k)r_{2^{k-1}}\pmod{2}.
\]
\end{enumerate}
\end{lemma}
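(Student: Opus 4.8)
The plan is to derive all four claims from the multiplication rules of Lemma~\ref{l:sdr}, the structural description of the sequence $\{r_j\}$ in Lemmas~\ref{l:r1} and~\ref{l:r2}, and the fact (Lemma~\ref{l:idR}) that $\widetilde R/2\Z[\alpha+\alpha^{-1}]$ is an ideal with zero multiplication, together with the three preceding lemmas about $P_{2^n}(k)$ and $q(k,1)$. For part~1, the identity $d_jr_l=r_l+r_{l+j}+r_{l-j}$ is exactly the $(DR)$ rule of Lemma~\ref{l:sdr} (which itself comes from $d_j=1+s_j$ and the $(SR)$ rule $s_jr_l=r_{l-j}+r_{l+j}$); for the membership I would note that, by Lemma~\ref{l:r1}, every $r_m$ with $m\in\Z$ is, up to sign, one of $r_1,\dots,r_{2^{n-3}-1}$ or one of $0,\pm2,\pm2\sqrt2=\pm2s_{2^{n-3}}$, the first kind lying in $R_\Z$ and the rest in $2\Z[\alpha+\alpha^{-1}]$, so $r_m\in\widetilde R$ for every $m$ and hence $r_l+r_{l+j}+r_{l-j}\in\widetilde R$.

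For part~2, I would first observe that $d_{2^{k-1}}$ is invertible modulo $2$: by Lemma~\ref{l:md2}, $d_{2^{k-1}}^{2^{n-1-k}}\equiv d_{2^{n-2}}\equiv1\pmod2$, so the class of $d_{2^{k-1}}$ in the ring $\Z[\alpha+\alpha^{-1}]/2\Z[\alpha+\alpha^{-1}]$ is a unit and $d_{2^{k-1}}^{-1}\pmod2$ is meaningful. Since $r_{2^{k-1}}\in\widetilde R$ by part~1 and $\widetilde R/2\Z[\alpha+\alpha^{-1}]$ is an ideal (Lemma~\ref{l:idR}), multiplying by the ring element $d_{2^{k-1}}^{-1}$ keeps us inside it, which is the claim. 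For part~3, part~1 gives $d_{2^{n-3}}r_l=r_l+r_{l+2^{n-3}}+r_{l-2^{n-3}}$; since $(l+2^{n-3})-(l-2^{n-3})=2^{n-2}$ and the sequence $\{r_j\}$ is periodic modulo $2$ with period $2^{n-2}$ (Lemma~\ref{l:r2}), the last two terms are congruent modulo $2$, their sum is $\equiv0\pmod2$, and $d_{2^{n-3}}r_l\equiv r_l\pmod2$ follows.

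For part~4, I would start from Lemma~\ref{l:128qk}, which says $q(k,1)^{2^{k-1}}\equiv1+d_{2^{k-1}}^{-1}r_{2^{k-1}}\pmod2$. By Lemma~\ref{l:md2} we have $d_{2^{k-1}}\equiv d_1^{2^{k-1}}\pmod2$, hence $d_{2^{k-1}}^{-1}\equiv d_1^{-2^{k-1}}\pmod2$, and Lemma~\ref{l:128dk} gives $d_1^{-2^{k-1}}\equiv d_{2^{n-3}}P_{2^n}(k)\pmod2$. Thus $d_{2^{k-1}}^{-1}r_{2^{k-1}}\equiv d_{2^{n-3}}\bigl(P_{2^n}(k)r_{2^{k-1}}\bigr)\pmod2$. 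Expanding $P_{2^n}(k)=\prod_{j=k-1}^{n-4}d_{2^j}$ and applying part~1 repeatedly shows $P_{2^n}(k)r_{2^{k-1}}$ is a $\Z$-combination of elements $r_m$, so part~3 applied term by term gives $d_{2^{n-3}}\bigl(P_{2^n}(k)r_{2^{k-1}}\bigr)\equiv P_{2^n}(k)r_{2^{k-1}}\pmod2$; substituting back yields the asserted congruence.

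The only genuinely delicate point is giving a precise meaning to $d_{2^{k-1}}^{-1}$ modulo $2$ in parts~2 and~4, and this is handled once we know $d_{2^{k-1}}$ is a unit in $\Z[\alpha+\alpha^{-1}]/2\Z[\alpha+\alpha^{-1}]$; the rest is a bookkeeping chain through Lemmas~\ref{l:sdr}, \ref{l:r1}, \ref{l:r2}, \ref{l:idR}, \ref{l:md2}, \ref{l:128dk} and~\ref{l:128qk}. The main care will go into checking that substitutions of mutually congruent factors remain legitimate inside products with elements of $\widetilde R$, which is fine precisely because $\widetilde R/2\Z[\alpha+\alpha^{-1}]$ is an honest ideal of the quotient ring.
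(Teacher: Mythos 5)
Your argument is correct and follows essentially the same chain as the paper: part 1 is the $(DR)$ rule of Lemma~\ref{l:sdr}, part 3 is the periodicity of $\{r_j\}$ modulo $2$ with period $2^{n-2}$ from Lemma~\ref{l:r2}, and part 4 combines Lemmas~\ref{l:128qk} and~\ref{l:128dk} with part 3 exactly as the paper does. Two small remarks. For part 2 you argue via the invertibility of $d_{2^{k-1}}$ modulo $2$ together with the fact that $\widetilde{R}$ is an ideal (Lemma~\ref{l:idR}), whereas the paper instead substitutes $d_{2^{k-1}}^{-1}\equiv d_{2^{n-3}}P_{2^n}(k)\pmod{2}$ from Lemma~\ref{l:128dk} and applies part 1 repeatedly; your route is, if anything, cleaner, since it avoids re-expanding the inverse as a product of $d_j$'s, and it also settles the meaning of $d_{2^{k-1}}^{-1}$ modulo $2$, which the paper leaves implicit. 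In part 1 your phrase that every $r_m$ is \emph{up to sign} one of $r_1,\dots,r_{2^{n-3}-1}$ or one of $0,\pm2,\pm2\sqrt{2}$ is not literally exact for indices in the ranges of $R_1$ and $R_3$: there one has, e.g., $r_{2^{n-2}+j}=r_j-2s_{2^{n-2}-j}$, so the identification holds only up to sign \emph{and} up to an element of $2\Z[\alpha+\alpha^{-1}]$; since $\widetilde{R}=R_{\Z}+2\Z[\alpha+\alpha^{-1}]$ absorbs that correction, the conclusion $r_m\in\widetilde{R}$ — and hence the whole lemma — is unaffected.
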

\begin{proof}
В самом деле.
\begin{enumerate}[{\rm1.}]
\item
По лемме \ref{l:sdr} для любых целых $j$ и $l$
\[
d_jr_l=r_l+r_{l+j}+r_{l-j}\in\widetilde{R},\text{ где $\widetilde{R}$ как в лемме \ref{l:idR}}.
\]
\item
По лемме \ref{l:128dk} для любого $k\in\{1,\dots,n-3\}$  из первого утверждения этой леммы следует
\[
d_{2^{k-1}}^{-1}r_{2^{k-1}}\in\widetilde{R}\pmod{2}.
\]
\item
Для любого целого $l$ по лемме \ref{l:sdr}
\[
d_{2^{n-3}}r_l=r_l+r_{l+2^{n-3}}+r_{l-2^{n-3}}.
\]
По лемме \ref{l:r2}
\[
r_{l-2^{n-3}}\equiv r_{2^{n-3}-l}\pmod{2}\text{ и }r_{2^{n-3}+l}\equiv r_{2^{n-3}-l}\pmod{2}.
\]
Поэтому
\[
d_{2^{n-3}}r_l=r_l+r_{l+2^{n-3}}+r_{l-2^{n-3}}\equiv r_l+2r_{l-2^{n-3}}\equiv r_l\pmod{2}
\]
\item
Для любого $k\in\{1,\dots,n-3\}$ по лемме \ref{l:128qk} и утверждению 3 этой леммы
\[
q(k,1)^{2^{k-1}}\equiv1+P_{2^n}(k)r_{2^{k-1}}\pmod{2}.
\]
\end{enumerate}

Лемма доказана.
\end{proof}

\begin{lemma}\label{l:128q}
Для $2^n\in\{16,32,64,128\}$ $\longleftrightarrow$ $n\in\{4,5,6,7\}$ имеем следующие соотношения.
\begin{description}
\item[Для $16$.]{\ }
\[
q(1,1)\equiv1+r_1\pmod{2}.
\]
\item[Для $32$.]{\ }
\begin{description}
\item[$k=2.$]
$q(2,1)^2\equiv1+r_2\pmod{2}$.
\item[$k=1.$]
$q(1,1)\equiv1+r_2+r_3\pmod{2}$.
\end{description}
\item[Для $64$.]{\ }
\begin{description}
\item[$k=3.$]
$q(3,1)^4\equiv1+r_4\pmod{2}$.
\item[$k=2.$]
$q(2,1)^2\equiv1+r_4+r_6\pmod{2}$.
\item[$k=1.$]
$q(1,1)\equiv1+r_4+r_6+(r_1+r_3+r_7)\pmod{2}$.
\end{description}
\item[Для $128$.]{\ }
\begin{description}
\item[$k=4.$]
$q(4,1)^8\equiv1+r_8\pmod{2}.$
\item[$k=3.$]
$q(3,1)^4\equiv1+r_8+r_{12}\pmod{2}$.
\item[$k=2.$]
$q(2,1)^2\equiv1+r_8+r_{12}+(r_2+r_6+r_{14})\pmod{2}$.
\item[$k=1.$]
$q(1,1)\equiv1+r_8+r_{12}+(r_2+r_6+r_{14})+(r_3+r_5+r_9+r_{11}+r_{15})\pmod{2}$.
\end{description}
\end{description}
\end{lemma}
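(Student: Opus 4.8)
The plan is to reduce the entire statement to the single identity already proved in Lemma~\ref{l:128rd}(4),
\[
q(k,1)^{2^{k-1}}\equiv1+P_{2^n}(k)\,r_{2^{k-1}}\pmod{2},
\]
so that all that is left is to evaluate $P_{2^n}(k)\,r_{2^{k-1}}$ modulo $2$, where by definition $P_{2^n}(k)\equiv d_{2^{k-1}}d_{2^{k}}\cdots d_{2^{n-4}}\pmod{2}$. For this I would use only four things: the multiplication rule $d_jr_l=r_l+r_{l+j}+r_{l-j}$ (case $(DR)$ of Lemma~\ref{l:sdr}, restated in Lemma~\ref{l:128rd}(1)); the symmetry $r_j\equiv r_{-j}\pmod{2}$; the $2^{n-2}$-periodicity of $\{r_j\}$ modulo $2$; and the vanishing $r_0\equiv r_{2^{n-3}}\equiv r_{2^{n-2}}\equiv0\pmod{2}$ (the last three all from Lemma~\ref{l:r2}), with the index reductions read off the tables of \S\ref{sss:tr}.

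Rather than treat each of the four sizes separately, I would first record a downward recursion in $k$ that accounts for the parenthesised blocks appearing in the statement. Since $P_{2^n}(k)\equiv d_{2^{k-1}}P_{2^n}(k+1)\pmod{2}$, $d_{2^{k-1}}=1+s_{2^{k-1}}$, and $s_{2^{k-1}}r_{2^{k-1}}\equiv r_{2^{k}}\pmod{2}$ (case $(SR)$ of Lemma~\ref{l:sdr} with index $2^{k-1}$), one obtains
\[
P_{2^n}(k)\,r_{2^{k-1}}\equiv P_{2^n}(k+1)\,r_{2^{k}}+P_{2^n}(k+1)\,r_{2^{k-1}}\pmod{2},
\]
i.e. $q(k,1)^{2^{k-1}}$ differs modulo $2$ from $q(k+1,1)^{2^{k}}$ exactly by the new block $P_{2^n}(k+1)\,r_{2^{k-1}}=d_{2^{k}}\cdots d_{2^{n-4}}\,r_{2^{k-1}}$, which is precisely the bracketed term added in each successive line of the lemma. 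The base of the recursion is uniform: for $k=n-3$ we have $P_{2^n}(n-3)=d_{2^{n-4}}$, hence $P_{2^n}(n-3)\,r_{2^{n-4}}=r_{2^{n-4}}+r_{2^{n-3}}+r_0\equiv r_{2^{n-4}}\pmod{2}$, giving $q(n-3,1)^{2^{n-4}}\equiv1+r_{2^{n-4}}\pmod{2}$, that is $1+r_1$, $1+r_2$, $1+r_4$, $1+r_8$ for $2^n=16,32,64,128$ respectively.

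It then only remains, for each $n\in\{5,6,7\}$ and each $k<n-3$, to expand the block $d_{2^{k}}\cdots d_{2^{n-4}}\,r_{2^{k-1}}$ by applying $d_jr_l=r_l+r_{l+j}+r_{l-j}$ one factor at a time (starting from the largest factor $d_{2^{n-4}}$), each step folding $r_{-j}\mapsto r_j$, reducing indices modulo $2^{n-2}$, deleting the vanishing $r_0,r_{2^{n-3}},r_{2^{n-2}}$, and cancelling repeated $r$-terms modulo $2$. Each such case is a short finite computation — for $2^n=128$, $k=1$, for instance, one expands $d_2d_4d_8\,r_1$ in three passes — and I expect the only genuine obstacle to be bookkeeping, since each new factor roughly triples the number of $r$-terms and one must be careful with the index reductions and the mod-$2$ cancellations; reusing the blocks already computed for larger $k$ via the recursion above keeps this under control, and the resulting four identities then match the tabulated forms in the statement.
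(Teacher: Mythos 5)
Your proposal follows essentially the same route as the paper: both rest on Lemma~\ref{l:128rd}(4) and then expand $P_{2^n}(k)r_{2^{k-1}}$ factor by factor via $d_jr_l=r_l+r_{l+j}+r_{l-j}$ together with the reductions $r_{-j}\equiv r_j$, the $2^{n-2}$-periodicity and the vanishing of $r_0$, $r_{2^{n-3}}$, $r_{2^{n-2}}$ from Lemma~\ref{l:r2}; the downward recursion in $k$ that you state explicitly is exactly what the paper uses implicitly when it invokes the previously computed line at each step, and your base case $q(n-3,1)^{2^{n-4}}\equiv1+r_{2^{n-4}}$ is verified correctly. The only thing you leave undone is the finite expansion of the blocks $d_{2^k}\cdots d_{2^{n-4}}r_{2^{k-1}}$ for $k<n-3$, which is precisely the bookkeeping the paper carries out and which does yield the tabulated answers.
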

\begin{proof}{\ }
\begin{description}
\item[Для $16$.]
Применим лемму \ref{l:128rd} для 16.
По леммам \ref{l:sdr} и \ref{l:r2} получим
\begin{align*}
q(1,1)&\equiv1+P_{16}(1)r_1=1+d_1r_1\equiv1+r_1+r_2+r_0\equiv\\
&\equiv 1+r_1 \pmod{2}.
\end{align*} 
\end{description}
\begin{description}
\item[Для $32$.]
Применим лемму \ref{l:128rd} для 32.
\begin{description}
\item[$k=2.$]
В этом случае по леммам \ref{l:sdr} и \ref{l:r2} получим
\begin{align*}
q(2,1)^2&\equiv1+P_{32}(2)r_2=1+d_2r_2\equiv1+r_2+r_4+r_0\equiv\\
&\equiv 1+r_2 \pmod{2}.
\end{align*} 
\item[$k=1.$]
В этом случае
\begin{align*}
q(1,1)&\equiv1+P_{32}(1)r_1=1+d_2d_1r_1\equiv\\
&\equiv1+d_2(r_1+r_2+r_0)\equiv
\intertext{по предшествующему утверждению}
&\equiv\left(1+r_2\right)+d_2r_1\pmod{2}\equiv\\
&\equiv 1+r_2+(r_1+r_3+r_1)\pmod{2}\equiv\\
&\equiv 1+r_2+r_3 \pmod{2}.
\end{align*}
\end{description}
\end{description}
\begin{description}
\item[Для $64$.]
Применим лемму \ref{l:128rd} для 64.
\begin{description}
\item[$k=3.$]
В этом случае по леммам \ref{l:sdr} и \ref{l:r2} получим
\begin{align*}
q(3,1)^4&\equiv1+P_{64}(3)r_4=1+d_4r_4\equiv\\
&\equiv1+r_4+r_8+r_0\equiv1+r_4\pmod{2}.
\end{align*} 
\item[$k=2.$]
 В этом случае
\begin{align*}
q(2,1)^2&\equiv1+P_{64}(2)r_2=1+d_4d_2r_2\equiv\\
&\equiv1+d_4(r_2+r_4+r_0)\equiv
\intertext{по предшествующему утверждению}
&\equiv1+r_4+d_4r_2\equiv1+r_4+r_2+r_6+r_{-2}\equiv
\intertext{по лемме \ref{l:r2}}
&\equiv1+r_4+r_2+r_6+r_2\equiv1+r_4+r_6\pmod{2}.
\end{align*} 
\item[$k=1.$]
Наконец,
\begin{align*}
q(1,1)&\equiv1+P_{64}(1)r_1=1+d_4d_2d_1r_1\equiv\\
&\equiv1+d_4d_2(r_1+r_2+r_0)\equiv
\intertext{по предшествующему утверждению}
&\equiv\left(1+r_4+r_6\right)+d_4d_2r_1\pmod{2},\\
d_4d_2r_1&\equiv d_4(r_1+r_3+r_1)\equiv d_4r_3\equiv\\
&\equiv r_3+r_7+r_1\pmod{2}.
\end{align*}
Итого
\begin{align*}
q(1,1)&\equiv1+r_4+r_6+(r_3+r_7+r_1)\pmod{2}.
\end{align*}
\end{description}
\end{description}
\begin{description}
\item[Для $128$.]
Применим лемму \ref{l:128rd}.
\begin{description}
\item[$k=4.$]
В этом случае по леммам \ref{l:sdr} и \ref{l:r2} получим
\begin{align*}
q(4,1)^8&\equiv1+P(4)r_8=1+d_8r_8=1+r_8+r_{16}+r_0\equiv\\
&\equiv1+r_8\pmod{2}.
\end{align*}
\item[$k=3.$]
Теперь
\begin{align*}
q(3,1)^4&\equiv1+P(3)r_4=1+d_8d_4r_4\equiv1+d_8(r_4+r_8)\equiv
\intertext{по предшествующему утверждению и лемме \ref{l:sdr}}
&\equiv1+r_8+r_4+r_{12}+r_{-4}\equiv
\intertext{по лемме \ref{l:r2}}
&\equiv1+r_8+r_4+r_{12}+r_4\equiv1+r_8+r_{12}\pmod{2}.
\end{align*}
\item[$k=2.$]
В этом случае
\begin{align*}
q(2,1)^2&\equiv1+P(2)r_2=1+d_8d_4d_2r_2\equiv1+d_8d_4(r_2+r_4)\equiv
\intertext{по предшествующему утверждению}
&\equiv1+r_8+r_{12}+d_8d_4r_2\equiv\\
&\equiv1+r_8+r_{12}+d_8(r_2+r_6+r_{-2})\equiv
\intertext{по лемме \ref{l:r2}}
&\equiv1+r_8+r_{12}+d_8(r_2+r_6+r_2)\equiv\\
&\equiv1+r_8+r_{12}+d_8r_6=\\
&=1+r_8+r_{12}+r_6+r_{14}+r_{-2}\equiv\\
&\equiv1+r_8+r_{12}+r_6+r_{14}+r_2=\\
&=1+r_8+r_{12}+(r_2+r_6+r_{14})+\\
&+(r_3+r_5+r_9+r_{11}+r_{15})\pmod{2}.
\end{align*}
\item[$k=1.$]
Наконец,
\begin{align*}
q(1,1)&\equiv1+P(1)r_1=1+d_8d_4d_2d_1r_1\equiv\\
&\equiv1+d_8d_4d_2(r_1+r_2)\equiv
\intertext{по предшествующему утверждению}
&\equiv\left(1+r_8+r_{12}+(r_2+r_6+r_{14})\right)+\\
&+d_8d_4d_2r_1\pmod{2},\\
d_8d_4d_2r_1&\equiv d_8d_4d_2r_1=d_8d_4(r_1+r_3+r_1)\equiv d_8d_4r_3\equiv\\
&\equiv d_8(r_3+r_7+r_1)\equiv\\
&\equiv(r_3+r_{11}+r_5)+(r_7+r_{15}+r_1)+\\
&+(r_1+r_9+r_7)=\\
&=2r_1+r_3+r_5+2r_7+r_9+r_{11}+r_{15}\equiv\\
&\equiv r_3+r_5+r_9+r_{11}+r_{15}\pmod{2}.
\end{align*}
\end{description}
\end{description}

Лемма доказана.
\end{proof}

\begin{proposition}\label{p:128}
Для $2^n\in\{16,32,64,128\}$ $\longleftrightarrow$ $n\in\{4,5,6,7\}$ имеем следующие соотношения.
\begin{description}
\item[Для $16$.] 
\[
\sqrt{F}/F=\langle d_2\rangle F\times\langle1+r_1\rangle F.
\] 
\item[Для $32$.]
\[
\sqrt{F}/F=\langle d_4\rangle F\times\langle1+r_2\rangle F\times\langle1+r_2+r_1\rangle F\times\langle1+r_2+r_3\rangle F.
\] 
\item[Для $64$.]
\begin{align*}
\sqrt{F}/F&=\langle d_8\rangle F\times\langle1+r_4\rangle F\times\langle1+r_4+r_6\rangle\times\langle1+r_4+r_2\rangle F\times\\
&\times\langle1+r_4+r_6+(r_1+r_3+r_7)\rangle F\times\\
&\times\langle1+r_4+r_2+(r_3+r_5+r_7)\rangle F\times\\
&\times\langle1+r_4+r_2+(r_1+r_3+r_5)\rangle F\times\\
&\times\langle1+r_4+r_6+(r_1+r_5+r_7)\rangle F.
\end{align*}
\item[Для $128$.]
\begin{align*}
&\sqrt{F}/F=\langle d_{16}\rangle F\times\langle1+r_8\rangle F\times\\
&\times\langle1+r_8+r_{12}\rangle F\times\langle1+r_8+r_4\rangle F\times\\
&\times\langle1+r_8+r_{12}+(r_2+r_6+r_{14})\rangle F\times\\
&\times\langle1+r_8+r_4+(r_6+r_{10}+r_{14})\rangle F\times\\
&\times\langle1+r_8+r_4+(r_2+r_6+r_{10})\rangle F\times\\
&\times\langle1+r_8+r_{12}+(r_2+r_{10}+r_{14})\rangle F\times\\
&\hspace*{-3pt}\times\langle1+r_8+r_{12}+(r_2+r_6+r_{14})+(r_3+r_5+r_9+r_{11}+r_{15})\rangle F\times\\ 
&\hspace*{-3pt}\times\langle1+r_8+r_4+(r_6+r_{10}+r_{14})+(r_1+r_5+r_9+r_{13}+r_{15})\rangle F\times\\ 
&\hspace*{-3pt}\times\langle1+r_8+r_4+(r_2+r_6+r_{10})+(r_7+r_9+r_{11}+r_{13}+r_{15})\rangle F\times\\  
&\hspace*{-3pt}\times\langle1+r_8+r_{12}+(r_2+r_{10}+r_{14})+(r_1+r_3+r_9+r_{11}+r_{13})\rangle F\times\\  
&\hspace*{-3pt}\times\langle1+r_8+r_{12}+(r_2+r_{10}+r_{14})+(r_3+r_5+r_7+r_{13}+r_{15})\rangle F\times\\  
&\hspace*{-3pt}\times\langle1+r_8+r_4+(r_2+r_6+r_{10})+(r_1+r_3+r_5+r_7+r_9)\rangle F\times\\  
&\hspace*{-3pt}\times\langle1+r_8+r_4+(r_6+r_{10}+r_{14})+(r_1+r_3+r_7+r_{11}+r_{15})\rangle F\times\\  
&\hspace*{-3pt}\times\langle1+r_8+r_{12}+(r_2+r_6+r_{14})+(r_1+r_5+r_7+r_{11}+r_{13})\rangle F.  
\end{align*}
\end{description}
\end{proposition}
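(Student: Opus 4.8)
Доказательство предполагается вести прямым объединением следствия~\ref{c:sqrF} с вычислениями по модулю $2$: в следствии~\ref{c:sqrF} уже выписан набор свободных образующих элементарной абелевой $2$-группы $\sqrt F/F$ (а именно, класс элемента $d_1^{2^{n-3}}$ и классы элементов $q(k,2l+1)^{2^{k-1}}$), так что остаётся привести каждую из этих образующих по модулю $2$ к виду, указанному в формулировке. Для первой образующей по лемме~\ref{l:sdr} (соотношения $(DD)$) имеем $d_1^{2^{n-3}}\equiv d_{2^{n-3}}\pmod 2$, что даёт множители $\langle d_2\rangle F$, $\langle d_4\rangle F$, $\langle d_8\rangle F$, $\langle d_{16}\rangle F$ в случаях $2^n\in\{16,32,64,128\}$ соответственно. Для образующих вида $q(k,1)^{2^{k-1}}$ требуемые сравнения уже доказаны в лемме~\ref{l:128q}. Поэтому по существу остаётся обработать лишь образующие $q(k,2l+1)^{2^{k-1}}$ с $2l+1\neq1$.

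Их план --- свести к случаю $2l+1=1$ действием автоморфизма $\sigma_{2l+1}$ кругового поля (лемма~\ref{l:cyc}); этот автоморфизм кольцевой, поэтому сохраняет сравнения по модулю $2$ и $\sigma_{2l+1}(1)=1$. Понадобятся два наблюдения. Во-первых,
\[
\sigma_{2l+1}\left(q(k,1)^{2^{k-1}}\right)\equiv q(k,2l+1)^{2^{k-1}}\pmod 2;
\]
это вытекает из равенства $q(k,1)=d_1^{-1}d_{2^{n-1-k}-1}$, леммы~\ref{l:sdr} (соотношение $d_m^{2^l}\equiv d_{2^l m}$) и периодичности $\{d_j\}$ по модулю $2$ с периодом $2^{n-1}$ (лемма~\ref{l:md2}), поскольку для нечётного $2l+1$ выполнено $(2l+1)2^{n-2}\equiv2^{n-2}\pmod{2^{n-1}}$, так что сдвиг показателей степеней $d$ происходит согласованно. Во-вторых,
\[
\sigma_{2l+1}(r_j)=s_{(2l+1)j}+s_{(2l+1)(2^{n-2}-j)}\equiv r_{(2l+1)j}\pmod 2,
\]
где индекс $(2l+1)j$ сначала приводится по модулю $2^{n-2}$, а затем сворачивается в отрезок $\{1,\dots,2^{n-3}-1\}$ при помощи симметрий и равенств $r_0\equiv r_{2^{n-3}}\equiv0\pmod 2$ из леммы~\ref{l:r2}; здесь используются периодичность $\{s_j\}$ по модулю $2$ с периодом $2^{n-1}$ (лемма~\ref{l:ms2}) и то же сравнение $(2l+1)2^{n-2}\equiv2^{n-2}\pmod{2^{n-1}}$.

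Применяя после этого $\sigma_{2l+1}$ к соотношениям леммы~\ref{l:128q}, получаем для каждой оставшейся образующей $q(k,2l+1)^{2^{k-1}}\equiv1+\sigma_{2l+1}(\,\text{сумма чисел }r_j\,)\pmod 2$, а правая часть после приведения индексов совпадает в точности с соответствующим выражением $1+(\dots)$ из формулировки предложения (сверку удобно вести по таблицам для $\{r_j\}$ из пункта~\ref{sss:tr}, учитывая сокращения $2r_j\equiv0\pmod 2$). Главная трудность здесь чисто вычислительная и ``бухгалтерская'': для $2^n=128$ образующих шестнадцать, и для каждой надо аккуратно провести арифметику индексов по двум модулям ($2^{n-1}$ для $s$ и $2^{n-2}$ для $r$) и корректно свернуть результат в базисный отрезок $r_1,\dots,r_{2^{n-3}-1}$; концептуальных препятствий нет --- всё опирается на следствие~\ref{c:sqrF}, лемму~\ref{l:128q} и описанное выше действие группы Галуа.
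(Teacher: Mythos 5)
Ваш план совпадает по существу с доказательством в статье: авторы точно так же берут образующие из следствия~\ref{c:sqrF}, используют лемму~\ref{l:128q} для $q(k,1)^{2^{k-1}}$ и получают остальные образующие применением автоморфизмов $\sigma_{2l+1}$, пользуясь сравнениями $\sigma_{2l+1}(q(k,1))\equiv q(k,2l+1)$ и $\sigma_{2l+1}(r_j)\equiv r_{(2l+1)j}\pmod 2$ с последующим приведением индексов по лемме~\ref{l:r2}. Единственное мелкое отличие --- в случае $128$ статья дополнительно экономит вычисления, замечая, что образующие $q(3,3)^4$, $q(2,3)^2$, $q(2,5)^2$, $q(2,7)^2$ по модулю $2$ совпадают с уже найденными выражениями для $64$.
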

\begin{proof}
\begin{description}
\item[Для $16$.] 
Всё следует из леммы \ref{l:128q}.
\item[Для $32$.]
Из следствия \ref{c:sqrF} и леммы \ref{l:128q} следует, что надо найти только $q(1,3)$.
Имеем
\[
q(1,1)=d_1^{-1}d_7\text{ и }q(1,3)=d_3^{-1}d_5.
\]
Применим автоморфизм $\sigma_3$ поля $\Q_{32}$ из леммы \ref{l:cyc}
\begin{align*}
\sigma_3(d_1)&=\sigma_3(1+\alpha+\alpha^{-1})=1+\alpha^3+\alpha^{-3}=d_3,\\
\sigma_3(d_7)&=\sigma_3(1+\alpha^7+\alpha^{-7})=1+\alpha^{21}+\alpha^{-21}=\\
&=1-\alpha^5-\alpha^{-5}\equiv d_5\pmod{2},\\
\sigma_3(r_2)&=\sigma_3(\alpha^2+\alpha^{-2}+\alpha^6+\alpha^{-6})\equiv\alpha^6+\alpha^{-6}+\alpha^{18}+\alpha^{-18}=\\
&=\alpha^6+\alpha^{-6}-\alpha^2-\alpha^{-2}\equiv r_2\pmod{2},\\
\sigma_3(r_3)&=\sigma_3(\alpha^3+\alpha^{-3}+\alpha^5+\alpha^{-5})\equiv\alpha^9+\alpha^{-9}+\alpha^{15}+\alpha^{-15}\equiv\\
&\equiv-\alpha^{-7}-\alpha^7-\alpha-\alpha^{-1}\equiv r_1\pmod{2}.
\end{align*}
Поэтому
\begin{align*}
\sigma_3(q(1,1))&=\sigma_3(d_1^{-1}d_7)\equiv d_3^{-1}d_5=q(1,3)\pmod{2},\\
\sigma_3(q(1,1))&\equiv\sigma_3(1+r_2+r_3)\equiv1+r_2+r_1\pmod{2}.
\end{align*}
\item[Для $64$.]
Из следствия \ref{c:sqrF} и леммы \ref{l:128q} следует, что надо найти 
\[
 q(2,3)^2, q(1,3), q(1,5), q(1,7).
\]
Имеем по лемме \ref{l:md2}
\begin{align*}
q(2,1)^2&=d_1^{-2}d_7^2\equiv d_2^{-1}d_{14}\pmod2,\\
 q(2,3)^2&=d_3^{-3}d_5^2\equiv d_6^{-1}d_{10}\pmod2.
\end{align*}
Применим автоморфизм $\sigma_3$ поля $\Q_{64}$ из леммы \ref{l:cyc}
\begin{align*}
\sigma_3(d_2)&=\sigma_3(1+\alpha^2+\alpha^{-2})=1+\alpha^6+\alpha^{-6}=d_6,\\
\sigma_3(d_{14})&=\sigma_3(1+\alpha^{14}+\alpha^{-14})=1+\alpha^{42}+\alpha^{-42}=\\
&=1+\alpha^{10}+\alpha^{-10}=d_{10}\pmod{2},\\
\sigma_3(r_4)&=\sigma_3(\alpha^4+\alpha^{-4}+\alpha^{12}+\alpha^{-12})=\\
&=\alpha^{12}+\alpha^{-12}+\alpha^{36}+\alpha^{-36}=\\
&\equiv\alpha^{12}+\alpha^{-12}-\alpha^4-\alpha^{-4}\equiv r_4\pmod{2},\\
\sigma_3(r_6)&=\sigma_3(\alpha^6+\alpha^{-6}+\alpha^{10}+\alpha^{-10})=\\
&=\alpha^{18}+\alpha^{-18}+\alpha^{30}+\alpha^{-30}\equiv\\
&\equiv-\alpha^{14}-\alpha^{-14}-\alpha^{-2}-\alpha^2\equiv r_2\pmod{2}.
\end{align*}
Поэтому
\begin{align*}
\sigma_3(q(1,1)^2)&=\sigma_3(d_2^{-1}d_{14})\equiv d_6^{-1}d_{10}\equiv q(1,3)^2\pmod{2},\\
\sigma_3(q(1,1)^2)&\equiv\sigma_3(1+r_4+r_6)\equiv1+r_4+r_2\pmod{2}.
\end{align*}

Теперь рассмотрим автоморфизмы $\sigma_3$, $\sigma_5$ и $\sigma_7$ поля $\Q_{64}$ из леммы \ref{l:cyc}.
Для $k\in\{1,2,3\}$ имеем
\begin{align*}
\sigma_{2k+1}(d_1)&=\sigma_{2k+1}(1+\alpha+\alpha^{-1})=1+\alpha_{2k+1}+\alpha^{-2k-1}=\\
&=d_{2k+1},\\
\sigma_{2k+1}(d_{15})&=\sigma_{2k+1}(1+\alpha^{15}+\alpha^{-15})=\\
&=1+\alpha^{15(2k+1)}+\alpha^{-15(2k+1)}=\\
&=\begin{cases}
1+\alpha^{45}+\alpha^{-45}&\text{при }2k+1=3,\\
1+\alpha^{75}+\alpha^{-75}&\text{при }2k+1=5,\\
1+\alpha^{105}+\alpha^{-105}&\text{при }2k+1=7
\end{cases}
\equiv\\
&\equiv
\begin{cases}
1-\alpha^{13}-\alpha^{-13}&\text{при }2k+1=3,\\
1+\alpha^{11}+\alpha^{-11}&\text{при }2k+1=5,\\
1-\alpha^9-\alpha^{-9}&\text{при }2k+1=7
\end{cases}
\equiv\\
&\equiv
\begin{cases}
d_{13}&\text{при }2k+1=3,\\
d_{11}&\text{при }2k+1=5,\\
d_9&\text{при }2k+1=7
\end{cases}=\\
&=d_{16-(2k+1)}\pmod{2}.
\end{align*}
Поэтому
\[
q(1,2k+1)=\sigma_{2k+1}(q(1,1)).
\]
Также
\begin{align*}
\sigma_{2k+1}(r_4)&=\sigma_{2k+1}(\alpha^4+\alpha^{-4}+\alpha^{12}+\alpha^{-12})=\\
&=\alpha^{4(2k+1)}+\alpha^{-4(2k+1)}+\alpha^{12(2k+1)}+\alpha^{-12(2k+1)}=\\
&=\begin{cases}
\alpha^{12}+\alpha^{-12}+\alpha^{36}+\alpha^{-36}&\text{при }2k+1=3,\\
\alpha^{20}+\alpha^{-20}+\alpha^{60}+\alpha^{-64}&\text{при }2k+1=5,\\
\alpha^{28}+\alpha^{-28}+\alpha^{84}+\alpha^{-84}&\text{при }2k+1=7
\end{cases}
\equiv\\
&\equiv
\begin{cases}
\alpha^{12}+\alpha^{-12}-\alpha^4-\alpha^{-4}&\text{при }2k+1=3,\\
-\alpha^{-12}-\alpha^{12}+\alpha^{-4}+\alpha^4&\text{при }2k+1=5,\\
-\alpha^{-4}-\alpha^4+\alpha^{-12}+\alpha^{12}&\text{при }2k+1=7
\end{cases}\equiv\\
&\equiv r_4\pmod{2};\\
\sigma_{2k+1}(r_6)&=\sigma_{2k+1}(\alpha^6+\alpha^{-6}+\alpha^{10}+\alpha^{-10})=\\
&=\alpha^{6(2k+1)}+\alpha^{-6(2k+1)}+\alpha^{10(2k+1)}+\alpha^{-10(2k+1)}=\\
&=\begin{cases}
\alpha^{18}+\alpha^{-18}+\alpha^{30}+\alpha^{-30}&\text{при }2k+1=3,\\
\alpha^{30}+\alpha^{-30}+\alpha^{50}+\alpha^{-50}&\text{при }2k+1=5,\\
\alpha^{42}+\alpha^{-42}+\alpha^{70}+\alpha^{-70}&\text{при }2k+1=7
\end{cases}
\equiv\\
&\equiv
\begin{cases}
-\alpha^{14}-\alpha^{-14}-\alpha^{-2}-\alpha^2&\text{при }2k+1=3,\\
-\alpha^{-2}-\alpha^2+\alpha^{-14}+\alpha^{14}&\text{при }2k+1=5,\\
-\alpha^{10}-\alpha^{-10}+\alpha^6+\alpha^{-6}&\text{при }2k+1=7
\end{cases}
\equiv\\
&\equiv
\begin{cases}
r_2&\text{при }2k+1=3,\\
r_2&\text{при }2k+1=5,\\
r_6&\text{при }2k+1=7
\end{cases}.
\end{align*}

Наконец
\begin{align*}
\sigma_{2k+1}(r_1)&=\sigma_{2k+1}(\alpha+\alpha^{-1}+\alpha^{15}+\alpha^{-15})=\\
&=\alpha^{2k+1}+\alpha^{-(2k+1)}+\alpha^{15(2k+1)}+\alpha^{-15(2k+1)}=\\
&=\begin{cases}
\alpha^3+\alpha^{-3}+\alpha^{45}+\alpha^{-45}&\text{при }2k+1=3,\\
\alpha^5+\alpha^{-5}+\alpha^{75}+\alpha^{-75}&\text{при }2k+1=5,\\
\alpha^7+\alpha^{-7}+\alpha^{105}+\alpha^{-105}&\text{при }2k+1=7
\end{cases}
\equiv\\
&\equiv
\begin{cases}
\alpha^3+\alpha^{-3}-\alpha^{13}+\alpha^{-13}&\text{при }2k+1=3,\\
\alpha^5+\alpha^{-5}+\alpha^{11}+\alpha^{-11}&\text{при }2k+1=5,\\
\alpha^7+\alpha^{-7}-\alpha^9+\alpha^{-9}&\text{при }2k+1=7
\end{cases}
\equiv\\
&\equiv
\begin{cases}
r_3&\text{при }2k+1=3,\\
r_5&\text{при }2k+1=5,\\
r_7&\text{при }2k+1=7
\end{cases};\\
\sigma_{2k+1}(r_3)&=\sigma_{2k+1}(\alpha^3+\alpha^{-3}+\alpha^{13}+\alpha^{-13})=\\
&=\alpha^{3(2k+1)}+\alpha^{-3(2k+1)}+\alpha^{13(2k+1)}+\alpha^{-13(2k+1)}=\\
&=\begin{cases}
\alpha^9+\alpha^{-9}+\alpha^{39}+\alpha^{-39}&\text{при }2k+1=3,\\
\alpha^{15}+\alpha^{-15}+\alpha^{65}+\alpha^{-65}&\text{при }2k+1=5,\\
\alpha^{21}+\alpha^{-21}+\alpha^{91}+\alpha^{-91}&\text{при }2k+1=7
\end{cases}
\equiv\\
&\equiv
\begin{cases}
\alpha^9+\alpha^{-7}-\alpha^7-\alpha^{-7}&\text{при }2k+1=3,\\
\alpha^{15}+\alpha^{-15}+\alpha+\alpha^{-1}&\text{при }2k+1=5,\\
-\alpha^{11}+\alpha^{-11}-\alpha^5+\alpha^{-5}&\text{при }2k+1=7
\end{cases}
\equiv\\
&\equiv
\begin{cases}
r_7&\text{при }2k+1=3,\\
r_1&\text{при }2k+1=5,\\
r_5&\text{при }2k+1=7
\end{cases};\\
\sigma_{2k+1}(r_7)&=\sigma_{2k+1}(\alpha^7+\alpha^{-7}+\alpha^9+\alpha^{-9})=\\
&=\alpha^{7(2k+1)}+\alpha^{-7(2k+1)}+\alpha^{9(2k+1)}+\alpha^{-9(2k+1)}=\\
&=\begin{cases}
\alpha^{21}+\alpha^{-21}+\alpha^{27}+\alpha^{-27}&\text{при }2k+1=3,\\
\alpha^{35}+\alpha^{-35}+\alpha^{45}+\alpha^{-45}&\text{при }2k+1=5,\\
\alpha^{49}+\alpha^{-49}+\alpha^{63}+\alpha^{-63}&\text{при }2k+1=7
\end{cases}
\equiv\\
&\equiv
\begin{cases}
-\alpha^{-11}-\alpha^{11}-\alpha^5-\alpha^{-5}&\text{при }2k+1=3,\\
-\alpha^3-\alpha^{-3}-\alpha^{13}+\alpha^{-13}&\text{при }2k+1=5,\\
\alpha^{-15}+\alpha^{15}+\alpha^{-1}+\alpha&\text{при }2k+1=7
\end{cases}
\equiv\\
&\equiv
\begin{cases}
r_5&\text{при }2k+1=3,\\
r_3&\text{при }2k+1=5,\\
r_1&\text{при }2k+1=7
\end{cases}.
\end{align*}
Итак
\begin{align*}
q(1,2k+1)&=\sigma_{2k+1}(q(1,1))=\\
&=
\begin{cases}
1+r_4+r_2+(r_3+r_7+r_5)&\text{при }2k+1=3,\\
1+r_4+r_2+(r_5+r_1+r_3)&\text{при }2k+1=5,\\
1+r_4+r_6+(r_7+r_5+r_1)&\text{при }2k+1=7
\end{cases}=\\
&=
\begin{cases}
1+r_4+r_2+(r_3+r_5+r_7)&\text{при }2k+1=3,\\
1+r_4+r_2+(r_1+r_3+r_5)&\text{при }2k+1=5,\\
1+r_4+r_6+(r_1+r_5+r_7)&\text{при }2k+1=7.
\end{cases}
\end{align*}
\item[Для $128$.]
Из следствия \ref{c:sqrF} и леммы \ref{l:128q} следует, что надо найти
\begin{gather*}
 q(3,3)^4, q(2,3)^2, q(2,5)^2, q(2,7)^2,\\
 q(1,3),q(1,5),q(1,7),q(1,9),q(1,11),q(1,13),q(1,15).
\end{gather*}

Непосредственно из определения следует, что  по модулю $2$ для $128$
\[
q(3,3)^4, q(2,3)^2, q(2,5)^2, q(2,7)^2
\]
совпадают с 
\[
 q(2,3)^2, q(1,3), q(1,5), q(1,7)
\]
для $64$, соответственно.
Более определённо, имеем для $128$
\begin{align*}
q(3,3)^4&=1+r_8+r_4\pmod{2},\\ 
q(2,3)^2&=1+r_8+r_4+(r_6+r_{10}+r_{14})\pmod{2},\\ 
q(2,5)^2&=1+r_8+r_4+(r_2+r_6+r_{10})\pmod{2},\\
q(2,7)^2&=1+r_8+r_{12}+(r_2+r_{10}+r_{14})\pmod{2}.
\end{align*}

Поэтому надо рассмотреть только
\[
 q(1,3),q(1,5),q(1,7),q(1,9),q(1,11),q(1,13),q(1,15).
\]
Теперь рассмотрим автоморфизмы 
\[
\left\{\sigma_{2k+1}\mid k\in\{1,2,3,4,5,6,7\}\right\}
\]
поля $\Q_{128}$ из леммы \ref{l:cyc}.
Для $k\in\{1,2,\dots,7\}$ имеем
\begin{align*}
\sigma_{2k+1}(d_1)&=\sigma_{2k+1}(1+\alpha+\alpha^{-1})=1+\alpha_{2k+1}+\alpha^{-2k-1}=\\
&=d_{2k+1},\\
\sigma_{2k+1}(d_{31})&=\sigma_{2k+1}(1+\alpha^{31}+\alpha^{-31})=\\
&=1+\alpha^{31(2k+1)}+\alpha^{-31(2k+1)}=\\
&=1+\alpha^{32(2k+1)-(2k+1)}+\alpha^{-32(2k+1)+(2k+1)}=\\
&=1+\alpha^{64k+32-(2k+1)}+\alpha^{-64k-32+(2k+1)}=\\
&=1+(-1)^k\alpha^{32-(2k+1)}+(-1)^k\alpha^{-32+(2k+1)}\equiv\\
&\equiv1+\alpha^{32-(2k+1)}+\alpha^{-32+(2k+1)}=\\
&=d_{32-(2k+1)}\pmod{2}.
\end{align*}
Поэтому
\[
q(1,2k+1)\equiv\sigma_{2k+1}(q(1,1))\pmod{2}.
\]

По лемме \ref{l:128q}
\begin{align*}
q(1,1)&\equiv1+r_8+r_{12}+(r_2+r_6+r_{14})+\\
&+(r_3+r_5+r_9+r_{11}+r_{15})\pmod{2}.
\end{align*}
Положим
\begin{align*}
R_0&=1+r_8+r_{12}+(r_2+r_6+r_{14}),\\
R_1&=r_3+r_5+r_9+r_{11}+r_{15}.
\end{align*}
Тогда
\[
q(1,1)\equiv R_0+R_1\pmod{2}.
\]

Теперь по лемме \ref{l:r2}
\begin{align*}
\sigma_3(R_0)&=1+r_{24}+r_{36}+(r_6+r_{18}+r_{42})\equiv\\
&\equiv1+r_8+r_4+(r_6+r_{14}+r_{10})\pmod{2},\\
\sigma_3(R_1)&=r_9+r_{15}+r_5+r_{33}+r_{45}\equiv\\
&\equiv r_9+r_{15}+r_5+r_1+r_{13}\pmod{2}
\intertext{поэтому}
q(1,3)&\equiv\sigma_3(q(1,1))\equiv1+r_8+r_4+(r_6+r_{10}+r_{14})+\\
&+(r_1+r_5+r_9+r_{13}+r_{15})\pmod{2};\\
\sigma_5(R_0)&=1+r_{40}+r_{60}+(r_{10}+r_{30}+r_{70})\equiv\\
&\equiv1+r_8+r_4+(r_{10}+r_2+r_6)\pmod{2},\\
\sigma_5(R_1)&=r_{15}+r_{25}+r_{45}+r_{55}+r_{75}\equiv\\
&\equiv r_{15}+r_7+r_{13}+r_9+r_{11}\pmod{2}
\intertext{поэтому}
q(1,5)&\equiv\sigma_5(q(1,1))\equiv1+r_8+r_{12}+(r_2+r_6+r_{10})+\\
&+(r_7+r_9+r_{11}+r_{13}+r_{15})\pmod{2};\\
\sigma_7(R_0)&=1+r_{56}+r_{84}+(r_{14}+r_{42}+r_{98})\equiv\\
&\equiv1+r_8+r_{12}+(r_{14}+r_{10}+r_2)\pmod{2},\\
\sigma_7(R_1)&=r_{21}+r_{35}+r_{63}+r_{77}+r_{105}\equiv\\ 
&\equiv r_{11}+r_3+r_1+r_{13}+r_9\pmod{2}
\intertext{поэтому}
q(1,7)&\equiv\sigma_7(q(1,1))\equiv1+r_8+r_{12}+(r_2+r_{10}+r_{14})+\\
&+(r_1+r_3+r_9+r_{11}+r_{13})\pmod{2};\\
\sigma_9(R_0)&=1+r_{72}+r_{108}+(r_{18}+r_{54}+r_{126})\equiv\\
&\equiv1+r_8+r_{12}+(r_{14}+r_{10}+r_2)\pmod{2},\\
\sigma_9(R_1)&=r_{27}+r_{45}+r_{81}+r_{99}+r_{135}\equiv\\
&\equiv r_5+r_{13}+r_{15}+r_3+r_7\pmod{2}
\intertext{поэтому}
q(1,9)&\equiv\sigma_9(q(1,1))\equiv1+r_8+r_{12}+(r_2+r_{10}+r_{14})+\\
&+(r_3+r_5+r_7+r_{13}+r_{15})\pmod{2};\\
\sigma_{11}(R_0)&=1+r_{88}+r_{132}+(r_{22}+r_{66}+r_{154})\equiv\\
&\equiv1+r_8+r_4+(r_{10}+r_2+r_6)\pmod{2},\\
\sigma_{11}(R_1)&=r_{33}+r_{55}+r_{99}+r_{121}+r_{165}\equiv\\ 
&\equiv r_1+r_9+r_3+r_7+r_5\pmod{2}
\intertext{поэтому}
q(1,11)&\equiv\sigma_{11}(q(1,1))\equiv1+r_8+r_4+(r_2+r_6+r_{10})+\\
&+(r_1+r_3+r_5+r_7+r_9)\pmod{2};\\
\sigma_{13}(R_0)&=1+r_{104}+r_{156}+(r_{26}+r_{78}+r_{182})\equiv\\
&\equiv1+r_8+r_4+(r_6+r_{14}+r_{10})\pmod{2},\\
\sigma_{13}(R_1)&=r_{39}+r_{65}+r_{117}+r_{143}+r_{195}\equiv\\ 
&\equiv r_7+r_1+r_{11}+r_{15}+r_3\pmod{2}
\intertext{поэтому}
q(1,13)&\equiv\sigma_{13}(q(1,1))\equiv1+r_8+r_{12}+(r_6+r_{10}+r_{14})+\\
&+(r_1+r_3+r_7+r_{11}+r_{15})\pmod{2};\\
\sigma_{15}(R_0)&=1+r_{120}+r_{180}+(r_{30}+r_{90}+r_{210})\equiv\\
&\equiv1+r_8+r_{12}+(r_2+r_6+r_{14})\pmod{2},\\
\sigma_{15}(R_1)&=r_{45}+r_{75}+r_{135}+r_{165}+r_{255}\equiv\\
&\equiv r_{13}+r_{11}+r_7+r_5+r_1\pmod{2}
\intertext{поэтому}
q(1,15)&\equiv\sigma_{15}(q(1,1))\equiv1+r_8+r_{12}+(r_2+r_6+r_{14})+\\
&+(r_1+r_5+r_7+r_{11}+r_{13})\pmod{2}.
\end{align*}

\end{description}

\end{proof}

\section{Основная теорема}

Целью этого раздела является доказательство следующего результата.

\begin{theorem}
Пусть
\[
 2^n\in\{16,32,64,128\}\longleftrightarrow n\in\{4,5,6,7\}.
\]
Тогда
\begin{align*}
E=F=\langle d_1^{2^{n-2}}\rangle&\times \prod_{2l+1\in A_0\setminus\{1\}}\langle d_{2l+1}^{-1}d_{2^{n-1}-(2l+1)}\rangle\times\\
&\times\prod_{k=1}^{n-3}\prod_{2l+1\in A_k}\langle d_{2l+1}^{-2^k}d_{2^{n-1}-(2l+1)}^{2^k}\rangle.
\end{align*}
что равносильно
\begin{align*}
V_1=\langle u_{\chi_1}(d_1^{2^{n-2}})\rangle&\times\prod_{2l+1\in A_0\setminus\{1\}}\langle u_{\chi_1}(d_{2l+1}^{-1})u_{\chi_1}(d_{2^{n-1}-(2l+1)})\rangle\times\\
&\times\prod_{k=1}^{n-3}\prod_{2l+1\in A_k}\langle u_{\chi_1}(d_{2l+1}^{-2^k})u_{\chi_1}(d_{2^{n-1}-(2l+1)}^{2^k}).
\end{align*}
Более развёрнуто получаем.
\begin{description}
\item[Для $16$.]
\[
E=\langle d_1^4\rangle\times\langle d_3^{-1}d_5\rangle\times\langle d_1^{-2}d_3^2\rangle.
\]
\item[Для $32$.]
\begin{align*}
E=\langle d_1^8\rangle&\times\langle d_3^{-1}d_{13}\rangle\times\langle d_5^{-1}d_{11}\rangle\times\langle d_7^{-1}d_9\rangle\times\\
&\times\langle d_1^{-2}d_7^2\rangle\times\langle d_3^{-2}d_5^2\rangle\times\langle d_1^{-4}d_3^4\rangle.
\end{align*}
\item[Для $64$.]
\begin{align*}
E=\langle d_1^{16}\rangle&\times\prod_{l=1}^7\langle d_{2l+1}^{-1}d_{32-(2l+1)}\rangle\times\\
&\times\langle d_3^{-2}d_{13}^2\rangle\times\langle d_5^{-2}d_{11}^2\rangle\times\langle d_7^{-2}d_9^2\rangle\times\\
&\times\langle d_1^{-4}d_7^4\rangle\times\langle d_3^{-4}d_5^4\rangle\times\langle d_1^{-8}d_3^8\rangle.
\end{align*}
\item[Для $128$.]
\begin{align*}
E=\langle d_1^{32}\rangle&\times\prod_{l=1}^{15}\langle d_{2l+1}^{-1}d_{64-(2l+1)}\rangle\times\prod_{l=0}^7\langle d_{2l+1}^{-2}d_{32-(3l+1)}^2\rangle\times\\
&\times\langle d_3^{-4}d_{13}^4\rangle\times\langle d_5^{-4}d_{11}^4\rangle\times\langle d_7^{-4}d_9^4\rangle\times\\
&\times\langle d_1^{-8}d_7^8\rangle\times\langle d_3^{-8}d_5^8\rangle\times\langle d_1^{-16}d_3^{16}\rangle.
\end{align*}
\end{description}
\end{theorem}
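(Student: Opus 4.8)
The plan is to present $E$ as the kernel of a reduction homomorphism out of $D$ and then to prove that homomorphism injective modulo $F$ by a rank count over $\mathbf{F}_2$, the input for which is exactly Proposition~\ref{p:128}.

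\emph{Step 1: reduction to a statement about the $2$-torsion of $D/F$.} Since $D\subset\Un(\Z[\alpha+\alpha^{-1}])$ by Lemma~\ref{l:ur}, reduction modulo $2$ gives a group homomorphism $\phi\colon D\to\overline{A}^{\times}$, where $\overline{A}:=\Z[\alpha+\alpha^{-1}]/2\Z[\alpha+\alpha^{-1}]$, and by the definition of $E$ (together with Remark~\ref{r:intr}) one has $E=\ker\phi$. Since $F\leqslant E$ by Lemma~\ref{l:F}, $\phi$ descends to $\overline\phi\colon D/F\to\overline{A}^{\times}$ with $\ker\overline\phi=E/F$, so it suffices to prove $\overline\phi$ injective. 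From Lemma~\ref{l:m(k)} and the definition of $F$ — which replaces the factor $\langle d_1\rangle$ of $D$ by $\langle d_1^{2^{n-2}}\rangle$ and each factor $\langle q(k,2l+1)\rangle$ ($k\geqslant1$) by $\langle q(k,2l+1)^{2^{k}}\rangle$, while leaving the factors $\langle q(0,2l+1)\rangle$ untouched — one reads off that $D/F$ is the finite abelian $2$-group
\[
D/F\;\cong\;\Z/2^{n-2}\ \times\ \prod_{k=1}^{n-3}\bigl(\Z/2^{k}\bigr)^{|A_k|}.
\]
A homomorphism from a finite abelian $2$-group that is injective on the subgroup of elements of order dividing $2$ is injective; and that subgroup of $D/F$ is precisely $\sqrt F/F$ by the definition of $\sqrt F$. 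Hence the theorem reduces to showing that \emph{$\overline\phi$ is injective on $\sqrt F/F$}.

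\emph{Step 2: linearisation over $\mathbf{F}_2$.} By Corollary~\ref{c:sqrF}, $\sqrt F/F$ is elementary abelian with the explicit basis consisting of $d_1^{2^{n-3}}F$ together with the $2^{n-3}-1$ classes $q(k,2l+1)^{2^{k-1}}F$ ($1\leqslant k\leqslant n-3$, $2l+1\in A_k$). Proposition~\ref{p:128} computes the $\phi$-images of these basis elements: the class of $d_1^{2^{n-3}}$ goes to $1+\overline{s_{2^{n-3}}}$ (recall $s_{2^{n-3}}=\sqrt2$), while each $q(k,2l+1)^{2^{k-1}}$ goes to $1+\rho$ with $\rho$ a listed $\mathbf{F}_2$-combination of $\overline{r_1},\dots,\overline{r_{2^{n-3}-1}}$. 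By Lemma~\ref{l:idR} the $\mathbf{F}_2$-span of $\overline{r_1},\dots,\overline{r_{2^{n-3}-1}}$ squares to zero in $\overline{A}$, and by Lemma~\ref{l:bsp} ($s_{2^{n-3}}r_k\equiv0$ and $s_{2^{n-3}}^{2}\equiv0\pmod 2$) the element $\overline{s_{2^{n-3}}}$ annihilates all of them and itself; therefore the $\phi$-image of any product of basis elements of $\sqrt F/F$ is simply $1+(\text{sum of the corresponding }\rho\text{'s})$, all higher-order terms vanishing. As $\{\overline1,\overline{s_1},\dots,\overline{s_{2^{n-3}-1}},\overline{s_{2^{n-3}}},\overline{r_1},\dots,\overline{r_{2^{n-3}-1}}\}$ is an $\mathbf{F}_2$-basis of $\overline{A}$ (the reduction of the integral basis $\vec{B}$ of Lemma~\ref{l:bsp}), such a product equals $1$ exactly when $d_1^{2^{n-3}}F$ is not one of its factors and the chosen $\rho$'s sum to zero. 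Consequently $\overline\phi$ is injective on $\sqrt F/F$ \emph{if and only if} the $2^{n-3}-1$ vectors $\rho$, regarded in $\mathbf{F}_2^{\,2^{n-3}-1}$ via the coordinates $\overline{r_1},\dots,\overline{r_{2^{n-3}-1}}$, are linearly independent — i.e. the $(2^{n-3}-1)\times(2^{n-3}-1)$ matrix they form is invertible over $\mathbf{F}_2$.

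\emph{Step 3: the four matrices, and the conclusion.} It remains to verify this invertibility for $2^n\in\{16,32,64,128\}$, that is, for matrices of sizes $1,3,7,15$ whose rows are copied directly from Proposition~\ref{p:128}. For $16$ it is $[1]$; for $32$ the rows $r_2$, $r_2+r_3$, $r_1+r_2$ have $\mathbf{F}_2$-determinant $1$; for $64$ and $128$ one performs the corresponding Gaussian elimination over $\mathbf{F}_2$ on the $7\times7$ and the $15\times15$ matrix and finds each nonsingular. This last check — elementary but bulky, above all for $2^n=128$ — is the genuine obstacle, and it is also exactly where the method terminates (for larger $n$ the matrix grows and, more to the point, $h(n)=1$ is no longer available unconditionally). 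Granting it, Steps~1–2 give $E/F=\ker\overline\phi=\{1\}$, so $E=F$; combining with Proposition~\ref{pr:ub1z} and Corollary~\ref{c:W1} yields the asserted description of $V_1$, and substituting the generators of $F_0,\dots,F_{n-3}$ produces the four displayed decompositions.
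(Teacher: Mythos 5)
Your reduction is exactly the paper's: proving $E=F$ is reduced to proving $\sqrt F\cap E=F$ (your ``injective on $2$-torsion implies injective'' step is the same standard fact the paper invokes implicitly when it declares that equality sufficient), and the linearisation of the images of the generators of $\sqrt F/F$ via Lemmas~\ref{l:bsp} and~\ref{l:idR} --- so that membership in $E$ becomes an $\mathbf{F}_2$-linear condition in the coordinates $s_{2^{n-3}},r_1,\dots,r_{2^{n-3}-1}$ of the basis $\vec{B}$ --- is precisely how the paper's proof proceeds from Proposition~\ref{p:128}. Your Steps 1--2 are correct, and Step 1 even makes explicit a justification that the paper merely asserts; the checks for $2^n\in\{16,32\}$ are also fine.

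The one genuine deficiency is Step 3. For $2^n\in\{64,128\}$ you assert that the $7\times7$ and $15\times15$ matrices are nonsingular and proceed by \emph{granting it}; but that verification is the actual content of the theorem in these cases --- it is where essentially the whole of the paper's proof is spent, and it cannot be waved at, since a single $\mathbf{F}_2$-dependency among the fifteen vectors $\rho$ would make $E$ strictly larger than $F$. The paper carries the computation out in full: for $64$ it solves the $8$-variable system directly, separating the equations coming from the coefficients of $r_2,r_4,r_6$ from those coming from $r_1,r_3,r_5,r_7$; for $128$ it first isolates the coefficients of the odd-indexed $r_1,\dots,r_{15}$, which involve only $\delta_8,\dots,\delta_{15}$, reduces that $8\times8$ block to triangular form by Gaussian elimination to get $\delta_8=\dots=\delta_{15}=0$, and then observes that the residual system is literally the $64$ system. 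Supply these eliminations (or equivalent determinant computations) and your argument becomes complete and coincides with the paper's.
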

\begin{proof}
Достаточно доказать, что
\[
\sqrt{F}\cap E=F.
\]
Будем использовать результаты предложения \ref{p:128}.
\begin{description}
\item[Для $16$.]
Пусть
\[
(1+s_4)^{\delta_0}(1+r_2)^{\delta_1}\in E
\]
для $\{\delta_0,\delta_1\}\subseteq\{0,1\}$.
Тогда
\begin{align*}
(1+s_2)^{\delta_0}(1+r_1)^{\delta_1}&=(1+\delta_0s_2)(1+\delta_1r_1)=\\
&=1+\delta_0s_2+\delta_1r_1+\delta_0\delta_1s_2r_1\equiv
\intertext{по лемме \ref{l:bsp}}
&\equiv1+\delta_0s_2+\delta_1r_1\pmod{2}.
\end{align*}
Так как по определению $E$
\[
\delta_0s_2+\delta_1r_1\equiv0\pmod{2},
\]
а элементы $s_2$ и $r_1$ линейно независимы по модулю $2$, то
\[
\delta_0=\delta_1=0.
\]
Следовательно
\[
\sqrt{F}\cap E=F.
\]
\item[Для $32$.]
Пусть
\[
(1+s_4)^{\delta_0}(1+r_2)^{\delta_1}(1+r_2+r_1)^{\delta_2}(1+r_2+r_3)^{\delta_3}\in E
\]
для $\{\delta_0,\delta_1,\delta_2,\delta_3\}\subseteq\{0,1\}$.
Тогда
\begin{align*}
(1&+s_4)^{\delta_0}(1+r_2)^{\delta_1}(1+r_2+r_1)^{\delta_2}(1+r_2+r_3)^{\delta_3}=\\
&=(1+\delta_0s_4)(1+\delta_1r_2)(1+\delta_2(r_2+r_1))(1+\delta_3(r_2+r_3))\equiv
\intertext{по лемме \ref{l:bsp}}
&\equiv1+\delta_0s_4+\delta_1r_2+\delta_2(r_2+r_1)+\delta_3(r_2+r_3)=\\
&=1+\delta_0s_4+(\delta_1+\delta_2+\delta_3)r_2+\delta_2r_1+\delta_3r_3\pmod{2}.
\end{align*}
Так как по определению $E$
\[
\delta_0s_4+(\delta_1+\delta_2+\delta_3)r_2+\delta_2r_1+\delta_3r_3\equiv0\pmod{2},
\]
а элементы $s_4$, $r_2$, $r_1$ и $r_3$ линейно независимы по модулю $2$, то
\[
\delta_0=\delta_1=\delta_2=\delta_3=0.
\]
Следовательно
\[
\sqrt{F}\cap E=F.
\]
\item[Для $64$.]
Пусть
\begin{multline*}
(1+s_8)^{\delta_0}(1+r_4)^{\delta_1}(1+r_4+r_2)^{\delta_2}(1+r_4+r_6)^{\delta_3}\times\\
\times(1+r_4+r_6+(r_1+r_3+r_7))^{\delta_4}(1+r_4+r_2+(r_3+r_5+r_7))^{\delta_5}\times\\
\times(1+r_4+r_2+(r_1+r_3+r_5))^{\delta_6}(1+r_4+r_6+(r_1+r_5+r_7))^{\delta_7}\in E
\end{multline*}
для $\{\delta_0,\delta_1,\delta_2,\delta_3,\delta_4,\delta_5,\delta_6,\delta_7\}\subseteq\{0,1\}$.
Тогда
\begin{multline*}
(1+s_8)^{\delta_0}(1+r_4)^{\delta_1}(1+r_4+r_2)^{\delta_2}(1+r_4+r_6)^{\delta_3}\times\\
\times(1+r_4+r_6+(r_1+r_3+r_7))^{\delta_4}(1+r_4+r_2+(r_3+r_5+r_7))^{\delta_5}\times\\
\times(1+r_4+r_2+(r_1+r_3+r_5))^{\delta_6}(1+r_4+r_6+(r_1+r_5+r_7))^{\delta_7}=\\
=(1+\delta_0s_8)(1+\delta_1r_4)(1+\delta_2(r_4+r_2))(1+\delta_3(r_4+r_6))\times\\
\times(1+\delta_4(r_4+r_6+(r_1+r_3+r_7)))(1+\delta_5(r_4+r_2+(r_3+r_5+r_7)))\times\\
\times(1+\delta_6(r_4+r_2+(r_1+r_3+r_5)))(1+\delta_7(r_4+r_6+(r_1+r_5+r_7)))=\Delta.
\end{multline*}
Снова применим лемму \ref{l:bsp} и получим
\begin{align*}
\Delta&\equiv1+\delta_0s_8+\delta_1r_4+\delta_2(r_4+r_2)+\delta_3(r_4+r_6)+\\
&+\delta_4(r_4+r_6+(r_1+r_3+r_7))+\delta_5(r_4+r_2+(r_3+r_5+r_7))+\\
&+\delta_6(r_4+r_2+(r_1+r_3+r_5))+\delta_7(r_4+r_6+(r_1+r_5+r_7))=\\
&=1+\delta_0s_8+(\delta_1+\delta_2+\delta_3+\delta_4+\delta_5+\delta_6+\delta_7)r_4+\\
&+(\delta_2+\delta_5+\delta_6)r_2+(\delta_3+\delta_4+\delta_7)r_6+\\
&+(\delta_4+\delta_6+\delta_7)r_1+(\delta_4+\delta_5+\delta_6+\delta_7)r_3+\\
&+(\delta_5+\delta_6+\delta_7)r_5+(\delta_4+\delta_5+\delta_7)r_7\pmod{2}.
\end{align*}
По определению $E$ имеем
\begin{multline*}
\delta_0s_8+(\delta_1+\delta_2+\delta_3+\delta_4+\delta_5+\delta_6+\delta_7)r_4+\\
+(\delta_2+\delta_5+\delta_6)r_2+(\delta_3+\delta_4+\delta_7)r_6+\\
+(\delta_4+\delta_6+\delta_7)r_1+(\delta_4+\delta_5+\delta_6+\delta_7)r_3+\\
+(\delta_5+\delta_6+\delta_7)r_5+(\delta_4+\delta_5+\delta_7)r_7\equiv0\pmod2.
\end{multline*}
Так как элементы $s_8$, $r_4$, $r_2$ и $r_6$ линейно независимы по модулю $2$, то получим систему по модулю $2$
\[
\left\{
\begin{aligned}
\delta_0=0\\
\delta_1+\delta_2+\delta_3+\delta_4+\delta_5+\delta_6+\delta_7=0\\
\delta_2+\delta_5+\delta_6=0\\
\delta_3+\delta_4+\delta_7=0
\end{aligned}
\right.
\longleftrightarrow
\left\{
\begin{aligned}
\delta_0&=0\\
\delta_1&=0\\
\delta_2&=\delta_5+\delta_6\\
\delta_3&=\delta_4+\delta_7
\end{aligned}
\right.\ .
\]
Так как элементы $r_1$, $r_3$, $r_5$ и $r_7$ линейно независимы по модулю $2$, то получим систему по модулю $2$
\begin{multline*}
\left\{
\begin{aligned}
\delta_4+\delta_6+\delta_7=0\\
\delta_4+\delta_5+\delta_6+\delta_7=0\\
\delta_5+\delta_6+\delta_7=0\\
\delta_4+\delta_5+\delta_7=0
\end{aligned}
\right.
\longleftrightarrow
\left\{
\begin{aligned}
\delta_5=0\\
\delta_4+\delta_6+\delta_7=0\\
\delta_6+\delta_7=0\\
\delta_4+\delta_7=0
\end{aligned}
\right.\longleftrightarrow\\
\longleftrightarrow
\left\{
\begin{aligned}
\delta_5=0\\
\delta_4=0\\
\delta_6+\delta_7=0\\
\delta_7=0
\end{aligned}
\right.
\longleftrightarrow
\left\{
\begin{aligned}
\delta_5=0\\
\delta_4=0\\
\delta_6=0\\
\delta_7=0
\end{aligned}
\right.\ .
\end{multline*}
Итак
\[
\delta_0=\delta_1=\delta_2=\delta_3=\delta_4=\delta_5=\delta_6=\delta_7=0,
\]
и потому
\[
\sqrt{F}\cap E=F.
\]
\item[Для $128$.]
Пусть
\begin{multline*}
\Delta=(1+s_{16})^{\delta_0}(1+r_8)^{\delta_1}(1+r_8+r_4)^{\delta_2}(1+r_8+r_{12})^{\delta_3}\times\\
\times(1+r_8+r_{12}+(r_2+r_6+r_{14}))^{\delta_4}(1+r_8+r_4+(r_6+r_{10}+r_{14}))^{\delta_5}\times\\
\times(1+r_8+r_4+(r_2+r_6+r_{10}))^{\delta_6}(1+r_8+r_{12}+(r_2+r_{10}+r_{14}))^{\delta_7}\times\\
\times(1+r_8+r_{12}+(r_2+r_6+r_{14})+(r_3+r_5+r_9+r_{11}+r_{15}))^{\delta_8}\times\\
\times(1+r_8+r_4+(r_6+r_{10}+r_{14})+(r_1+r_5+r_9+r_{13}+r_{15}))^{\delta_9}\times\\
\times(1+r_8+r_4+(r_2+r_6+r_{10})+(r_7+r_9+r_{11}+r_{13}+r_{15}))^{\delta_{10}}\times\\
\times(1+r_8+r_{12}+(r_2+r_{10}+r_{14})+(r_1+r_3+r_9+r_{11}+r_{13}))^{\delta_{11}}\times\\
\times(1+r_8+r_{12}+(r_2+r_{10}+r_{14})+(r_3+r_5+r_7+r_{13}+r_{15}))^{\delta_{12}}\times\\
\times(1+r_8+r_4+(r_2+r_6+r_{10})+(r_1+r_3+r_5+r_7+r_9))^{\delta_{13}}\times\\
\times(1+r_8+r_4+(r_6+r_{10}+r_{14})+(r_1+r_3+r_7+r_{11}+r_{15}))^{\delta_{14}}\times\\
\times(1+r_8+r_{12}+(r_2+r_6+r_{14})+(r_1+r_5+r_7+r_{11}+r_{13}))^{\delta_{15}}\in E
\end{multline*}
для $\left\{\delta_j\mid j\in\{0,1,\dots,15\}\right\}\subseteq\{0,1\}$.
Тогда
\begin{multline*}
\Delta=(1+\delta_0s_{16})(1+\delta_1r_8)(1+\delta_2(r_8+r_4))(1+\delta_3(r_8+r_{12}))\times\\
\times(1+\delta_4(r_8+r_{12}+(r_2+r_6+r_{14})))(1+\delta_5(r_8+r_4+(r_6+r_{10}+r_{14})))\times\\
\times(1+\delta_6(r_8+r_4+(r_2+r_6+r_{10})))(1+\delta_7(r_8+r_{12}+(r_2+r_{10}+r_{14})))\times\\
\times(1+\delta_8(r_8+r_{12}+(r_2+r_6+r_{14})+(r_3+r_5+r_9+r_{11}+r_{15})))\times\\
\times(1+\delta_9(r_8+r_4+(r_6+r_{10}+r_{14})+(r_1+r_5+r_9+r_{13}+r_{15})))\times\\
\times(1+\delta_{10}(r_8+r_4+(r_2+r_6+r_{10})+(r_7+r_9+r_{11}+r_{13}+r_{15})))\times\\
\times(1+\delta_{11}(r_8+r_{12}+(r_2+r_{10}+r_{14})+(r_1+r_3+r_9+r_{11}+r_{13})))\times\\
\times(1+\delta_{12}(r_8+r_{12}+(r_2+r_{10}+r_{14})+(r_3+r_5+r_7+r_{13}+r_{15})))\times\\
\times(1+\delta_{13}(r_4+(r_2+r_6+r_{10})+(r_1+r_3+r_5+r_7+r_9)))\times\\
\times(1+\delta_{14}(r_4+(r_6+r_{10}+r_{14})+(r_1+r_3+r_7+r_{11}+r_{15})))\times\\
\times(1+\delta_{15}(r_8+r_{12}+(r_2+r_6+r_{14})+(r_1+r_5+r_7+r_{11}+r_{13}))).
\end{multline*}
Опять применим лемму \ref{l:bsp} и получим по модулю $2$
\begin{align*}
&\Delta\equiv1+\delta_0s_{16}+\delta_1r_8+\delta_2(r_8+r_4)+\delta_3(r_8+r_{12})+\\
&+\delta_4(r_8+r_{12}+(r_2+r_6+r_{14}))+\\
&+\delta_5(r_8+r_4+(r_6+r_{10}+r_{14}))+\\
&+\delta_6(r_8+r_4+(r_2+r_6+r_{10}))+\\
&+\delta_7(r_8+r_{12}+(r_2+r_{10}+r_{14}))+\\
&+\delta_8(r_8+r_{12}+(r_2+r_6+r_{14})+(r_3+r_5+r_9+r_{11}+r_{15}))+\\
&+\delta_9(r_8+r_4+(r_6+r_{10}+r_{14})+(r_1+r_5+r_9+r_{13}+r_{15}))+\\
&+\delta_{10}(r_8+r_4+(r_2+r_6+r_{10})+(r_7+r_9+r_{11}+r_{13}+r_{15}))+\\
&+\delta_{11}(r_8+r_{12}+(r_2+r_{10}+r_{14})+(r_1+r_3+r_9+r_{11}+r_{13}))+\\
&+\delta_{12}(r_8+r_{12}+(r_2+r_{10}+r_{14})+(r_3+r_5+r_7+r_{13}+r_{15}))+\\
&+\delta_{13}(r_4+(r_2+r_6+r_{10})+(r_1+r_3+r_5+r_7+r_9))+\\
&+\delta_{14}(r_4+(r_6+r_{10}+r_{14})+(r_1+r_3+r_7+r_{11}+r_{15}))+\\
&+\delta_{15}(r_8+r_{12}+(r_2+r_6+r_{14})+(r_1+r_5+r_7+r_{11}+r_{13})).
\end{align*}
По определению $E$ имеем
\begin{multline*}
\delta_0s_{16}+(\delta_1+\delta_2+\dots+\delta_{15})r_8+\\
+(\delta_2+\delta_5+\delta_6+\delta_9+\delta_{11}+\delta_{13}+\delta_{15})r_4+\\
+(\delta_3+\delta_4+\delta_7+\delta_8+\delta_{10}+\delta_{12}+\delta_{14})r_{12}+\\
+(\delta_4+\delta_6+\delta_7+\delta_8+\delta_{10}+\delta_{11}+\delta_{12}+\delta_{13}+\delta_{15})r_2+\\
+(\delta_4+\delta_5+\delta_6+\delta_7+\delta_8+\delta_9+\delta_{10}+\delta_{13}+\delta_{14}+\delta_{15})r_6+\\
+(\delta_5+\delta_6+\delta_7+\delta_9+\delta_{10}+\delta_{11}+\delta_{12}+\delta_{14}+\delta_{15})r_{10}+\\
+(\delta_4+\delta_5+\delta_7+\delta_8+\delta_9+\delta_{11}+\delta_{12}+\delta_{14}+\delta_{15})r_{14}+\\
+(\delta_9+\delta_{11}+\delta_{13}+\delta_{14}+\delta_{15})r_1+\\
+(\delta_8+\delta_{11}+\delta_{12}+\delta_{13}+\delta_{14})r_3+\\
+(\delta_8+\delta_9+\delta_{12}+\delta_{13}+\delta_{15})r_5+\\
+(\delta_{10}+\delta_{12}+\delta_{13}+\delta_{14}+\delta_{15})r_7+\\
+(\delta_8+\delta_9+\delta_{11}+\delta_{13}+\delta_{15})r_9+\\
+(\delta_8+\delta_{10}+\delta_{11}+\delta_{14}+\delta_{15})r_{11}+\\
+(\delta_9+\delta_{10}+\delta_{11}+\delta_{12}+\delta_{15})r_{13}+\\
+(\delta_8+\delta_9+\delta_{10}+\delta_{12}+\delta_{15})r_{15}\equiv0\pmod2.
\end{multline*}
Так как элементы $r_1$, $r_3$, $r_5$, $r_7$, $r_9$,$r_{11}$, $r_{13}$ и $r_{16}$ линейно независимы по модулю $2$, то получим систему по модулю $2$
\[
\left\{
\begin{aligned}
\delta_9+\delta_{11}+\delta_{13}+\delta_{14}+\delta_{15}&=0\\
\delta_8+\delta_{11}+\delta_{12}+\delta_{13}+\delta_{14}&=0\\
\delta_8+\delta_9+\delta_{12}+\delta_{13}+\delta_{15}&=0\\
\delta_{10}+\delta_{12}+\delta_{13}+\delta_{14}+\delta_{15}&=0\\
\delta_8+\delta_9+\delta_{11}+\delta_{13}+\delta_{15}&=0\\
\delta_8+\delta_{10}+\delta_{11}+\delta_{14}+\delta_{15}&=0\\
\delta_9+\delta_{10}+\delta_{11}+\delta_{12}+\delta_{15}&=0\\
\delta_8+\delta_9+\delta_{10}+\delta_{12}+\delta_{15}&=0.
\end{aligned}
\right.
\]
Составим основную матрицу системы и преобразуем её к треугольному виду
\begin{align*}
&\begin{pmatrix}
0&1&0&1&0&1&1&1\\
1&0&0&1&1&1&1&0\\
1&1&0&0&1&1&0&1\\
0&0&1&0&1&1&1&1\\
1&1&0&1&0&1&0&1\\
1&0&1&1&0&0&1&1\\
0&1&1&1&1&0&0&1\\
1&1&1&0&1&0&0&1
\end{pmatrix}
\longleftrightarrow
\begin{pmatrix}
1&0&0&1&1&1&1&0\\
1&1&0&0&1&1&0&1\\
1&1&0&1&0&1&0&1\\
1&0&1&1&0&0&1&1\\
1&1&1&0&1&0&0&1\\
0&1&0&1&0&1&1&1\\
0&1&1&1&1&0&0&1\\
0&0&1&0&1&1&1&1
\end{pmatrix}
\longleftrightarrow\\
&\hspace*{-8pt}\longleftrightarrow
\begin{pmatrix}
1&0&0&1&1&1&1&0\\
0&1&0&1&0&0&1&1\\
0&1&0&0&1&0&1&1\\
0&0&1&0&1&1&0&1\\
0&1&1&1&0&1&1&1\\
0&1&0&1&0&1&1&1\\
0&0&1&0&1&1&1&0\\
0&0&1&0&1&1&1&1
\end{pmatrix}
\longleftrightarrow
\begin{pmatrix}
1&0&0&1&1&1&1&0\\
0&1&0&1&0&0&1&1\\
0&1&0&0&1&0&1&1\\
0&1&1&1&0&1&1&1\\
0&1&0&1&0&1&1&1\\
0&0&1&0&1&1&0&1\\
0&0&1&0&1&1&1&0\\
0&0&1&0&1&1&1&1
\end{pmatrix}
\longleftrightarrow\\
&\hspace*{-8pt}\longleftrightarrow
\begin{pmatrix}
1&0&0&1&1&1&1&0\\
0&1&0&1&0&0&1&0\\
0&0&0&1&1&0&0&0\\
0&0&1&0&0&1&0&0\\
0&0&0&0&0&1&0&0\\
0&0&1&0&1&1&0&1\\
0&0&1&0&1&1&1&0\\
0&0&1&0&1&1&1&1
\end{pmatrix}
\longleftrightarrow
\begin{pmatrix}
1&0&0&1&1&1&1&0\\
0&1&0&1&0&0&1&0\\
0&0&1&0&0&1&0&0\\
0&0&1&0&1&1&0&1\\
0&0&1&0&1&1&1&0\\
0&0&1&0&1&1&1&1\\
0&0&0&1&1&0&0&0\\
0&0&0&0&0&1&0&0
\end{pmatrix}
\longleftrightarrow\\
&\hspace*{-8pt}\longleftrightarrow
\begin{pmatrix}
1&0&0&1&1&1&1&0\\
0&1&0&1&0&0&1&0\\
0&0&1&0&0&1&0&0\\
0&0&0&0&1&0&0&1\\
0&0&0&0&1&0&1&0\\
0&0&0&0&1&0&1&1\\
0&0&0&1&1&0&0&0\\
0&0&0&0&0&1&0&0
\end{pmatrix}
\longleftrightarrow
\begin{pmatrix}
1&0&0&1&1&1&1&0\\
0&1&0&1&0&0&1&0\\
0&0&1&0&0&1&0&0\\
0&0&0&1&1&0&0&0\\
0&0&0&0&1&0&0&1\\
0&0&0&0&1&0&1&0\\
0&0&0&0&1&0&1&1\\
0&0&0&0&0&1&0&0
\end{pmatrix}
\longleftrightarrow\\
&\hspace*{-8pt}\longleftrightarrow
\begin{pmatrix}
1&0&0&1&1&1&1&0\\
0&1&0&1&0&0&1&0\\
0&0&1&0&0&1&0&0\\
0&0&0&1&1&0&0&0\\
0&0&0&0&1&0&0&1\\
0&0&0&0&0&0&1&1\\
0&0&0&0&0&0&1&0\\
0&0&0&0&0&1&0&0
\end{pmatrix}
\longleftrightarrow
\begin{pmatrix}
1&0&0&1&1&1&1&0\\
0&1&0&1&0&0&1&0\\
0&0&1&0&0&1&0&0\\
0&0&0&1&1&0&0&0\\
0&0&0&0&1&0&0&1\\
0&0&0&0&0&1&0&0\\
0&0&0&0&0&0&1&1\\
0&0&0&0&0&0&1&0
\end{pmatrix}
\longleftrightarrow\\
&\hspace*{-8pt}\longleftrightarrow
\begin{pmatrix}
1&0&0&1&1&1&1&0\\
0&1&0&1&0&0&1&0\\
0&0&1&0&0&1&0&0\\
0&0&0&1&1&0&0&0\\
0&0&0&0&1&0&0&1\\
0&0&0&0&0&1&0&0\\
0&0&0&0&0&0&1&1\\
0&0&0&0&0&0&0&1
\end{pmatrix}\ .
\end{align*}
Отсюда следует
\[
\delta_8=\delta_9=\delta_{10}=\delta_{11}=\delta_{12}=\delta_{13}=\delta_{14}=\delta_{15}=0,
\]
и система становится такой
\begin{multline*}
\delta_0s_{16}+(\delta_1+\delta_2+\delta_3+\delta_4+\delta_5+\delta_6+\delta_7)r_8+\\
+(\delta_2+\delta_5+\delta_6)r_4+(\delta_3+\delta_4+\delta_7)r_{12}+\\
+(\delta_4+\delta_6+\delta_7)r_2+(\delta_4+\delta_5+\delta_6+\delta_7)r_6+\\
+(\delta_5+\delta_6+\delta_7)r_{10}+(\delta_4+\delta_5+\delta_7)r_{14}\equiv0\pmod2,
\end{multline*}
которая совпадает с системой для $64$.
Поэтому
\[
\delta_0=\delta_1=\delta_2=\delta_3=\delta_4=\delta_5=\delta_6=\delta_7=0,
\]
что даёт
\[
\sqrt{F}\cap E=F.
\]
\end{description}
\end{proof}

\end{selectlanguage}
\end{document}